\DeclareMathOperator{\Id}{Id}
\DeclareMathOperator{\grad}{grad}
\numberwithin{equation}{section}
\newcommand\1{\lower 9pt\hbox{\underbar{}}}
\numberwithin{equation}{section}
\newtheorem*{theorem*}{Main Theorem}
\newtheorem {theorem}{Theorem}[section]
\newtheorem {lemma}[theorem]           {Lemma}
\newtheorem {proposition}[theorem]     {Proposition}
\newtheorem {conjecture}[theorem]      {Conjecture}
\newtheorem{thm}{Theorem}
\newtheorem {corollary}[theorem]       {Corollary}
\theoremstyle{definition}
\newtheorem {definition}[theorem]{Definition}
\newtheorem {remark}[theorem]          {Remark}
\newtheorem {example}[theorem]         {Example}
\newtheorem {openquestion}[theorem]{Open Question}
\newcommand{\pr} {\smallskip\noindent{\bf Proof\,\,}}
\begin{document}

\title{The Arnold conjecture for singular symplectic manifolds}

\author{Joaquim Brugués}
\thanks{ All the authors are partially supported by the Spanish State Research Agency grant PID2019-103849GB-I00 of AEI / 10.13039/501100011033 and by the AGAUR project 2021 SGR 00603. }
\thanks{J. Brugués is supported by the FWO-FNRS Excellence of Science project G0H4518N "Symplectic techniques in differential geometry" with UA Antigoon Project-ID 36584}

\address{University of Antwerp, Department of Mathematics. Middelheim Campus - Building G. Middelheimlaan 1, 2020 Antwerp, Belgium}
\email{joaquim.bruguesmora@uantwerpen.be}

\author{Eva Miranda}
\thanks{  E. Miranda  is supported by the Catalan Institution for Research and Advanced Studies via an ICREA Academia Prize 2021 and by the Alexander Von Humboldt foundation via a Friedrich Wilhelm Bessel Research Award. E. Miranda is also supported by the Spanish State
Research Agency, through the Severo Ochoa and Mar\'{\i}a de Maeztu Program for Centers and Units
of Excellence in R\&D (project CEX2020-001084-M). }

\address{{Laboratory of Geometry and Dynamical Systems \& IMTech, Department of Mathematics}, Universitat Polit\`{e}cnica de Catalunya and CRM, Barcelona, Spain \\ Centre de Recerca Matemàtica-CRM}
\email{eva.miranda@upc.edu}

\author{Cédric Oms}
\thanks{C. Oms acknowledges financial support from the Margarita Salas postdoctoral contract 
financed by the European Union-NextGenerationEU and is partially supported by the ANR grant ``Cosy" (ANR-21-CE40-0002), partially supported by the ANR grant ``CoSyDy" (ANR-CE40-0014).}

\address{{Laboratory of Geometry and Dynamical Systems \& IMTech, Department of Mathematics}, Universitat Polit\`{e}cnica de Catalunya and BCAM Bilbao, Mazarredo, 14. 48009 Bilbao Basque Country - Spain}
\email{coms@bcamath.org}

\begin{abstract} 
In this article, we study the Hamiltonian dynamics on singular symplectic manifolds and prove the Arnold conjecture for a large class of $b^m$-symplectic manifolds.  Novel techniques are introduced to associate smooth symplectic forms to the original singular symplectic structure, under some mild conditions. These techniques yield the validity of the Arnold conjecture for singular symplectic manifolds across multiple scenarios.
	More precisely, we prove a lower bound on the number of 1-periodic Hamiltonian orbits for $b^{2m}$-symplectic manifolds depending only on the topology of the manifold.
	Moreover, for $b^m$-symplectic surfaces, we improve the lower bound depending on the topology of the pair $(M,Z)$.
	We then venture into the study of Floer homology to this singular realm and we conclude with a list of open questions. 
\end{abstract}
\maketitle

\section{Introduction}

Symplectic structures on manifolds with boundary \cite{nestandtsygan} led naturally to the investigation of a class of Poisson manifolds which are symplectic manifolds away from a hypersurface but degenerate along this hypersurface (see \cite{guimipi} and \cite{Gualtierili}\footnote{Other contributions are included in  \cite{mirandaplanas, guimipi2,gmps, gmps2, gmW1,gmw,  gmw2,km, kms, cavalcanti, marcutosorno1, marcutosorno2}}). In the literature, these structures are called $b^m$- or $\log$-symplectic manifolds. They also show up in the space of geodesics of the Lorentz plane \cite{khesintabachnikov} and furnish a natural phase space for regularized problems in celestial mechanics such as the restricted 3-body problem \cite{km, kms, dkm, bcontact, singularweinstein, invitation}.
Another example where Hamiltonian dynamics on $b$-symplectic manifolds is outstanding is in the context of Painlevé transcendents (\cite{anastasia}, \cite{anastasiaeva}).

The investigation of the dynamics on the odd-dimensional sibling of these singular manifolds started in \cite{singularweinstein}, \cite{escapeorbits}, and \cite{2Nescapeorbits}. In this article, we initiate the exploration of singular Hamiltonian dynamics on the even-dimensional counterpart and compare it to Hamiltonian dynamics for symplectic manifolds. In order to do so, we adopt the revolutionary approach due to Andreas Floer to prove the Arnold conjecture on compact symplectic manifolds.

\begin{conjecture}[Arnold conjecture]
Let $(M,\omega)$ be a compact symplectic manifold and let $H:\mathbb{R}\times M \to \mathbb{R}$ be a time-dependent Hamiltonian function. Suppose that the solutions of period $1$ of
	the associated Hamiltonian system are non-degenerate. Let $\mathcal{P}(H)$ denote the set of 1-periodic orbits. Then
	$$\# \mathcal{P}(H) \geq \sum_i \dim H_i (M;\mathbb{Z}_2).$$
\end{conjecture}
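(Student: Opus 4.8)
The plan is to realize the lower bound as a Morse-type inequality for a homology theory built from the $1$-periodic orbits themselves, following Floer's variational approach. First I would pass to the component of contractible loops in the free loop space $\mathcal{L}M$ and consider the symplectic action functional $\mathcal{A}_H$, defined on the appropriate covering so as to resolve the ambiguity in the symplectic area term. Its critical points are exactly the elements of $\mathcal{P}(H)$, and the hypothesis that the $1$-periodic orbits are non-degenerate guarantees that $\mathcal{A}_H$ behaves as a Morse function, with non-degenerate Hessian at each critical point. Fixing an $\omega$-compatible almost complex structure $J$, the associated $L^2$-gradient flow lines of $\mathcal{A}_H$ are solutions $u\colon \mathbb{R}\times S^1 \to M$ of Floer's equation $\partial_s u + J(u)\bigl(\partial_t u - X_H(t,u)\bigr) = 0$, a perturbed Cauchy--Riemann equation that is invariant under translation in the $s$-variable.

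Second, I would build the Floer chain complex $CF_*(H)$ freely generated over $\mathbb{Z}_2$ by $\mathcal{P}(H)$, graded by the Conley--Zehnder index, and define a differential $\partial$ by counting modulo $2$ the isolated solutions of Floer's equation (zero-dimensional after quotienting by the $\mathbb{R}$-translation) asymptotic to pairs of orbits whose indices differ by one. Establishing $\partial\circ\partial = 0$ requires the compactness-and-gluing package: broken trajectories form precisely the boundary of the compactified one-dimensional moduli spaces, so the total number of their ends vanishes modulo $2$.

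Third, I would show that the resulting Floer homology $HF_*(H,J)$ is independent of the auxiliary data $(H,J)$ by means of continuation maps, and then compute it by specializing $H$ to a $C^2$-small time-independent Morse function, for which Floer's equation degenerates into the negative gradient flow and the complex reduces to the Morse--Smale--Witten complex; hence $HF_*(H,J)\cong H_*(M;\mathbb{Z}_2)$. Since the number of generators of a chain complex over a field bounds the total dimension of its homology, this yields $\#\mathcal{P}(H) = \dim_{\mathbb{Z}_2} CF_*(H) \geq \dim_{\mathbb{Z}_2} HF_*(H,J) = \sum_i \dim H_i(M;\mathbb{Z}_2)$, which is the desired inequality.

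The hard part will be the analysis underpinning the moduli spaces, and I expect compactness to be the principal obstacle. Compactness of $M$ supplies the uniform energy bounds that rule out escape to infinity and yield Gromov-type convergence, but bubbling of $J$-holomorphic spheres can still obstruct the transversality needed to define $\partial$ on a general compact symplectic manifold. On monotone or semi-positive targets this is controllable by dimension counting, but in full generality one must invoke virtual perturbation technology (Kuranishi structures, or polyfold and abstract perturbation schemes) to define the differential and verify $\partial^2 = 0$ over $\mathbb{Z}_2$.
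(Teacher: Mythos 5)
Your proposal is correct and follows essentially the same route the paper takes in its Section on Floer homology basics: the action functional on contractible loops, the Floer complex graded by the Conley--Zehnder index, $\partial^2=0$ via compactness and gluing, invariance under $(H,J)$, and the isomorphism with Morse homology giving the Morse-type lower bound. The paper restricts to the aspherical case (where $c_1$ and $[\omega]$ vanish on $\pi_2(M)$) precisely to avoid the bubbling and transversality issues you correctly flag as requiring virtual perturbation machinery in full generality.
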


To tackle this long-standing conjecture, Floer defined a homology whose generators are the Hamiltonian periodic orbits and showed that this homology is isomorphic to the standard homology of the manifold $M$, thus depending only on the topology of $M$. Floer's pioneering work paved the way for a novel set of methodologies that have proven to be effective in establishing the validity of the Arnold conjecture across various contexts (we direct the reader to \cite[Section~1.1]{salamon} for a comprehensive chronology of the work on the Arnold conjecture and references therein).

In this paper, we lay the groundwork for adapting  Hamiltonian Floer theory to $b$-symplectic manifolds. This will allow us to apply the resulting theory to many natural examples endowed with a \emph{singular} symplectic structure in celestial mechanics, Lorentzian geometry and the study of Painlevé transcendents. The geometry on these manifolds can be described as open symplectic manifolds equipped with a cosymplectic structure on the open ends. In a way, our theory provides a generalization of Floer theory to non-compact symplectic manifolds but it is different from the theory of tentacular Hamiltonians in \cite{tentacular1, tentacular2}.  The cosymplectic structure on the open ends is determined by a symplectic vector field that is transverse to the boundary. To avoid compactness issues in the definition of the Floer complex, the set of Hamiltonian functions needs to be restricted.

This is reminiscent of symplectic manifolds with convex boundaries: for them, the Hamiltonian dynamics was thoroughly studied in \cite{convexsymplectic}. In order to define a Floer theory for those open symplectic manifolds, special care needs to be taken at the boundary to have a well-defined complex.

In the class of $b^m$-symplectic manifolds, we are interested in periodic Hamiltonian orbits away from the critical set.
The $b^m$-symplectic structure induces on the critical set a codimension one symplectic foliation, known as a cosymplectic structure.
When the leaves of this symplectic foliation are compact and the Hamiltonian function is smooth, the Hamiltonian vector field restricts to a smooth Hamiltonian vector field on each leaf and therefore there are infinitely many periodic orbits on the leaves.
The existence of periodic orbits away from the critical set, however, is much more subtle.
To study this question, we will argue that the appropriate Hamiltonian functions are the ones that are linear along the normal symplectic vector field and preserved by the Reeb vector field given by the cosymplectic structure. Those Hamiltonian functions are called admissible Hamiltonian functions and they are not smooth functions (as they present log-terms or higher order singularities) but they belong to the class of functions naturally associated with $b^m$-symplectic manifolds, the $b^m$-Hamiltonian functions.

In the case of $b^{2m}$-symplectic manifolds (without any restriction on the dimension), we can bound from below the number of time-$1$ periodic orbits of such Hamiltonian vector fields by the topology of the ambient manifold. This bound is achieved by proving that its associated dynamics is indeed symplectic for a \emph{smooth} geometric structure, called the desingularized symplectic structure. This desingularization process was previously introduced and studied in \cite{gmW1}. In the case where the Hamiltonian satisfies a condition of global nature (i.e. being unimodular, see Definition \ref{def:constantlinearweight}), or when the $b$-manifold satisfies the condition of being acyclic, the dynamics associated to an admissible Hamiltonian is Hamiltonian (this is the content of Proposition \ref{prop:b2mHam=Ham} and \ref{prop:cyclicb2m}). As a corollary (Corollary \ref{thm:b2msymplecticArnold}), we obtain that the number of non-degenerate periodic orbits is bounded from below by the topology of the manifold, as given by the Arnold conjecture.

\begin{thm}[Arnold conjecture for $b^{2m}$-symplectic manifolds] \label{coro:b2marnold_acyclic}\label{thmA}
		Let $(M,Z,\omega)$ be a compact $b^{2m}$-symplectic manifold, and let $H_t\in {^{b^{2m}}}C^\infty(M)$ be a time-dependent admissible Hamiltonian function. Assume that either the $b$-manifold is acyclic or that the Hamiltonian is unimodular (see Definition \ref{def:constantlinearweight}).
		Suppose that the solutions of period $1$ of the associated Hamiltonian system are non-degenerate.
		Then
		\[\# \mathcal{P}(H) \geq \sum_i \dim HM_i(M;\mathbb{Z}_2).\]
\end{thm}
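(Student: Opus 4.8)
The plan is to reduce the statement to the classical Arnold conjecture for smooth compact symplectic manifolds by transporting the $b^{2m}$-Hamiltonian dynamics onto the desingularized symplectic form. First I would invoke the desingularization procedure of \cite{gmW1}: to the $b^{2m}$-symplectic form $\omega$ on $(M,Z)$ one associates a genuine \emph{smooth} symplectic form $\omega_\epsilon$ on the same underlying compact manifold $M$. Here the hypothesis that the power is even, $2m$, is essential, since it is precisely for even powers that the desingularization is symplectic rather than folded. Because the smooth manifold $M$ is left unchanged by this process, the Morse homology appearing on the right-hand side satisfies $\sum_i \dim HM_i(M;\mathbb{Z}_2)=\sum_i \dim H_i(M;\mathbb{Z}_2)$, the ordinary singular homology, so the target inequality is the classical Arnold bound for $(M,\omega_\epsilon)$.

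The heart of the argument is already isolated in Propositions \ref{prop:b2mHam=Ham} and \ref{prop:cyclicb2m}: under either the unimodularity hypothesis (Definition \ref{def:constantlinearweight}) or the acyclicity of the $b$-manifold, the $b^{2m}$-Hamiltonian vector field $X_H$ of the admissible Hamiltonian $H_t$ coincides with the smooth Hamiltonian vector field of a smooth time-dependent Hamiltonian $\tilde{H}_t$ taken with respect to $\omega_\epsilon$. Granting this, I would check that the two flows generate the same $1$-periodic orbits: away from $Z$ the vector fields agree by construction, and the admissibility of $H_t$ (linear along the normal symplectic direction and preserved by the cosymplectic Reeb field) guarantees that no orbit in $\mathcal{P}(H)$ sits on $Z$, since the dynamics on $Z$ is the Reeb/cosymplectic dynamics which is excluded from consideration. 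Thus $\mathcal{P}(H)=\mathcal{P}(\tilde{H})$ as sets.

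Next I would verify that non-degeneracy is preserved under this identification. Since $X_H$ and $X_{\tilde{H}}^{\omega_\epsilon}$ coincide on a neighbourhood of each such orbit, their linearized time-$1$ return maps agree, so a $1$-periodic orbit is non-degenerate in the $b^{2m}$-sense exactly when the corresponding orbit of $\tilde{H}_t$ has linearized return map without eigenvalue $1$. At this point the classical Arnold conjecture, established for compact symplectic manifolds through Floer theory (see the chronology in \cite[Section~1.1]{salamon}), applies verbatim to the smooth compact symplectic manifold $(M,\omega_\epsilon)$ with the non-degenerate Hamiltonian $\tilde{H}_t$, yielding $\#\mathcal{P}(\tilde{H})\geq \sum_i \dim H_i(M;\mathbb{Z}_2)$. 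Combining this with the orbit identification and the homological identity of the first paragraph gives the claimed bound; this is exactly the corollary flagged as Corollary \ref{thm:b2msymplecticArnold}.

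The main obstacle I anticipate is not the final invocation of Floer theory but the bookkeeping near $Z$, which is where the unimodular/acyclic hypotheses genuinely enter. Locally an admissible $b$-Hamiltonian always produces a smooth vector field, but to realize it \emph{globally} as $X_{\tilde{H}_t}^{\omega_\epsilon}$ for a single smooth function $\tilde{H}_t$ one needs the relevant cohomological obstruction to vanish; this is precisely what Propositions \ref{prop:b2mHam=Ham} and \ref{prop:cyclicb2m} supply under the two hypotheses. Without such a global primitive the vector fields would fail to match and the passage from $\omega$ to $\omega_\epsilon$ could create or annihilate periodic orbits near the critical hypersurface, so the careful argument that $\mathcal{P}(H)$ and $\mathcal{P}(\tilde{H})$ agree \emph{including} their non-degeneracy is the delicate point to get right.
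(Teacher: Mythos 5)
Your proposal follows exactly the paper's own route: desingularize via Propositions \ref{prop:b2mHam=Ham} and \ref{prop:cyclicb2m} to realize $X_H$ as the Hamiltonian vector field of a smooth function with respect to the smooth symplectic form $\omega_\varepsilon$, then invoke the classical Arnold conjecture for the compact symplectic manifold $(M,\omega_\varepsilon)$. Your additional remarks on matching the orbit sets and their non-degeneracy are a correct (and slightly more explicit) elaboration of the same argument.
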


	We remark that for manifolds with trivial $H^1(M)$, the condition of having a globally defining function defining $Z$ is automatically satisfied. When the defining function is not globally defined, the associated Hamiltonian flow is symplectic and we therefore obtain a weaker corollary in this case (Corollary \ref{coro:b2marnold_cyclic}).

 {For $b^{2m+1}$-symplectic structure, this strategy breaks down, as the desingularization as introduced in \cite{gmW1} associates  a  \emph{folded} symplectic structure  to a $b^{2m+1}$-structure instead of a symplectic structure.} 

{In this article, we demonstrate that the process of desingularization can be enhanced in various scenarios. Specifically, we prove that for surfaces and higher-dimensional $b^{2m+1}$-symplectic manifolds with critical sets that are \emph{trivial mapping tori}, a natural topological condition on $b^{2m+1}$-symplectic manifolds\footnote{ The critical set of a $b^m$-symplectic manifold with integral $b^m$-cohomology class is a  trivial mapping tori, as shown in \cite{gmw, gmwq2}}, it is still feasible to desingularize the singular symplectic structure into a symplectic one. Additionally, we show that the $b^m$-Hamiltonian dynamics can be understood as the smooth Hamiltonian dynamics of this new desingularized structure.} 

Building on these methods, we establish a sharper lower bound for orientable $b^m$-symplectic surfaces. This lower bound depends just on the topology of the surface and the relative position of the critical curves.
The lower bound holds regardless of the parity of $m$ and the existence of cycles in the associated graph.

Hence, we obtain the following,

\begin{thm}\label{thmB}
		Let $(\Sigma,Z,\omega)$ be a compact $b^m$-symplectic orientable surface.
		Let $H_t$ be an admissible Hamiltonian in $(\Sigma, Z, \omega)$ whose periodic orbits are all non-degenerate.
Then the number of $1$-periodic orbits of $X_H$ is bounded from below by 
	\[\# \mathcal{P}(H) \geq \sum_{v\in V} \big(2g_v + |\mathrm{deg}(v) - 2| \big),\]
	where $v$ is a connected component of $\Sigma \setminus Z$, $g_v$ denotes its genus and $\mathrm{deg}(v)$ is its degree as a vertex in the graph.
	\end{thm}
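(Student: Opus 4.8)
The plan is to localize the count to the pieces of $\Sigma\setminus Z$ and to run a relative Floer count on each of them, exploiting a key simplification special to surfaces: although the global $b^m$-Hamiltonian flow may fail to be Hamiltonian when the associated graph has cycles, its restriction to each connected component $v$ of $\Sigma\setminus Z$ is genuinely Hamiltonian for the honest area form $\omega|_v$ with the smooth Hamiltonian $H|_v$. Thus, in contrast with Theorem~\ref{thmA}, the presence of cycles is irrelevant here and I would work component by component. The first step is to pin down the dynamics in a collar $Z_e\times(-\epsilon,\epsilon)$ of each critical curve: using the enhanced desingularization for surfaces (valid for every $m$ under the trivial mapping torus hypothesis), the admissibility condition forces $H$ to depend, to leading order, only on the normal $b^m$-coordinate, so that $X_H$ is a shear whose angular speed is monotone and non-integer away from isolated resonant circles. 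After a small admissible perturbation restoring non-degeneracy, I would show that no $1$-periodic orbit survives in the collar, so that every element of $\mathcal{P}(H)$ lies in the compact core of some piece $\Sigma_v:=\overline{v}$ and $\#\mathcal{P}(H)=\sum_{v\in V}\#\big(\mathcal{P}(H)\cap\Sigma_v\big)$.

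Next I would complete each piece to an open symplectic surface $\widehat\Sigma_v$ by attaching to every boundary circle a cylindrical end on which $H$ is extended linearly in the normal coordinate, the sign of the slope being dictated by the modular weight of $\omega$ along the adjacent critical curve. This records, for each end, whether $H\to+\infty$ or $H\to-\infty$ upon approaching $Z$; write $\partial_-\Sigma_v$ for the union of the circles of the second type. Because the slopes are non-resonant there are no orbits in the ends and the Floer trajectories cannot escape, so the Hamiltonian Floer homology $HF_\ast(\widehat\Sigma_v,H;\mathbb{Z}_2)$ is well defined, and I would identify it with the relative homology
\[ HF_\ast(\widehat\Sigma_v,H;\mathbb{Z}_2)\;\cong\;H_\ast(\Sigma_v,\partial_-\Sigma_v;\mathbb{Z}_2), \]
the ends on which $H\to-\infty$ playing the role of the part of the boundary along which the negative gradient escapes. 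This is the standard computation of the symplectic homology of a linear Hamiltonian, specialized to dimension two, and it yields $\#\big(\mathcal{P}(H)\cap\Sigma_v\big)\ge \dim H_\ast(\Sigma_v,\partial_-\Sigma_v;\mathbb{Z}_2)$.

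The final step is purely topological. A direct computation with the long exact sequence of the pair shows that for a genus-$g_v$ surface with $d=\mathrm{deg}(v)$ boundary circles one has $\dim H_\ast(\Sigma_v,\partial_-\Sigma_v;\mathbb{Z}_2)=2g_v+d$ when $\partial_-\Sigma_v$ is empty or all of $\partial\Sigma_v$ (and $2g_v+2$ in the closed case $d=0$), while $\dim H_\ast(\Sigma_v,\partial_-\Sigma_v;\mathbb{Z}_2)=2g_v+d-2$ in every mixed case. Consequently, whatever the actual sign pattern produced by the modular weights,
\[ \dim H_\ast(\Sigma_v,\partial_-\Sigma_v;\mathbb{Z}_2)\;\ge\;2g_v+|d-2|, \]
the right-hand side being precisely the minimum over all sign patterns, with the separating annulus $g_v=0,\ d=2$ illustrating the sharp vanishing case where a shear has no interior orbit. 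Summing over $v\in V$ then gives the asserted bound. The crux of the argument is the middle step: constructing a Floer complex on the non-compact completed pieces with the correct behaviour at the cylindrical ends, establishing the requisite compactness (no breaking into the ends) and non-degeneracy after an admissible perturbation that respects the $b^m$-structure, and carrying out the identification with relative homology class by class in the free loop space, since the orbits one counts are in general non-contractible in $\Sigma_v$.
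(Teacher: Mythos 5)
Your overall architecture --- localize the count to the connected components of $\Sigma\setminus Z$, produce a Floer-theoretic lower bound on each piece, and finish with a purely topological computation whose answer is $2g_v+|\mathrm{deg}(v)-2|$ --- is sound, and your numerology at the last step is correct: $\dim H_\ast(\Sigma_v,\partial_-\Sigma_v;\mathbb{Z}_2)$ equals $2g_v+d$ in the unmixed cases and $2g_v+d-2$ in the mixed ones, both of which dominate $2g_v+|d-2|$. (A small point: the absence of $1$-periodic orbits in the collar is not something you need to arrange by perturbation; it is item (3) of the definition of an admissible Hamiltonian.)

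The genuine gap is the middle step, which you yourself identify as the crux but do not supply. The isomorphism $HF_\ast(\widehat\Sigma_v,H;\mathbb{Z}_2)\cong H_\ast(\Sigma_v,\partial_-\Sigma_v;\mathbb{Z}_2)$ is \emph{not} ``the standard computation of the symplectic homology of a linear Hamiltonian'': the cylindrical ends you attach are not convex/Liouville ends but cosymplectic ends carrying a transverse \emph{symplectic} (not Liouville) vector field, the symplectic form $\frac{dz}{z^m}\wedge d\theta$ has infinite area on the end, and the Hamiltonian blows up there. None of the off-the-shelf machinery (PSS isomorphism, continuation to a $C^2$-small Morse function, the computation of $SH$ below the minimal Reeb period) applies verbatim in this setting; one would first have to establish compactness of Floer trajectories near the ends (this is the content of the paper's minimum principle, Theorem~\ref{theorem:min_principle}), then transversality, invariance under the choice of data, and finally a PSS-type identification with a relative homology adapted to mixed boundary conditions --- a full Floer package that the paper explicitly defers to future work and never uses in the proof of this theorem. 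Your remark that the identification must be carried out ``class by class in the free loop space'' because orbits may be non-contractible compounds the problem: the non-contractible sectors are not computed by $H_\ast(\Sigma_v,\partial_-\Sigma_v)$, so the bound must come entirely from the contractible sector, and that identification is precisely what is missing. The paper's own proof avoids all of this: it caps each boundary circle of $\Sigma_v$ with a disk carrying a single elliptic rest point (Lemma~\ref{lemma:diskgluing} and Proposition~\ref{thm:regularizingsurfaces}), desingularizes the resulting acyclic $b^m$-surface to an honest closed symplectic surface with a smooth Hamiltonian (Proposition~\ref{prop:bHam=Ham}), applies the classical closed-surface Arnold conjecture there, and then removes the $\mathrm{deg}(v)$ artificial orbits using the Euler-characteristic relation $2-2g_v=c_{-1}-c_0+c_1$ together with $c_{-1}+c_1\ge\mathrm{deg}(v)$. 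If you want to salvage your route, you should either import the paper's disk-capping reduction or supply the Floer homology computation on cosymplectic ends as an independent result.
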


Under the admissibility condition of the Hamiltonian functions, we show that the associated Floer solutions satisfy a minimal principle:

\begin{thm}\label{thmC}
    Let $(M,Z,\omega)$ be a $b^m$-symplectic manifold and denote by $\mathcal{N}$ is a tubular neighbourhood of $Z$ in $M \setminus Z$. Let $f : \mathcal{N} \to \mathbb{R}$ be the function given by $\log|z|$ if $m = 1$ and $- \frac1{(m-1)z^{m-1}}$ if $m > 1$ and let $u: \Omega \subset \mathbb{C} \to \mathcal{N} \subset M$ be a solution of the Floer equation for an admissible Hamiltonian $H \in \mathcal{C}^{\infty}(S^1\times \mathcal{N})$.
			If $f\circ u$ attains its maximum or minimum on $\Omega$, then $f\circ u$ is constant.
\end{thm}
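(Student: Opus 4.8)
The plan is to show that the composition $w := f \circ u$ is a harmonic function on $\Omega$ and then invoke the strong maximum principle. The starting point is the local normal form of the $b^m$-symplectic structure on the tubular neighbourhood $\mathcal{N}$ of $Z$. Writing $df = z^{-m}\,dz$ for the $b^m$-defining form (so that $f = \log|z|$ or $f = -\tfrac{1}{(m-1)z^{m-1}}$ as in the statement), the $b^m$-Darboux theorem identifies $\mathcal{N}$ with a collar of $Z$ on which $\omega = df \wedge \alpha + \beta$, where $(\alpha,\beta)$ is the cosymplectic structure induced on $Z$; in particular $\alpha$ is closed, and I denote by $R$ its Reeb field, $\iota_R\alpha = 1$, $\iota_R\beta = 0$. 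I would take the compatible almost complex structure $J$ in its adapted form near $Z$, namely $J\partial_f = R$ with $J$ preserving the leafwise distribution $\ker\alpha$ and compatible with $\beta$ there. The only consequence I need from this choice is the pointwise identity $df \circ J = -\alpha$.

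Next I would extract the two consequences of admissibility. Pairing $\iota_{X_H}\omega = dH$ against $\partial_f$ and $R$ in the frame $\{\partial_f, R\}\cup\ker\alpha$, one finds $X_H = (R\cdot H)\,\partial_f - (\partial H/\partial f)\,R + W$ with $W \in \ker\alpha$. Since $H$ is preserved by the Reeb field, $R\cdot H = 0$, so the normal component vanishes and $df(X_H) = 0$: the vector field $X_H$ is tangent to the level sets of $f$. Since moreover $H$ is linear along the normal symplectic vector field $\partial_f$, the weight $\lambda := \partial H/\partial f$ is constant along the collar, and $\alpha(X_H) = -\lambda$.

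With these ingredients the computation is short. Writing $z = s + it$ on $\Omega$ and using $dw = u^*(df)$ together with the Floer equation $\partial_s u + J(\partial_t u - X_H) = 0$, the identities $df\circ J = -\alpha$ and $df(X_H) = 0$ give $w_t = -\alpha(\partial_s u)$ and $w_s = \alpha(\partial_t u) + \lambda$. Hence the conjugate one-form is
\[
\ast\, dw \;=\; -w_t\,ds + w_s\,dt \;=\; u^*\alpha + \lambda\,dt ,
\]
and therefore
\[
(\Delta w)\,ds\wedge dt \;=\; d(\ast\, dw) \;=\; u^*(d\alpha) + d(\lambda\,dt) \;=\; 0 ,
\]
using that $\alpha$ is closed and $\lambda$ is constant. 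Thus $w = f\circ u$ is harmonic on $\Omega$, and the strong maximum principle for harmonic functions yields that if $w$ attains an interior maximum or minimum it is constant, which is the assertion.

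The step I expect to be the main obstacle is the extraction of the two consequences of admissibility, and specifically showing that the Hamiltonian contributes only the \emph{closed} term $\lambda\,dt$ to $\ast\,dw$: this is exactly where admissibility (linearity along $\partial_f$ together with Reeb-invariance and a constant weight) is indispensable, since a leaf-dependent or non-constant weight would leave an uncontrolled first-order term $d\lambda(\partial_s u)$ in $\Delta w$, for which the maximum principle could fail. A secondary technical point is to justify that $J$ may be taken in the adapted form near $Z$ within the class of compatible almost complex structures defining the Floer equation; once this is granted, the identity $df\circ J = -\alpha$ and the closedness of $\alpha$ do all the work.
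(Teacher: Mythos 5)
Your proposal is correct and follows essentially the same route as the paper's proof of Proposition \ref{prop:max_principle} and Theorem \ref{theorem:min_principle}: both establish harmonicity of $f\circ u$ by computing the conjugate differential ($\ast\,dw$ in your notation, $-d^c(f\circ u)$ in the paper's), using the adapted $J$ to get $df\circ J=-\iota_{X^{\sigma}}\omega$ (the paper phrases this as $\nabla f = X^{\sigma}$), the admissibility conditions $\mathcal{L}_{X^{\sigma}}H=k(t)$ and $\mathcal{L}_{R}H=0$ to evaluate the Hamiltonian terms, and the closedness of $\iota_{X^{\sigma}}\omega$ (your $d\alpha=0$, the paper's $d\iota_{X^{\sigma}}\omega=\mathcal{L}_{X^{\sigma}}\omega=0$), before invoking the maximum principle. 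The only cosmetic difference is that the paper works with a general parameter-dependent weight $k(s,t)$ and obtains $\Delta(f\circ u)=-\partial k/\partial s$, of which your computation is the $s$-independent special case relevant to the statement.
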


From this result we deduce the compactness of the space of solutions. We are thus able to introduce a well-defined  Hamiltonian Floer-type homology on $b^m$-symplectic manifolds.

Our definition of Hamiltonian Floer theory on $b^m$-symplectic manifold together with the lower bounds in Theorem \ref{thmA} and \ref{thmB},  gives rise to new open questions which we describe at the end of this article.
	
\subsection*{Organization of the paper} 

In Section \ref{sec:prel} we include a brief introduction to $b^m$-symplectic geometry and Floer homology. We also include some known results concerning symplectic manifolds with convex boundaries, which are in some sense at the opposite end of the spectrum to $b^m$-symplectic manifolds. Next, we show in Section \ref{sec:dyn} some preliminary results concerning the Hamiltonian dynamics around the critical set of $b^m$-symplectic manifolds. This section will serve as a motivation to study a particular class of Hamiltonian functions on $b^m$-symplectic manifolds. We then revisit the desingularization in Section \ref{sec:desing} and study the effect of this technique on the dynamics of Hamiltonian vector fields. In Section \ref{section:arnold} we prove in Theorem \ref{thmA} and Theorem \ref{thmB} a lower bound for the number of periodic orbits for this class of Hamiltonian functions. The Hamiltonian functions are suitable to define a Hamiltonian Floer type homology on $b^m$-symplectic manifolds, as is seen in Section \ref{sec:floer}. We conclude the article by presenting a list of open questions in Section \ref{sec:final}, which we intend to explore in future work. We include an appendix, proving that the lower bound for in Theorem \ref{thmB} is optimal.

\subsection*{Acknowledgements} 

 We are grateful to Barney Bramham,   Urs Frauenfelder, Sonja Hohloch, Charlotte Kirchhoff-Lukat, Marco Mazzuchelli, Federica Pasquotto, Leonid Polterovich, Jonathan Weitsman, Jagna Wiśniewska, and Aldo Witte for many enlightening conversations. This new version owes much to colleagues who provided invaluable insights. Special thanks to Juanjo Rue Perna for suggesting a graph theoretical approach in the proof of the appendix 2, and Pablo Nicolás for spotting a critical notational mismatch. Gratitude extends to the thorough feedback from the anonymous referee. Thank you to the organizers of \emph{Symplectic Dynamics beyond periodic orbits} and the Lorenz Center in Leiden for their hospitality in August 2022. Certainly, the workshop and conversations with several participants gave us food for thought and inspiration. We are thankful to Hansjörg Geiges and the University of Köln for their hospitality during the final days of the battlefield that led to this article. A special thanks goes to Juanjo Rué for helpful discussions concerning graph theory. The second author is indebted to the Alexander Von Humboldt Foundation for the Friedrich Wilhelm Bessel Research Award and the opportunity to enjoy a nice working atmosphere at the University of Köln.

\section{Preliminaries}\label{sec:prel}

In this section, we provide a summary of the background material required for our setting.
We begin by giving an outline of the proof of the Arnold conjecture in the case of closed and aspherical symplectic manifolds using Floer theory, following mostly the contents of \cite{audin} and \cite{salamon}.

After that we will collect the basic results on $b$- and $b^m$-symplectic manifolds that we will need.

Finally, we will reproduce shortly the construction developed in \cite{convexsymplectic}, which has been an important inspiration for our version of the Floer complex on $b^m$-symplectic manifolds.

\subsection{Basics on Floer homology}

Let $(M,\omega)$ be a compact symplectic manifold.
Let us assume that the first Chern class of $M$ vanishes on $\pi_2(M)$ and also that $\left\langle [\omega],\pi_2(M)\right\rangle=0$.

Let us introduce informally the domain on which we will work.
For a formal introduction we refer the reader to \cite[Section~6.8]{audin}.

\begin{definition}
	Let $p > 1$ and a Riemannian metric $g$ on $M$.
	The \emph{space of loops} of $M$, denoted by $\mathcal{L}^{1,p}M$, is a subset of $\mathcal{C}^0(S^1; M)$ given by the exponentials (with respect to $g$) of sections of the Banach bundle $W^{1,p}(x^{\ast}TM)$, where $x \in \mathcal{C}^{\infty}(S^1, M)$.
\end{definition}

\begin{theorem}[{\cite[Theorem~10]{schwarz}}]
	$\mathcal{L}^{1,p}M$ has a structure of smooth manifold for all $p > 1$ which does not depend on $g$.
	Moreover, $\mathcal{C}^{\infty}(S^1; M) \subset \mathcal{L}^{1,p}M \subset \mathcal{C}^0(S^1; M)$, where each inclusion is dense in the next one.
\end{theorem}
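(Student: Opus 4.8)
The plan is to exhibit an explicit Banach manifold atlas modelled on the fibres $W^{1,p}(x^{\ast}TM)$ and to reduce every assertion---chart compatibility, independence of $g$, and density---to the smoothness of superposition (Nemytskii) operators on Sobolev sections, i.e.\ to the so-called $\Omega$-lemma. The key structural input, and the reason the hypothesis $p>1$ is comfortable, is the Sobolev embedding $W^{1,p}(S^1)\hookrightarrow\mathcal{C}^0(S^1)$: it guarantees that a $W^{1,p}$ section $\xi$ of $x^{\ast}TM$ is continuous, so that the fibrewise exponential $t\mapsto\exp_{x(t)}(\xi(t))$ is a genuine continuous loop and the construction in the definition is meaningful, and it makes $W^{1,p}(S^1)$ a Banach algebra, which is precisely what is needed to control the nonlinear maps that appear below.

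First I would construct the charts. Fix a smooth loop $x\in\mathcal{C}^{\infty}(S^1;M)$ and let $\varepsilon$ be smaller than the injectivity radius of $(M,g)$, which is positive since $M$ is compact. On the ball $B_\varepsilon\subset W^{1,p}(x^{\ast}TM)$ of those $\xi$ with $\sup_t|\xi(t)|<\varepsilon$, define $\varphi_x(\xi)(t)=\exp_{x(t)}(\xi(t))$. Because $\exp_{x(t)}$ is a diffeomorphism from the $\varepsilon$-ball of $T_{x(t)}M$ onto a geodesic ball, $\varphi_x$ is a bijection onto a neighbourhood of $x$ in $\mathcal{L}^{1,p}M$, with inverse $y\mapsto\bigl(t\mapsto(\exp_{x(t)})^{-1}(y(t))\bigr)$; these neighbourhoods cover $\mathcal{L}^{1,p}M$ by definition, and one equips $\mathcal{L}^{1,p}M$ with the topology for which each $\varphi_x$ is a homeomorphism.

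The core step is to verify that the transition maps are smooth. For two smooth loops $x_1,x_2$ whose charts overlap, the overlap map $\varphi_{x_2}^{-1}\circ\varphi_{x_1}$ sends $\xi$ to the section $t\mapsto(\exp_{x_2(t)})^{-1}\bigl(\exp_{x_1(t)}(\xi(t))\bigr)$. This is the superposition operator induced by a smooth, fibre-preserving map defined on a neighbourhood of the zero section of a vector bundle over $S^1$ (the base dependence entering only through the smooth loops $x_1,x_2$). The $\Omega$-lemma---smoothness of such operators on $W^{1,p}$ sections when $W^{1,p}\hookrightarrow\mathcal{C}^0$---then shows the transition is a smooth map between open subsets of Banach spaces; this is exactly where the Banach-algebra property of $W^{1,p}(S^1)$ is used, since the iterated Fréchet derivatives of the operator are themselves superposition operators built from the fibre derivatives multiplied against the increments. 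The very same argument, applied now to the comparison map between the $g$-atlas and a $g'$-atlas---again a superposition operator, with smooth fibre map $(\exp^{g'})^{-1}\circ\exp^{g}$---shows the two atlases are smoothly compatible, which is the independence of the smooth structure from $g$.

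I would close with the density statements, which are softer. Smooth sections are $W^{1,p}$-dense in $W^{1,p}(x^{\ast}TM)$ by mollification, and transporting this through the chart $\varphi_x$ (whose centre $x$ is already smooth) shows $\mathcal{C}^{\infty}(S^1;M)$ is dense in $\mathcal{L}^{1,p}M$. For the second inclusion, any continuous loop is a $\mathcal{C}^0$-limit of smooth loops, and smooth loops already lie in $\mathcal{L}^{1,p}M$, so $\mathcal{L}^{1,p}M$ is $\mathcal{C}^0$-dense in $\mathcal{C}^0(S^1;M)$. The main obstacle is squarely the $\Omega$-lemma: establishing, and differentiating to all orders, the smoothness of the nonlinear superposition operators on Sobolev spaces is delicate precisely because one must keep careful track of the multilinear remainder terms in the Banach setting and invoke the multiplication property of $W^{1,p}(S^1)$ at each order; everything else in the proof is a formal consequence once this analytic lemma is in hand.
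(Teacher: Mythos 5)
The paper does not prove this statement at all: it is quoted verbatim from Schwarz (\cite[Theorem~10]{schwarz}) as background, so there is no in-paper argument to compare against. Your proposal is the standard proof of that cited result — exponential charts modelled on $W^{1,p}(x^{\ast}TM)$, smoothness of transitions and of the change-of-metric map via the $\Omega$-lemma for superposition operators (using $W^{1,p}(S^1)\hookrightarrow\mathcal{C}^0(S^1)$ for $p>1$), and soft density arguments — and it is essentially the argument given in the reference, so it is correct and takes the same route as the source the paper relies on.
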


From now on, we will denote by $\mathcal{L}M$ the subspace of $\mathcal{L}^{1,2}M$ of loops that are contractible.

\begin{definition}
	Let $H_t: \mathbb{R} \times M \to \mathbb{R}$ be a time-$1$ periodic smooth Hamiltonian.
	The action functional is defined on the space of contractible loops $\mathcal{L}M$ and is given by
	\[\mathcal{A}_{H}(x)=\int_{S^1} H_t(x(t))dt - \int_{D^2} v^{\ast} \omega,\]
	where $v$ is a filling of $x$ within $M$.
\end{definition}

\begin{proposition}[{\cite[Proposition~6.3.3]{audin}}]
	A loop $x$ is a critical point of $\mathcal{A}_{H}$ if, and only if, it is a periodic solution of the Hamiltonian system $x'(t) = X_{H}(x(t))$.
\end{proposition}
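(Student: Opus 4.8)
The plan is to locate critical points of $\mathcal{A}_H$ by computing its first variation and then invoking the nondegeneracy of $\omega$. A tangent vector to $\mathcal{L}M$ at a loop $x$ is a section $\xi\in W^{1,2}(x^{\ast}TM)$; I would represent it by a smooth family of contractible loops $x_s$ with $x_0=x$ and $\partial_s x_s|_{s=0}=\xi$, and extend a filling $v$ of $x$ to a smooth family of fillings $v_s$ of the $x_s$. Then $d\mathcal{A}_H(x)[\xi]=\frac{d}{ds}\big|_{s=0}\mathcal{A}_H(x_s)$, and the goal is to show this vanishes for every $\xi$ precisely when $\dot x=X_{H_t}(x)$.

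First I would differentiate the two terms of $\mathcal{A}_H$ separately. The Hamiltonian term is immediate:
\[\frac{d}{ds}\Big|_{s=0}\int_{S^1} H_t(x_s(t))\,dt = \int_{S^1} dH_t(x(t))[\xi(t)]\,dt.\]
For the area term, writing $V(s,\cdot)=v_s$ and using Cartan's formula $\mathcal{L}_{\partial_s}\omega = d\,\iota_{\partial_s}\omega + \iota_{\partial_s}d\omega$ together with $d\omega=0$, the derivative of $\int_{D^2} v_s^{\ast}\omega$ becomes the integral over $D^2$ of an exact form, which Stokes' theorem reduces to a boundary integral over $S^1$. Evaluating the contraction along the boundary, where $\partial_s V=\xi$ and $\partial_t V=\dot x$, gives
\[\frac{d}{ds}\Big|_{s=0}\int_{D^2} v_s^{\ast}\omega = \int_{S^1}\omega\big(\xi(t),\dot x(t)\big)\,dt.\]

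Combining the two contributions, and recalling the sign in front of the area term, yields
\[d\mathcal{A}_H(x)[\xi] = \int_{S^1}\Big(dH_t(x)[\xi]-\omega(\xi,\dot x)\Big)\,dt.\]
With the convention $\iota_{X_{H_t}}\omega=-dH_t$, so that $dH_t(x)[\xi]=\omega(\xi,X_{H_t}(x))$, this rearranges to
\[d\mathcal{A}_H(x)[\xi] = \int_{S^1}\omega\big(\xi(t),\,X_{H_t}(x(t))-\dot x(t)\big)\,dt.\]
Since $\omega$ is nondegenerate and $\xi$ ranges over all sections of $x^{\ast}TM$, the fundamental lemma of the calculus of variations shows that $d\mathcal{A}_H(x)=0$ if and only if $X_{H_t}(x(t))-\dot x(t)=0$ for all $t$, i.e. $x$ solves $x'(t)=X_{H_t}(x(t))$, which is the claim.

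The only genuinely delicate point is the area term: I must check that a filling $v$ of $x$ extends to a smooth family $v_s$ of fillings of $x_s$ (available since the variation is concentrated near the boundary loop, which stays contractible), and that the resulting derivative is independent of this choice. Independence follows from the hypothesis $\left\langle[\omega],\pi_2(M)\right\rangle=0$, which is exactly what makes $\mathcal{A}_H$ itself well defined on $\mathcal{L}M$; alternatively, two fillings differ by a sphere whose contribution to the derivative vanishes. The Stokes computation also requires mild care at the center of the disc, handled by excising a small disc and passing to the limit. All of these are routine given the functional-analytic framework of $\mathcal{L}^{1,2}M$ recalled above.
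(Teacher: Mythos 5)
Your proposal is correct: the first-variation computation of $\mathcal{A}_H$ via Cartan's formula and Stokes' theorem on the filling, followed by the nondegeneracy of $\omega$, is exactly the standard argument, and your sign bookkeeping is consistent with the paper's convention $\iota_{X_H}\omega=-dH$. The paper gives no proof of its own here --- it simply cites \cite[Proposition~6.3.3]{audin} --- and your argument is essentially the one found in that reference, so there is nothing to compare beyond noting the match.
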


\begin{definition}
	Let $x$ a critical point of $\mathcal{A}_H$.
	We say that it is \emph{non-degenerate} if
	\[\mathrm{det}\left( \mathrm{Id} - d_{x(0)} \varphi_{X_{H}}^1 \right) \neq 0 ,\]
	this means, the operator $d_{x(0)} \varphi_{X_{H}}^1$ does not have the eigenvalue $1$.
	By $\varphi_{X_{H}}^1$ here we mean the time $1$ flow of the Hamiltonian vector field $X_{H}$.

	We say that a Hamiltonian $H_t$ is \emph{regular} if all of the $1$-periodic solutions of $X_{H}$ are non-degenerate.
	We denote the set of regular Hamiltonian functions by $\mathcal{H}_{\text{reg}}$.
\end{definition}

The set of regular Hamiltonian functions is generic, in the sense that it is a countable intersection of open and sets dense in the $\mathcal{C}^{\infty}$-topology.

The Floer complex consists of the graded $\mathbb{Z}_2$-vector space $CF_*(M,H)$ generated by the critical points of $\mathcal{A}_H$ and graded by an index called the Conley-Zehnder index (see for instance \cite[Section~2.4]{salamon}).

\begin{definition}\label{def:acs}
	An \emph{almost complex structure} is an endomorphism $J \in \mathrm{End}(TM)$ such that $J^2 = - \mathrm{Id}$.
	We say that an almost complex structure is \emph{compatible} with a symplectic form $\omega$ if $\omega(JX, JY) = \omega(X,Y)$ for all $X, Y \in \mathfrak{X}(M)$ and if the bilinear map $(u,v) \mapsto \omega_p(u, J(p) v)$ is positive definite for all $p \in M$(where $u, v \in T_pM$).

	If $(M, \omega)$ is a symplectic manifold and $J$ is a compatible almost complex structure, we define the \emph{compatible Riemannian metric} on $M$ by $g_p(u,v) = \omega_p(u, J(p) v)$ for $p \in M$, $u, v \in T_pM$.
\end{definition}

\begin{definition}
	\label{definition:induced_metric}
	Let $M$ a compact manifold and $g$ a Riemannian metric.
	We say that the \emph{metric induced on $\mathcal{L}M$} is the Riemannian metric given by
	\[\langle \xi, \zeta \rangle_{x} := \int_0^1 g_{x(t)} \left( \xi(t), \zeta(t) \right) dt\]
	for all $\xi, \zeta \in T_x \mathcal{L}M$.
\end{definition}

\begin{definition}
	Let $(M, \omega, J)$ a compact symplectic manifold with a compatible almost complex structure.
	Consider the contractible loop space $\mathcal{L}M$ equipped with the metric introduced in Definition \ref{definition:induced_metric}.

	The \emph{Floer equation} is the negative gradient flow equation of the action functional $\mathcal{A}_H$ with respect to the induced metric.
	If we take $u : \mathbb{R} \to \mathcal{L}M$ smooth, the equation has the expression
	\begin{equation}\label{eq:floer}
		\frac{\partial u}{\partial s}+J(u)\frac{\partial u}{\partial t}+\grad H_t(u) = 0 .
	\end{equation}

	If $u$ is a solution to the Floer equation \ref{eq:floer}, we say that its \emph{energy} is the map $E : \mathcal{C}^{\infty}(\mathbb{R}; \mathcal{L}M) \to \mathbb{R}$ given by
	\[E(u) := \int_{\mathbb{R}} u^{\ast} d \mathcal{A}_H .\]

	We denote the set of \emph{finite energy solutions} of the Floer equation by
	\begin{equation}\label{eq:finiteenergysolutions}
		\mathcal{M} := \left\{ u \in \mathcal{C}^{\infty}(\mathbb{R}; \mathcal{L}M) \ | \ u \text{ is a solution of \ref{eq:floer} and } E(u) < + \infty \right\}.
	\end{equation}

	Moreover, the \emph{moduli space of solutions from $x$ to $y$}, where $x,y$ are $1$-periodic Hamiltonian orbits, is the set
	\begin{equation}
		\mathcal{M}(x,y) := \left\{ u \in \mathcal{M} \ \Big| \ \lim_{s \to -\infty} u(s,\cdot) = x, \ \lim_{s \to +\infty} u(s,\cdot) = y \right\} .
	\end{equation}
\end{definition}

The properties of $\mathcal{M}$ were studied by Floer using the techniques pioneered by Gromov in his analysis of pseudo-holomorphic curves.
In the particular context of the Floer equation, very similar methods are used to show the following theorem:

\begin{theorem}[{\cite[Theorems 6.5.4 and 6.5.6]{audin}}]\label{thm:Mcompact}
	The set $\mathcal{M}$ is compact in $\mathcal{C}_{\mathrm{loc}}^{\infty}(\mathbb{R} \times S^1; M)$.
	Moreover, for any $u \in \mathcal{M}$, there exist two critical points of $\mathcal{A}_H$, $x$ and $y$, such that
	\[\lim_{s\rightarrow -\infty} u(s,\cdot) = x, \ \lim_{s\rightarrow +\infty} u(s,\cdot) = y\]
	in $\mathcal{C}^{\infty}(S^1,M)$, and
	\[\lim_{s\rightarrow \pm \infty}\frac{\partial u}{\partial s}(s,\cdot) = 0. \]

	Therefore, $\mathcal{M} = \bigcup_{x,y\in \mathrm{Crit}(\mathcal{A}_H)} \mathcal{M}(x,y)$.
\end{theorem}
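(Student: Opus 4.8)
The plan is to run the standard Floer-theoretic compactness machinery, which splits naturally into four pieces: an energy identity, an a priori gradient bound ruling out bubbling, an elliptic bootstrap turning this into $\mathcal{C}^{\infty}_{\mathrm{loc}}$-control, and finally an asymptotic analysis near the (non-degenerate) critical loops. To begin, I would record the energy identity: for a finite-energy solution $u$ of \eqref{eq:floer} one has $\partial_s u = -\grad\mathcal{A}_H(u(s))$ with respect to the metric of Definition \ref{definition:induced_metric}, so that $u^{\ast}d\mathcal{A}_H = -|\partial_s u|_g^2\, ds\, dt$ and therefore
\[\int_{\mathbb{R}\times S^1} |\partial_s u|_g^2 \, ds\, dt = |E(u)| < +\infty,\]
where $g$ is the compatible metric attached to $J$ in Definition \ref{def:acs}. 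Thus membership in $\mathcal{M}$ is exactly the condition that the $L^2$-norm of $\partial_s u$ over the whole cylinder is finite, and the action is monotone along each trajectory.

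The analytic heart of the argument is the second step: a uniform pointwise bound $\sup_{\mathbb{R}\times S^1}|\partial_s u| < \infty$, uniform over all of $\mathcal{M}$. I would establish this by contradiction using Gromov's rescaling argument. If the gradient blew up along a sequence of points, one rescales near the blow-up locus and extracts, in the limit, a non-constant $J$-holomorphic sphere carrying a definite amount of $\omega$-area. But the hypothesis $\langle[\omega],\pi_2(M)\rangle=0$ assumed at the outset forbids any such sphere, so no bubbling can occur; the vanishing of $c_1$ on $\pi_2(M)$ simultaneously guarantees that the relevant indices behave well. This rules out concentration of energy and yields the desired uniform gradient bound.

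With the $\mathcal{C}^0$-bound on $|\partial_s u|$ (equivalently on $|du|$) in hand, the Floer equation is a first-order elliptic Cauchy--Riemann-type system with bounded right-hand side, and elliptic bootstrapping produces uniform $\mathcal{C}^k$-bounds on every compact subset of $\mathbb{R}\times S^1$, for all $k$. Arzelà--Ascoli then shows that any sequence in $\mathcal{M}$ has a subsequence converging in $\mathcal{C}^{\infty}_{\mathrm{loc}}(\mathbb{R}\times S^1;M)$; the limit again solves \eqref{eq:floer}, and lower semicontinuity of the energy under $\mathcal{C}^{\infty}_{\mathrm{loc}}$-convergence together with the a priori bounds places the limit back in $\mathcal{M}$. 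This is precisely compactness of $\mathcal{M}$ in $\mathcal{C}^{\infty}_{\mathrm{loc}}(\mathbb{R}\times S^1;M)$.

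For the asymptotic statement, finiteness of $\int|\partial_s u|_g^2$ forces $\|\partial_s u(s_n,\cdot)\|_{L^2(S^1)}\to 0$ along some sequence $s_n\to+\infty$; feeding this into the uniform gradient bound and a standard energy-decay estimate upgrades it to $\partial_s u(s,\cdot)\to 0$ as $s\to+\infty$ (and symmetrically as $s\to-\infty$). The loops $u(s,\cdot)$ then subconverge to a loop $y$ solving $\partial_t y = X_H(y)$, i.e. a critical point of $\mathcal{A}_H$ by the proposition identifying critical points with $1$-periodic orbits. Here non-degeneracy of the critical points is essential: it provides a spectral gap for the linearized asymptotic operator, hence exponential decay of $\partial_s u$, which promotes the convergence to genuine $\mathcal{C}^{\infty}(S^1;M)$-convergence and gives $\partial_s u(s,\cdot)\to 0$ there. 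The decomposition $\mathcal{M} = \bigcup_{x,y\in\mathrm{Crit}(\mathcal{A}_H)}\mathcal{M}(x,y)$ follows immediately. I expect the main obstacle to be the no-bubbling step, which carries the full weight of the elliptic estimates and leans decisively on the asphericity hypotheses; the exponential-decay analysis underlying the $\mathcal{C}^{\infty}(S^1)$-convergence is the second delicate point, as it hinges on the non-degeneracy assumption.
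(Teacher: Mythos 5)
This statement appears in the paper only as quoted background (with the citation to \cite{audin}, Theorems 6.5.4 and 6.5.6) and is given no proof there, so the relevant comparison is with the cited source's argument. Your sketch reproduces exactly that standard argument --- energy identity for the negative gradient flow, uniform gradient bound via Gromov rescaling with asphericity $\langle[\omega],\pi_2(M)\rangle=0$ excluding bubbles, elliptic bootstrap plus Arzel\`a--Ascoli for $\mathcal{C}^{\infty}_{\mathrm{loc}}$-compactness, and asymptotic convergence to nondegenerate critical loops --- so it is correct and takes essentially the same approach (your aside that $c_1$ vanishing is needed here is superfluous: it only enters later, for the Conley--Zehnder grading).
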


	In order the define a boundary map for the Floer complex one must study the sets $\mathcal{M}(x,y)$.
	It is possible to show that each $\mathcal{M}(x,y)$ is a finite dimensional smooth manifold.
To do so one studies the linear approximation of the Floer operator acting on the set of perturbations of elements of $\mathcal{M}$.
The space of such perturbations is defined as
\[\mathcal{P}^{1,p}(x,y) = \left\{P : (s,t) \mapsto \mathrm{exp}_{u(s,t)} Y(s,t) \ | \ u \in \mathcal{M}(x,y) , Y\in W^{1,p}(u^{\ast}(TM))\right\}.\]

Here $W^{1,p}(u^{\ast}(TM))$ denotes the set of maps $Y : \mathbb{R}\times S^1 \rightarrow TM$ such that $\pi \circ Y = u$ (where $\pi : TM \rightarrow M$ is the natural projection of the tangent bundle) and such that their local trivializations belong to the Sobolev space $W^{1,p}$.

Then, the Floer operator is defined as
\[\begin{array}{rccc} \mathcal{F}: & \mathcal{P}^{1,p}(x,y) & \longrightarrow & L^p(\mathbb{R}, S^1) \\ & w & \longmapsto & \frac{\partial w}{\partial s} + J(w) \frac{\partial w}{\partial t} + \mathrm{grad}_w H_t, \end{array}\]
and its linearization has the expression
\[d\mathcal{F}_u(Y) = \frac{\partial Y}{\partial s} + J(u) \frac{\partial Y}{\partial t} + \left(\mathcal{L}_Y J\right)_u \frac{\partial u}{\partial t} + \mathcal{L}_Y (\mathrm{grad}_u H) . \]

One can check (see \cite[Theorem 8.1.5]{audin} or \cite[Theorem 2.2]{salamon}) that $d\mathcal{F}_u$ is a Fredholm map for any $u \in \mathcal{M}(x,y)$ and that it has Fredholm index is $\mathrm{Ind}(d\mathcal{F}_u) = \mu_{CZ}(x) - \mu_{CZ}(y)$, the difference of Conley-Zehnder indices.
	Further, it can be seen that it is a surjective map for any non-degenerate Hamiltonian $H$ and any almost complex structure $J$ compatible with $\omega$.

From this, it is possible to determine the dimension of $\mathcal{M}(x,y)$ as follows:

\begin{theorem}\label{thm:mdimensions}
	For $p > 2$, $\mathcal{F}^{-1}(0)$ is a finite dimensional compact manifold of dimension $\mu_{CZ}(x) - \mu_{CZ}(y)$.
\end{theorem}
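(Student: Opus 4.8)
The plan is to realise $\mathcal{F}^{-1}(0) = \mathcal{M}(x,y)$ as the zero set of the smooth Fredholm map $\mathcal{F}$ and to apply the regular value theorem for Banach manifolds. First I would record that $\mathcal{F} : \mathcal{P}^{1,p}(x,y) \to L^p(\mathbb{R}\times S^1)$ is a smooth map of Banach manifolds whose zero set is precisely $\mathcal{M}(x,y)$; here the hypothesis $p > 2$ is essential, since it guarantees the Sobolev embedding $W^{1,p}(\mathbb{R}\times S^1)\hookrightarrow \mathcal{C}^0$ on the two-dimensional domain $\mathbb{R}\times S^1$, which is what makes the pointwise nonlinear terms $(\mathcal{L}_Y J)_u \partial_t u$ and $\mathcal{L}_Y(\grad_u H)$ well defined and smooth in $Y$, and hence $\mathcal{F}$ a genuine smooth map rather than merely continuous.

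Next I would invoke the two facts already established above: for every $u \in \mathcal{M}(x,y)$ the linearisation $d\mathcal{F}_u$ is Fredholm of index $\mathrm{Ind}(d\mathcal{F}_u) = \mu_{CZ}(x) - \mu_{CZ}(y)$, and, because $H$ is regular and $J$ is compatible with $\omega$, the operator $d\mathcal{F}_u$ is surjective. Surjectivity of $d\mathcal{F}_u$ at every point of the zero set means exactly that $0$ is a regular value of $\mathcal{F}$. The regular value theorem in the Banach setting then yields that $\mathcal{F}^{-1}(0)$ is a smooth submanifold of $\mathcal{P}^{1,p}(x,y)$ with tangent space $T_u\mathcal{F}^{-1}(0) = \ker d\mathcal{F}_u$. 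Since $d\mathcal{F}_u$ is Fredholm and onto, $\dim \ker d\mathcal{F}_u = \mathrm{Ind}(d\mathcal{F}_u) = \mu_{CZ}(x) - \mu_{CZ}(y)$, so the submanifold is finite-dimensional of exactly the asserted dimension.

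Two points remain. To see that the manifold structure is genuinely smooth and independent of the auxiliary exponent $p$, I would run an elliptic bootstrap: the Floer equation is a perturbed nonlinear Cauchy--Riemann equation, hence elliptic, so any $W^{1,p}$ solution is automatically $\mathcal{C}^\infty$, and the $W^{1,p}$ and $W^{1,p'}$ completions produce the same set of smooth solutions. For compactness I would appeal to Theorem \ref{thm:Mcompact}: $\mathcal{M}$ is compact in $\mathcal{C}^\infty_{\mathrm{loc}}(\mathbb{R}\times S^1; M)$, and the asphericity hypotheses $\langle c_1, \pi_2(M)\rangle = 0$ and $\langle[\omega],\pi_2(M)\rangle = 0$ rule out bubbling and energy loss, so that restricting to fixed asymptotics $x,y$ gives compactness of $\mathcal{M}(x,y)$. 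The genuine analytic heart of the argument is the surjectivity of $d\mathcal{F}_u$ (transversality), which is the hardest input, but it is already available to us; within the remaining work the most delicate step is the regularity/bootstrapping that upgrades $W^{1,p}$ solutions to smooth ones and makes the construction independent of $p$, the place where the restriction $p>2$ is genuinely used.
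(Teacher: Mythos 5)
Your argument is the standard one and coincides with what the paper implicitly relies on: Theorem \ref{thm:mdimensions} is stated without proof as a recollection of \cite[Theorem 8.1.2 ff.]{audin} and \cite{salamon}, and the proof there is exactly your route --- the zero set of the Fredholm map $\mathcal{F}$ at a regular value, with surjectivity of $d\mathcal{F}_u$ giving a Banach submanifold whose tangent space is $\ker d\mathcal{F}_u$, hence of dimension equal to the Fredholm index $\mu_{CZ}(x)-\mu_{CZ}(y)$; the role of $p>2$ (Sobolev embedding into $\mathcal{C}^0$ on a two-dimensional domain, plus elliptic bootstrapping to make the structure independent of $p$) is also identified correctly. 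The one step I would not let stand as written is the compactness claim: $\mathcal{C}^\infty_{\mathrm{loc}}$-compactness of $\mathcal{M}$ (Theorem \ref{thm:Mcompact}) does \emph{not} imply that each $\mathcal{M}(x,y)$ is compact, because a sequence with fixed asymptotics $x,y$ can converge in $\mathcal{C}^\infty_{\mathrm{loc}}$ to a trajectory with different asymptotics --- this is precisely the breaking phenomenon, which is why the paper later introduces the compactified spaces $\overline{\mathcal{T}}(x,y)$. This is really a looseness in the theorem's statement (the references assert only the manifold structure and dimension at this stage), but your proof should either restrict to the case $\mu_{CZ}(x)-\mu_{CZ}(y)=1$ after quotienting by $\mathbb{R}$, or drop the word ``compact'' and defer it to the broken-trajectory compactification.
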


\begin{definition}
	Let $x$ and $y$ be two $1$-periodic Hamiltonian orbits.
	The set of \emph{non-parametrized trajectories}, denoted by $\mathcal{T}(x,y)$, is the quotient of the manifold $\mathcal{M}(x,y)$ by the action of $\mathbb{R}$.
\end{definition}

It is possible to conclude (see \cite[Chapter~9]{audin}) that $\mathcal{T}(x,y)$ is Hausdorff.
Moreover, for any pair $(x,y)$ of orbits, $\mathcal{T}(x,y)$ can be compactified into a manifold of dimension $\mu_{CZ}(x) - \mu_{CZ}(y) - 1$, which we denote by $\overline{\mathcal{T}}(x,y)$.

\begin{definition}
	Let $(x,y)$ be a pair of Hamiltonian orbits such that $\mu_{CZ}(x) = \mu_{CZ}(y) + 1$.
	Then, we denote by $n(x,y)$ the cardinality of the zero-dimensional and compact manifold $\overline{\mathcal{T}}(x,y)$, modulo 2.

	Then, for each $k \in \mathbb{N}$ the \emph{boundary map} of the Floer complex is a map $\partial_k : CF_{k+1}(M; H,J) \to CF_k(M; H,J)$, and given by
	\[\partial_k (x) = \sum_{y \in CF_k(M;H,J)} n(x,y) y.\]
\end{definition}

From the established properties it follows that the boundary map is well-defined.

\begin{theorem}[{\cite[Theorem~4]{floer2}}]
	\[\partial_k \circ \partial_{k+1} = 0.\]
\end{theorem}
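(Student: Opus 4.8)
The plan is to reduce the identity $\partial_k \circ \partial_{k+1} = 0$ to a statement about counting the boundary points of a one-dimensional moduli space. Expanding the composition on a generator $x \in CF_{k+2}(M;H,J)$ gives
\[\partial_k \circ \partial_{k+1}(x) = \sum_{z} \Big( \sum_{y} n(x,y)\, n(y,z) \Big) z,\]
where $z$ ranges over orbits with $\mu_{CZ}(z) = \mu_{CZ}(x) - 2$ and the inner sum is over intermediate orbits $y$ with $\mu_{CZ}(y) = \mu_{CZ}(x) - 1$. Since we work over $\mathbb{Z}_2$, it suffices to prove that for every such pair $(x,z)$ the integer $\sum_y n(x,y)\, n(y,z)$ is even.

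First I would consider the moduli space $\mathcal{T}(x,z)$ of non-parametrized trajectories from $x$ to $z$. By the dimension count recorded above, its compactification $\overline{\mathcal{T}}(x,z)$ is a compact manifold of dimension $\mu_{CZ}(x) - \mu_{CZ}(z) - 1 = 1$, that is, a compact $1$-manifold with boundary, hence a finite disjoint union of circles and arcs. The decisive step is to identify its boundary. The claim is that
\[\partial \overline{\mathcal{T}}(x,z) = \bigcup_{y:\, \mu_{CZ}(y) = \mu_{CZ}(x)-1} \mathcal{T}(x,y) \times \mathcal{T}(y,z),\]
the union being over the finitely many intermediate orbits, with each factor $\mathcal{T}(x,y)$ and $\mathcal{T}(y,z)$ a zero-dimensional compact, hence finite, set. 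Granting this, the number of boundary points of $\overline{\mathcal{T}}(x,z)$ equals $\sum_y n(x,y)\, n(y,z)$. A compact $1$-manifold with boundary has an even number of boundary points, so this sum vanishes modulo $2$, which is exactly what is required.

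The main obstacle, and the analytic heart of the matter, is the boundary identification, which proceeds in two directions. The \emph{breaking} direction is a compactness statement: any sequence in $\mathcal{T}(x,z)$ with no convergent subsequence in the interior must, after reparametrization, converge to a broken trajectory. The asphericity hypotheses $\langle [\omega], \pi_2(M)\rangle = 0$ and $c_1|_{\pi_2(M)} = 0$ are essential here, as they rule out the bubbling of pseudo-holomorphic spheres in the Gromov--Floer limit; since the total index difference is $2$ and each segment of a broken configuration must carry a strictly positive index drop, the limit then breaks into exactly two pieces through a single intermediate orbit $y$ with $\mu_{CZ}(y) = \mu_{CZ}(x) - 1$. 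The \emph{gluing} direction is the converse: given a broken trajectory $(u_1, u_2) \in \mathcal{T}(x,y) \times \mathcal{T}(y,z)$, one must produce, via a pregluing construction together with the implicit function theorem applied to the surjective linearized Floer operator $d\mathcal{F}_u$, a unique one-parameter family of genuine trajectories in $\mathcal{T}(x,z)$ limiting to it, thereby realizing each broken trajectory as a single boundary point. Combining the two directions yields the bijection above and, with it, the parity count; this is precisely the content of the cited theorem.
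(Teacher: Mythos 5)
Your argument is correct and is precisely the standard proof of $\partial^2=0$: reduce to the parity of $\sum_y n(x,y)n(y,z)$, identify this sum with the boundary points of the compact one-dimensional manifold $\overline{\mathcal{T}}(x,z)$ via Gromov--Floer compactness (with asphericity excluding bubbling) in one direction and gluing in the other, and conclude by the evenness of the boundary of a compact $1$-manifold. The paper does not prove this statement itself but cites Floer's original theorem, and your proof follows exactly the route taken in the cited sources, so there is nothing to flag.
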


Thus, the Floer complex $(CF_{\bullet}(M;H,J), \partial_{\bullet})$ is well defined and it induces a homology.
It is clear that we used both $H$ and $J$ to define this complex, so the complex (and therefore the homology) can depend on these choices.
However, as the following theorem shows, this turns out not to be the case for this homology.

\begin{theorem}[{\cite[Theorem~5]{floer2}}, {\cite[Chapter~11]{audin}}] \label{thm:floerinvariance}
	The homology induced by the Floer complex does not depend on the choice of a pair $(H,J)$.
\end{theorem}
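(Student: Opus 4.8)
The plan is to prove invariance by constructing \emph{continuation maps} between the Floer complexes attached to two regular pairs and showing that these maps are chain homotopy equivalences. Fix two regular pairs $(H^-, J^-)$ and $(H^+, J^+)$. First I would choose a homotopy $(H^s, J^s)_{s \in \mathbb{R}}$ between them: a smooth family agreeing with $(H^-, J^-)$ for $s \ll 0$ and with $(H^+, J^+)$ for $s \gg 0$, with $\partial_s H^s$ and $\partial_s J^s$ supported in a compact $s$-interval. To this homotopy I associate the $s$-dependent version of the Floer equation \eqref{eq:floer},
\[
\frac{\partial u}{\partial s} + J^s(u)\frac{\partial u}{\partial t} + \grad H^s_t(u) = 0 ,
\]
whose finite-energy solutions connect a $1$-periodic orbit $x^-$ of $X_{H^-}$ to a $1$-periodic orbit $x^+$ of $X_{H^+}$.

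The next step is the analytic foundation, which parallels the one sketched above for $\mathcal{M}(x,y)$ but applied to this nonautonomous operator. For a generic homotopy the moduli space $\mathcal{M}^{\{H^s,J^s\}}(x^-, x^+)$ of such solutions is a smooth manifold of dimension $\mu_{CZ}(x^-) - \mu_{CZ}(x^+)$, by the same Fredholm and transversality arguments underlying Theorem \ref{thm:mdimensions}; here the symplectic asphericity hypotheses $c_1|_{\pi_2(M)}=0$ and $[\omega]|_{\pi_2(M)}=0$ are what rule out sphere bubbling and keep the compactification clean. When $\mu_{CZ}(x^-) = \mu_{CZ}(x^+)$ the moduli space is compact and zero-dimensional, and counting its points modulo $2$ defines a map $\Phi : CF_*(M; H^-, J^-) \to CF_*(M; H^+, J^+)$. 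To see that $\Phi$ is a chain map I would analyze the boundary of the compactified one-dimensional moduli spaces (the case $\mu_{CZ}(x^-) = \mu_{CZ}(x^+) + 1$): by gluing and compactness, the ends decompose either as a Floer trajectory for $(H^-, J^-)$ followed by a continuation solution, or a continuation solution followed by a Floer trajectory for $(H^+, J^+)$. Since the total number of boundary points of a compact $1$-manifold vanishes modulo $2$, this yields $\Phi \circ \partial^- = \partial^+ \circ \Phi$.

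It then remains to show $\Phi$ induces an isomorphism on homology. I would build the continuation map $\Psi$ in the reverse direction from a homotopy $(H^+, J^+) \rightsquigarrow (H^-, J^-)$, and prove that $\Psi \circ \Phi$ is chain homotopic to the identity. The chain homotopy operator arises from a \emph{homotopy of homotopies}: a two-parameter family interpolating between the concatenation of the two homotopies and the constant homotopy at $(H^-, J^-)$. The constant homotopy induces the identity, since its only finite-energy solutions are the stationary orbits, so counting solutions of the associated parametrized equation in the appropriate dimension produces an operator $K$ (raising degree by one) with $\mathrm{Id} - \Psi \circ \Phi = \partial^- K + K \partial^-$. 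The symmetric argument gives $\Phi \circ \Psi \simeq \mathrm{Id}$, so $\Phi$ is a quasi-isomorphism and the two homologies agree; taking $\Psi$ to be a continuation for the constant homotopy also shows independence of the chosen homotopy.

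The main obstacle is the compactness of the continuation moduli spaces, which is subtler than in Theorem \ref{thm:Mcompact} because the action is not monotone along solutions of the $s$-dependent equation. The energy identity acquires an extra term $\int_{\mathbb{R}}\int_{S^1} (\partial_s H^s_t)(u)\, dt\, ds$, and controlling it requires the uniform bound on $\partial_s H^s$ coming from the compact support of the homotopy together with the compactness of $M$. This a priori estimate is precisely where the hypotheses enter, and it is the analogue of this energy bound — rather than the algebraic bookkeeping — that must be reworked when the construction is transported to the $b^m$-symplectic setting of the rest of the paper.
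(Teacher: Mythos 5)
Your proposal is correct and follows the standard continuation-map argument (chain maps from $s$-dependent Floer equations, chain homotopies from homotopies of homotopies), which is exactly the proof given in the references the paper cites for this statement — the paper itself does not reprove it but quotes it from \cite{floer2} and \cite[Chapter~11]{audin}. Your closing remark that the energy estimate for the nonautonomous equation is the step that must be reworked in the $b^m$-setting is also consistent with how the paper later handles compactness near $Z$ via the minimum principle.
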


Indeed, this complex can be identified with the Morse homology:
\begin{theorem}[{\cite[Theorem~1]{floer2}}, {\cite[Theorem~10.1.1]{audin}}] \label{thm:floerisomorse}
	The Floer homology is isomorphic to the Morse homology,
	\[HF_{\bullet}(M) \cong HM_{\bullet + n}(M),\]
	where $\mathrm{dim}(M) = 2n$.
\end{theorem}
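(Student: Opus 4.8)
The plan is to exploit the independence of Floer homology on the pair $(H,J)$ established in Theorem \ref{thm:floerinvariance} and to compute the homology for a particularly convenient choice. First I would pick a $\mathcal{C}^2$-small time-independent Morse function $f : M \to \mathbb{R}$ and set $H = \epsilon f$ for a small parameter $\epsilon > 0$. For such a Hamiltonian the contractible $1$-periodic orbits are in bijection with the critical points of $f$: the nonconstant periodic orbits of $X_{\epsilon f}$ have period bounded below by a quantity that diverges as $\epsilon \to 0$, so for $\epsilon$ small enough the only contractible $1$-periodic orbits are the constant loops sitting at $\mathrm{Crit}(f)$. This identifies the generators of the Floer complex $CF_\bullet(M;\epsilon f, J)$ with those of the Morse complex of $f$.

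Next I would establish the grading correspondence. The Conley--Zehnder index of the constant orbit at a critical point $p$ of $f$ can be computed directly from the linearized flow of $X_{\epsilon f}$, and for $\epsilon$ small it equals $\mu_{CZ}(p) = \mathrm{ind}_f(p) - n$, where $\mathrm{ind}_f(p)$ is the Morse index. This accounts precisely for the degree shift by $n$ appearing in the statement: a critical point of Morse index $k$ contributes in Floer degree $k - n$.

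The crux is to match the differentials. I would show that for $\epsilon$ sufficiently small and a generic compatible $J$ inducing the metric $g$, the finite-energy Floer trajectories connecting two critical points $p,q$ with $\mu_{CZ}(p) = \mu_{CZ}(q) + 1$ are in bijection with the unparametrized negative gradient flow lines of $f$ with respect to $g$. The key is an a priori estimate forcing any such low-energy solution $u(s,t)$ to be independent of the loop variable $t$; once $t$-independence holds, the Floer equation \eqref{eq:floer} reduces to $\partial_s u = -\epsilon\,\grad f(u)$, which is exactly the Morse gradient equation. To obtain the estimate I would use that the energy of a connecting trajectory is controlled by the action difference $\mathcal{A}_{\epsilon f}(p) - \mathcal{A}_{\epsilon f}(q) = O(\epsilon)$, and combine this energy bound with an adiabatic/rescaling argument ruling out both $t$-dependence and bubbling in the limit $\epsilon \to 0$. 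This is the technical heart of the theorem and the main obstacle, as it requires delicate elliptic estimates to exclude spurious solutions and to verify that the Floer and Morse moduli spaces coincide transversally.

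Finally, having identified generators, gradings, and differentials, I would conclude that $(CF_\bullet(M;\epsilon f, J), \partial_\bullet)$ is isomorphic as a chain complex to the shifted Morse complex $(CM_{\bullet+n}(M;f,g), \partial^{\mathrm{Morse}}_{\bullet+n})$, and hence $HF_\bullet(M) \cong HM_{\bullet+n}(M)$. By Theorem \ref{thm:floerinvariance} the left-hand side does not depend on the auxiliary data, so this isomorphism computes the Floer homology for any admissible $(H,J)$.
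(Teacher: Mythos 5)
Your proposal is correct and follows essentially the same route as the proof in the cited references (the paper itself only quotes this theorem from Floer and Audin--Damian without reproving it): deform to a $\mathcal{C}^2$-small autonomous Morse function so that the $1$-periodic orbits are the critical points, match $\mu_{CZ}$ with the shifted Morse index, and show that low-energy index-one Floer trajectories are $t$-independent and hence coincide with negative gradient lines, then invoke the invariance of Theorem \ref{thm:floerinvariance}. You also correctly flag the $t$-independence estimate as the technical heart, and your index convention $\mu_{CZ}(p)=\mathrm{ind}_f(p)-n$ is the one consistent with the stated shift $HF_{\bullet}(M)\cong HM_{\bullet+n}(M)$.
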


Theorem \ref{thm:floerisomorse} proves that the Floer homology is indeed a topological invariant of an (aspherical) symplectic manifold.
Moreover, it provides explicitly the dimensions of the homology groups, in relation to the groups of the Morse homology.

The power of this result resides in the fact that the groups of the Floer complex may be impossible to compute, rendering an explicit computation of the Floer homology impossible.
However, the theorem allows the Morse inequalities to be translated to this setting without any further effort.

In particular, this proves the Arnold conjecture for the aspherical symplectic case.

\subsection{Geometrical structures on $b^m$-manifolds}

We begin by providing a quick summary of the theory of $b^m$-symplectic manifolds.
For a more detailed development on this topic we direct the reader to \cite{guimipi2,invitation,scott}.

\begin{definition} \label{def:bmanifolds}
	Let $M$ be a compact and connected manifold, and let $Z \subset M$ be an embedded submanifold of codimension 1. We call such pairs {\bf $b$-manifolds}.
	The hypersurface $Z$ is given by the regular zero level-set of a local defining function $z$. When the function $z$ is globally defined, the function will be called a \emph{global} defining function.
	When $M$ is orientable, the existence of the globally defining function implies that $Z$ is coorientable.
	We define the set of $b$-vector fields as
	\[^b\mathfrak{X}(M) := \{v \in \mathfrak{X}(M) \ | \ \left.v\right|_Z \in \mathfrak{X}(Z)\} ,\]
	that is  the set of vector fields on $M$ that are tangent to $Z$.
	This set has the structure of a projective module, so using the Serre-Swan theorem (see for instance \cite{swan1962vector}) we can define the vector bundle whose sections are $^b\mathfrak{X}(M)$, the $b$-tangent bundle $^bTM \to M$.
	
	The $b$-cotangent bundle is its dual, $^bTM^{\ast} := \left(^bTM\right)^{\ast}$. The forms on this vector bundle form a complex, denoted by $^b\Omega^{\bullet}(M)$, which contains $\Omega^{\bullet}(M)$ canonically, and moreover, we can extend the notion of differential to this complex, $d : {^b}\Omega^{\bullet}(M) \to {^b}\Omega^{\bullet + 1}(M)$.

	A {\bf $b$-symplectic manifold} is a $3$-tuple $(M,Z,\omega)$, where $(M,Z)$ is a $b$-manifold and $\omega \in {}^b\Omega^2(M)$ is a closed and non-degenerate $b$-form. We will often omit the critical set $Z$ in the notation.
\end{definition}

	More generally, we can consider singularities such as $b^m$-symplectic manifolds, as was done by \cite{scott}.
	Mimicking the $b$-case we start by introducing the analog of sections of the $b^m$-tangent bundle.

	\begin{definition}
		A \textbf{$b^m$-vector field} is a vector field $v$ on a $b$-manifold $(M,Z)$ \emph{tangent} of order $m$ at $Z$. 
	\end{definition}

	The $b^m$-vector fields  are locally generated by $\{z^m \frac{\partial}{\partial z}, \frac{\partial}{\partial x_2}, \ldots, \frac{\partial}{\partial x_n}\}$ where $z=0$ describes $Z$ and $(z,x_2 \dots, x_n)$ are coordinates on $M$. By the Serre-Swan theorem \cite{swan1962vector}, given a $b$-manifold $(M, Z)$ there exists a unique vector bundle $^{b^m} TM$ whose smooth sections are $b^m$-vector fields. Such a bundle is called the \textbf{$b^m$-tangent bundle}. Analogously, the \textbf{$b^m$-cotangent bundle} can be defined either as dual to the tangent one:
	$$
	^{b^m} T^*M = (^{b^m} TM)^*.
	$$

	We can then introduce the set of $b^m$-forms ${^{b^m}}\Omega^k(M)$ as sections of the exterior product of this bundle  $\bigwedge ^k(^{b^m} T^* M)$.
	The forms on this vector bundle form a complex, denoted by $^{b^m}\Omega^{\bullet}(M)$, which contains $\Omega^{\bullet}(M)$ canonically, and moreover, we can extend the notion of differential to this complex, $d : {^{b^m}}\Omega^{\bullet}(M) \to {^{b^m}}\Omega^{\bullet + 1}(M)$.
	We denote by ${^{b^m}}H^*(M)$ the associated $b^m$-cohomology.
	The zero-degree terms of this complex are the motivation to introduce the definition of $b^m$-functions which we denote as $^{b^m} \mathcal{C}^\infty(M)$ as $$^{b^m} \mathcal{C}^\infty(M) = \left ( \bigoplus \limits_{i = 1}^{m - 1} z^{-i} \mathcal{C}^{\infty}(Z) \right ) \oplus {^{b}}\mathcal{C}^\infty(M)$$ where $^{b}\mathcal{C}^\infty (M)$ denotes the space of $b$-functions (see \cite{gmps}, respectively \cite{gmw2} for the case of $b^m$-functions),
 $$^{b}\mathcal{C}^\infty (M) = \{a \log |z| + g, a\in \mathbb{R}, g\in \mathcal{C}^\infty (M)\}$$.

	Among $b^m$-forms, we consider  forms of degree two and define the analog of symplectic form in this set-up.
	\begin{definition}
		Let $(M^{2n}, Z)$ be a $b$-manifold.
		Let $\omega \in {^{b^m}}\Omega^2(M)$ be a closed $b^m$-form.
		We say that $\omega$ is \textbf{$b^m$-symplectic} if $\omega_p$ is of maximal rank as an element of $\Lambda^2 \left ( ^{b^m} T_p^* M \right )$ for all $p \in M$.
		We call a \textbf{$b^m$-symplectic manifold} a triple $(M, Z, \omega)$.
	\end{definition}
	When $m=1$, these forms are simply called $b$-symplectic forms.

\begin{remark}\label{rem:bPoisson}
    Alternatively, $b^m$-symplectic forms can be described by Poisson structures on the $b$-manifolds that are non-degenerate away from the critical set $Z$ and are degenerate in a \emph{controlled} way on the critical set. More precisely, a Poisson structure is a bi-vector field $\Pi$ that satisfies that the Schouten-Nijenhuis bracket with itself is zero, i.e. $[\Pi,\Pi]=0$. A $b$-symplectic structure is then equivalent to a Poisson structure such that $\Pi^n$ cuts the zero section \emph{transversally} (see \cite[Section 4]{guimipi2}). The case of $b^m$-symplectic structure relaxes this condition, see \cite{scott}. The symplectic foliation associated to the Poisson structure in these particular cases can be described using the normal local forms of the $b^m$-symplectic forms.
\end{remark}

	A $b^m$-symplectic form can be described in a tubular neighbourhood $\mathcal{N}_\varepsilon(Z)$ of the critical set $Z$ given by $\mathcal{N}_\varepsilon=z^{-1}(-\varepsilon, \varepsilon)$ as 
	\begin{equation}\label{eqn:newlaurent}
		{\omega = \sum_{j = 1}^{m}\frac{dz}{z^j} \wedge \pi^*(\alpha_{j}) +  \beta }
	\end{equation}
	\noindent where the $\alpha_j$ are closed one forms on $Z$, $\beta$  is a  closed 2-form on $\mathcal{N}_\varepsilon$, and $\pi:\mathcal{N}_\varepsilon\longrightarrow Z$ is the projection. Non-degeneracy of the form $\omega$
	implies that $\beta\vert_{Z}$ is of maximal rank and $\alpha_m|_Z$ is nowhere vanishing. The form $\alpha_m|_Z$ defines the symplectic foliation of the Poisson structure associated with $\omega$, and $\beta$ gives the symplectic form on the leaves of this foliation.
		The regular Poisson structure induced on $Z$ is known as a cosymplectic structure:

		\begin{definition}
			\label{def:cosymplectic}
			Let $Z$ a smooth manifold of dimension $2n - 1$, and let $\alpha \in \Omega^1(Z)$ and $\beta \in \Omega^2(Z)$ be closed forms.
			We say that $(Z, \alpha, \beta)$ is a \emph{cosymplectic manifold} if $\alpha \wedge \beta^{n-1}$ is a volume form.
		\end{definition}

Given a $b^m$-function $H$, the Hamiltonian vector field $X_H$ is defined by the non-degeneracy of the $b^m$-symplectic form by $\iota_{X_H} \omega=-dH$. We highlight here that the vector field $X_H$ is a $b^m$-vector field, and therefore when we are interested in the dynamics of this vector field, we view it as a smooth vector field as we did before.

	The only local invariant for $b^m$-symplectic forms is the dimension as the following theorem shows:
	\begin{theorem}[$b^m$-Darboux Theorem, {\cite{gmW1}}] \label{theo:darboux}
		Let $\omega$ be a $b^m$-symplectic form on $(M^{2n}, Z)$. Let $p \in Z$. Then we can find a local coordinate chart $(z, y_1, x_2, y_2, \ldots , x_n, y_n)$ centered at $p$ such that hypersurface $Z$ is locally defined by $z = 0$ and
		$$
		\omega = \frac{dz}{z^m}\wedge dy_1 + \sum_{i = 2}^n dx_i \wedge dy_i.
		$$
	\end{theorem}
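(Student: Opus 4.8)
The plan is to combine a normalization of the singular (polar) part of $\omega$ along $Z$ with a Moser deformation argument. I start from the local Laurent expansion \eqref{eqn:newlaurent}, valid in a tubular neighbourhood $\mathcal{N}_\varepsilon$ of $Z$, namely $\omega = \sum_{j=1}^{m}\frac{dz}{z^j}\wedge\pi^*(\alpha_j) + \beta$, where the $\alpha_j$ are closed $1$-forms on $Z$, $\beta$ is closed, $\beta|_Z$ has maximal rank, and $\alpha_m|_Z$ is nowhere vanishing. The target is the constant-coefficient $b^m$-form $\omega_0 = \frac{dz}{z^m}\wedge dy_1 + \sum_{i=2}^n dx_i\wedge dy_i$, and I want a local diffeomorphism fixing $Z$ that pulls $\omega$ back to $\omega_0$. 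As a first step I standardize the data on $Z$: since $\alpha_m|_Z$ is closed and nowhere vanishing, near $p$ I choose a function $y_1$ on $Z$ with $dy_1 = \alpha_m|_Z$; the triple $(Z,\alpha_m|_Z,\beta|_Z)$ is a cosymplectic manifold in the sense of Definition \ref{def:cosymplectic}, so the classical cosymplectic Darboux theorem completes $y_1$ to coordinates $(y_1,x_2,y_2,\dots,x_n,y_n)$ on $Z$ in which $\alpha_m|_Z = dy_1$ and $\beta|_Z = \sum_{i\ge 2}dx_i\wedge dy_i$. Extending these functions off $Z$ together with the defining function $z$ produces a chart in which $\omega_0$ is defined.

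The genuinely $b^m$-specific step is reducing the polar tail to a single top-order pole. I would eliminate the lower-order terms $\frac{dz}{z^j}\wedge\pi^*\alpha_j$ with $j<m$ by a change of defining function of the form $\tilde z = z\,(1 + \sum_{k=1}^{m-1}c_k z^k)$, with the $c_k$ functions on $Z$ (suitably extended), solving order by order for the $c_k$ so as to cancel $\alpha_{m-1},\dots,\alpha_1$ successively; here the closedness of each $\alpha_j$ is precisely what makes the resulting local equations solvable (degreewise local $b^m$-Poincaré lemma). After this reduction $\omega = \frac{dz}{z^m}\wedge\pi^*\alpha_m + \beta'$ for a new closed smooth $2$-form $\beta'$ with $\beta'|_Z$ of maximal rank, and by construction $\omega$ and $\omega_0$ carry the same leading residue $\frac{dz}{z^m}\wedge dy_1$ and the same $\beta$-part along $Z$.

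Now I run the Moser deformation. Set $\omega_t = \omega_0 + t(\omega-\omega_0)$ for $t\in[0,1]$. Since $\omega - \omega_0$ vanishes along $Z$, after shrinking the neighbourhood each $\omega_t$ is a non-degenerate $b^m$-form. Writing $\omega-\omega_0 = d\mu$ for a $b^m$-one-form $\mu$ vanishing on $Z$ (the relative $b^m$-Poincaré lemma), I define $X_t$ by $\iota_{X_t}\omega_t = -\mu$. The crucial point is that non-degeneracy of $\omega_t$ as an element of $\Lambda^2({}^{b^m}T_p^*M)$ forces $X_t$ to be a $b^m$-vector field, hence tangent to $Z$; since $\mu$ vanishes on $Z$, so does $X_t$, and its flow $\varphi_t$ exists for $t\in[0,1]$ near $p$ and fixes $Z$. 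Then $\frac{d}{dt}\varphi_t^*\omega_t = \varphi_t^*(\mathcal{L}_{X_t}\omega_t + \frac{d}{dt}\omega_t) = \varphi_t^*(d\iota_{X_t}\omega_t + (\omega - \omega_0)) = 0$, using $d\iota_{X_t}\omega_t = -d\mu = -(\omega-\omega_0)$, so that $\varphi_1^*\omega = \omega_0$ in the chosen coordinates, which is the asserted normal form.

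The main obstacle is the interaction between the pole-reduction step and the integrability in the Moser step: I must verify that the Moser vector field $X_t$, obtained by inverting a \emph{singular} ($b^m$) form, is an honest smooth vector field tangent to $Z$ whose flow is defined up to time $1$. This is exactly where the $b^m$-structure (rather than an arbitrary Poisson degeneration) is essential, since maximal rank in $\Lambda^2({}^{b^m}T_p^*M)$ guarantees that the dual of a $b^m$-one-form lies in ${}^{b^m}\mathfrak{X}(M)$. Controlling the precise orders of vanishing of $\mu$ at $Z$—so that the subleading poles are genuinely absorbed in the reduction step and $X_t$ vanishes to the right order in the deformation step—is the part of the argument that requires the most care.
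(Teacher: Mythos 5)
The paper does not actually prove Theorem \ref{theo:darboux}; it is quoted from \cite{gmW1} (and ultimately from \cite{scott}), so your proposal has to be judged on its own. Your overall architecture --- cosymplectic Darboux on $Z$ to normalize $(\alpha_m|_Z,\beta|_Z)$, followed by a Moser path between $\omega$ and the model $\omega_0$ --- is exactly the standard route, and your Moser step is sound: non-degeneracy in $\Lambda^2({}^{b^m}T^*_pM)$ does force $X_t$ defined by $\iota_{X_t}\omega_t=-\mu$ to be a $b^m$-vector field, hence tangent to $Z$, vanishing where $\mu$ does, with flow defined up to time $1$ after shrinking.

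The genuine gap is the pole-reduction step. A change of defining function $\tilde z=z\bigl(1+\sum_k c_kz^k\bigr)$ cannot in general cancel the lower-order terms $\frac{dz}{z^j}\wedge\pi^*\alpha_j$. Writing $\tilde z=zu$, one has
\begin{equation*}
\frac{d\tilde z}{\tilde z^m}=\frac{dz}{z^m}\,u^{1-m}+\frac{du}{z^{m-1}u^m},
\end{equation*}
and the second summand produces terms proportional to $z^{k+1-m}\,dc_k\wedge\pi^*\alpha_m$, which for $k<m-1$ are singular $2$-forms with no $dz$ factor and hence are not even sections of $\Lambda^2({}^{b^m}T^*M)$ for the original structure (for $m\geq 2$ the $b^m$-structure depends on the jet of $z$ along $Z$, so such substitutions change the structure itself). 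Moreover the $dz$-terms generated are all of the form $\frac{dz}{z^{m-k}}\wedge(\text{function}\cdot\alpha_m)$, so they can only cancel $\alpha_{m-k}$ when $\alpha_{m-k}$ is a functional multiple of $\alpha_m$, which is false in general (e.g.\ $\alpha_m=dy_1$, $\alpha_{m-1}=dx_2$). The correct and standard fix --- which the paper itself uses in Lemma \ref{lem:tubularmappingtorus} --- is to absorb the subleading poles into the \emph{conjugate coordinate} rather than into $z$: locally $\alpha_j=dg_j$ by the Poincar\'e lemma on $Z$, and setting $\tilde y_1=\sum_{j=1}^m z^{m-j}g_j$ gives $\frac{dz}{z^m}\wedge d\tilde y_1=\sum_{j=1}^m\frac{dz}{z^j}\wedge dg_j$ exactly, with $\tilde y_1|_Z=g_m=y_1$. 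With this replacement $\omega-\omega_0$ vanishes along $Z$ as a $b^m$-form and your Moser argument goes through unchanged. So the proof is repairable, but the mechanism you propose for the one genuinely $b^m$-specific normalization is the step that fails.
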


	Natural examples of $b$-symplectic manifolds are given by symplectic manifolds with boundary (see \cite{nestandtsygan}). Other examples are given by the space of geodesics on the Lorentz plane (as described in \cite{khesintabachnikov} and discussed in \cite{bcontact}).  Let us recall this construction: Let $M$ be a smooth manifold endowed with a pseudo-Riemannian metric $g$.  Recall that the geodesic flow in $T^*M$ is the Hamiltonian vector field $X_H$ of $H(q,p)=g(p,p)/2$.
Then the set of all oriented geodesic lines $\mathfrak{L}$ can be decomposed as  ${\mathfrak{L}}={\mathfrak{L}}_+\cup   {\mathfrak{L}}_-\cup  {\mathfrak{L}}_0$  where ${\mathfrak{L}}_+, {\mathfrak{L}}_-, {\mathfrak{L}}_0$  stand for the space of oriented non-parameterized space-, time- and light-like geodesics respectively .  The set  ${\mathfrak{L}}_0$ is the common boundary of ${\mathfrak{L}}_\pm$.
Khesin and Tabachnikov proved in \cite{khesintabachnikov} that the manifolds ${\mathfrak{L}}_\pm$ carry symplectic structures which are described  from $T^*M$ by Hamiltonian reduction on the level hypersurfaces $H=\pm 1$. The manifold ${\mathfrak{L}}_0$ carries a contact structure whose symplectization is the Hamiltonian reduction of the symplectic structure in $T^*M$ (without the zero section)
on the level hypersurface $H=0$. In dimension 2 (the Lorenz plane case) this structure is just a $b$-symplectic surface (as in dimension 1 a contact structure is the same as a cosymplectic one).

	The following vector field will play an important role in this paper.
	
	\begin{definition} \label{def:normalbvectorfield}
		Let $(M,Z,\omega)$ be a $b^m$-symplectic manifold.
		By definition, there exists a natural epimorphism of vector bundles $\varphi : \left.{}^{b^m} TM\right|_Z \rightarrow TZ$ induced by the restriction of $b^m$-vector fields into $Z$.
		The kernel of this map, ${}^{b^m}N(M,Z) := \mathrm{ker}(\varphi)$, is a line bundle over $Z$ which is trivializable (see \cite[Proposition 4]{guimipi}).

		We call a $b^m$-vector field $X \in \Gamma({}^{b^m}N(M,Z)) \subset {}^{b^m}\mathfrak{X}(M,Z)$ trivializing the line bundle a {\bf normal $b^m$-vector field}.

		We will call it a {\bf normal symplectic $b^m$-vector field} and denote it as $X^{\sigma}$ if it is also symplectic with respect to $\omega$, this means, if $\mathcal{L}_{X^{\sigma}}\omega = 0$.
	\end{definition}

	$b^m$-Symplectic manifolds can be seen as \emph{open} symplectic manifolds with a certain geometric structure prescribed at the open ends.
	A similar situation occurs with \emph{convex symplectic manifolds}, which we will discuss briefly in Subsection \ref{subsec:convex}, where near the boundary the existence of a transverse Liouville vector field is required.
	In \cite{convexsymplectic} the Floer homology for convex symplectic manifolds is defined.
	From this perspective, $b$-symplectic manifolds represent the other end of the spectrum: they can be seen as symplectic manifolds with boundary equipped with a symplectic vector field (instead of Liouville) that is transverse to the boundary. The role of the symplectic vector field is played by the normal symplectic $b$-vector field (as in Definition \ref{def:normalbvectorfield}).

	\begin{example}
		Consider a $b^m$-symplectic manifold $(M,Z,\omega)$ and let $z$ be a defining function such that in local coordinates around a point in $Z$ we have the decomposition $\omega=\frac{dz}{z^m}\wedge \alpha +\beta$, $\alpha\in \Omega^1(M)$ and $\beta \in \Omega^2(M)$ as in Theorem \ref{theo:darboux}.
		The normal symplectic $b^m$-vector field can then be given by $X^{\sigma}=z^m\frac{\partial}{\partial z}$.
	\end{example}

	\begin{example} \label{ex:2btorus}
		Consider the $b^m$-symplectic torus $(\mathbb{T}^2,\{\sin\theta_1 = 0\},\omega=\frac{d\theta_1}{\sin^m \theta_1}\wedge d\theta_2)$. A normal symplectic $b^m$-vector field is given globally by $X^{\sigma}=\sin^m \theta_1 \frac{\partial}{\partial \theta_1}$. 
	\end{example}

 	The tubular neighborhood and the defining function will always be chosen to satisfy the following lemma.

	\begin{proposition}[{\cite[Theorem 2]{gmw2}}]\label{prop:decomposition}
		Let $(M,\omega)$ be a $b^m$-symplectic manifold and let $\mathcal{N}(Z)$ be tubular neighbourhood around the critical set $Z$. There exists a choice of defining function $z$ for the critical set and a projection $\pi: \mathcal{N}(Z)\to Z$ such that
		\begin{equation}\label{eq:Laurentseriesdef}
		    \omega=\sum_{i=1}^m\frac{dz}{z^i}\wedge \pi^*\alpha_i+\pi^*\beta
      	\end{equation}
		where $\alpha_i\in \Omega^1(Z)$ is closed and $\beta\in \Omega^2(Z)$ is symplectic on the foliation defined by $\alpha_m$.
	\end{proposition}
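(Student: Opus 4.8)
The plan is to keep the Laurent normal form already available near $Z$ and to upgrade only its smooth part to a pullback from $Z$, by a Moser deformation argument carried out inside the $b^m$-category. Fix an auxiliary tubular neighbourhood with projection $\pi_0$ and defining function $z$, so that by \eqref{eqn:newlaurent} one may write
\[\omega=\sum_{i=1}^{m}\frac{dz}{z^{i}}\wedge\pi_0^{*}\alpha_i+\beta,\]
with $\alpha_i\in\Omega^1(Z)$ closed and $\beta$ a closed (but a priori $z$-dependent) $2$-form on $\mathcal N(Z)$. Writing $\iota\colon Z\hookrightarrow\mathcal N(Z)$ for the inclusion and setting $\beta_Z:=\iota^{*}\beta\in\Omega^2(Z)$, I introduce the model
\[\omega_0:=\sum_{i=1}^{m}\frac{dz}{z^{i}}\wedge\pi_0^{*}\alpha_i+\pi_0^{*}\beta_Z,\]
which already has the shape claimed in the statement. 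The non-degeneracy criterion recalled after \eqref{eqn:newlaurent} forces $\alpha_m|_Z$ to be nowhere vanishing and $\beta_Z$ to be of maximal rank on the foliation defined by $\alpha_m$, i.e.\ $\beta_Z$ is symplectic on the leaves; this gives the final assertion of the proposition. It therefore suffices to construct a $b^m$-diffeomorphism fixing $Z$ and carrying $\omega$ to $\omega_0$.

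Consider the difference $D:=\beta-\pi_0^{*}\beta_Z=\omega-\omega_0$, a \emph{smooth} closed $2$-form. Since $\pi_0\circ\iota=\mathrm{id}_Z$, we have $\iota^{*}D=\beta_Z-\beta_Z=0$; as the restriction $H^2(\mathcal N(Z))\to H^2(Z)$ is an isomorphism ($Z$ being a deformation retract of its tubular neighbourhood), $\iota^{*}D=0$ forces $[D]=0$, so $D=d\sigma$ for a smooth $1$-form $\sigma$. I then run the linear Moser path $\omega_t:=\omega_0+tD$ for $t\in[0,1]$, noting that $\omega_1=\omega$. The key observation is that $\iota^{*}(tD)=0$, so the data entering the non-degeneracy criterion, namely $\alpha_m|_Z$ and the restriction of the smooth part to $Z$, equal the $t$-independent and nowhere-degenerate $\alpha_m$ and $\beta_Z$. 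Hence, after shrinking $\mathcal N(Z)$, a compactness argument over $t\in[0,1]$ guarantees that every $\omega_t$ is $b^m$-symplectic.

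Finally, since each $\omega_t$ is $b^m$-symplectic, the equation $\iota_{X_t}\omega_t=-\sigma$ has a unique solution $X_t$, and because $\sigma$ is smooth (hence a $b^m$-$1$-form) and the musical isomorphism of a $b^m$-symplectic form sends $b^m$-forms to $b^m$-vector fields, $X_t$ is a time-dependent $b^m$-vector field. In particular $X_t$ vanishes to order $m$ along $Z$, so its flow $\phi_t$ (which exists up to $t=1$ after possibly shrinking $\mathcal N(Z)$ and cutting off away from $Z$) is a $b^m$-diffeomorphism fixing $Z$ pointwise. Moser's computation then gives $\frac{d}{dt}\phi_t^{*}\omega_t=\phi_t^{*}\big(D+d\iota_{X_t}\omega_t\big)=\phi_t^{*}(D-d\sigma)=0$, so $\phi_1^{*}\omega=\omega_0$. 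Transporting the coordinates by $\phi_1$, the function $\tilde z:=z\circ\phi_1^{-1}$ is again a defining function for $Z$ and $\tilde\pi:=\pi_0\circ\phi_1^{-1}$ is again a projection onto $Z$, and together they exhibit $\omega$ in the form asserted by the proposition. I expect the genuine difficulty to be exactly this $b^m$-refinement of Moser's method: ensuring that $\omega_t$ remains $b^m$-symplectic along the entire path and, above all, that the Moser vector field is a true $b^m$-vector field, so that its flow preserves the singular structure and fixes $Z$. The identity $\iota^{*}D=0$ is what makes both points work, as it freezes the cosymplectic datum on $Z$ throughout the deformation.
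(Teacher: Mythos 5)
This proposition is not proved in the paper at all: it is quoted verbatim from \cite[Theorem 2]{gmw2}, so there is no in-text argument to compare against. Your strategy --- start from the Laurent normal form \eqref{eqn:newlaurent}, replace the smooth tail $\beta$ by $\pi_0^*\iota^*\beta$, and absorb the discrepancy by a Moser isotopy in the $b^m$-category --- is the natural one and is in the spirit of the relative Moser/Poincar\'e-lemma arguments used in the cited source. The verification that $\omega_t=\omega_0+tD$ stays $b^m$-symplectic is also correct: since $\iota^*D=0$, at points of $Z$ one has $D=dz\wedge\gamma$, which is killed by $\frac{dz}{z^m}\wedge\pi_0^*\alpha_m$ in $\omega_t^n$, so $\omega_t^n|_Z=\omega_0^n|_Z\neq 0$ and openness plus compactness of $[0,1]\times Z$ does the rest.

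There is, however, one step that fails as written. You argue: $X_t$ is a $b^m$-vector field, hence ``$X_t$ vanishes to order $m$ along $Z$,'' hence $\phi_t$ fixes $Z$ pointwise. A $b^m$-vector field is only required to be tangent to $Z$ (with normal component vanishing to order $m$); its components along $Z$ are unconstrained --- $\frac{\partial}{\partial y_1}$ in Darboux coordinates is a nowhere-vanishing $b^m$-vector field. Concretely, writing $\sigma=a\,dz+\sigma'$ near $Z$, the solution of $\iota_{X_t}\omega_t=-\sigma$ has normal part of order $z^m$ but a leafwise part given essentially by the $\beta_Z$-dual of $\sigma'|_{TZ}$, which for a generic primitive $\sigma$ of $D$ does not vanish on $Z$. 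Then $\phi_1$ merely preserves $Z$ rather than fixing it, $\tilde\pi=\pi_0\circ\phi_1^{-1}$ restricts to $(\phi_1|_Z)^{-1}\neq\mathrm{id}_Z$, and your final object is not a projection of a tubular neighbourhood. The repair is standard but must be made explicit: choose $\sigma$ by the fibrewise radial homotopy operator of the retraction $\rho_t(x,z)=(x,tz)$, i.e.\ $\sigma=\int_0^1\rho_t^*(\iota_{W_t}D)\,dt$. Since the generating field $W_t$ carries a factor of $z$, this primitive satisfies $\sigma|_Z=0$ pointwise, hence $X_t|_Z=0$, the flow fixes $Z$, and the rest of your argument goes through. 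With that choice inserted, the proof is complete.
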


The series as in Equation (\ref{eq:Laurentseriesdef}) is called the \emph{Laurent series} associated to the $b^m$-symplectic form.

	The following lemma shows how the case that we are studying is actually quite general.
	In particular, we can construct a $b$-symplectic structure for any symplectic manifold with boundary provided we choose a normal symplectic vector field.

	\begin{lemma}[\cite{frejlichmartinezmiranda}]\label{lem:structure}
		Let $(M,\partial M)$ be a manifold with boundary $\partial M$ and let $\omega\in \Omega^2(M)$ a symplectic form on $M \setminus \partial M$ such that there exists a symplectic vector field $X^\sigma$ that points outwards or inwards of the boundary.
		Then, for each $m \in \mathbb{N}, m \geq 1$ there exists a $b^m$-symplectic structure on $(M,\partial M)$ with critical set given by $\partial M$ that coincides with the symplectic structure outside of a tubular neighborhood of the boundary $\partial M$.
	\end{lemma}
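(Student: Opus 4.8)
The plan is to use the transverse symplectic vector field $X^\sigma$ to produce an explicit collar model for $\omega$ near $\partial M$, and then to splice in the singular $b^m$-term by an elementary change of the radial profile. Since all of the work is local near the boundary, I would arrange for $\omega_b=\omega$ outside a collar from the start, so that only the interpolation region requires checking.

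First I would rectify $X^\sigma$. As $X^\sigma$ is transverse to $\partial M$ and points inward or outward, its flow yields a collar identifying a neighbourhood of $\partial M$ with $Z\times[0,\varepsilon)$, where $Z=\partial M$, the slice $\{r=0\}$ is the boundary, and $X^\sigma=\pm\partial_r$. Because $X^\sigma$ is symplectic, $\mathcal L_{X^\sigma}\omega=0$, and in these coordinates this forces every coefficient of $\omega$ to be independent of $r$. Writing the unique decomposition $\omega=dr\wedge\theta+\sigma$ with $\iota_{\partial_r}\theta=\iota_{\partial_r}\sigma=0$, the forms $\theta\in\Omega^1(Z)$ and $\sigma\in\Omega^2(Z)$ are $r$-independent. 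Then $d\omega=0$ gives $d_Z\theta=0$ and $d_Z\sigma=0$, while non-degeneracy of $\omega$ on $M\setminus\partial M$ reads $\omega^n=n\,dr\wedge\theta\wedge\sigma^{n-1}\ne 0$ (the term $\sigma^n$ vanishes for dimension reasons), i.e. $\theta\wedge\sigma^{n-1}$ is a volume form on $Z$. Thus $(Z,\theta,\sigma)$ is a cosymplectic manifold in the sense of Definition \ref{def:cosymplectic}.

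Next I would build the $b^m$-form by modifying only the radial factor. I choose a smooth profile $g:(0,\varepsilon)\to\mathbb R_{>0}$ with $g(r)=r^{-m}$ for $r\in(0,\delta)$ and $g(r)=1$ for $r$ near $\varepsilon$, and set
\[
\omega_b := g(r)\,dr\wedge\theta+\sigma \quad\text{on the collar},\qquad \omega_b := \omega \quad\text{outside it}.
\]
Taking $z=r$ as a defining function for $\partial M$, on the inner region $(0,\delta)$ one has $g(r)\,dr=z^{-m}\,dz$, so there $\omega_b=\frac{dz}{z^m}\wedge\theta+\sigma$ is exactly the Laurent normal form of Equation \eqref{eq:Laurentseriesdef}, hence a genuine smooth section of $\bigwedge^2({}^{b^m}T^{*}M)$; here $\frac{dz}{z^m}$ is the $b^m$-one-form dual to the normal $b^m$-vector field $z^m\partial_z$ of Definition \ref{def:normalbvectorfield}. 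For $r\ge\delta$ the form $\omega_b$ is an ordinary smooth $2$-form, and for $r$ near $\varepsilon$ it equals $\omega$, so the two definitions agree and $\omega_b=\omega$ outside the tubular neighbourhood.

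Finally I would verify the two defining properties. Closedness is immediate: writing $g(r)\,dr=dG$ with $G'=g$, we get $d\omega_b=-dG\wedge d_Z\theta+d_Z\sigma=0$ since $\theta,\sigma$ are closed and $r$-independent. For non-degeneracy as a $b^m$-form, one computes $\omega_b^n=n\,g(r)\,dr\wedge\theta\wedge\sigma^{n-1}$ away from $Z$ and $\omega_b^n=n\,\frac{dz}{z^m}\wedge\theta\wedge\sigma^{n-1}$ on the inner collar, both nowhere-vanishing sections of $\bigwedge^{2n}({}^{b^m}T^{*}M)$ precisely because $\theta\wedge\sigma^{n-1}$ is a volume form on $Z$ and $g>0$. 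Hence $\omega_b$ is $b^m$-symplectic with critical set $\partial M$ and coincides with $\omega$ outside the collar, for every $m\ge 1$. The only genuinely delicate point is the rectification step: one must check that $\mathcal L_{X^\sigma}\omega=0$ together with the straightening $X^\sigma=\pm\partial_r$ makes $\theta$ and $\sigma$ strictly $r$-independent (not merely closed), as it is this rigidity that lets the radial profile $g$ be changed freely without disturbing either closedness or non-degeneracy.
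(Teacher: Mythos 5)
Your proof is correct and follows essentially the same route as the paper's: straighten $X^\sigma$ into a collar via its flow, extract the $r$-invariant cosymplectic pair $(\theta,\sigma)$ on $\partial M$ from $\mathcal{L}_{X^\sigma}\omega=0$, verify $\theta\wedge\sigma^{n-1}$ is a volume form, and splice in the singular radial profile. The only cosmetic difference is that you interpolate with a single positive profile $g$ where the paper uses the convex combination $\psi\,\frac{dz}{z^m}+(1-\psi)\,dz$; these are equivalent.
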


	\begin{proof}
		We start by showing that the boundary $\partial M$ can be endowed with a cosymplectic structure.

		We assume without loss of generality that the normal $b$-symplectic vector field $X^{\sigma}$ points inwards at $\partial M$.
		Let $\varphi_t : U \subset M\times \mathbb{R} \rightarrow M$ denote the flow of $X^{\sigma}$.
		As $X^{\sigma}$ is transverse to $\partial M$, there exists a tubular neighbourhood $V_{\varepsilon} := \{ (x,z) \ | \ 0\leq z < \varepsilon(x)\} \subset \partial M \times \mathbb{R}$ for some function $\varepsilon : \partial M \rightarrow \mathbb{R}$ such that
		\[\begin{array}{rccc} c : & V_{\varepsilon} & \longrightarrow & M \\ & (x,z) & \longmapsto & \varphi_z(x) \end{array}\]
			is an embedding.

			Let $\theta := c^{\ast} (\iota_{X^{\sigma}}\omega|_{\mathrm{Im}(c)})$ and $\eta := c^{\ast}\omega|_{\mathrm{Im}(c)}$.
			Both are forms in $V_{\varepsilon}$, but they can be naturally restricted to $\partial M \subset V_{\varepsilon}$, because they are invariant with respect to the coordinate $z$.
			We can show this for $\eta$ under the observation that the vector fields $\frac{\partial}{\partial z}$ and $X^{\sigma}$ are $c$-related, so $\mathcal{L}_{\frac{\partial}{\partial z}} \eta = c^{\ast}\left( \mathcal{L}_{X^{\sigma}} \omega \right) = 0$.
			Conversely,
			\[\mathcal{L}_{\frac{\partial}{\partial z}} \theta = c^{\ast}\left( \mathcal{L}_{X^{\sigma}} \iota_{X^{\sigma}} \omega \right) = c^{\ast}\left( d \iota_{X^{\sigma}}\iota_{X^{\sigma}} \omega + \iota_{X^{\sigma}} d \iota_{X^{\sigma}} \omega \right) = c^{\ast}\left( \iota_{X^{\sigma}} \mathcal{L}_{X^{\sigma}} \omega \right) = 0,\]
			where the first term vanishes because $\omega$ is skew-symmetric, and the second does because $X^{\sigma}$ is a symplectic vector field.

			Moreover, $\theta\wedge \eta^{n-1}$ is a volume form for $\partial M$, because
			\[\theta\wedge \eta^{n-1} = c^{\ast}\left( \iota_{X^{\sigma}} \omega \wedge \omega^{n-1} \right) = \frac1n c^{\ast}\left( \iota_{X^{\sigma}} \omega^n \right) = \frac1n \iota_{\frac{\partial}{\partial z}} c^{\ast}\omega^n,\]
			which is a well-defined non-degenerate form due to its $z$-invariance.

			Therefore, $(\partial M, \theta, \eta)$ is a cosymplectic manifold, and $V_{\varepsilon}$ is an embedding into $M$.
			Moreover, by our definitions it is clear that
			\[c^{\ast} \left. \omega\right|_{\mathrm{Im}(c)} = dz\wedge\theta + \eta.\]

			Let $\psi : V_{\varepsilon} \rightarrow \mathbb{R}$ be a bump function such that
			\begin{itemize}
				\item For $0\leq z < \frac{\varepsilon(x)}3$, $\psi(x,z) = 1$.
				\item For $z > \frac{2\varepsilon(x)}3$, $\psi(x,z) = 0$.
			\end{itemize}
			Consider then the $b$-form
			\[\overline{\omega} = \left(\psi \frac{dz}z + (1-\psi) dz\right)\wedge \theta + \eta .\]
		Showing that $\overline{\omega}$ is symplectic is then trivial, as $\overline{\omega}^n = \psi \frac{dz}z \wedge \theta \wedge \eta^{n-1} + (1 - \psi) dz \wedge \theta \wedge \eta^{n-1}$, which is a non-vanishing $b$-form of maximal degree.
			If we push $\overline{\omega}$ forward to $M$, it coincides with $\omega$ outside of a tubular neighborhood of $\partial M$.

			In the same way, for $m > 1$ we can use the $b^m$-form
			\[\overline{\omega} = \left( \psi \frac{dz}{z^m} + (1 - \psi)dz \right) \wedge \theta + \eta ,\]
			which is $b^m$-symplectic by the same argument.
	\end{proof}

 Associated to a cosymplectic structure as introduced in Definition \ref{def:cosymplectic}, one can define the Reeb vector field as follows.

\begin{definition}\label{def:Reebcosymplectic}
The Reeb vector field $R$ associated to a cosymplectic structure $(\alpha, \beta)$ is defined by $\iota_R \alpha=1$ and $\iota_R \beta =0$.
\end{definition}

	The notion of Reeb vector field of the cosymplectic structure of the critical set of the $b^m$-symplectic manifold, which can be seen as a conjugate of $X^{\sigma}$ with respect to $\omega$, will play an important role in our constructions.
	For $m=1$, that is for $b$-symplectic structures, there is a relationship between the Reeb vector field in $Z$ and the so called modular vector field. Let us recall the definition of modular vector fields.
	
	 Modular vector fields are well-defined for any Poisson manifold and measure how far Hamiltonian vector fields are from preserving a given volume form.

	\begin{definition} \label{def:modular}
		Let $(M,Z,\omega)$ be a $b^m$-symplectic manifold equipped with a smooth volume form $\Omega \in \Omega^{2n}(M)$.
		The {\bf modular vector field}  is the vector field defined as a derivation by
		\[f \mapsto \frac{\mathcal{L}_{X_f} \Omega}{\Omega} .\]
  \end{definition}

	The modular vector field depends on the volume form $\Omega$.
	Changing the volume form yields the same modular vector field, up to the addition of Hamiltonian vector fields (that are by definition tangent to the leaves of the symplectic foliation on $Z$ (see \cite[Propostion 25]{guimipi2}).

  As shown in \cite[Theorem 56]{guimipi2} and \cite[Theorem 19]{guimipi}, when one of the leaves of the symplectic foliation of the associated Poisson structure is compact, then the critical set is given by the mapping torus
		\begin{equation}\label{eq:mappingtorus}
  Z \cong \frac{\mathcal{L} \times [0,T]}{(x,0) \sim (\phi(x),T)}
    \end{equation}
     where $\mathcal{L}$ is a leaf of the symplectic foliation and  $\phi:\mathcal{L} \to \mathcal{L}$ is the symplectic map, given by the return map of the modular vector field, which is transverse to the symplectic foliation.

       The modular vector field is then given by $\frac{\partial}{\partial \theta}$, where $\theta$ represents the translation in the second coordinate (see \cite{gmps}).
			 We refer to the period of the modular vector field in the mapping torus as the \textbf{modular weight} of that component of $Z$.

				 In the case of $b^m$-symplectic manifolds for $m > 1$, the modular vector field vanishes on $Z$.
				 However, the Reeb vector field of the associated cosymplectic structure defines a flow transverse to the symplectic foliation.
				 When one of the symplectic leaves is compact, the critical set is given by the mapping torus as shown in Equation (\ref{eq:mappingtorus}), where $\phi$ represents the flow associated with the Reeb vector field.
				 By analogy, the period of the Reeb vector field in the mapping torus is called the \textbf{modular weight}.
				 It is noteworthy that both definitions agree when $m=1$.

    \begin{definition}\label{rem:trivialmappingdef}
        The mapping torus in Equation (\ref{eq:mappingtorus}) is \textbf{trivial} if the function $\phi$ is isotopic to the identity.
    \end{definition}

\begin{lemma}\label{lem:tubularmappingtorus}
    Let $(M,\omega)$ be a $b^m$-symplectic manifold and $\mathcal{N}(Z)$ be a tubular neighbourhood around the critical set as in Proposition \ref{prop:decomposition}. Then $\mathcal{N}(Z)$ is foliated by symplectic mapping tori.
\end{lemma}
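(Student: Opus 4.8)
The plan is to exhibit the required foliation as the level sets of the defining function $z$ and to transport the mapping torus structure of $Z$ onto each of them through the product structure of the tubular neighbourhood.

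First I would fix the normal form. By Proposition \ref{prop:decomposition} we may choose the defining function $z$ together with a projection $\pi : \mathcal{N}(Z) \to Z$ so that
\[\omega = \sum_{i=1}^m \frac{dz}{z^i}\wedge \pi^*\alpha_i + \pi^*\beta,\]
and the pair $(\pi, z)$ identifies $\mathcal{N}(Z)$ with a product $Z \times (-\varepsilon, \varepsilon)$. For each $c \in (-\varepsilon, \varepsilon)$ set $Z_c := \{z = c\}$. These hypersurfaces foliate $\mathcal{N}(Z)$, and each restriction $\pi|_{Z_c} : Z_c \to Z$ is a diffeomorphism. The crucial feature of the normal form is that $\pi^*\alpha_m$ and $\pi^*\beta$ are pullbacks along the projection, hence independent of $z$; consequently all the leaves $Z_c$ carry literally the same transported data.

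Next I would endow each leaf with a cosymplectic structure. Since $(\alpha_m, \beta)$ is cosymplectic on $Z$, that is $\alpha_m \wedge \beta^{n-1}$ is a volume form by Definition \ref{def:cosymplectic} and Proposition \ref{prop:decomposition}, pulling these forms back along $\pi|_{Z_c}$ makes each $Z_c$ a cosymplectic manifold isomorphic to $(Z, \alpha_m, \beta)$. For $c \neq 0$ this is exactly the structure inherited from the ambient symplectic form: the pullback of $\omega$ to $Z_c$ equals $\pi^*\beta$, because $dz$ vanishes on the level set, and its characteristic direction is spanned by the Reeb vector field $R$ of Definition \ref{def:Reebcosymplectic}, transverse to the symplectic foliation $\ker\alpha_m$.

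Finally I would invoke the structure theorem for cosymplectic manifolds recalled around Equation (\ref{eq:mappingtorus}): when one of the symplectic leaves is compact, the flow of the Reeb vector field is transverse to the symplectic foliation and its return map $\phi$ presents the cosymplectic manifold as the symplectic mapping torus $\frac{\mathcal{L}\times[0,T]}{(x,0)\sim(\phi(x),T)}$. Applying this to each $Z_c$, all cosymplectically isomorphic to $Z$, shows that every leaf of the foliation by level sets of $z$ is a symplectic mapping torus, which is the claim. The main point requiring care is the hypothesis of a compact symplectic leaf needed to apply the mapping torus theorem, together with checking that the cosymplectic data is genuinely compatible across the level sets; once the product normal form of Proposition \ref{prop:decomposition} is fixed, the remaining verifications reduce to transporting structure along the diffeomorphisms $\pi|_{Z_c}$ and are routine.
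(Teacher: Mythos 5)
Your argument is correct and follows the same skeleton as the paper's proof: foliate $\mathcal{N}(Z)$ by the level sets $Z_c = z^{-1}(c)$ and equip each with a cosymplectic structure. The difference is in which one-form you place on the leaves. The paper restricts $\widetilde{\alpha}=\sum_{i=1}^m z^{m-i}\pi^*\alpha_i$, which is exactly $\iota_{X^\sigma}\omega$ for the normal symplectic vector field $X^\sigma=z^m\frac{\partial}{\partial z}$, so the pair $\bigl(\widetilde{\alpha}|_{Z_c},\beta|_{Z_c}\bigr)$ is the cosymplectic structure genuinely induced by the ambient form in the sense of Lemma \ref{lem:structure}; the price is that non-degeneracy of $\widetilde{\alpha}\wedge\beta^{n-1}$ is only known at $z=0$ and must be propagated to small $|c|$ by openness, which is where ``$\varepsilon$ small enough'' enters. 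You instead transport $(\alpha_m,\beta)$ from $Z$ along the diffeomorphism $\pi|_{Z_c}$, which makes the openness argument unnecessary because the transported structure is literally isomorphic to the one on $Z$; this suffices for the lemma as stated. Two caveats. First, your parenthetical claim that your structure is ``exactly the structure inherited from the ambient symplectic form'' is only half right: the two-form $\pi^*\beta|_{Z_c}$ does coincide with $\omega|_{Z_c}$, but the naturally induced one-form on $Z_c$ is $\iota_{X^\sigma}\omega|_{Z_c}=\sum_i c^{m-i}\pi^*\alpha_i$, not $\pi^*\alpha_m$, and these differ unless the lower-order $\alpha_i$ vanish. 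Second, you are right to flag the compact-leaf hypothesis: the mapping torus presentation of Equation (\ref{eq:mappingtorus}) requires a compact symplectic leaf; the paper's own proof uses this silently, and it holds in the situations where the lemma is invoked.
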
 

\begin{proof}
    By the decomposition of Equation (\ref{eq:Laurentseriesdef}), the differential forms $\widetilde{\alpha} = \sum_{i=1}^m z^{m-i} \pi^*\alpha_i$ and $\beta$ are closed differential forms on $Z$ such that $\widetilde{\alpha}\wedge \beta^{n-1}\neq 0$. Hence the restriction of $\widetilde{\alpha}$ and $\pi^*\beta$ to the hypersurface $\widetilde{Z}=z^{-1}(\varepsilon)$ for $\varepsilon$ small enough to assure that $\widetilde{Z}\subset \mathcal{N}(Z)$ are closed forms and as the condition $\widetilde{\alpha}\wedge \beta^{n-1}\neq 0$ is open, they define a symplectic mapping torus.
\end{proof}

	\subsection{Symplectic manifolds with convex boundary} \label{subsec:convex}

	\

	Floer theory in the non-compact set-up has already been studied previously in \cite{convexsymplectic}, most notably when there exists a Liouville vector field (instead of a symplectic one as described in the previous section) pointing outwards of the boundary.

	\begin{definition}\label{def:convex}
		\
		\begin{enumerate}
			\item A compact symplectic manifold $(W,\omega)$ with boundary $\partial W$ is {\bf convex} if there exists a Liouville vector field $X$ near the boundary, this means that $\mathcal{L}_X \omega=\omega$ pointing outwards.
			\item A non-compact symplectic manifold is convex if there exists an increasing sequence of compact convex submanifolds $W_i\subset W$ exhausting $W$.
		\end{enumerate}

		By convention, we assume that $X$ is always  outward-pointing on the boundary.
	\end{definition}

	This induces a contact $1$-form on the boundary, given by $\alpha=\iota_X\omega$ and the Hamiltonian dynamics on the boundary is described by the dynamics of the Reeb vector field $R_\alpha$ on $\partial M$. Important examples are given by cotangent bundles or Stein manifolds.

	Compact convex symplectic manifolds can be viewed as non-compact symplectic manifolds. Indeed, one can attach a so-called \emph{proboscis} to the boundary: using the Liouville vector field, one can glue
	$$(P_\varepsilon:=\partial W \times (-\varepsilon,\infty), d(e^r \alpha))$$
	to the convex symplectic manifold $(W,\omega)$. The resulting manifold, denoted by $(\widehat{W},\widehat{\omega})$ is called the \emph{completion} of $(W,\omega)$. The flow of the Liouville vector field generates an $\mathbb{R}^*$-action on the proboscis $P_\varepsilon$. The compatible almost complex structure in the proboscis is chosen to be invariant with respect to this action and on the boundary, the Liouville vector field is sent to the Reeb vector field.

	The non-compactness is an issue for the Floer trajectories, as they may run along the proboscis to infinity. A wise choice of the notion of admissibility for the Hamiltonian functions assures that the Floer trajectories remain in the original manifold. Namely, the Hamiltonian function $H$ is asked to satisfy on $P_\varepsilon$ that $H=h\circ f:P_\varepsilon\to \mathbb{R}$, where $f(x,r)=e^r$ (here $(x,r)\in \partial M,(-\varepsilon,\infty)$) and $h\in C^\infty(\mathbb{R})$ satisfies that $0\leq h'(\rho) < \kappa$, where
	\begin{equation}\label{eq:kappa}
		\kappa:=\inf\{c>0 \ | \ \dot{x}(t)=cR(x(t))  \text{ has a 1-periodic orbit}\}.
	\end{equation}

	It follows that the Hamiltonian equation on $\partial M$ is given by
	$$\dot{x}(t)=h'(1)R(x(t)),$$
	and thus that there are no $1$-periodic solutions on $\partial M \times [0,\infty)$.

	The authors in \cite{convexsymplectic} apply the maximum principle to the function $h$ to assure that the Floer trajectories do not escape to infinity but remain inside $M$.

	This gives rise to a Floer-type homology defined for convex symplectic manifolds. In order to compute this Floer homology, the authors proceed by proving that this homology is isomorphic to a Morse homology adapted to this non-compact situation. Morse theory for non-compact manifolds is generally not well-defined and additional assumptions on the Morse functions under consideration need to be taken. In this particular setting, an admissible Morse function is a smooth Morse function on $\widehat{M}$ whose restriction to $P_\varepsilon$ is given by
	$$F(x,r)=e^{-r},\quad x\in \partial M,r\in [-\varepsilon,\infty).$$

By choosing a suitable compatible almost complex structure $J$, the negative gradient of an admissible Morse function with respect to the metric $\omega(\cdot, J\cdot )$ fits well with the Hamiltonian vector field associated with an admissible time-independent Hamiltonian function. Indeed, by the choice of the almost complex structure, the negative gradient of an admissible Morse function in $P_\varepsilon$ is just the Liouville vector field $X$ and therefore transverse to the boundary.

	This homology gives rise to a well-defined Morse-type homology and it is classically known that this does not depend on the choice of $F$ (within the class of admissible Morse functions considered), see \cite{schwarz}.

	The authors prove that a PSS-isomorphism between the above-defined Floer homology and Morse homology holds. Thus, this turns the Floer homology on convex symplectic manifold into a homology that can be computed rather easily.

 In this article, we will proceed analogously as in \cite{convexsymplectic}: namely, $b^m$-symplectic manifolds can be viewed as manifolds with boundaries, equipped with a transverse \emph{symplectic} vector field and with an induced cosymplectic structure on the boundary as presented in Lemma \ref{lem:structure}. Around the critical set, the Hamiltonian dynamics that we will consider in the case of $b^m$-symplectic manifolds will have a component in the direction of the Reeb vector field of the cosymplectic structure as defined in Definition \ref{def:Reebcosymplectic}. However, this component, similar to Equation (\ref{eq:kappa}), will be assumed to have period small enough in order to avoid periodic orbits close to the critical set.


	\section{Dynamics around the critical hypersurface and admissible Hamiltonian functions}\label{sec:dyn}

	In this section, we focus on the definition of admissible Hamiltonian that gives rise to well-behaved dynamics close to the critical set of the $b$-symplectic (and $b^m$-symplectic) manifold.
	We will present the conditions on our functions incrementally to lay out the motivation underlying them, particularly in relation to the dynamics of the induced Hamiltonian vector fields.

	Note that if we consider $M\backslash Z$ as an open symplectic manifold with cosymplectic behavior at infinity in the sense introduced in Lemma \ref{lem:structure}, these are a subcategory of Hamiltonians on the open manifold with a prescribed behavior towards infinity.
	In this, our work here has parallels to the construction in \cite{tentacular1,tentacular2} and more recently in \cite{kirchhofflukat}, where the dynamics of the Hamiltonian needs to be subscribed near the singular locus or infinity.

	We also provide a detailed definition of the concept of admissible Hamiltonians.
	

	We begin by extending the notion of an almost complex structure compatible (see Definition \ref{def:acs}) to $b^m$-symplectic structures.

	\begin{definition}
		Let $(M,Z,\omega)$ be a $b^m$-symplectic manifold. An almost complex structure on the $b^m$-tangent bundle is an endomorphism
		$$J:{^{b^m}}TM\to {^{b^m}}TM$$
		such that $J^2=-\Id$. As $({^{b^m}}TM,\omega)$ is a symplectic vector bundle, there exist almost complex structures that are compatible with the $b^m$-symplectic structure, i.e.
		$$g_J(\cdot,\cdot):=\omega(\cdot, J\cdot)$$
		defines a $b^m$-Riemannian metric (that is a bilinear, symmetric and non-degenerate $2$-form on ${^{b^m}}TM$). We furthermore ask for the almost complex structure to be compatible with the cosymplectic structure in the tubular neighborhood around the critical set. More precisely, the almost complex structure must satisfy the following:
		\begin{enumerate}
			\item The restriction of $J$ to the symplectic leaves is a smooth almost complex structure.
			\item It leaves the distribution $\langle X^{\sigma}, R \rangle$ invariant, and in particular
				\[J (X^{\sigma}) = R .\]
			\item The almost complex structure commutes with the flow of the normal symplectic $b^m$-vector field, i.e.
				$$d\varphi_t(z,x)J(z,x)=J(\varphi_t(z),x))d\varphi_t(z,x) .$$
		\end{enumerate}
	\end{definition}

	This definition is very similar to the definition of an almost complex structure compatible with a symplectic structure around a contact-type hypersurface. The only difference here is that, instead of linking the Reeb vector field with the Liouville vector field, we have a relation between the symplectic vector field and the Reeb vector field.
	We give here an example that serves as a semi-local model around the critical set.

	\begin{example}\label{ex:acs}
		In a tubular neighbourhood $\mathcal{N}(Z)$ around the critical set $Z$, the $b^m$-symplectic form is given by
		$$\omega=\sum_{i=1}^m\frac{dz}{z^i}\wedge \pi^*\alpha_i + \pi^*\beta$$
		where $\alpha_i\in \Omega^1(Z)$ and $\beta\in \Omega^2(Z)$ are closed forms {as in Proposition \ref{prop:decomposition}}. The Reeb vector field is defined by the equations $\iota_{R}\alpha_m=1$ and $\iota_{R}\beta=0$ and we consider its extension to the tubular neighborhood. At each point around the tubular neighborhood, we have the splitting
		$${^{b^m}}TM=\langle X^\sigma\rangle \oplus \langle R\rangle \oplus \mathcal{F},$$
  where $\mathcal{F}$ is the symplectic foliation, given by the kernel of $\pi^*\alpha_m$.
  
		We define the almost complex structure by the following:
		\begin{itemize}
			\item $J(X^\sigma)=R$
			\item $J|_\mathcal{F}=J_\mathcal{F}$, where $J_\mathcal{F}$ is an almost complex structure on the foliation $\mathcal{F}$, compatible with the induced symplectic structure by $\beta$.
		\end{itemize}
		The $b^m$-metric $g_J(\cdot,\cdot)$ is given in this decomposition by
		\begin{equation}\label{eq:exactbmetric}
		g_J=\left(\frac{dz}{z^m}\right)^2+\alpha\otimes \alpha+g_\mathcal{F},
		\end{equation} 
		where $g_\mathcal{F}$ is the leafwise Riemannian metric on the foliation $F$. \end{example}
		
		Riemannian metrics on ${^b}TM$ as in Equation (\ref{eq:exactbmetric}) have appeared previously in \cite[Definition 2.12]{escapeorbits} under the name of asymptotically exact $b$-metric.
		\begin{definition}
		A metric on ${^{b^m}}TM$ is called \emph{asymptotically exact} if there exists a trivialization around the critical set $Z$
		$$(z, \pi):\mathcal{N}(Z)\to (-\varepsilon,\varepsilon)\times Z$$
		such that $g=\big(\frac{dz}{z^m}\big)^2+ \pi^*h$, where $h$ is a smooth metric on $Z$.
		\end{definition}

	This example in fact serves as a local model around the critical set.

	\begin{lemma}[\cite{byy}, Lemma 2.6]\label{lem:compatibleacs}
		Let $(M,Z,\omega)$ be a compact oriented $b$-symplectic manifold, with a defining function and tubular neighborhood $\mathcal{N}(Z)$ chosen as in Proposition \ref{prop:decomposition}.

		There exists a direct sum decomposition ${^b} TM|_{\mathcal{N}(Z)} ={^b}T\mathcal{N}(Z) = \pi^*E \oplus \pi^*\mathcal{F}$,
		where $\mathcal{F}= \ker(\alpha)$ and $E = \langle X^\sigma, R \rangle$ are subbundles of ${^b}TM|_Z$.
		Here, $X^{\sigma}$ and $R$ are defined as in Definitions \ref{def:normalbvectorfield} and \ref{def:Reebcosymplectic}, respectively.

		There also exists a compatible complex structure $J$ on the symplectic vector bundle $({^b}TM,\omega)$ such that the metric $g(\cdot,\cdot) = \omega(\cdot,J\cdot)$ takes a product form near $Z$:
		$$\left.g\right|_{\mathcal{N}(Z)}=\Big(\frac{dz}{z}\Big)^2 +\alpha \otimes \alpha+\pi^* g_\mathcal{F}$$
		where $g_\mathcal{F}$ is a compatible metric on the symplectic vector subbundle $(\mathcal{F},\beta_\mathcal{F}) \subset ({^b}TM|_Z,\omega|_Z)$.
	\end{lemma}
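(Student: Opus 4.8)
The plan is to build $J$ from an $\omega$-orthogonal splitting adapted to the Laurent normal form near $Z$. First I would invoke Proposition \ref{prop:decomposition} to write, on $\mathcal{N}(Z)$,
\[\omega = \frac{dz}{z}\wedge\pi^*\alpha + \pi^*\beta,\]
with $\alpha$ closed and $\beta$ symplectic on $\ker\alpha$, and take $X^\sigma = z\,\partial/\partial z$ together with the lift of the Reeb field $R$ (characterised by $\iota_R\alpha = 1$, $\iota_R\beta = 0$). Setting $E = \langle X^\sigma, R\rangle$ and $\mathcal{F} = \ker\alpha$, the decomposition ${^b}TM|_{\mathcal{N}(Z)} = \pi^*E\oplus\pi^*\mathcal{F}$ is immediate both on the level of ranks ($2 + (2n-2) = 2n$) and of linear independence: $X^\sigma$ spans the kernel of the epimorphism $\varphi : {^b}TM|_Z\to TZ$ of Definition \ref{def:normalbvectorfield}, whereas $R\notin\ker\alpha=\mathcal F$ since $\iota_R\alpha = 1$.

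The next step is to show this splitting is symplectically orthogonal. A direct contraction gives $\iota_{X^\sigma}\omega = \pi^*\alpha$ and $\iota_R\omega = -\frac{dz}{z}$, whence $\omega(X^\sigma, \mathcal{F}) = \pi^*\alpha(\ker\alpha) = 0$ and $\omega(R, \mathcal{F}) = -\frac{dz}{z}(\mathcal{F}) = 0$, the latter because $\mathcal{F}$ is tangent to $Z$ and carries no $dz$-component. Since moreover $\omega(X^\sigma, R) = \pi^*\alpha(R) = 1\neq 0$, the plane field $E$ is a symplectic subbundle, and therefore $\mathcal{F} = E^{\omega}$ is symplectic as well, with symplectic form $\beta_\mathcal{F} = \beta|_\mathcal{F}$.

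With this orthogonal symplectic splitting in hand, I would define $J$ blockwise: on $E$ set $J(X^\sigma) = R$ and $J(R) = -X^\sigma$, and on $\mathcal{F}$ take any almost complex structure $J_\mathcal{F}$ compatible with $\beta_\mathcal{F}$. Because $E$ and $\mathcal{F}$ are $\omega$-orthogonal and each block is compatible, $J = J_E\oplus J_\mathcal{F}$ is compatible with $\omega$. Evaluating $g = \omega(\cdot, J\cdot)$ on the frame then yields $g(X^\sigma,X^\sigma) = \omega(X^\sigma,R) = 1$, $g(R,R) = \omega(R,-X^\sigma) = 1$, with all cross terms vanishing by orthogonality and $g|_\mathcal{F} = \beta_\mathcal{F}(\cdot, J_\mathcal{F}\cdot) = g_\mathcal{F}$. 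Since $\frac{dz}{z}$ and $\pi^*\alpha$ are exactly the $b$-coframe dual to $X^\sigma, R$ and annihilating $\mathcal{F}$, this identifies the metric as
\[g|_{\mathcal{N}(Z)} = \Big(\frac{dz}{z}\Big)^2 + \alpha\otimes\alpha + \pi^*g_\mathcal{F}.\]

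Finally, I would extend $J$ from $\mathcal{N}(Z)$ to a globally defined $\omega$-compatible structure on ${^b}TM$: the compatible structures on a symplectic vector bundle form a fibre bundle with non-empty contractible fibres, so the section $J$ defined near $Z$ extends to a global compatible $J$, left unchanged on a smaller neighbourhood of $Z$. The point requiring the most care — and the main obstacle — is the second step: the block-diagonal $J$ is compatible with $\omega$ \emph{precisely} because $E$ and $\mathcal{F}$ are $\omega$-orthogonal, so it is the orthogonality computation that forces the product form (rather than merely some compatible metric), and it must be carried out in the $b$-category where $\frac{dz}{z}$ and $X^\sigma$ are genuine dual $b$-form and $b$-vector field.
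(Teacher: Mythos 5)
Your proposal is correct and follows essentially the same route the paper takes: the paper cites this lemma from \cite{byy} and sketches exactly your construction in Example \ref{ex:acs} — the splitting ${^{b}}TM = \langle X^\sigma\rangle \oplus \langle R\rangle \oplus \mathcal{F}$, the block-diagonal $J$ with $J(X^\sigma)=R$ and a leafwise compatible $J_\mathcal{F}$, and the resulting product metric $\big(\frac{dz}{z}\big)^2+\alpha\otimes\alpha+g_\mathcal{F}$, with the global extension handled by contractibility of the space of compatible structures. Your explicit verification of the $\omega$-orthogonality of $E$ and $\mathcal{F}$ via $\iota_{X^\sigma}\omega=\pi^*\alpha$ and $\iota_R\omega=-\frac{dz}{z}$ is the correct justification for the product form and fills in what the paper leaves implicit.
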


 We will assume from here on that the defining function and the tubular neighbourhood are chosen as in Lemma \ref{lem:compatibleacs}. 

	 By this choice, we will denote by $X^{\sigma}$ and $R$ the extensions of the normal (respectively Reeb) vector field to $\pi^{\ast} E$ in $\mathcal{N}(Z)$.

    The lemma can be generalized without any technical difficulties to $b^m$-symplectic manifolds and the proof will therefore be omitted.
	From this lemma, it readily follows that the associated $b^m$-metric to a compatible almost complex structure is an exact $b^m$-metric.

	\begin{lemma}
		The space of compatible almost complex structures is non-empty and contractible.
	\end{lemma}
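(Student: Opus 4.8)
The plan is to establish the two assertions separately: non-emptiness via a partition-of-unity argument, and contractibility via a convexity argument on a suitable space of symmetric positive-definite endomorphisms. Throughout, I would work on the $b^m$-tangent bundle ${}^{b^m}TM$, which by construction is an honest smooth vector bundle over $M$ carrying the fiberwise nondegenerate, skew-symmetric bilinear form $\omega$ (the $b^m$-symplectic form viewed as a section of $\Lambda^2({}^{b^m}T^*M)$). The key observation is that the compatibility conditions with the cosymplectic structure near $Z$ are, by Lemma \ref{lem:compatibleacs}, already realized by a specific model $J_0$ (the one built in Example \ref{ex:acs}), so the problem reduces to the classical statement about compatible complex structures on a symplectic vector bundle, applied fiberwise and patched across the tubular neighbourhood.

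For non-emptiness, first I would fix the compatible almost complex structure $J_0$ furnished by Lemma \ref{lem:compatibleacs} on a tubular neighbourhood $\mathcal{N}(Z)$, where it respects the splitting $\langle X^\sigma, R\rangle \oplus \mathcal{F}$, satisfies $J_0(X^\sigma)=R$, and commutes with the flow of $X^\sigma$. Away from $Z$ the bundle $({}^{b^m}TM,\omega)$ is an ordinary symplectic vector bundle, so compatible complex structures exist locally by the standard polar-decomposition construction: choosing any $b^m$-metric $g$, the endomorphism $A$ defined by $\omega(u,v)=g(Au,v)$ is invertible and skew-adjoint, and $J:=(AA^*)^{-1/2}A$ is $\omega$-compatible. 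Then I would patch: cover $M$ by the neighbourhood $\mathcal{N}(Z)$ on which $J_0$ is defined together with Darboux-type charts on $M\setminus Z$, and glue the locally constructed complex structures using the fact that the space of compatible structures is fiberwise convex-like (see below). Concretely, averaging metrics via a partition of unity subordinate to the cover yields a global $b^m$-metric restricting to the product form near $Z$, and the polar decomposition applied to this metric produces a global compatible $J$ that coincides with $J_0$ near $Z$; the commutation and $J(X^\sigma)=R$ conditions are preserved because they hold for $J_0$ and the metric is unchanged there.

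For contractibility, I would invoke the standard identification of the space $\mathcal{J}(\omega)$ of $\omega$-compatible complex structures with the space $\mathrm{Met}(\omega)$ of $\omega$-compatible Riemannian metrics via $g\mapsto J_g$, where $J_g$ is obtained from the polar decomposition above; this map is a homeomorphism whose inverse is $J\mapsto \omega(\cdot,J\cdot)$. Since the relevant space of metrics is an open convex cone in the affine space of symmetric $b^m$-tensors (a convex combination of compatible metrics is again compatible), it is contractible, and the compatibility constraints near $Z$ cut out a convex subset, preserving contractibility. Transporting this contraction through the homeomorphism gives a contraction of the space of compatible almost complex structures.

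The main obstacle I anticipate is verifying that the convexity and polar-decomposition machinery, which is classical for smooth symplectic vector bundles, goes through without incident on the $b^m$-tangent bundle near $Z$ while simultaneously respecting the extra cosymplectic compatibility conditions (items (1)--(3) in the definition). The delicate point is that the contraction must not merely preserve $\omega$-compatibility but must keep the distribution $\langle X^\sigma,R\rangle$ invariant with $J(X^\sigma)=R$ and maintain commutation with the flow of $X^\sigma$; these are linear conditions defining a closed linear subspace at each fiber, so convex combinations and the induced polar decomposition stay within the constrained class, but checking this explicitly in the splitting of Example \ref{ex:acs}---where $J$ is forced to send $X^\sigma$ to $R$ and act as a leafwise compatible $J_\mathcal{F}$ on $\mathcal{F}$---requires confirming that the $\mathcal{F}$-part and the $\langle X^\sigma,R\rangle$-part decouple under the construction. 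Once this decoupling is observed, the argument reduces on each factor to the classical case and the result follows.
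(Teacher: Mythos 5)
Your proposal is correct and follows essentially the same route as the paper, which simply fixes the model structure of Example \ref{ex:acs} near $Z$ and then invokes Exercise 2.2.19 of \cite{wendl}; your polar-decomposition and retraction argument is precisely the content of that exercise, unpacked. The only imprecision is the claim that $\omega$-compatible metrics form a convex set --- the standard argument instead retracts the (convex) space of \emph{all} bundle metrics onto $\mathcal{J}(\omega)$ via polar decomposition --- but this does not affect the conclusion.
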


	\begin{proof}
		We start by defining a compatible almost complex structure in a tubular neighborhood around the critical set as in Example \ref{ex:acs} and we then apply Exercise 2.2.19 in \cite{wendl}.
	\end{proof}

	Let $(M,Z)$ a $b$-manifold, and take $z : M \to \mathbb{R}$ a defining function.
	We will denote the set of time-dependent $b^m$-Hamiltonian functions  (respectively $b$-Hamiltonian functions) as

	\[{}^b \mathcal{C}^{\infty}(S^1\times M) := \{k(t) \log |z| + h_t \ | \ k \in \mathcal{C}^{\infty}(S^1), h \in \mathcal{C}^{\infty}(S^1\times M)\} ,\]
	\[{}^{b^m} \mathcal{C}^{\infty}(S^1\times M) := \left(\bigoplus_{i=1}^{m-1} z^{-i} \mathcal{C}^{\infty}(S^1\times \mathbb{R})\right) \oplus {}^b \mathcal{C}^{\infty}(S^1 \times M) ,\]
	where $\mathcal{C}^{\infty}(S^1\times \mathbb{R})$ refers to the set of functions taking values in time $t$ (the $S^1$-component) and the defining function $z$ (the $\mathbb{R}$-component).

	\begin{definition}\label{ref:linearalongsvf}
		Let $(M,Z,\omega)$ be a $b^m$-symplectic manifold and let $X^{\sigma}$ the normal symplectic $b^m$-vector field.
		We say that a (time-dependent) Hamiltonian $H_t \in {^{b^m}}\mathcal{C}^\infty(S^1\times M)$ is \emph{linear along $X^{\sigma}$} if $\mathcal{L}_{X^{\sigma}} H_t=k_i(t)$, where $k_i\in \mathcal{C}^{\infty}(S^1)$ is a time-dependent function defined in the neighbourhood of the critical set $Z_i$.
	\end{definition}

	\begin{remark}
		If $k \equiv 0$, this means that the Hamiltonian is a first integral of $X^{\sigma}$.
	\end{remark}

 \begin{remark}
    Note that the time dependent function $k_i(t)$ is a smooth function defined in a local neighbourhood around a connected component of the critical set $Z_i$, i.e. they can change from one connected component of $Z$ to another one.
\end{remark}

    \begin{definition}\label{def:constantlinearweight}
    If there exists a global defining function $f$ for the critical set and a unique smooth time-dependent function $k(t)$ such that in each connected component of $Z$, the Hamiloninan $H$ is given in the tubular neighbourhoods by $H_t=k(t)\log|f|+h_t$, or $H_t=k(t)\frac{1}{f^{m-1}}+h_t$ when $m>1$, where $h_t$ is a smooth time-dependant function, we say that $H_t$ is \emph{unimodular}.
\end{definition}

 \begin{example}\label{ex:btoruseasy}
     Consider the $b$-symplectic manifold $(\mathbb{T}^2,\{\theta_1=0\}\sqcup \{\theta_1=\pi\},\omega=\frac{d\theta_1}{\sin\theta_1} \wedge d\theta_2)$ and the Hamiltonian globally defined by
		$$H_t=k(t)\log \left| {\sin\theta_1} \right| +\sin\theta_2,$$
		where $k\in \mathcal{C}^{\infty}(S^1)$ is a time-dependent function. This is a unimodular $b$-Hamiltonian function.
 \end{example}

	\begin{example}\label{ex:btorus}
		Consider the $b$-symplectic manifold $(\mathbb{T}^2,\{\theta_1=0\}\sqcup \{\theta_1=\pi\},\omega=\frac{d\theta_1}{\sin\theta_1} \wedge d\theta_2)$ and the Hamiltonian
  $$H_t=k(t)\log \left| \frac{\sin\theta_1}{1 +\cos\theta_1} \right| +\sin\theta_2,$$
		where $k\in \mathcal{C}^{\infty}(S^1)$ is a time-dependent function.
		First, let us see that this is indeed a globally defined $b$-function.

        The defining function for the connected component of the critical set $Z_1=\{\theta_1=0\}$ is given by $f_1(\theta_1)=\frac{\sin\theta_1}{1+\cos\theta_1}$, which is a \emph{smooth} function around this connected component of the critical set. 
        Using the trigonometric relation $\frac{\sin\theta_1}{1+\cos{\theta_1}}=\frac{1-\cos\theta_1}{\sin\theta_1}$, we can rewrite $$H_t=k(t)\log \left| \frac{1-\cos\theta_1}{\sin\theta_1} \right| +\sin\theta_2=-k(t)\log \left| \frac{\sin\theta_1}{1-\cos\theta_1} \right| +\sin\theta_2.$$
        Around the critical set $Z_2=\{\theta_1=\pi\}$, the function $f_2(\theta_1)=\frac{\sin\theta_1}{1-\cos\theta_1}$ is a \emph{smooth} defining function for the connected component of the critical set given by $\{\theta_1=\pi\}$.

        However, this $b$-Hamiltonian function is \emph{not} a unimodular Hamiltonian function, as it can not be written as $H_t=k(t)\log|f|+h_t$ for a \emph{global} defining function $f$.
		The Hamiltonian vector field is then given by $X_H = k(t)\frac{\partial}{\partial \theta_2}+\cos \theta_2 \sin \theta_1 \frac{\partial}{\partial \theta_1}$.

		Considering the particular case in which $k$ is a constant and $k \notin \mathbb{Z}$ then the time $1$-flow of $X_H$ does not have any periodic orbits around the critical set.
	\end{example}

	We note that, in dimension 2, the Hamiltonian vector fields associated with $b$-Hamiltonian functions that are linear along $X^{\sigma}$ will always have infinitely many periodic orbits around the critical set:

	\begin{proposition}\label{prop:linaralongXnopo}
		Let $(\Sigma,Z,\omega)$ be a compact $b$-symplectic surface with $X^{\sigma}$ a normal symplectic vector field.
		Let $H$ be a $b$-Hamiltonian function such that $\mathcal{L}_{X^\sigma} H=k\in \mathbb{R}$ is constant and different from zero in a tubular neighbourhood around $Z$.
		Then, for any $\varepsilon > 0$ small enough there exist periodic orbits of period $\frac{T}{k}$ in the tubular neighborhood $\mathcal{N}_{\varepsilon}(Z)$ around the critical set with modular weight $T$. Furthermore, if $k\in T \mathbb{Z}$, then there exists $1$-periodic Hamiltonian orbit in $\mathcal{N}_\varepsilon(Z)$.
	\end{proposition}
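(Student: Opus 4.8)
The plan is to reduce everything to an explicit computation in a tubular neighbourhood of $Z$, exploiting the Laurent normal form of Proposition \ref{prop:decomposition}. Since $\dim\Sigma = 2$, the critical set $Z$ is a disjoint union of circles, and the two-form $\beta$ in the decomposition vanishes for dimensional reasons (as $\Omega^2(Z) = 0$ when $\dim Z = 1$). Hence on each component of $\mathcal{N}_\varepsilon(Z)$ I would choose coordinates $(z,s)$, with $z$ the defining function and $s \in \mathbb{R}/T\mathbb{Z}$ a coordinate along $Z$ chosen so that the nowhere-vanishing form $\alpha_1$ becomes $ds$; here $T = \int_Z \alpha_1$ is exactly the modular weight, i.e. the period of the Reeb flow. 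In these coordinates $\omega = \frac{dz}{z}\wedge ds$, the normal symplectic vector field is $X^\sigma = z\,\partial_z$ (cf. the example after Definition \ref{def:normalbvectorfield}), and the Reeb field of Definition \ref{def:Reebcosymplectic} is $R = \partial_s$.

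Next I would pin down the shape of $H$ near $Z$. Writing $H = k\log|z| + h$ with $h$ smooth, one computes $\mathcal{L}_{X^\sigma}H = z\,\partial_z H = k + z\,\partial_z h$. Imposing that this equals the constant $k$ throughout the neighbourhood forces $z\,\partial_z h \equiv 0$, and since $h$ is smooth across $z=0$ this means $h = h(s)$ depends only on the coordinate along $Z$. Solving $\iota_{X_H}\omega = -dH$ then yields
\[ X_H = -\,z\,h'(s)\,\partial_z + k\,\partial_s . \]
The decisive feature is that the $s$-component is the nonzero constant $k$, so along any trajectory $s(t) = s_0 + k t$, and the remaining equation $\dot z = -h'(s(t))\,z$ is linear in $z$ with a coefficient that is periodic in $t$.

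The heart of the argument is the monodromy computation. Integrating the linear equation gives $z(t) = z_0\exp\!\big(-\int_0^t h'(s_0 + ku)\,du\big)$; after the time $T/k$ needed for $s$ to traverse $Z$ once (taking $k>0$, otherwise replace $k$ by $|k|$), the substitution $w = s_0 + ku$ turns the exponent into $-\tfrac1k\big(h(s_0 + T) - h(s_0)\big) = 0$, precisely because $h$ is single-valued on the circle $Z \cong \mathbb{R}/T\mathbb{Z}$ — the only multivaluedness of the $b$-function sits in the $k\log|z|$ term, whose $s$-derivative vanishes. Thus $z(T/k) = z_0$ and $s(T/k) \equiv s_0$, so every trajectory in the neighbourhood closes up with period $T/k$ (this is the minimal period, since $s$ is monotone); choosing $|z_0|$ small and $\varepsilon$ accordingly keeps these orbits inside $\mathcal{N}_\varepsilon(Z)$, producing infinitely many $T/k$-periodic orbits. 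For the last assertion, if $k \in T\mathbb{Z}$, say $k = T\ell$, then $T/k = 1/\ell$ divides $1$, so each such orbit is in particular $1$-periodic.

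The step I expect to require the most care is the reduction to the clean normal form $\omega = \frac{dz}{z}\wedge ds$ together with $H = k\log|z| + h(s)$: one must justify that the Lie-derivative condition genuinely removes the $z$-dependence of the smooth part on the whole neighbourhood (not merely on $Z$), and that the reparametrisation identifying $\alpha_1$ with $ds$ is compatible with the identification of $T = \int_Z\alpha_1$ as the modular weight. Once this normalisation is in place, the ODE analysis and the vanishing of the monodromy integral are routine.
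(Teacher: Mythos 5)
Your computation follows the same route as the paper's proof: reduce to the normal form $\omega=\frac{dz}{z}\wedge d\theta$ on $(-\varepsilon,\varepsilon)\times\mathbb{R}/T\mathbb{Z}$, observe that $\mathcal{L}_{X^\sigma}H=k$ forces the smooth part $h$ to be independent of $z$, integrate the resulting ODE explicitly, and check that the $z$-monodromy over one turn in $\theta$ vanishes. Your normalisation step (identifying $\alpha_1$ with $ds$ and $T=\int_Z\alpha_1$ with the modular weight, and killing $\beta$ for dimension reasons) is fine and matches Proposition \ref{prop:decomposition}.

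There is, however, one point where your argument is strictly narrower than the paper's. You take $H=k\log|z|+h(s)$ \emph{autonomous}, in which case the monodromy exponent $\int_0^{T/k}h'(s_0+ku)\,du=\frac{1}{k}\bigl(h(s_0+T)-h(s_0)\bigr)$ vanishes for \emph{every} $s_0$ and every nearby orbit closes up. The paper (consistently with Definition \ref{ref:linearalongsvf}, where only the leading coefficient is required to be a function of $t$ alone) allows the smooth part to be time-dependent, $h_t(\theta)$, with only $k$ constant. Then the displacement after time $T/k$ is
\[
F(\theta_0)=\int_0^{T/k}\frac{\partial h_t}{\partial\theta}(\theta_0+kt)\,dt,
\]
and your telescoping substitution no longer applies because the integrand carries an explicit $t$-dependence: $F$ need not vanish identically, and not every trajectory is periodic. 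The paper closes this gap by an averaging argument: $\int_0^T F(\theta_0)\,d\theta_0=0$ by Fubini and the $T$-periodicity of $h_t$ in $\theta$, so $F$ has a zero by continuity, which produces for each small $z_0$ \emph{some} $\theta_0$ lying on a $T/k$-periodic orbit. If the proposition is read as autonomous your proof is complete; if (as the paper's own proof indicates) time-dependence of $h_t$ is intended, you need to add this mean-value step, since the conclusion "every trajectory closes up" fails in that generality.
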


	\begin{proof}
		By compactness, $Z$ must be a finite union of circles.
		The mapping torus as in Equation (\ref{eq:mappingtorus}) is trivial and thus a tubular neighbourhood of the critical set is diffeomorphic to $(-\varepsilon,\varepsilon)\times [0,T]\left/\raisebox{-.1em}{$(0 \sim T)$}\right.$ and the $b$-symplectic form is given in these coordinates by $\omega=\frac{dz}{z}\wedge d\theta$.
  
   In these coordinates the normal symplectic $b$-vector field is $X^{\sigma} =  z \frac{\partial}{\partial z}$.
    By definition, the $b$-Hamiltonian function is given by $H_t(z,\theta)=k_t\log|z|+h_t(\theta,z)$.
		If $H_t$ satisfies the conditions of the proposition, then $k_t=k$ and $h_t(\theta,z)=h_t(\theta)$, where $h_t(\theta)$ is a $T$-periodic function in $\theta$. Thus the Hamiltonian function is given by $H_t(z,\theta) = k\log |z| + h_t(\theta)$.

		Hence the Hamiltonian vector field is given by
        \[X_{H_t} =  k \frac{\partial}{\partial \theta} - z\frac{\partial h_t}{\partial \theta} \frac{\partial}{\partial z}.\]

		The flow of this vector field $X_{H_t}$ is given by
  		\[\begin{array}{l}
			\theta(t) = \theta_0 +kt \\
			z(t) = z_0 \exp{\left( - \displaystyle\int_0^t \frac{\partial h_s}{\partial \theta}(\theta_0 +  ks) ds \right)}
		\end{array}\]
		where $(z_0,\theta_0)$ is the initial position.
		Recall that, as the image of $\theta$ lies in $[0,T]\left/\raisebox{-.1em}{$(0 \sim T)$}\right.$, $\theta(t)$ is merely a periodic flow, and in particular $\theta(t+\frac{T}{k})= \theta(t)$.
      
        Let us define the function 	$$F(\theta_0):=\int_0^{\frac{T}{k}} \frac{\partial h_t}{\partial \theta}(\theta_0 + kt) dt.$$

		The $z$-component of the flow has a $1$-periodic orbit if and only if there is some $\theta_0 \in S^1$ such that $F(\theta_0)= 0$. We claim that such a $\theta_0$ always exists.
		Indeed, integrating the function $F$ over $S^1$ we obtain by applying Fubini's theorem that
		\[\int_0^T F(\theta) d\theta = \int_0^T \int_0^{\frac{T}{k}} \frac{\partial h_t}{\partial \theta}(\theta + kt) dt d\theta = \int_0^{\frac{T}{k}} \int_0^T \frac{\partial h_t}{\partial \theta}(\theta + kt) d\theta dt = \int_0^{\frac{T}{k}} [h_t(\theta + kt)]_{\theta=0}^{\theta=T} dt.\]

        As $h_t$ periodic in $\theta$ of period $T$, it follows that $h_t(T+kt)=h_t(kt)$ and hence $F(\theta)=0$ for some $\theta\in [0,T]$.
		Then, for any $z_0$ small enough there exists some $\theta_0$ such that $(z_0,\theta_0)$ belongs to a periodic orbit of period $\frac{T}{k}$. In particular, if $k\in T\mathbb{Z}$, then there exists orbits of period $\frac{T}{k}$ in $\mathcal{N}_\varepsilon(Z)$, and therefore also $1$-periodic orbits.
	\end{proof}

A similar statement holds for $b^m$-symplectic surfaces.

	\begin{proposition}\label{prop:bmlinaralongXnopo}
		Let $(\Sigma,Z,\omega)$ be a compact $b^m$-symplectic surface with $X^{\sigma}$ a normal symplectic vector field.
		Let $H$ be a $b^m$-Hamiltonian function such that $\mathcal{L}_{X^\sigma} H=k\in \mathbb{R}$ is constant (different from zero) in a tubular neighbourhood around $Z$.
		Then for any $\varepsilon > 0$ there exist periodic orbits around the critical set.
	\end{proposition}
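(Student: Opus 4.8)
The plan is to transpose the proof of Proposition \ref{prop:linaralongXnopo} almost verbatim in structure, replacing the $b$-model $\frac{dz}{z}\wedge d\theta$ by its $b^m$-analogue and the logarithmic primitive $\log|z|$ by the primitive $-\frac{1}{(m-1)z^{m-1}}$ adapted to $X^{\sigma}=z^m\frac{\partial}{\partial z}$. First I would reduce to a normal form near one component of $Z$. By compactness $Z$ is a finite union of circles; I fix one of them, of modular weight $T$. Its mapping torus is trivial (the symplectic leaves are points and the Reeb return map is the identity, as discussed after Equation (\ref{eq:mappingtorus})), so Lemma \ref{lem:tubularmappingtorus} together with the $b^m$-Darboux theorem (Theorem \ref{theo:darboux}) provides coordinates $(z,\theta)\in(-\varepsilon,\varepsilon)\times\left([0,T]/(0\sim T)\right)$ on $\mathcal{N}_\varepsilon(Z)$ with
\[\omega=\frac{dz}{z^m}\wedge d\theta,\qquad X^{\sigma}=z^m\frac{\partial}{\partial z}.\]
The hypothesis $\mathcal{L}_{X^{\sigma}}H_t=z^m\frac{\partial H_t}{\partial z}=k$ is then the ODE $\frac{\partial H_t}{\partial z}=kz^{-m}$; integrating and noting that the $z$-independent integration term is the smooth, $T$-periodic part of the $b^m$-function $H$ pins the Hamiltonian down to
\[H_t(z,\theta)=-\frac{k}{m-1}\,z^{-(m-1)}+h_t(\theta),\]
with $h_t$ smooth and $T$-periodic in $\theta$, the exact $b^m$-counterpart of the normal form $k\log|z|+h_t(\theta)$ found in the $b$-case.

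Next I would read off the Hamiltonian vector field from $\iota_{X_H}\omega=-dH$, obtaining
\[X_{H_t}=k\frac{\partial}{\partial\theta}-z^m\frac{\partial h_t}{\partial\theta}\frac{\partial}{\partial z},\]
which differs from the $b$-case only in the factor $z^m$ in place of $z$. The $\theta$-flow is again $\theta(t)=\theta_0+kt$, and the separable $z$-equation $\dot z=-z^m\frac{\partial h_t}{\partial\theta}(\theta_0+kt)$ integrates to
\[z(t)^{-(m-1)}=z_0^{-(m-1)}+(m-1)\int_0^t\frac{\partial h_s}{\partial\theta}(\theta_0+ks)\,ds.\]
Since $\theta$ has period $T/k$, the orbit through $(z_0,\theta_0)$ closes up after time $T/k$ precisely when the integral vanishes there, i.e. when $F(\theta_0):=\int_0^{T/k}\frac{\partial h_s}{\partial\theta}(\theta_0+ks)\,ds=0$.

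This is literally the same function $F$ as in Proposition \ref{prop:linaralongXnopo}, so the identical Fubini computation combined with the $T$-periodicity of $h_s$ gives $\int_0^T F(\theta)\,d\theta=0$, and hence $F$ admits a zero $\theta_0$. For this $\theta_0$ and any $z_0$ small enough, the right-hand side of the displayed expression for $z(t)^{-(m-1)}$ is dominated by the large term $z_0^{-(m-1)}$ (the integral being uniformly bounded on $[0,T/k]$), so it keeps a constant sign and stays bounded away from $0$; consequently $z(t)$ is defined on all of $[0,T/k]$, remains inside $(-\varepsilon,\varepsilon)$ with the sign of $z_0$, and satisfies $z(T/k)=z_0$. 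Thus $(z_0,\theta_0)$ lies on a periodic orbit of period $T/k$, and letting $z_0$ run over small values yields infinitely many periodic orbits in $\mathcal{N}_\varepsilon(Z)$.

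The only genuinely new point compared with the $b$-case, and the step I expect to require the most care, is this last well-posedness check. For $m=1$ the flow is $z(t)=z_0\exp(\cdots)$, automatically of constant sign and never escaping, whereas for $m>1$ the power-law solution could in principle blow up if the bracketed quantity reached $0$. Restricting $\varepsilon$ (equivalently $|z_0|$) so that $z_0^{-(m-1)}$ dominates the bounded integral is exactly what keeps the orbit inside the tubular neighbourhood and makes it genuinely close up; a short remark distinguishing the two parities of $m$ (so as to confirm that $\mathrm{sign}(z(t))$ is preserved and $z(T/k)=z_0$, not merely $|z(T/k)|=|z_0|$) then completes the argument.
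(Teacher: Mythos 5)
Your overall strategy matches the paper's, but there is one unjustified step that the paper is careful to avoid. You invoke the $b^m$-Darboux theorem to put the form into the shape $\omega=\frac{dz}{z^m}\wedge d\theta$ on the \emph{whole} tubular neighbourhood $(-\varepsilon,\varepsilon)\times S^1$. Theorem \ref{theo:darboux} is a local statement (coordinates around a point of $Z$), and semi-globally around a circle component the best normal form is the Laurent series of Proposition \ref{prop:decomposition}, $\omega=\bigl(\sum_{i=1}^m z^{m-i}a_i\bigr)\frac{dz}{z^m}\wedge d\theta$, in which the lower-order coefficients $a_i$, $i<m$, are cohomological invariants of the $b^m$-structure and cannot in general be normalised away when $m>1$ (this is precisely why the $m=1$ argument of Proposition \ref{prop:linaralongXnopo}, where such a semi-global normal form does hold, does not transpose verbatim). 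With the correct normal form the Hamiltonian vector field acquires the non-constant positive prefactor $\bigl(\sum_{i=1}^m z^{m-i}a_i\bigr)^{-1}$; the field you actually integrate is the reparametrised $\widetilde{X}_{H_t}=k\frac{\partial}{\partial\theta}-z^m\frac{\partial h_t}{\partial\theta}\frac{\partial}{\partial z}$. The paper closes this gap by observing that $X_{H_t}$ and $\widetilde{X}_{H_t}$ are orbitally equivalent, so closed orbits of one are closed orbits of the other. Your Fubini argument for the zero of $F$ then applies unchanged, but the conclusion that the resulting orbits of $X_{H_t}$ all have period $T/k$ is false in general: the reparametrisation destroys the common period, as the paper explicitly remarks after the proof. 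The statement you are proving only asserts existence of periodic orbits, so once you insert the orbital-equivalence step the argument is complete; as written, the normal-form claim and the period claim are both wrong for $m>1$.

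On the positive side, your final well-posedness check --- that for $|z_0|$ small the quantity $z_0^{-(m-1)}$ dominates the bounded integral, so $z(t)^{-(m-1)}$ never vanishes, $z(t)$ keeps the sign of $z_0$, and the orbit genuinely closes up rather than merely returning to the same modulus --- is a point the paper's proof passes over in silence, and it is worth making explicit for $m>1$ where the power-law solution could a priori blow up.
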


 In contrast to the last proposition, in the $b^m$-symplectic case, we don't have a family of orbits with the same period. The proof will be as in Proposition \ref{prop:linaralongXnopo}, and we only repeat here the most important steps.

 \begin{proof}
 In the tubular neighborhood, the $b^m$-symplectic form is given as the following Laurent series:
      $$\omega = \left(\sum_{i=1}^m z^{m-i} a_i \right) \frac{dz}{z^m} \wedge d\theta.$$
By the condition on the Hamiltonian, it is given by
      $H_t(z,\theta) = -\frac{1}{m-1} \frac{k}{z^{m-1}} + h_t(\theta)$ if $m > 1$, where $k$ is constant.
      The Hamiltonian vector field is therefore given by
    \[X_{H_t} = \frac{1}{\sum_{i=1}^m z^{m-i} a_i} \left( k \frac{\partial}{\partial \theta} - z^m \frac{\partial h_t}{\partial \theta} \frac{\partial}{\partial z} \right).\]

As in a tubular neighborhood, the function $ \frac{1}{\sum_{i=1}^m z^{m-i} a_i}$ is non-vanishing, we consider the parametrization of the vector field $X_H$ given by
$$\widetilde{X}_{H_t}= k \frac{\partial}{\partial \theta} - z^m \frac{\partial h_t}{\partial \theta} \frac{\partial}{\partial z}.$$

    As this is a reparametrization, both vector field are orbitally equivalent, and thus periodic orbits of $\widetilde{X}_{H_t}$ correspond to periodic orbits of $X_{H_t}$.

    We now analyze the flow of the vector field $\widetilde{X}_{H_t}$ and show that this vector field has always infinitely many periodic orbits around the critical set.

    \[\begin{cases}
			 \theta(t) = \theta_0 +kt \\
			 z(t) =  \left(\frac1{z_0^{m-1}} - (m-1) \displaystyle\int_0^t \frac{\partial h_t}{\partial \theta}(\theta_0 +  ks) ds \right)^{-\frac{1}{m-1}}
		\end{cases}\]

    The flow in the $\theta$ coordinate is periodic of period $\frac{T}{k}$ as in the proof of Proposition \ref{prop:linaralongXnopo} and the same arguments apply as therein apply. This implies that for any $z_0$, there exists $\theta_0$ such that the integral curve of the reparametrization $\widetilde{X}_{{H}_t}$ with initial condition $(z_0,\theta_0)$ is periodic. The same thus holds for the Hamiltonian vector field $X_{H_t}$. However, notice that due to the reparametrization, the periods of the periodic orbits are not constant.
 \end{proof}

    \begin{remark}
        On $Z$, the flow described in Proposition \ref{prop:bmlinaralongXnopo} is as the flow in Proposition \ref{prop:linaralongXnopo}. Hence on $Z$, there is a periodic orbit of period $\frac{a_m}{k}$ on $Z$, where $a_m$ is the modular weight.
    \end{remark}

	This result is also true in higher dimensions if $H$ does not depend on time.

	\begin{lemma}
		Assume $H$ is a time-independent Hamiltonian that is a first integral of $X^{\sigma}$. Then there is a $1$-parametric family of critical points approaching the critical set.
	\end{lemma}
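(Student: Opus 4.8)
The plan is to show that the first-integral hypothesis forces $H$ to be independent of the defining function near $Z$, and then to produce the family as the normal fibre over a critical point of the restriction $H|_Z$, using non-degeneracy of $\omega$ to conclude that this whole fibre consists of equilibria.

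First I would pin down the analytic type of $H$ near $Z$. Writing a general $b^m$-function as $H=\sum_{i=1}^{m-1}c_i z^{-i}+a\log|z|+g$ with $c_i\in\mathcal{C}^\infty(Z)$, $a\in\mathbb{R}$ and $g$ smooth, and using the model $X^\sigma=z^m\partial_z$ from Definition~\ref{def:normalbvectorfield} and Example~\ref{ex:acs}, a direct computation gives
\[
\mathcal{L}_{X^\sigma}H=-\sum_{i=1}^{m-1} i\,c_i\,z^{m-1-i}+a\,z^{m-1}+z^m\,\partial_z g .
\]
Imposing $\mathcal{L}_{X^\sigma}H\equiv 0$ and matching powers of $z$ (the terms occur at distinct orders $z^0,\dots,z^{m-1},\,O(z^m)$) forces $c_1=\dots=c_{m-1}=0$, $a=0$ and $\partial_z g=0$. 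Hence $H$ is genuinely smooth and constant in the normal direction near $Z$, so $H=\pi^\ast h$ with $h:=H|_Z$ and $\pi:\mathcal{N}(Z)\to Z$ the projection of Proposition~\ref{prop:decomposition}.

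Next I would exploit compactness and non-degeneracy. Since $M$ is compact, $Z$ is a compact closed hypersurface, so $h=H|_Z$ attains a maximum (and a minimum); let $q^\ast\in Z$ be such a point, so $dh(q^\ast)=0$. Consider the normal fibre $\gamma:=\pi^{-1}(q^\ast)\cap\mathcal{N}(Z)=\{(z,q^\ast): z\in(-\varepsilon,\varepsilon)\}$. Along $\gamma$ we have $dH_p=(d\pi_p)^\ast dh_{q^\ast}=0$, because $d\pi_p$ lands in $T_{q^\ast}Z$ where $dh$ vanishes. As $\omega$ is a $b^m$-symplectic form, the contraction ${}^{b^m}TM\to{}^{b^m}T^\ast M$, $v\mapsto\iota_v\omega$, is an isomorphism, so $\iota_{X_H}\omega=-dH=0$ on $\gamma$ yields $X_H\equiv 0$ along $\gamma$. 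Since $H$ is time-independent, each such zero of $X_H$ is an equilibrium, hence a constant $1$-periodic orbit and a critical point of the action functional $\mathcal{A}_H$. Thus $\gamma$ is the desired one-parameter family of critical points, which reaches $Z$ at $q^\ast$ and contains critical points at every $z\in(-\varepsilon,\varepsilon)$ approaching the critical set as $z\to 0$.

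I expect the only delicate point to be the first step: ruling out the singular $b^m$-terms so that $H$ is honestly $z$-independent, which is what makes the whole fibre (rather than just a limiting asymptotic locus) consist of equilibria; once $H=\pi^\ast h$ is established, the existence of $q^\ast$ and the vanishing of $X_H$ along $\gamma$ are immediate. As an illustrative supplement one may also record the explicit form $X_H=-(Rh)\,X^\sigma+X_h^{\mathcal{F}}$ obtained from the Darboux normal form of Theorem~\ref{theo:darboux} together with the splitting ${}^{b^m}TM=\langle X^\sigma\rangle\oplus\langle R\rangle\oplus\mathcal{F}$ of Example~\ref{ex:acs}; this shows that $X_H$ carries no Reeb component and that $\gamma$ is a fibre of leafwise-critical, Reeb-critical points, which clarifies why the degenerate families persist precisely in the first-integral case $k\equiv 0$.
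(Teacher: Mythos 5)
Your proof is correct and follows essentially the same route as the paper's: the first-integral condition makes $H$ constant along the normal direction, compactness produces a critical point of the restriction of $H$ to $Z$ (the paper phrases this as one critical point on each level hypersurface $z^{-1}(\varepsilon)$, which amounts to the same set since $H=\pi^{*}h$), and non-degeneracy of $\omega$ turns the resulting normal fibre of zeros of $dH$ into a one-parameter family of equilibria approaching $Z$. Your preliminary verification that the singular terms of $H$ must vanish, so that $H$ is genuinely smooth and $z$-independent near $Z$, is a correct detail that the paper's terse proof leaves implicit.
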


	\begin{proof}
		As $H$ is a first integral of $X^{\sigma}$, $H$ can be viewed as a function on $z^{-1}(\varepsilon)$, where $Z=z^{-1}(0)$. As $Z$ is compact, there exist critical points of $H$ on each hypersurface. The critical points translate to trivial periodic orbits which appear as a $1$-parametric family.
	\end{proof}

	The same result holds also in higher dimensions if $\mathcal{L}_{X^{\sigma}} H = 0$ and the geometry of $Z$ is that of a symplectic mapping torus.

	\begin{proposition} \label{prop:trivialmappingtorusinfperiodic}
		Let $(M, Z,\omega)$ be a $b$-symplectic manifold such that $Z$ is compact and $Z$ is a trivial mapping torus as in Remark \ref{rem:trivialmappingdef}.
		Then all $X^\sigma$-invariant Hamiltonian functions have periodic orbits arbitrarily close to $Z$.
	\end{proposition}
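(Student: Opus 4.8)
The plan is to reduce the dynamics of an $X^\sigma$-invariant Hamiltonian near $Z$ to the leafwise Hamiltonian flow on the symplectic foliation of $Z$, and then to promote a leafwise \emph{non-constant} periodic orbit to a genuine periodic orbit sitting at positive distance from $Z$. First I would fix the tubular model of Proposition \ref{prop:decomposition} (with $m=1$), $\mathcal{N}(Z)\cong(-\varepsilon,\varepsilon)_z\times Z$ with $\omega=\frac{dz}{z}\wedge\pi^*\alpha+\pi^*\beta$, and use that $Z$ is a trivial mapping torus to write $Z\cong\mathcal{L}\times S^1_\theta$, where $S^1$ has circumference the modular weight $T$, $R=\partial_\theta$, $\alpha=d\theta$, the foliation is $\mathcal{F}=\ker\alpha$ with leaves $\mathcal{L}_\theta=\mathcal{L}\times\{\theta\}$, and $\beta$ is leafwise symplectic. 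The condition $\mathcal{L}_{X^\sigma}H=0$ forces $H=\pi^*\bar H$ for a \emph{smooth} $\bar H\in\mathcal{C}^\infty(Z)$ (it kills the $\log$-term and makes $H$ independent of $z$). A direct computation from $\iota_{X_H}\omega=-\pi^*d\bar H$ in the splitting ${}^{b}TM=\langle X^\sigma\rangle\oplus\langle R\rangle\oplus\mathcal{F}$, using $\iota_{X^\sigma}\omega=\pi^*\alpha$, $\iota_R\omega=-\frac{dz}{z}$, and $\iota_V\omega=\pi^*(\iota_V\beta)$ for $V\in\mathcal{F}$, yields
\[ X_H=-(\partial_\theta\bar H)\,X^\sigma+X^{\mathcal{F}}_{\bar H},\qquad X^\sigma=z\partial_z, \]
where $X^{\mathcal{F}}_{\bar H}$ is the leafwise Hamiltonian field of $\bar H|_{\mathcal{L}_\theta}$. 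The two structural facts I want to extract are that the Reeb component vanishes (since $-\pi^*d\bar H$ has no $\frac{dz}{z}$-part), so both $\theta$ and $H$ are conserved, and that $X^{\mathcal{F}}_{\bar H}$ is independent of $z$.

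Integrating the flow inside a slice $\theta\equiv\theta_0$, the leaf component $\ell(t)$ follows the ($z$-independent) leafwise flow of $\bar H_{\theta_0}$, while $z(t)=z_0\exp\!\big(-\int_0^t\partial_\theta\bar H(\theta_0,\ell(s))\,ds\big)$. Thus, if $\gamma$ is a \emph{non-constant} periodic orbit of $X^{\mathcal{F}}_{\bar H}$ on $\mathcal{L}_{\theta_0}$ of period $\tau$, then for \emph{every} height $z_0\in(-\varepsilon,\varepsilon)$ the lift $(z(t),\gamma(t),\theta_0)$ is a genuine non-constant $\tau$-periodic orbit of $X_H$ exactly when
\[ \oint_\gamma \partial_\theta\bar H\,dt=0. \]
Letting $z_0\to 0$ then produces periodic orbits arbitrarily close to $Z$. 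So the entire statement is reduced to finding, on some leaf, a leafwise periodic orbit whose average of $\partial_\theta\bar H=R(\bar H)$ vanishes; note that this is genuinely different from the equilibrium mechanism, since a critical point of $\bar H$ only yields a constant trajectory.

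To kill the integral I would exploit that $\theta$ ranges over a \emph{circle}. Given a continuous family $\gamma_\theta$ of leafwise periodic orbits parametrised by $\theta\in S^1$, the first-variation (envelope) formula for the symplectic action $\mathcal{A}(\theta):=\mathcal{A}_{\bar H_\theta}(\gamma_\theta)$ gives $\frac{d\mathcal{A}}{d\theta}=-\oint_{\gamma_\theta}\partial_\theta\bar H\,dt$; since $\mathcal{A}$ is a function on the compact circle it attains an extremum at some $\theta^*$, where $\frac{d\mathcal{A}}{d\theta}(\theta^*)=0$, giving the desired orbit. In the surface case $\dim\mathcal{L}=2$ this is completely explicit and needs no abstract action: the leafwise periodic orbits are the regular level curves $\{\bar H_\theta=c\}$, a short computation identifies $\oint_{\gamma_{c,\theta}}\partial_\theta\bar H\,dt$ with $-\partial_\theta A(c,\theta)$ (the $\theta$-derivative of the enclosed $\beta$-area), and $\partial_\theta A(c,\cdot)$, being the derivative of a periodic function of $\theta$, necessarily vanishes at an extremum. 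This recovers, and refines to genuine orbits, the phenomenon already seen in Propositions \ref{prop:linaralongXnopo} and \ref{prop:bmlinaralongXnopo}.

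The main obstacle is precisely the input required for the circle argument: producing a family of leafwise periodic orbits depending on $\theta$ over the \emph{whole} $S^1$. On surface leaves this is automatic (the family of level curves), and the extremum-of-the-area argument is airtight. On higher-dimensional leaves the leafwise flow of an arbitrary $\bar H_\theta$ need not be periodic, so I would instead produce such orbits locally: each compact leaf has a minimum of $\bar H_\theta$, an elliptic-type equilibrium of the leafwise flow, and the Weinstein--Moser/Lyapunov center theorem supplies short periodic orbits on energy levels just above the minimum. The delicate point is to show that these persist in a continuous $\theta$-family over the entire circle (controlling bifurcations, non-degeneracy of the minima, and the well-definedness of the action on $S^1$) so that the extremum argument applies; when a global family is not available one is forced to work on a subinterval of $\theta$ and track the sign of $\oint\partial_\theta\bar H\,dt$ at its endpoints, which is where the analysis becomes technical.
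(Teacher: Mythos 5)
Your setup and reduction are correct and coincide with the paper's own first steps: $X^\sigma$-invariance kills the $\log$-term and the $z$-dependence, so $H=\pi^*\bar H$ with $\bar H\in\mathcal{C}^\infty(Z)$, and in the splitting ${}^b TM=\langle X^\sigma\rangle\oplus\langle R\rangle\oplus\mathcal{F}$ one gets $X_H=-(\partial_\theta\bar H)\,X^\sigma+X^{\mathcal{F}}_{\bar H}$, so that any leafwise $\tau$-periodic orbit $\gamma$ along which $\oint_\gamma\partial_\theta\bar H\,dt=0$ lifts to $\tau$-periodic orbits at every height $z_0$, hence arbitrarily close to $Z$. The genuine gap is in how you produce $\gamma$. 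Because you insist that $\gamma$ be \emph{non-constant}, you need a continuous family of non-constant leafwise periodic orbits over the whole circle of leaves before your action/envelope extremum argument can run, and you do not establish it: the Weinstein--Moser/Lyapunov mechanism requires a non-degenerate minimum, which an arbitrary $X^\sigma$-invariant Hamiltonian need not have, and even when short orbits exist near each minimum there is no control on their persistence, on bifurcations, or on the single-valuedness of the action as $\theta$ runs over $S^1$ --- all of which you acknowledge but leave open. Even the surface case, which you call airtight, has the same defect in miniature: a value $c$ that is regular for $\bar H_{\theta_0}$ can become critical, or the level set $\{\bar H_\theta=c\}$ can become empty, as $\theta$ moves along the circle, so the global family of level curves need not exist. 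Since the proposition quantifies over \emph{all} $X^\sigma$-invariant Hamiltonians, these are not removable technicalities.

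The deeper issue is that this gap is self-inflicted. The statement only asks for periodic orbits, and constant trajectories (fixed points of $X_H$) are periodic orbits; this is exactly what the paper uses: its preceding lemma speaks of critical points translating into ``trivial periodic orbits'' in $1$-parametric families, and the paper's proof of this proposition invokes the Arnold conjecture on a compact leaf (which, for an autonomous Hamiltonian, is critical-point theory) together with the averaging mechanism of Proposition \ref{prop:linaralongXnopo} in the transverse direction. With constant orbits allowed, your own decomposition finishes the proof in one line: $\bar H$ attains its maximum on the compact manifold $Z$ at some point $q^*$, where $d\bar H(q^*)=0$, i.e.\ both $\partial_\theta\bar H$ and the leafwise differential of $\bar H$ vanish at $q^*$; hence $X_H$ vanishes identically on the ray $(-\varepsilon,\varepsilon)\times\{q^*\}$, giving a $1$-parametric family of fixed points of the flow accumulating on $Z$. (Choosing a critical point of the full $\bar H$ on $Z$, rather than a leafwise critical point of some $\bar H_\theta$, is what makes both components of $X_H$ vanish simultaneously --- a compatibility your orbit-based route has to fight for.) Your stronger program of producing \emph{non-constant} periodic orbits near $Z$ is interesting and would refine the proposition, but as written it proves more than is asked and is not complete.
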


	\begin{remark}
		A sufficient condition so that $(M ,Z,\omega)$ has a trivial mapping torus at $Z$ is that the cohomology class $[\omega] \in {}^{b}H^2(M)$ is integral.
		See \cite[Section 2]{gmw} for more details.
	\end{remark}

	\begin{proof}
		Around the critical set, we have  $\omega=\frac{dz}{z}\wedge \alpha+\beta$ as in Proposition \ref{prop:decomposition}.
		The tubular neighborhood admits a codimension $2$ symplectic foliation as in Proposition \ref{lem:compatibleacs}.
		Denote the codimension $2$ foliation around the critical set by $\mathcal{F}$.
		We want to solve Hamilton's equation $\iota_{X_H}\omega=-dH$.
		The Hamiltonian vector field can be computed using the $b$-Poisson structure (see also Remark \ref{rem:bPoisson}), which is given by
		$$\Pi= z \frac{\partial}{\partial z} \wedge R+\pi_\mathcal{L},$$
		where $\pi_\mathcal{L}$ is the restriction of the Poisson structure associated to the $b$-Poisson structure to the leaf of the foliation.
		Thus the Hamiltonian vector field is given by $\Pi^\sharp(dH)$.

		As $H$ does not depend on $z$, the Hamiltonian vector field has the expression $X_H=(X_H)_{\mathcal{L}} +\frac{\partial H}{\partial \theta} z\frac{\partial}{\partial z}$ where $\frac{\partial H}{\partial \theta}$ is a function which does not depend on $z$ and $(X_H)_{\mathcal{L}}$ is a Hamiltonian vector field along the leaf.
		On one hand, by the Arnold conjecture applied to a compact symplectic leaf $\mathcal{L}$, there always exists a $1$-periodic orbit of $(X_H)_{\mathcal{L}}$ in $\mathcal{L}$.
		On the other hand, we can apply Proposition \ref{prop:linaralongXnopo} to the term $\frac{\partial H}{\partial \theta}z\frac{\partial }{\partial z}$ and thus we always find a $1$-periodic orbits on the direction of the normal symplectic vector field $X^\sigma$.
		Hence there are periodic orbits for $X_H$.
		Furthermore, they come in $1$-parametric families.
	
	\end{proof}

	In contrast to the last propositions, we define \emph{admissible Hamiltonian functions} to be linear along $X^{\sigma}$ and without periodic orbits close to the critical hypersurface. Remember that when the foliation induced on the critical set has compact leaves, the symplectic foliation is a mapping torus. Thus, by means of the classical Arnold conjecture, a smooth Hamiltonian has an infinite number of periodic orbits. The condition of admissible Hamiltonian is on the other side of the spectrum and thus underlying the importance that the associated function is not smooth. 
	In later sections, we will prove that if this condition  is met, then finite energy solutions to the Floer equation will not approach the critical hypersurface.
 
	\begin{definition}\label{def:admissibleHamiltonian}
		A Hamiltonian $H_t\in {^{b^m}}\mathcal{C}^\infty(M)$ is \emph{admissible} if in a tubular neighbourhood around a connected component of the critical set $Z_i$
		\begin{enumerate}
			\item it is linear along the normal symplectic $b^m$-vector field, $\mathcal{L}_{X^{\sigma}} H_t = k_i(t)$,
			\item it is invariant with respect to the Reeb vector field: $\mathcal{L}_{R} H_t = 0$,
			\item there are no $1$-periodic orbits of $X_H$ in a tubular neighbourhood around $Z$.
		\end{enumerate}
			We denote the set of admissible Hamiltonian functions by ${^{b^m}}\mathrm{Adm}(M,R,X^\sigma)$.
   
	\end{definition}

		\begin{remark}
			Definition \ref{def:admissibleHamiltonian} depends on the choice of the normal symplectic vector field and the tubular neighbourhood, which induces the extensions $X^{\sigma}$ and $R$ as denoted in Lemma \ref{lem:compatibleacs}. 	However, the lower bounds concerning the number of periodic Hamiltonian orbits in this paper are independent of these choices.
		\end{remark}
		
	\begin{remark} \label{remark:localFormHamiltonian}
		In the surface case, the expression in a tubular neighbourhood of the critical set of an admissible Hamiltonian function is $H_t= k(t) \log|z|$ for $m=1$ and
		$H_t=-k(t)\frac{1}{m-1}\frac{1}{z^{m-1}}$
		for $m > 1$, where $k\in C^\infty(S^1)$ is a smooth function, and $z$ is a local defining function for the critical set.

		In higher dimensions there is an additional term, represented by a local function $h_t$ such that $\mathcal{L}_{X^{\sigma}} h_t = 0$ and $\mathcal{L}_{ R } h_t = 0$, which means that $h_t$ depends only on the coordinates on the symplectic leaves.
		The local expression of an admissible Hamiltonian functions is then $H_t= k(t) \log|z| + h_t $ for $m=1$ and
		$H_t=-k(t)\frac{1}{m-1}\frac{1}{z^{m-1}} + h_t$
		for $m > 1$.

		Recall that if there exists a global defining function $z$ such that the above expression holds, then we say that the admissible Hamiltonian is \emph{unimodular}, see also Definition \ref{def:constantlinearweight}.
	\end{remark}

	\begin{example}
		Consider $(\mathbb{T}^2,\frac{d\theta_1}{\sin \theta_1}\wedge d\theta_2)$ and let $H\in {^b}C^\infty(\mathbb{T}^2)$ be a Hamiltonian such that $\mathcal{L}_{X^\sigma}H=k(t)$ and $\mathcal{L}_{ R }H=0$. Under those conditions, the Hamiltonian is given by $H_t=k(t) \log|\tan(\frac{\theta_1}{2})|$ and thus the Hamiltonian vector field is $X_{H_t}=-k(t)\frac{\partial}{\partial \theta_2}=-k(t) R$. This has $1$-periodic orbits if and only if
		\[\int_0^1 k(\theta) d\theta \in T \mathbb{Z}.\]
		where $T$ is the modular weight. 
		
		Thus the condition that there do not exist any periodic orbits in a tubular neighbourhood around $Z$ is necessary.
	\end{example}

    The preceding example can easily be adapted to the $b^2$-symplectic case:

    \begin{example}\label{ex:b2torusHam}
			Consider the $b^2$-symplectic surface given by $(\mathbb{T}^2,\omega=\frac{d\theta_1}{\sin^2 \theta_1}\wedge d\theta_2)$. An admissible Hamiltonian is given by $H_t(\theta_1,\theta_2)=-k(t)\frac{\cos\theta_1}{\sin \theta_1}$, where $k(t)$ is a globally defined function such that $0<\int_0^1 k(t)dt<1$. It follows that the associated Hamiltonian vector field is given by $X_H=k(t)\frac{\partial}{\partial \theta_2}$.
    \end{example}

	More generally, $b$-Hamiltonian functions that are linear along $X^\sigma$ and invariant with respect to the Reeb vector field do not admit any periodic orbits in a tubular neighbourhood around $Z$ if the following is satisfied:

	\begin{proposition} \label{prop:K}
		Assume that the leaves of the mapping torus associated to the $b$-symplectic structure are compact.
		Let $H_t\in {^{b}}C^\infty(M)$ such that $\mathcal{L}_{X^\sigma}H_t=k(t)$ and $\mathcal{L}_{R}H_t=0$. If
		\[\int_0^1 k(t)dt \in (0,T) ,\]
		where $T$ is the modular weight of $Z$, then there are no periodic orbits of $X_H$ in a tubular neighbourhood around $Z$. 
	\end{proposition}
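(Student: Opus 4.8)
The plan is to reduce to an explicit description of $X_H$ in a neighbourhood of $Z$ and then project the flow onto the base circle of the mapping torus. First I would fix the defining function $z$ and the tubular neighbourhood as in Proposition \ref{prop:decomposition} and Lemma \ref{lem:compatibleacs}, so that near $Z$ one has the splitting ${}^bTM = \langle X^\sigma\rangle \oplus \langle R\rangle \oplus \mathcal{F}$ with $X^\sigma = z\frac{\partial}{\partial z}$ and $\omega = \frac{dz}{z}\wedge\alpha + \beta$, where $\beta$ is leafwise symplectic and $\iota_R\alpha = 1$, $\iota_R\beta = 0$. The two hypotheses on $H$ then pin down its shape: integrating $\mathcal{L}_{X^\sigma}H_t = k(t)$ gives $H_t = k(t)\log|z| + h_t$ with $\frac{\partial h_t}{\partial z} = 0$, while $\mathcal{L}_R H_t = 0$ forces $\frac{\partial h_t}{\partial\theta} = 0$, where $\theta$ denotes the coordinate along $R$. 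Hence $h_t$ depends only on the leaf coordinates.

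Next I would compute the Hamiltonian vector field through the associated $b$-Poisson bivector, exactly as in the proof of Proposition \ref{prop:trivialmappingtorusinfperiodic}, namely $\Pi = z\frac{\partial}{\partial z}\wedge R + \pi_\mathcal{L}$ with $\pi_\mathcal{L}$ the leafwise Poisson tensor. Since $dH_t = k(t)\frac{dz}{z} + d_\mathcal{L}h_t$ has no $\alpha$-component (because $dH_t(R) = \frac{\partial H_t}{\partial\theta} = 0$), contracting with $\Pi$ yields
\[ X_{H_t} = k(t)\,R + (X_{h_t})_\mathcal{L}, \]
where $(X_{h_t})_\mathcal{L}$ is the leafwise Hamiltonian vector field of $h_t$. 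The decisive feature is that $X_{H_t}$ has no $\frac{\partial}{\partial z}$-component: both $R$ and $(X_{h_t})_\mathcal{L}$ are tangent to the slices $\{z = \mathrm{const}\}$. Consequently $z$ is a first integral of the flow, so any orbit starting in the tubular neighbourhood stays inside it, and it suffices to rule out closed orbits slice by slice.

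Finally I would exploit the mapping torus structure. Since the symplectic leaves are compact, each slice is a symplectic mapping torus (Lemma \ref{lem:tubularmappingtorus}) and carries a well-defined projection onto the base circle $S^1_T = \mathbb{R}/T\mathbb{Z}$, where $T$ is the modular weight, with $R$ descending to the unit-speed field $\frac{\partial}{\partial\theta}$. The leafwise term $(X_{h_t})_\mathcal{L}$ is tangent to the fibres and hence projects to zero, so the image of any integral curve of $X_{H_t}$ satisfies $\dot\theta = k(t)$ on $S^1_T$, giving $\theta(1) - \theta(0) = \int_0^1 k(t)\,dt$ (up to the orientation of $R$). A $1$-periodic orbit would require this displacement to vanish in $S^1_T$, that is $\int_0^1 k(t)\,dt \in T\mathbb{Z}$; the hypothesis $\int_0^1 k(t)\,dt \in (0,T)$ makes this impossible, so no $1$-periodic orbit can exist in the neighbourhood.

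I expect the genuine work to lie in the second and third steps: verifying carefully that $X_{H_t}$ has no normal component, and that the projection to $S^1_T$ is globally well defined across the whole tubular neighbourhood despite the monodromy $\phi$ of the mapping torus. Once the flow is shown to descend to the rigid rotation $\dot\theta = k(t)$ on $S^1_T$, the obstruction argument is immediate; compactness of the leaves is used precisely to guarantee the mapping torus description of Equation (\ref{eq:mappingtorus}) and hence the existence of this projection.
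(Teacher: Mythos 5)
Your proposal is correct and follows essentially the same route as the paper: compute $X_{H_t}=\pm k(t)R+(X_{h_t})_{\mathcal{L}}$ from the $b$-Poisson bivector, use the mapping torus description of the neighbourhood of $Z$ (Lemma \ref{lem:tubularmappingtorus}), and observe that a $1$-periodic orbit would force $\int_0^1 k(t)\,dt\in T\mathbb{Z}$, which the hypothesis excludes. You make explicit two points the paper leaves implicit — that $z$ is a first integral so the orbit stays on a fixed slice, and that the leafwise term projects to zero on the base circle — but these are refinements of the same argument rather than a different one.
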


	\begin{proof}
		In a  neighbourhood of a point in $Z$, the $b$-Poisson structure can be described as  $\Pi= X^\sigma \wedge R+\pi_\mathcal{L}$. 
		Under the assumptions of the proposition, the Hamiltonian vector field is given by $X_H=\Pi^\sharp(dH)=-k(t) R +(X_H)_{\mathcal{L}}$.
		The critical set is given by the mapping torus
		$$ Z = \frac{\mathcal{L} \times [0,T]}{(x,0) \sim (\phi(x),T)}$$ as in Equation \ref{eq:mappingtorus} and the Reeb vector field is $R=\frac{\partial}{\partial \theta}$, where $\theta$ is the translation in the second coordinate (see \cite{gmps}). Observe that the description of the mapping torus holds in a neighbourhood, as is explained in Lemma \ref{lem:tubularmappingtorus}.

        A necessary condition for the Hamiltonian flow to have a $1$-periodic orbit in the neighbourhood of $Z$ is that the flow of $X_H$ returns to the same leaf of the symplectic foliation. Consequently, it requires $\int_0^1 k(t)dt \in T\mathbb{Z}$.

The condition $\int_0^1 k(t)dt\in (0,T)$ imposes thus that the Hamiltonian vector field moves along the transverse direction, but not enough for it to return to the same leaf.
		Thus this condition imposes that there are no periodic orbits in a tubular neighbourhood around the critical set, as the Hamiltonian vector field is not returning the same leaf of the symplectic foliation.
	\end{proof}

        In the case where $M$ is a $b^m$-symplectic surfaces, there exists an upper bound $T_\varepsilon>0$ such that if $\int_0^1 k(t)dt \in (0,T_\varepsilon)$, then there are no periodic orbits of $X_H$ in a $\varepsilon$-neighbourhood.

    \begin{proposition}
		Assume that the leaves of the mapping torus associated to the $b^m$-symplectic structure are compact.
		In a tubular neighbourhood $\mathcal{N}_\varepsilon(Z)$, let $H_t\in {^{b^m}}C^\infty(M)$ such that $\mathcal{L}_{X^\sigma}H_t=k(t)$ and $\mathcal{L}_{R} H_t=0$. Then $\varepsilon$ can be choosen sufficiently small, such that there exists $T_\varepsilon>0$ such that if
		\[\int_0^1 k(t)dt \in (0,T_\varepsilon) ,\]
		then there are no periodic orbits of $X_H$ in $\mathcal{N}_\varepsilon(Z)$.
    \end{proposition}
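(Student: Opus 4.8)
The plan is to adapt the proof of Proposition \ref{prop:K} to the $b^m$-setting, mirroring the computation already performed in Proposition \ref{prop:bmlinaralongXnopo}. The key difference from the $m=1$ case is that the reparametrization factor $\frac{1}{\sum_{i=1}^m z^{m-i}a_i}$ appearing in the Hamiltonian vector field is no longer constant, so the ``time to return to the same leaf'' now depends on the radial coordinate $z$. The strategy is therefore to show that on a sufficiently thin tubular neighbourhood this return time is uniformly bounded below, and to take $T_\varepsilon$ to be that lower bound.

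\medskip

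First I would fix the local model from Proposition \ref{prop:decomposition} and Lemma \ref{lem:tubularmappingtorus}: in $\mathcal{N}_\varepsilon(Z)$ the $b^m$-symplectic form is $\omega = \left(\sum_{i=1}^m z^{m-i}a_i\right)\frac{dz}{z^m}\wedge d\theta + \dots$, and the critical set is a symplectic mapping torus with Reeb direction $R = \frac{\partial}{\partial\theta}$. Under the admissibility conditions $\mathcal{L}_{X^\sigma}H_t = k(t)$ and $\mathcal{L}_R H_t = 0$, the same computation as in Proposition \ref{prop:bmlinaralongXnopo} gives, after the reparametrization to $\widetilde{X}_{H_t}$, a flow whose $\theta$-component evolves as $\theta(t) = \theta_0 + kt$ along the transverse (Reeb) direction, with leafwise motion $(X_H)_{\mathcal{L}}$ tangent to the foliation. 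A necessary condition for a $1$-periodic orbit is that the flow returns to the same leaf of the symplectic foliation, i.e.\ that the net transverse displacement over one period lies in the discrete set of return times of the mapping torus.

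\medskip

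Next I would make the return-time estimate quantitative. For the genuine (non-reparametrized) vector field $X_{H_t}$, the speed in the $\theta$-direction is scaled by $\frac{1}{\sum_{i=1}^m z^{m-i}a_i}$. Since $a_m|_Z$ is nowhere vanishing (the non-degeneracy from Equation (\ref{eqn:newlaurent})), this factor is continuous and non-vanishing on $\overline{\mathcal{N}_\varepsilon(Z)}$ for $\varepsilon$ small, hence bounded away from zero and infinity there. Consequently the transverse displacement accumulated in unit time is controlled, and there is a positive lower bound $T_\varepsilon$ on the net transverse displacement required to return to an adjacent sheet of the mapping torus within $\mathcal{N}_\varepsilon(Z)$. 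Choosing $\varepsilon$ small enough that this bound is uniform, the condition $\int_0^1 k(t)\,dt \in (0,T_\varepsilon)$ forces the orbit to drift strictly into the transverse direction without completing a return, exactly as in Proposition \ref{prop:K}; thus no $1$-periodic orbit of $X_H$ can exist in $\mathcal{N}_\varepsilon(Z)$.

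\medskip

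The main obstacle, and the reason $T_\varepsilon$ depends on $\varepsilon$ (unlike the clean modular weight $T$ of the $m=1$ case), is precisely the $z$-dependence of the reparametrization factor: the transverse return time is no longer constant across leaves, so one cannot simply read off an integrality condition. The delicate point will be to verify that, after shrinking $\varepsilon$, the factor $\frac{1}{\sum_{i=1}^m z^{m-i}a_i}$ stays within a fixed compact subinterval of $(0,\infty)$ uniformly over the neighbourhood, which is what legitimizes extracting a single threshold $T_\varepsilon>0$ valid for every leaf in $\mathcal{N}_\varepsilon(Z)$.
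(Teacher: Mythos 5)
Your proposal is correct and follows essentially the same route as the paper: compute the flow in the local model, observe that the transverse speed is scaled by $\bigl(\sum_{i=1}^m a_i z^{m-i}\bigr)^{-1}$, and use the non-vanishing of $a_m$ to bound this factor uniformly on a thin neighbourhood, which yields the threshold $T_\varepsilon$ below which the necessary return condition $\int_0^1 k(t)\,dt \in T\bigl(\sum_i a_i z_0^{m-i}\bigr)\mathbb{Z}$ cannot be met. The paper simply makes your "uniform lower bound on the return time" explicit by taking $T_\varepsilon < \tfrac12\, T \liminf_{z\in(-\varepsilon,\varepsilon)}\sum_{i=1}^m a_i z^{m-i}$.
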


    \begin{proof}
        In $\mathcal{N}_\varepsilon(Z)$, the $b^m$-symplectic form is given by $\big(\sum_{i=1}^m z^{m-i}a_i\big)\frac{dz}{z^m}\wedge d\theta$ and by assumption, the Hamiltonian is given by $H=-\frac{k(t)}{(m-1)z^{m-1}}$. Consequently, the Hamiltonian vector field is given by $X_H=\frac{k(t)}{\sum_{i=1}^m (a_iz^{m-i})} \frac{\partial}{\partial \theta}.$
        The Hamiltonian flow is given by
        $$z(t)=z_0,\quad \theta(t)=\theta_0+\frac{1}{\sum_{i=1}^m a_iz_0^{m-i}}\int_0^t k(\tau) d\tau.$$
        The necessary condition for a $1$-periodic orbit to exist is given by
        $$\int_0^1 k(\tau)d\tau \in T\left(\sum_{i=1}^m a_iz_0^{m-i}\right)\mathbb{Z},$$
        \noindent with $T$ the modular period.
        Without loss of generality, we can assume that $a_m > 0$ and that $\varepsilon>0$ is small enough so that
        $$a_m > \frac12 \liminf_{z\in(-\varepsilon,\varepsilon)} \sum_{i=1}^m a_iz^{m-i}>0.$$
				Consequently, if we take $T_\varepsilon < \frac12 T \liminf\limits_{z\in(-\varepsilon,\varepsilon)} \sum\limits_{i=1}^m a_iz^{m-i}$, then there are no $1$-periodic orbits in $\mathcal{N}_\varepsilon(Z)$.
    \end{proof}

	\begin{example} \label{ex:arnoldtorus}
		Let us consider the $2n$-torus $\mathbb{T}^{2n}$ with coordinates $(\theta_1, ... , \theta_{2n})$, with a critical set $Z = \{\sin\theta_1 = 0\}$ and with a $b$-symplectic form $\omega$ globally given by
		\[\omega = \frac{d\theta_1}{\sin\theta_1} \wedge \alpha + \beta ,\]
		for some closed $\alpha \in \Omega^1(\mathbb{T}^{2n})$ and $\beta \in \Omega^2(\mathbb{T}^{2n})$.

		Let us then take $H_t = k \log \left|\frac{\sin \theta_1}{1 + \cos\theta_1} \right|$ for $k$ constant, so that $dH_t=k\frac{d\theta_1}{\sin \theta_1}$ and thus $X_H = k R$.
		As we saw in Proposition \ref{prop:K} we can choose $k$ small enough so that $X_H$ does not have 1-periodic orbits.
		Thus, there exists a family of admissible Hamiltonian functions in $\mathbb{T}^{2n}$ with no 1-periodic orbits.
	\end{example}

	\begin{remark}
		Consider the product of $b^m$-symplectic manifolds.
		This is no longer a $b^m$-symplectic manifold but admits a symplectic structure over a Lie algebroid. Those manifolds are called $c$-symplectic manifolds in \cite{mirandascott} and are a particular case of $E$-symplectic manifolds (see also, \cite{nesttsygan2}, \cite{mirmirandanicolas}).
		More precisely, let $(M_i,Z_i,\omega_i)$ be a collection of $b$-symplectic manifolds, and consider $(M:=\Pi_{i=1}^n M_i,\overline{Z},\omega=\sum_i\pi_i^*\omega_i)$, where $\pi_i:M\to M_i$ denotes the projection and
		\[\overline{Z} = \bigcup_{i=1}^n M_1\times \cdots \times Z_i \times \cdots \times M_n .\]
		The vector field $X^\sigma=\sum_{i=1}^n X_i^\sigma$ is a symplectic vector field on $M\setminus \overline{Z}$, which is transverse to the boundary. The same constructions as before thus work in this more general set-up.
	\end{remark}

    As for Floer homology, we need to consider non-degenerate periodic orbits.
    
	\begin{definition} \label{def:regularHam}
		Consider a Hamiltonian $H_t \in {}^{b^m}\mathrm{Adm}(M,\omega)$.
		We denote by $\mathcal{P}(H)$ the set of 1-periodic orbits of the flow of $X_H$ that are completely contained in $M\setminus Z$.
		We say that $H$ is {\bf regular} if for all periodic orbits $x \in \mathcal{P}(H)$ we have that
		\[\mathrm{det}(\mathrm{Id} - d\phi_{X_H}^1(x(0))) \neq 0 .\]
		We denote the set of regular admissible Hamiltonian functions by ${}^{b^m}\mathrm{Reg}(M,\omega)$.
	\end{definition}

	The set of regular Hamiltonian functions is open and dense in ${}^{b^m}\mathrm{Adm}(M,\omega)$ in the strong Whitney $\mathcal{C}^{\infty}$-topology.


\section{Desingularization of singular symplectic structures: old and new}
\label{sec:desing}

 The desingularization process introduced in \cite{gmW1} associates a family of symplectic structures to a $b^m$-symplectic structure for even $m$.    We use this technique to relate the initial dynamics to the \emph{desingularized dynamics}.
 
 In this section, we revisit this technique and generalize it for arbitrary $m$ on surfaces and in higher dimensions when the critical set is a trivial mapping torus.  {Our novel approach allows to associate a symplectic structure in these scenarios also for odd $m$.}
 
We will also revisit the \emph{singularization} of surfaces (and manifolds with trivial mapping tori), which defines a produces a $b^m$-symplectic structure out of the symplectic one.

First, let us briefly recall the desingularization procedure of a $b^m$-symplectic manifold.

	\begin{theorem}[{\cite[Theorem 3.1]{gmW1}}] \label{th:deblog}
		Let $(M,Z,\omega)$ be a $b^{2m}$-symplectic manifold.
		Then there exists  a family of \textbf{symplectic} forms ${\omega_{\varepsilon}}$ which coincide with the $b^{2m}$-symplectic form $\omega$ outside an $\varepsilon$-neighborhood of $Z$ and for which  the family of bivector fields $(\omega_{\varepsilon})^{-1}$ converges in the $\mathcal{C}^{2m-1}$-topology to the Poisson structure $\omega^{-1}$ as $\varepsilon\to 0$ .
	\end{theorem}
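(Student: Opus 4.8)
The plan is to work in the tubular neighbourhood $\mathcal{N}_\delta(Z)$ furnished by Proposition \ref{prop:decomposition} and to desingularize the leading singular coefficient directly at the level of forms, exploiting that for $b^{2m}$-structures this coefficient has a constant sign. Regrouping the Laurent series exactly as in Lemma \ref{lem:tubularmappingtorus}, I would write
$$\omega = \frac{1}{z^{2m}}\, dz\wedge \widetilde{\alpha}(z) + \pi^*\beta, \qquad \widetilde{\alpha}(z) := \sum_{i=1}^{2m} z^{2m-i}\, \pi^*\alpha_i ,$$
so that $\widetilde{\alpha}$ is a smooth (polynomial in $z$) family of one-forms on $Z$ with $\widetilde{\alpha}(0) = \alpha_{2m}$. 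Since the $\alpha_i$ and $\beta$ are closed one checks that $dz\wedge d\widetilde{\alpha}=0$, so the only genuinely singular object is the scalar factor $z^{-2m}$.

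The crucial point is parity: because $2m$ is even, $z^{-2m}>0$ for all $z\neq 0$. Hence I can choose a smooth family of strictly positive functions $c_\varepsilon$ on $(-\delta,\delta)$ obtained by rescaling a fixed model profile, say $c_\varepsilon(z)=\varepsilon^{-2m}c(z/\varepsilon)$ with $c$ smooth, positive, and equal to $x^{-2m}$ for $|x|\ge 1$, so that $c_\varepsilon(z)=z^{-2m}$ for $|z|\ge\varepsilon$. I then set
$$\omega_\varepsilon := c_\varepsilon(z)\, dz\wedge\widetilde{\alpha}(z) + \pi^*\beta$$
on $\mathcal{N}_\delta(Z)$ and $\omega_\varepsilon:=\omega$ elsewhere; the two prescriptions agree on $\varepsilon\le|z|<\delta$, so $\omega_\varepsilon$ is a smooth two-form coinciding with $\omega$ outside the $\varepsilon$-neighbourhood. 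Closedness is immediate from $dz\wedge d\widetilde\alpha=0$ and $d\pi^*\beta=0$, just as for $\omega$. For non-degeneracy I would compute, using $(\pi^*\beta)^n=0$, that $\omega_\varepsilon^{\,n}=n\,c_\varepsilon(z)\,dz\wedge\widetilde\alpha(z)\wedge(\pi^*\beta)^{n-1}$; since $\widetilde\alpha(0)\wedge\beta^{n-1}=\alpha_{2m}\wedge\beta^{n-1}$ is the cosymplectic volume of Definition \ref{def:cosymplectic}, the factor $dz\wedge\widetilde\alpha(z)\wedge\beta^{n-1}$ is nonvanishing for $\delta$ small, and $c_\varepsilon>0$ then forces $\omega_\varepsilon^{\,n}\neq 0$ throughout $\mathcal{N}_\delta(Z)$, while outside it $\omega_\varepsilon=\omega$ is already symplectic on $M\setminus Z$. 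Thus $\omega_\varepsilon$ is symplectic. This is precisely where evenness of the order enters: for $b^{2m+1}$ the coefficient $z^{-(2m+1)}$ changes sign across $Z$, so no strictly positive smooth replacement agreeing outside exists, and one is forced into a folded structure, as recalled in the remarks above.

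It remains to establish the $\mathcal{C}^{2m-1}$-convergence of the Poisson bivectors. Since $\omega_\varepsilon=\omega$ for $|z|\ge\varepsilon$, the smooth bivectors $(\omega_\varepsilon)^{-1}$ and $\omega^{-1}$ differ only on $\{|z|<\varepsilon\}$, and there the sole non-smooth ingredient of $\omega^{-1}$ is the factor dual to $z^{-2m}\,dz\wedge\widetilde\alpha$, namely $z^{2m}$, which in $(\omega_\varepsilon)^{-1}$ is replaced by $1/c_\varepsilon(z)$; the leafwise part is unchanged and smooth, and the remaining Laurent contributions carried inside $\widetilde\alpha(z)$ come with positive powers of $z$. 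I would therefore reduce the statement to the elementary claim that $1/c_\varepsilon(z)\to z^{2m}$ in $\mathcal{C}^{2m-1}(-\delta,\delta)$. Writing $1/c_\varepsilon(z)=\varepsilon^{2m}\psi(z/\varepsilon)$ with $\psi:=1/c$ smooth, positive and equal to $x^{2m}$ for $|x|\ge 1$, the $k$-th derivative of the difference $1/c_\varepsilon(z)-z^{2m}$ is supported in $|z|<\varepsilon$ and scales like $\varepsilon^{2m-k}$ times a fixed bound, so it tends to $0$ uniformly exactly when $k\le 2m-1$, while at order $2m$ the constant $(2m)!$ coming from $z^{2m}$ survives, which is why one cannot expect better than $\mathcal{C}^{2m-1}$. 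I expect this derivative bookkeeping, tracking how differentiation interacts with the $\varepsilon$-rescaling and verifying that the subdominant Laurent terms do not spoil the estimate, to be the main technical obstacle; assembling these bounds yields the claimed $\mathcal{C}^{2m-1}$-convergence of $(\omega_\varepsilon)^{-1}$ to $\omega^{-1}$.
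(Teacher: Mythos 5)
Your construction is essentially the same as the paper's (following \cite{gmW1}): your positive profile $c_\varepsilon$ is precisely the derivative $f_\varepsilon'$ of the odd desingularization function in Definition \ref{def:desingularization}, so $\omega_\varepsilon = c_\varepsilon(z)\,dz\wedge\widetilde{\alpha}(z)+\pi^*\beta$ coincides with the paper's $df_\varepsilon\wedge\bigl(\sum_i z^i\pi^*\alpha_i\bigr)+\beta$, and your closedness, non-degeneracy and $\varepsilon$-scaling arguments for $\mathcal{C}^{2m-1}$-convergence match the sketch given after Theorem \ref{th:deblog}. The proposal is correct.
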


	An immediate consequence of this theorem is that a $b^{2m}$-symplectic manifold also admits a symplectic structure. So, in particular, all symplectic topological obstructions apply to $b^{2m}$-symplectic manifolds. We will sketch the proof of Theorem \ref{th:deblog} following \cite{gmW1}.

As in Equation (\ref{eq:Laurentseriesdef}), a Laurent series can be associated to any closed $b^m$-form in a tubular neighbourhood $\mathcal{N}(Z)$ of $Z$ (see \cite{scott}):
	\begin{equation}
		\label{eq:Laurent}
		\omega = \frac{dz}{z^m} \wedge \left ( \sum \limits_{i = 0}^{m - 1} \pi^* (\alpha_i) z^i \right ) + \beta,
	\end{equation}
	where $\pi : \mathcal{N}(Z) \to Z$ is the projection of the tubular neighborhood onto $Z$, $\alpha_i$ is a closed smooth de Rham 1-form on $Z$, and $\beta$ is a de Rham 2-form on $M$ and $z$ is a defining function for $Z$ in $\mathcal{N}(Z)$ (the function does not need to be local at all, in fact Equation (\ref{eq:Laurent}) is valid only in $\mathcal{N}(Z)$).

Because of the formula~\ref{eq:Laurent}, the $b^{2m}$-form can be written as
	\begin{equation}
		\label{eq:b2kDecomp}
		\omega = \frac{d z}{z^{2m}} \wedge \sum \limits_{i = 0}^{2m - 1} \left ( \pi^*(\alpha_i ) z^i \right ) + \beta
	\end{equation}
	in a tubular neighbourhood of a given connected component of $Z$.

	\begin{definition}\label{def:desingularization}
		Let $(M,\omega)$, be a $b^{2m}$-symplectic manifold with critical set $Z$. Consider the decomposition given by the Equation~\ref{eq:b2kDecomp} on an $\varepsilon$-tubular neighborhood $\mathcal{N}_\varepsilon(Z)=\{|z|<\varepsilon\}$ of a connected component of $Z$.

		Let $f \in \mathcal{C}^\infty (\mathbb R)$ be an odd smooth function satisfying $f'(x) > 0$ for all $x \in [-1, 1]$ and satisfying outside this interval that 
		\begin{equation*}
			f(x) =
			\begin{cases}
				\frac{-1}{(2m-1)x^{2m -1}} - 2 \phantom{a} \mathrm{ for} \phantom{a} x < -1\\
				\frac{-1}{(2m-1)x^{2m -1}} + 2 \phantom{a} \mathrm{ for } \phantom{a} x > 1.
			\end{cases}\,
		\end{equation*}
		We define $f_\varepsilon$ as $\varepsilon^{-(2m - 1)} f(x/\varepsilon)$.
		More precisely, the expression of $f_\varepsilon$ is given as follows:
		\begin{equation}\label{def:feps}
			f_\varepsilon(x) =
			\begin{cases}
				\frac{-1}{(2m-1)x^{2m -1}} - \frac{2}{\varepsilon^{2m-1}} \phantom{a} \mathrm{ for} \phantom{a} x < -\varepsilon\\
				\frac{-1}{(2m-1)x^{2m -1}} + \frac{2}{\varepsilon^{2m-1}} \phantom{a} \mathrm{ for } \phantom{a} x > \varepsilon.
			\end{cases}\,
		\end{equation}
				Observe that $f'_\varepsilon(x)=\frac{1}{x^{2m}}$ for $|x|>\varepsilon$. In particular, $f_\varepsilon'>0$.

		The $f_\varepsilon$-\emph{desingularization} $\omega_\varepsilon$ is a smooth $2$-form that is defined on a neighbourhood $\mathcal{N}_\varepsilon(Z)$ as
		\begin{equation}
			\label{eq:b2kDesing}
			\omega_\varepsilon = d f_\varepsilon(z) \wedge \left ( \sum \limits_{i = 1}^{2m-1} z^i \pi^*(\alpha_i) \right ) + \beta.
		\end{equation}
	\end{definition}
	Observe that $\omega_\varepsilon$ can be trivially extended to the whole manifold  so that it coincides with $\omega$ outside $\mathcal{N}_\varepsilon(Z)$. Hence $\omega_\varepsilon$ is a smooth differential form on $M$.
	Furthermore, $\omega_\varepsilon$ can be shown to be symplectic, because a direct computation yields that in $\mathcal{N}_\varepsilon(Z)$, $\omega_\varepsilon^n=f_\varepsilon'(z)z^{2m}\omega$.

   For $b^{2m+1}$-symplectic manifolds, it is shown in \cite{gmW1} that one can desingularize this singular symplectic structure into a folded symplectic structure. We will show that in many cases, it is still possible to construct a smooth symplectic structure out of the $b^{2m+1}$-symplectic structure. Furthermore, given a Hamiltonian function, the initial associated Hamiltonian vector field can be reinterpreted a smooth Hamiltonian vector field under this desingularization. In order to analyze this improved desingularization,
  it will be useful to associate a graph to a $b$-manifold.
	This graph has been previously used notably in \cite{mirandaplanas} and more recently in \cite{kirchhofflukat}.

		\begin{definition}\label{def:bgraph}
			Let $(M, Z)$ be a $b$-manifold, and consider $\mathcal{N}_{\varepsilon}(Z)$ a tubular neighbourhood of $Z$ for $\varepsilon > 0$ small enough.
			The \emph{graph associated to} $(M,Z)$ is then given by the set of vertices defined by the connected components of $M \setminus \mathcal{N}_{\varepsilon}(Z)$, and by the set of edges given by the connected components of $\mathcal{N}_{\varepsilon}(Z)$.
			Two vertices $M_i, M_j \subset M \setminus \mathcal{N}_{\varepsilon}(Z)$ are said to be adjacent under this definition if there exists a connected component of $\mathcal{N}_{\varepsilon}(Z)$ which borders both of them.

			If $\varepsilon$ is small enough it is clear that the graph does not depend on the tubular neighbourhood $\mathcal{N}_{\varepsilon}(Z)$, and, in particular, it can be represented by the connected components of $M \setminus Z$ and of $Z$ for the vertices and the edges, respectively.
			In this sense, the graph is a topological invariant of the pair $(M,Z)$.
		\end{definition}

		\begin{definition}
			We say that the $b$-manifold is \emph{cyclic} (or \emph{acyclic}) if its associated graph contains (or, respectively, does not contain) a cycle.
		\end{definition}

		\begin{definition}
			Let $(\Sigma, Z)$ be a $b$-surface.
			Then we say that the \emph{weight} associated to a vertex $\Sigma_i \subset \Sigma \setminus Z$, denoted by $g_{\Sigma_i}$, is the genus of the connected component $\Sigma_i$ seen as a surface (assuming that each open part is completed with a disk).
		\end{definition}

	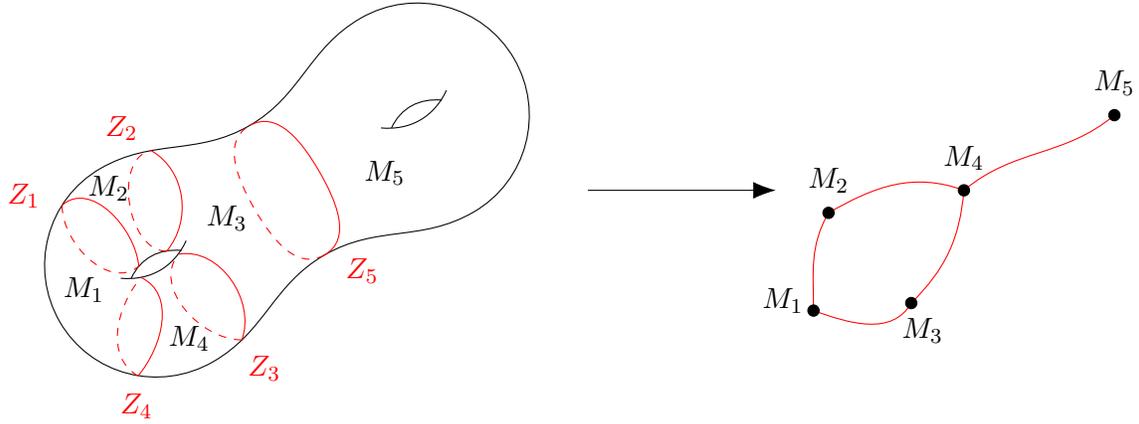
\begin{figure}[h]
		\centering
		\begin{tikzpicture}
			\draw [] (-2.48,0.29) to [out=210,in=120] (-3.02,-1.75) to [out=300,in=210] (-0.98,-2.28) to [out=30,in=210] (0.5,-0.86) to [out=30,in=210] (2.48,-0.29) to [out=30,in=300] (3.02,1.75) to [out=120,in=30] (0.98,2.29) to [out=210,in=30] (-0.5,0.86) to [out=210,in=30] (-2.48,0.29);
			\draw [] (-2.07,-1.17) to [out=65,in=175] (-1.4,-0.8);
			\draw [] (-2.21,-1.17) to [out=355,in=245] (-1.34,-0.67);
			\draw [] (1.39,0.83) to [out=65,in=175] (2.06,1.2);
			\draw [] (1.25,0.83) to [out=355,in=245] (2.12,1.33);

			\draw [red] (0.5,-0.86) to [out=30,in=300] (0.43,0.25) to [out=120,in=30] (-0.5,0.86);
			\draw [red,dashed] (0.5,-0.86) to [out=210,in=300] (-0.43,-0.25) to [out=120,in=210] (-0.5,0.86);

			\draw [red] (-1.98,-2.47) to [out=50,in=335] (-1.94,-1.16);
			\draw [red,dashed] (-1.98,-2.47) to [out=150,in=230] (-1.94,-1.16);

			\draw [red] (-0.6,-1.99) to [out=70,in=10] (-1.44,-0.85);
			\draw [red,dashed] (-0.6,-1.99) to [out=180,in=230] (-1.44,-0.85);

			\draw [red] (-2.99,-0.2) to [out=42,in=95] (-1.97,-1.01);
			\draw [red,dashed] (-2.99,-0.2) to [out=270,in=220] (-1.97,-1.01);

			\draw [red] (-1.6,-0.81) to [out=50,in=330] (-1.8,0.525);
			\draw [red,dashed] (-1.6,-0.81) to [out=180,in=190] (-1.8,0.53);

			\draw (-2.7,-1.75) node [label={$M_1$}] {};
			\draw (-2.37,-0.4) node [label={$M_2$}] {};
			\draw (-1.3,-2.4) node [label={$M_4$}] {};
			\draw (-0.8,-0.8) node [label={$M_3$}] {};
			\draw (1.3,-0.2) node [label={$M_5$}] {};

            \draw[color=red] (-3.5,-0.5) node [label={$Z_1$}] {};
            \draw[color=red] (-2.2,0.4) node [label={$Z_2$}] {};
            \draw[color=red] (-0.3,-2.8) node [label={$Z_3$}] {};
            \draw[color=red] (-2,-3.3) node [label={$Z_4$}] {};
            \draw[color=red] (1,-1.5) node [label={$Z_5$}] {};

			\draw [-{Latex[length=3mm]}] (4,0) to (6.5,0);

			\draw [red] (7,-1.6) to [out=90,in=240] (7.2,-0.3);
			\draw [red] (7,-1.6) to [out=340,in=240] (8.3,-1.5);
			\draw [red] (8.3,-1.5) to [out=45, in=265] (9,0);
			\draw [red] (7.2,-0.3) to [out=30,in=160] (9,0);
			\draw [red] (9,0) to [out=40,in=220] (11,1);

			\draw [fill] (7,-1.6) circle [radius=0.75mm]
			node [label={[left]$M_1$}] {};
			\draw [fill] (7.2,-0.3) circle [radius=0.75mm]
			node [label={$M_2$}] {};
			\draw [fill] (8.3,-1.5) circle [radius=0.75mm]
			node [label={[below,xshift=1.5mm,yshift=-2mm]$M_3$}] {};
			\draw [fill] (9,0) circle [radius=0.75mm]
			node [label={$M_4$}] {};
			\draw [fill] (11,1) circle [radius=0.75mm]
			node [label={$M_5$}] {};
		\end{tikzpicture}

		\caption{An example of the graph of a $b$-manifold (cyclic)}
		\label{figure:cyclic_graph}
	\end{figure}

The $2$-colorability of the graph of a $b$-manifold is equivalent to the existence of a global defining function for the critical set.

   \begin{lemma}\label{lem:globaldefiningfunction}\footnote{This lemma is basically a reformulation of Theorem 5.5. in \cite{mirandaplanas}.}
        Any $b$-manifold $(M,Z)$ admits a global defining function if and only if its associated graph is $2$-colorable.
    \end{lemma}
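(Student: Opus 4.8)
The plan is to prove both implications directly, reading the two colours as the two signs of a defining function. I would dispatch the forward implication first, since it is essentially immediate, and reserve the real work for the reverse construction.

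For the forward direction, suppose $z$ is a global defining function. On each connected component $M_j$ of $M \setminus Z$ the function $z$ is continuous and nowhere zero, hence of constant sign, and I would colour $M_j$ by this sign $c(M_j) \in \{+,-\}$. To check this colouring is proper, recall that an edge of the graph is a component $Z_i$ of $Z$, joining the two regions that its tubular neighbourhood $\mathcal{N}_i$ borders; since $z$ is a regular defining function it coorients $Z_i$, so $Z_i$ is two-sided and $\mathcal{N}_i \setminus Z_i$ has exactly two sides. Fixing $p \in Z_i$, regularity gives $dz_p \neq 0$, so $z$ is a submersion near $p$ and takes both signs in every neighbourhood of $p$, the two signs occurring on the two sides. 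Hence the two regions bordering $\mathcal{N}_i$ carry opposite signs; they are therefore distinct (ruling out self-loops) and receive different colours. The colouring is proper, so the graph is $2$-colourable.

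For the reverse direction, assume a proper $2$-colouring $c$ is given. The first observation is that a properly $2$-colourable graph has no self-loops; translating this back, each component $Z_i$ has its two local sides lying in distinct regions of opposite colour, so in particular each $Z_i$ is two-sided. I can then choose a tubular neighbourhood $\mathcal{N}_i \cong Z_i \times (-1,1)$ with transverse coordinate $t_i$, and orient $t_i$ so that $t_i > 0$ holds precisely on the side whose region is coloured $+$. The point is that $2$-colourability makes these coorientations globally coherent with the colouring. It then remains to glue the local functions $t_i$ into one smooth function: I would take the cover of $M$ given by the $\mathcal{N}_i$ together with $W := M \setminus \bigsqcup_i \overline{Z_i \times [-\tfrac12,\tfrac12]}$, pick a subordinate partition of unity $\{\chi_i\} \cup \{\chi_0\}$ arranged so that $\chi_i \equiv 1$ near $Z_i$, and set $z := \sum_i \chi_i\, t_i + \chi_0\, s$, where $s$ is the locally constant function on $W \subset M \setminus Z$ equal to $c(M_j)$ on $M_j$. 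Near $Z_i$ this reduces to $z = t_i$, a regular defining function; on the collar overlaps $W \cap \mathcal{N}_i$ the quantities $t_i$ and $s$ have the same sign by our orientation choice, so $z$ is a convex combination of same-sign quantities and hence nonzero; and on the remainder of $W$ one has $z = s = \pm 1 \neq 0$. Thus $z^{-1}(0) = Z$ with $0$ a regular value, giving the desired global defining function.

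I expect the main obstacle to be precisely the sign-bookkeeping in the reverse direction: the real content of $2$-colourability is that the local coorientations ``positive side $=$ the $+$-coloured region'' can be chosen coherently, so that the glued function does not accidentally vanish somewhere on $M \setminus Z$; when $2$-colourability fails (e.g. a self-loop arising from a non-separating or one-sided $Z_i$) this coherence breaks down and, as the torus example $Z = \{pt\}\times S^1 \subset \mathbb{T}^2$ shows, no global defining function can exist. I would also note that this statement is a reformulation of \cite[Theorem~5.5]{mirandaplanas}, which may be cited in place of the explicit gluing.
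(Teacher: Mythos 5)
Your proof is correct and follows essentially the same approach as the paper: signs of the defining function on the components of $M\setminus Z$ correspond to the two colours. The paper's own argument is only a two-line sketch (it simply asserts in the reverse direction that one can define a function with the prescribed signs vanishing on $Z$), so your partition-of-unity gluing and the observation that $2$-colourability rules out self-loops (hence one-sided or non-separating components of $Z$) supply exactly the details the paper leaves implicit.
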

    \begin{proof}
    A globally defining function $f$ defines a sign on each connected component of $M\setminus Z$, and we can thus color the associated graph using the coloring of $\{+,-\}$.
    
    Vice versa, given a $2$-coloring $\{+,-\}$ of the associated graph, we can define a function that is positive on the connected component colored by $+$ and negative on the connected component colored by $-$, and vanishing on each connected component of $Z$ (which correspond to the vertices of the graph).
    \end{proof}
    
    Therefore, as acyclic $b$-manifolds are $2$-colorable, they admit a global defining function.  When the $b$-manifold is cyclic, the $b$-manifold admits a globally defining function if and only if the associated graph is $2$-colorable.

\subsection{Hamiltonian dynamics in the desingularization of $b^{2m}$-symplectic manifolds}

	In this subsection  we study the effect of the desingularization procedure in \cite{gmW1} of $b^{2m}$-symplectic manifolds, taking into account the dynamics defined by the Hamiltonian vector field of an admissible $b^{2m}$-Hamiltonian function.

\begin{proposition}\label{prop:b2mdesinsympl}
    Let $(M,Z,\omega)$ be a compact $b^{2m}$-symplectic manifold and let $H_t\in {^{b^{2m}}}\mathcal{C}^\infty(M)$ be an admissible Hamiltonian.
    Then the Hamiltonian vector field $X_H^\omega$ is a symplectic vector field with respect to the desingularization $\omega_\varepsilon$.
\end{proposition}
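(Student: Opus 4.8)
The plan is to exploit that $\omega_\varepsilon$ differs from $\omega$ only inside the tubular neighbourhood $\mathcal{N}_\varepsilon(Z)$. On $M\setminus\mathcal{N}_\varepsilon(Z)$ the two forms agree, and there $X_H^\omega$ is the honest Hamiltonian vector field of the smooth function $H$ with respect to the smooth symplectic form $\omega$, hence symplectic, so $\mathcal{L}_{X_H^\omega}\omega_\varepsilon=0$ automatically. Since being symplectic is a pointwise condition and $\omega_\varepsilon$ is closed, we have $\mathcal{L}_{X_H^\omega}\omega_\varepsilon=d\,\iota_{X_H^\omega}\omega_\varepsilon$ throughout, so it remains to check on $\mathcal{N}_\varepsilon(Z)$ that $\iota_{X_H^\omega}\omega_\varepsilon$ is closed. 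I will in fact show it is exact.

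Write the Laurent decomposition (\ref{eq:b2kDecomp}) as $\omega=\frac{dz}{z^{2m}}\wedge\widetilde\alpha+\beta$ with $\widetilde\alpha=\sum_{i}z^i\pi^*\alpha_i$, and recall from Definition \ref{def:desingularization} that $df_\varepsilon=f'_\varepsilon(z)\,dz$ with $f'_\varepsilon(z)=z^{-2m}$ for $|z|>\varepsilon$. Setting $f(z)=-\frac{1}{(2m-1)z^{2m-1}}$, so that $df=\frac{dz}{z^{2m}}$ and $f'=z^{-2m}$, the admissible Hamiltonian reads $H_t=k(t)f(z)+h_t$ near $Z$ by Remark \ref{remark:localFormHamiltonian}, and the natural candidate primitive is the \emph{smooth} desingularized Hamiltonian $H_t^\varepsilon:=k(t)f_\varepsilon(z)+h_t$. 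The difference of the two forms, supported in $\mathcal{N}_\varepsilon(Z)$, is $\omega_\varepsilon-\omega=(f'_\varepsilon(z)-z^{-2m})\,dz\wedge\widetilde\alpha$.

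The heart of the argument is to extract two interior-product identities from admissibility. A direct computation from $X^\sigma=z^{2m}\partial_z$ gives $\iota_{X^\sigma}\omega=\widetilde\alpha$; pairing this with $\mathcal{L}_{X^\sigma}H_t=k(t)$ yields $\iota_{X_H^\omega}\widetilde\alpha=\omega(X^\sigma,X_H^\omega)=dH_t(X^\sigma)=k(t)$. Similarly $\iota_R\omega=-z^{-2m}(\iota_R\widetilde\alpha)\,dz$, and since $\iota_R\widetilde\alpha$ is nonvanishing near $Z$ (it governs the cosymplectic structure), pairing with $\mathcal{L}_{R}H_t=0$ forces $\iota_{X_H^\omega}dz=0$. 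With these, $\iota_{X_H^\omega}(dz\wedge\widetilde\alpha)=(\iota_{X_H^\omega}dz)\widetilde\alpha-dz\,(\iota_{X_H^\omega}\widetilde\alpha)=-k(t)\,dz$, so that $\iota_{X_H^\omega}(\omega_\varepsilon-\omega)=-k(t)(f'_\varepsilon-z^{-2m})\,dz=-k(t)\,d(f_\varepsilon-f)$. Combining with $\iota_{X_H^\omega}\omega=-dH_t=-d(k(t)f+h_t)$ and using that $k(t)$ is spatially constant gives $\iota_{X_H^\omega}\omega_\varepsilon=-d(k(t)f_\varepsilon+h_t)=-dH_t^\varepsilon$, which is exact; hence $\mathcal{L}_{X_H^\omega}\omega_\varepsilon=0$ on $\mathcal{N}_\varepsilon(Z)$ as well, and $X_H^\omega$ is symplectic for $\omega_\varepsilon$.

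I expect the main obstacle to be the two interior-product identities together with the passage across $Z$: the computations above are carried out on $M\setminus Z$, where $\omega$ is nondegenerate, and one must verify that the resulting identity $\iota_{X_H^\omega}\omega_\varepsilon=-dH_t^\varepsilon$ extends smoothly over $Z$ — this is where one uses that $X_H^\omega$ extends to a smooth vector field tangent to $Z$ and that $H_t^\varepsilon$ is genuinely smooth. It is worth emphasising that the primitive $H_t^\varepsilon$ is only defined on a neighbourhood of each connected component of $Z$; this is precisely why the conclusion is that $X_H^\omega$ is \emph{symplectic} (a local condition) rather than globally Hamiltonian. Promoting it to a Hamiltonian vector field requires the local primitives $k(t)f_\varepsilon+h_t$ to glue across components, which is exactly the role played by the unimodularity and acyclicity hypotheses in Propositions \ref{prop:b2mHam=Ham} and \ref{prop:cyclicb2m}.
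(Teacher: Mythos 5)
Your proposal is correct and follows essentially the same route as the paper: both arguments observe that $\omega_\varepsilon=\omega$ outside $\mathcal{N}_\varepsilon(Z)$ and that inside $\mathcal{N}_\varepsilon(Z)$ the admissible Hamiltonian desingularizes to the smooth local primitive $k(t)f_\varepsilon(z)+h_t$, whose $\omega_\varepsilon$-Hamiltonian vector field coincides with $X_H^\omega$, so that $X_H^\omega$ is locally Hamiltonian (hence symplectic) for $\omega_\varepsilon$. The only difference is presentational: you verify $\iota_{X_H^\omega}\omega_\varepsilon=-d\bigl(k(t)f_\varepsilon+h_t\bigr)$ via the interior-product identities $\iota_{X_H^\omega}\widetilde\alpha=k(t)$ and $\iota_{X_H^\omega}dz=0$ extracted from admissibility, whereas the paper writes $X_H^\omega=k(t)R+X_h^\beta$ explicitly and matches it with $X_{\widehat H}^{\omega_\varepsilon}$ — your bookkeeping is, if anything, slightly more careful about signs and about why only local primitives (hence only symplecticity) are obtained.
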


	\begin{proof}[Proof of Proposition \ref{prop:b2mdesinsympl}]
	The main idea of the proof is to desingularize the $b^{2m}$-symplectic form as in \cite{gmW1}, and to use the desingularization function to desingularize the Hamiltonian function as well: we will show that the admissible Hamiltonian can be desingularized in a tubular neighbourhood around the critical set. In this tubular neighbourhood, both vector fields agree and therefore this gives rise to symplectic dynamics.
		
		The desingularization of the $b^{2m}$-symplectic form as in Theorem \ref{th:deblog} in a tubular neighbourhood $\mathcal{N}_\varepsilon(Z)$ as in Equation \ref{def:desingularization} is given by
					$$\omega_\varepsilon = d f_\varepsilon \wedge \left ( \sum \limits_{i = 0}^{2m-1} z^i \pi^*(\alpha_i) \right ) + \beta,$$
		where $f_\varepsilon$ is defined as in Equation (\ref{def:feps}). Away from this neighbourhood, this form agrees with the initial one.
		
		By the assumptions of admissibility, the Hamiltonian is given in a tubular neighbourhood around $Z$ (as in Remark \ref{remark:localFormHamiltonian}) by
		$$H_t=-k(t)\frac{1}{2m-1}\frac{1}{z^{2m-1}}+h_t.$$
		The differential of $H_t$ is given by $dH_t=k(t)\frac{dz}{z^{2m}}+dh_t$ and thus the Hamiltonian vector field of $H_t$ with respect to the $b^{2m}$-symplectic form is given by $X_H^\omega=k(t)R+X_{h}^\beta$. Here $\beta$ is a symplectic form on the leaf of the symplectic foliation and by the condition of admissibility, the function $h_t$ is a function defined on the leaf of the symplectic foliation (see also Remark \ref{remark:localFormHamiltonian}).
		We define in the tubular neighbourhood $\mathcal{N}_\varepsilon(Z)$ the smooth Hamiltonian $\widehat{H}_t=-k(t)f_\varepsilon+h_t$.
		Similarly, the differential of $\widehat{H}$ is given by $d\widehat{H}_t=k(t)df_\varepsilon+dh_t$ and thus the Hamiltonian vector field of $\widehat{H}_\varepsilon$ with respect to the symplectic form $\omega_\varepsilon$ is given by $X_{\widehat{H}}^{\omega_\varepsilon}=k(t)R+X_{h}^\beta$.
 Thus both Hamiltonian vector fields agree, i.e. $X_H^\omega=X_{\widehat{H}}^{\omega_\varepsilon}$. As this is only a local construction, we conclude that $X_H^\omega$ by a symplectic vector field with respect to the symplectic structure $\omega_\varepsilon$.

 \end{proof}

 We will show that under additional conditions, the previous proposition can be strengthened, namely that the Hamiltonian vector field with respect to the $b^{2m}$-symplectic structure is a Hamiltonian vector field associated to the desingularization $\omega_\varepsilon$.

\begin{proposition}\label{prop:b2mHam=Ham}
		Let $(M,Z,\omega)$ be a compact $b^{2m}$-symplectic manifold with $Z$ being defined by a global defining function.
		Let $H_t\in {^{b^{2m}}}\mathcal{C}^\infty(M)$ be an admissible Hamiltonian and assume that $H_t$ is unimodular (see Definition \ref{def:constantlinearweight}).
		Then there exists a smooth Hamiltonian function $\widetilde{H}_{t}$ on $M$ such that $X^{{\omega}_\varepsilon}_{\widetilde{H}}$ coincides with $X^{\omega}_H$.
	\end{proposition}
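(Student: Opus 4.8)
The plan is to globalize the local construction from the proof of Proposition~\ref{prop:b2mdesinsympl}, exploiting the unimodularity of $H_t$ to subtract off the singular part of the Hamiltonian \emph{globally}. Write $s(x) := -\frac{1}{(2m-1)x^{2m-1}}$ for the common singular profile, so that the desingularization function satisfies $f_\varepsilon(x) = s(x) \pm \frac{2}{\varepsilon^{2m-1}}$ for $|x| > \varepsilon$, and $f_\varepsilon - s$ is locally constant on that region. By unimodularity (Definition~\ref{def:constantlinearweight}) there is a \emph{single} global defining function $f$ for $Z$ and a \emph{single} smooth $k(t) \in \mathcal{C}^\infty(S^1)$ such that, in the tubular neighbourhood of \emph{every} connected component of $Z$, one has $H_t = k(t)\, s(f) + h_t$ with $h_t$ smooth and leafwise (Remark~\ref{remark:localFormHamiltonian}). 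I would fix the desingularization $\omega_\varepsilon$ to be taken with respect to this global $f$.

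First I would define the function $G_t := H_t - k(t)\, s(f)$ on $M \setminus Z$ and argue that it extends to a globally defined smooth function on $M$. Away from $Z$ both $H_t$ and $s(f)$ are smooth, so $G_t$ is smooth there. Near each component of $Z$, the unimodular expression $H_t = k(t)\, s(f) + h_t$ shows that $G_t = h_t$, which is smooth across $Z$. This is precisely the step where unimodularity is indispensable: because a single pair $(f,k)$ works simultaneously on every component of $Z$, the singular terms cancel coherently and $G_t$ is genuinely smooth on all of $M$.

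Next I would set $\widetilde{H}_t := G_t + k(t)\, f_\varepsilon(f)$, which is smooth on $M$ since $f_\varepsilon$ is smooth and $G_t$ is smooth. It then remains to verify that $X_{\widetilde{H}}^{\omega_\varepsilon} = X_H^\omega$. Inside $\mathcal{N}_\varepsilon(Z)$ we have $G_t = h_t$, hence $\widetilde{H}_t = h_t + k(t)\, f_\varepsilon(f)$, which is exactly the local desingularized Hamiltonian $\widehat{H}_t$ appearing in Proposition~\ref{prop:b2mdesinsympl}; that proposition already established $X_{\widehat{H}}^{\omega_\varepsilon} = X_H^\omega$ there. Outside $\mathcal{N}_\varepsilon(Z)$ we have $f_\varepsilon(f) = s(f) + c$ with $c$ locally constant, so $\widetilde{H}_t = H_t + k(t)\, c$ differs from $H_t$ by a locally constant term, while at the same time $\omega_\varepsilon = \omega$; hence the two Hamiltonian vector fields coincide there as well. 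Since $\widetilde{H}_t$ is globally smooth and $\omega_\varepsilon$ globally symplectic, $X_{\widetilde{H}}^{\omega_\varepsilon}$ is globally defined, and the two local verifications patch to give equality on all of $M$.

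I expect the only genuine obstacle to be the global smoothness of $G_t$, i.e.\ the claim that the singular parts of $H_t$ cancel coherently across all components of $Z$; this is exactly what fails in the non-unimodular case, and is the reason Proposition~\ref{prop:b2mdesinsympl} can only assert that $X_H^\omega$ is symplectic rather than Hamiltonian. The remaining computations reduce to the local matching of vector fields already carried out in Proposition~\ref{prop:b2mdesinsympl}.
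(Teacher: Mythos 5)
Your proposal is correct and is essentially the paper's own argument: the paper defines the same $\widetilde{H}_t$ piecewise on $M^{\pm}$ and $\mathcal{N}_\varepsilon(Z)$ (namely $H_t \pm \frac{2}{\varepsilon^{2m-1}}k(t)$ outside and $k(t)f_\varepsilon(z)+h_t$ inside), which coincides with your $G_t + k(t)f_\varepsilon(f)$, and then checks smoothness across $z=\pm\varepsilon$ and the matching of vector fields exactly as you do. Your repackaging via the globally smooth correction $G_t = H_t - k(t)s(f)$ is a slightly cleaner way of presenting the same construction, and you correctly isolate unimodularity as the point that makes the subtraction coherent across all components of $Z$.
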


	\begin{proof}[Proof of Proposition \ref{prop:b2mHam=Ham}]
    As in the proof of Theorem \ref{prop:b2mdesinsympl}, we will use the desingularization function to desingularize the $b^{2m}$-Hamiltonian function.
	 We will show that in the case where the critical set is defined by a global defining function and the Hamiltonian is unimodular, it can be \emph{globally} desingularized into a smooth Hamiltonian function $\widetilde{H}$. The desingularized dynamics still agrees with the initial one \emph{globally}, and we will therefore conclude that in the case where the defining function is globally defined the associated dynamics are Hamiltonian.
	
We denote by $M^+$, respectively $M^-,$ the set of connected components where the globally defined defining function $z$ is greater then $\varepsilon$, respectively smaller than $\varepsilon$, i.e. \begin{equation}\label{def:M+}
		    M^\pm=\{x\in M | \pm z >\varepsilon\}.
		\end{equation}
        Let $\mathcal{N}_\varepsilon(Z)$ be a tubular neighbourhood around the critical set as in Definition \ref{def:desingularization}.

        We desingularize the Hamiltonian function $H_t$ to a smooth Hamiltonian function $\widetilde{H}_t$ in such a way that the Hamiltonian vector field of $H_t$ (with respect to the $b^{2m}$-symplectic structure $\omega$) agrees with the Hamiltonian vector field of $\widetilde{H}_t$ (with respect to the desingularized symplectic form $\omega_\varepsilon$).

        We define the following function:

        \begin{equation}\label{eq:Hamdesiglobally}
            \widetilde{H}_t= \begin{cases}H_t-\frac{2}{\varepsilon^{2m-1}}k(t) \text{ on } M^- \\ k(t)f_\varepsilon(z)+h_t \text{ in } \mathcal{N}_\varepsilon(Z) \\ H_t+\frac{2}{\varepsilon^{2m-1}}k(t) \text{ on } M^+. \end{cases}
        \end{equation}

        By the definition of $\widetilde{H}_t$ and $f_\varepsilon$, $$\widetilde{H}_t(\varepsilon)=k(t)f_\varepsilon(\varepsilon)+h_t=-k(t)\frac{1}{(2m-1)\varepsilon^{2m-1}}+k(t)\frac{2}{\varepsilon^{2m-1}}+h_t,$$
        and $\lim\limits_{z\to \varepsilon^-} \widetilde{H}_t(z)=-k(t)\frac{1}{(2m-1)\varepsilon^{2m-1}}+h_t+\frac{2}{\varepsilon^{2m-1}}k(t)$. Therefore $\widetilde{H}_t$ defines a continuous function. Furthermore, as the derivatives agree on $\{z=\varepsilon\}$, we obtain a smooth function.

			As outside of $\mathcal{N}_\varepsilon(Z)$ the Hamiltonian $\widetilde{H}_t$ is given by $H_t$ up to a constant, and as $\omega_\varepsilon=\omega$, by construction $X_H^\omega=X_{\widetilde{H}}^{\omega_\varepsilon}$. In $\mathcal{N}_\varepsilon(Z)$, the computation is similar as was done in the first case.  This finishes the proof of Proposition \ref{prop:b2mHam=Ham}.
			
	\end{proof}

    \begin{remark}
        Note that due to Equation (\ref{eq:Hamdesiglobally}), the function $k(t)$ must be a \emph{globally} defined function, as it is added globally to ${M^\pm}$: for this reason, we ask that $H$ is unimodular, as in Definition \ref{def:constantlinearweight}.
    \end{remark}

    \begin{remark} 
        Example \ref{ex:b2torusHam} is not an example that satisfies the condition of Proposition \ref{prop:b2mHam=Ham}. Consider the $b^2$-symplectic manifold given by $(\mathbb{T}^2,\omega=\frac{d\theta_1}{\sin^2\theta_1}\wedge d\theta_2)$ and the $b^2$-Hamiltonian given by $H=k(t)\cot(\theta)$. This Hamiltonian is not unimodular. Indeed, if $H$ could be written as $k(t)\frac{1}{f}+h_t$ for a global defining function $f$, the global defining function would always a critical point away from the critical set (see also Lemma \ref{lem:globaldefiningfunction}), whereas $\cot$ is a strictly decreasing function.
    \end{remark}

    We will now see that in the case where the associated graph is acyclic, we can relax the conditions of the previous proposition: the time-dependent function $k$ does not need to be globally defined. This means that for each connected component of the critical set there is a locally defined time-dependent function $k_i$ such that the $b^{2m}$-Hamiltonian writes down as $H_t=-k_i(t)\frac{1}{(2m-1)z^{2m-1}}+h_t$.

    \begin{proposition}\label{prop:cyclicb2m}
         Let $(M,Z,\omega)$ be a compact acylic $b^{2m}$-symplectic manifold and let $H_t \in {^{b^{2m}}}C^\infty(M)$ be an admissible Hamiltonian.
         Then there exists a smooth Hamiltonian $\widetilde{H}_t\in C^\infty(M)$ such that $X^{\omega_\varepsilon}_{\widetilde{H}}=X^\omega_H$.
    \end{proposition}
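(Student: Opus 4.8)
The plan is to follow the strategy of Proposition \ref{prop:b2mHam=Ham}, desingularizing the form and the Hamiltonian simultaneously, but to replace the single global weight $k(t)$ by the local weights $k_i(t)$ attached to each connected component $Z_i$ of the critical set, and to resolve the resulting gluing problem using the tree structure of the associated graph. First I would invoke the discussion following Lemma \ref{lem:globaldefiningfunction}: since $M$ is acyclic its graph is $2$-colorable, so there is a global defining function $z$ whose sign distinguishes, for each edge $Z_i$, a positive side $V_i^+$ and a negative side $V_i^-$ among the vertices (the connected components of $M\setminus Z$). I would then fix the desingularization $\omega_\varepsilon$ of Theorem \ref{th:deblog}, which agrees with $\omega$ away from $\bigcup_i\mathcal{N}_\varepsilon(Z_i)$, and recall from the proof of Proposition \ref{prop:b2mdesinsympl} that near $Z_i$ the admissible Hamiltonian $H_t=-k_i(t)\frac{1}{(2m-1)z^{2m-1}}+h_t$ has the same Hamiltonian vector field, computed with $\omega_\varepsilon$, as the smooth function $k_i(t)f_\varepsilon(z)+h_t$.

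The key reduction is the following. Since adding a time-dependent \emph{constant} to a Hamiltonian does not alter its Hamiltonian vector field, I am free to define $\widetilde{H}_t$ as $H_t+c_V(t)$ on each vertex $V$ and as $k_i(t)f_\varepsilon(z)+h_t+d_i(t)$ on each tubular neighbourhood $\mathcal{N}_\varepsilon(Z_i)$, for time-dependent constants $c_V(t)$ and $d_i(t)$ still to be chosen. Because $f_\varepsilon(z)=-\frac{1}{(2m-1)z^{2m-1}}\pm\frac{2}{\varepsilon^{2m-1}}$ for $\pm z>\varepsilon$, the two descriptions coincide, in the overlap where $|z|>\varepsilon$, exactly when $c_{V_i^{\pm}}(t)=\pm\frac{2}{\varepsilon^{2m-1}}k_i(t)+d_i(t)$; moreover, as in Proposition \ref{prop:b2mHam=Ham}, the matching of the singular parts makes the gluing automatically $\mathcal{C}^{\infty}$ (not merely continuous), so no further condition is needed. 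Eliminating the auxiliary constants $d_i$, the entire construction is consistent precisely when the vertex constants satisfy, for every edge $Z_i$,
\[
 c_{V_i^+}(t)-c_{V_i^-}(t)=\frac{4}{\varepsilon^{2m-1}}\,k_i(t).
\]

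It remains to produce such potentials, and this is the step where acyclicity is essential. The displayed relations prescribe, for each fixed $t$, the differences of an unknown function $V\mapsto c_V(t)$ along the oriented edges of the graph; such a function exists if and only if the prescribed differences sum to zero around every cycle, i.e. if and only if the corresponding $1$-cochain on the graph is a coboundary. When the graph is a forest there are no cycles and hence no obstruction: I would root each connected component of the graph, set the potential of the root to $0$, and propagate the prescribed differences outward along the tree, obtaining $c_V(t)$ that depends smoothly on $t$ (being a finite signed sum of the smooth functions $k_i(t)$). This is exactly the point at which the global, unimodular hypothesis of Proposition \ref{prop:b2mHam=Ham} can be dropped; in the cyclic case the cycle sums need not vanish unless all $k_i$ agree. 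Setting $d_i(t):=c_{V_i^+}(t)-\frac{2}{\varepsilon^{2m-1}}k_i(t)$ and assembling the pieces then yields a globally smooth $\widetilde{H}_t\in\mathcal{C}^{\infty}(M)$ which differs from $H_t$ by spatially constant time-dependent functions on the vertices and from $k_i(t)f_\varepsilon(z)+h_t$ by constants on the tubular neighbourhoods; hence $X^{\omega_\varepsilon}_{\widetilde{H}}=X^{\omega}_H$ throughout $M$, as required. The main obstacle is precisely the graph-cohomological solvability just described; the remaining verifications, namely smoothness across $\{|z|=\varepsilon\}$ and the equality of the vector fields, are local and identical to those in Propositions \ref{prop:b2mdesinsympl} and \ref{prop:b2mHam=Ham}.
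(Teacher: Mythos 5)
Your proposal is correct and follows essentially the same route as the paper's proof: the paper also desingularizes $H_t$ near each $Z_i$ via $k_i(t)f_\varepsilon(z)+h_t$ up to time-dependent constants and propagates the constant adjustments outward from an initial vertex along the tree, with acyclicity guaranteeing consistency. Your reformulation of that propagation as solving $c_{V_i^+}(t)-c_{V_i^-}(t)=\frac{4}{\varepsilon^{2m-1}}k_i(t)$ for a potential on the vertices (a $1$-cochain being a coboundary, automatic on a forest) is just a cleaner packaging of the same argument, and the matching constants agree with those in the paper.
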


    \begin{proof}
        As the associated graph is acylic, there exists a global defining function (see Lemma~\ref{lem:globaldefiningfunction}).
				By the assumptions of admissibility, the Hamiltonian is given in a tubular neighbourhood around a connected component of the critical set $Z_i$ (as in Remark \ref{remark:localFormHamiltonian}) by
		$$H_t=-k_i(t)\frac{1}{2m-1}\frac{1}{z^{2m-1}}+h_t.$$
        The differential of $H_t$ is given by $dH_t=k_i(t)\frac{dz}{z^{2m}}+dh_t$ and thus the Hamiltonian vector field of $H_t$ with respect to the $b^{2m}$-symplectic form is given by $X_H^\omega=k_i(t)R+X_{h_t}^\beta$. Here $\beta$ is a symplectic form on the leaf of the symplectic foliation and by the condition of admissibility, the function $h_t$ is a function defined on the leaf of the symplectic foliation (see also Remark \ref{remark:localFormHamiltonian}).

        We denote $M^\pm$ as in Equation \ref{def:M+}.
		 We start to define a smooth function $\widetilde{H}_t$ on a connected component of $M^+$ and we denote the corresponding vertex in the associated graph by $v_{initial}$. On this connected component, we set $\widetilde{H}_{t}=H_t$. We extend the definition of the function $\widetilde{H}_{t}$ by defining it on the connected components of $\mathcal{N}_\varepsilon(Z)$ corresponding to the edges incident to $v_{initial}$ and on the connected components of $M^-$ corresponding to the adjacent vertices of $v_{initial}$. We then reiterate this step to continue to extend $\widetilde{H}_t$ to the neighbouring connected components of $\mathcal{N}_\varepsilon(Z)$ and to the connected components of $M\setminus \mathcal{N}_\varepsilon(Z)$ corresponding to the adjacent vertices. The condition on the associated graph being acyclic implies that the graph does not close up and therefore this gives rise to a globally defined function $\widetilde{H}_t$. In order to simplify the wording, we use interchangeably the connected components of $M\setminus \mathcal{N}_\varepsilon(Z)$ (respectively connected components of $\mathcal{N}_\varepsilon(Z)$) and the vertices (respectively edges) of the associated graph $V$. For instance, when we write $v_{initial}$, we mean the connected component of $M\setminus \mathcal{N}_\varepsilon(Z)$ corresponding to $v_{initial}$ and vice versa.
		 
		 The extension of $\widetilde{H}_t$ goes as follows. Having already defined $\widetilde{H}_{t}$ on $v_{initial}$, we then define $\widetilde{H}_{t}$ on the edges incident to $v_{initial}$.
			
			In each edge $e_i$ incident to $v$, we define the Hamiltonian by $\widetilde{H}_{t} = f_\varepsilon(z)k_i(t)+h_t-k_i(t)\frac{2}{\varepsilon^{2m-1}} $.
			In this way, the function $\widetilde{H}_t$ defines a smooth function on $v_{initial}\cup \bigcup_{i} e_i$, as the function defined by parts are identical on a small neighbourhood of the intersection of $v\cap e_i^\pm$. Indeed, by the formula of $f_\varepsilon$, we obtain that for $z>\varepsilon$, 
   \begin{align*}
    \widetilde{H}_t&=f_\varepsilon(z)k_i(t)+h_t-k_i(t)\frac{2}{\varepsilon^{2m-1}}
       \\&=k_i(t)\Big(\frac{-1}{(2m-1)z^{2m -1}} + \frac{2}{\varepsilon^{2m-1}}\Big)+h_t-k_i(t)\frac{2}{\varepsilon^{2m-1}}
       \\&=-k_i(t)\frac{1}{(2m-1)z^{2m -1}}+h_t
    \end{align*}

   A similar computation yields that for $z < -\varepsilon$, $\widetilde{H}_t=-k_i(t)\frac{1}{(2m-1)z^{2m-1}}+h_t-\frac{4}{\varepsilon^{2m-1}}k_i(t)$.
   On the adjacent vertices $v_i$, we therefore define $\widetilde{H}_t=H_t-k_i(t)\frac{4}{\varepsilon^{2m-1}}$.
			As the function $f_\varepsilon$ satisfies $f_\varepsilon'>0$, the function $\widetilde{H}_{t}$ defines a smooth function on the domain of $M$ corresponding to $v_{initial}$, $e_i$ and $v_i$.

			We iterate this construction step by step over all the edges incident and vertices adjacent to the vertices of the domain of definition until the domain of definition of the function $\widetilde{H}_t$ is $M$.
			As the graph is acyclic, the function $\widetilde{H}_t$ is a well-defined smooth function on $M$.

			As in the proof of Proposition \ref{prop:b2mHam=Ham}, it can be checked that the Hamiltonian vector field of $\widetilde{H}_{t}$ associated to $(M, \widetilde{\omega}_{\varepsilon})$ equals to $X_H$, i.e. $X_H^\omega = X_{\widetilde{H}}^{\widetilde{\omega}_{\varepsilon}}$. This finishes the proof of Proposition \ref{prop:cyclicb2m}.
            
    \end{proof}

   \begin{remark}
        Once more, Example \ref{ex:b2torusHam} is not an example that satisfies the condition of Proposition \ref{prop:bHam=Ham}, as the graph is cyclic.
    \end{remark}

	\begin{remark} 
	    There are $b^{2m}$-symplectic manifolds where neither Proposition \ref{prop:b2mHam=Ham}, nor Proposition \ref{prop:cyclicb2m} directly apply, but combining the two results, we can still desingularize the $b^{2m}$-Hamiltonian function by applying ideas from both proofs.
			Consider for example the $b$-manifold as depicted in Figure \ref{figure:cyclic_graph} and a $b^{2m}$-Hamiltonian function, that has the semilocal expression $H_t=-k_1(t)\frac{1}{(2m-1)z^{2m-1}}+h_t$ in a tubular neighbourhood around $Z_1,Z_2,Z_3$ and $Z_4$, and the expression $H_t=-k_2(t)\frac{1}{(2m-1)z^{2m-1}}+h_t$ in a neighbourhood around $Z_5$.
			Here $z$ denotes a global defining function, which exists because the graph is two-colorable (see Lemma \ref{lem:globaldefiningfunction}).
			This $b^{2m}$-Hamiltonian function does not satisfy the conditions of Proposition \ref{prop:b2mHam=Ham}, and as the graph is not acyclic, Proposition \ref{prop:cyclicb2m} cannot be applied either.
			However, we can first apply the reasoning of Proposition \ref{prop:cyclicb2m} to the cyclic subgraph, and then finish the desingularization of the Hamiltonian function in the connected component denoted by $M_5$ following the proof of Proposition \ref{prop:bHam=Ham}.

		In short, the desingularization process described in Proposition \ref{prop:b2mHam=Ham} and in Proposition \ref{prop:cyclicb2m} can be applied to certain subsets of the manifold, as long as the right conditions hold in each of those subsets.
	\end{remark}

	Note that the above results only hold for $b^{2m}$-symplectic manifolds. In the case of $b^{2m+1}$-symplectic manifolds, the singular form cannot be desingularized generally due to topological constraints, as there exist manifolds that admit a $b$-symplectic structure but do not admit symplectic structures.
	See \cite[Figure 1]{cavalcanti} for an example.
	In the case of $b^{2m+1}$-symplectic manifolds the desingularization (Theorem \ref{th:deblog}) yields a so-called \emph{folded} symplectic form.
	In the next section, however, we will see that the desingularization can still be adapted in the particular case of surfaces.

\subsection{Desingularizing $b^m$-symplectic surfaces I}

As we saw in the previous section, $b^m$-symplectic manifolds together with an admissible Hamiltonian function can be desingularized as long as $m$ is an even number. In this subsection, we prove that in the case of surfaces, we can desingularize \emph{independently} on the parity of $m$.

	As will be proven, $b^m$-symplectic structures on orientable surfaces can be regularized into symplectic structures. While the case of $m$ even is already covered in Proposition \ref{prop:b2mHam=Ham} and \ref{prop:cyclicb2m}, we will show that the desingularization theorem can be adapted to the case of $m$ odd in the case of surfaces. We show that for $m$ odd and under similar conditions as in \ref{prop:b2mHam=Ham} and \ref{prop:cyclicb2m}, the Hamiltonian dynamics of an admissible $b^m$-Hamiltonian on a $b^m$-symplectic surface is given by the Hamiltonian dynamics of a smooth Hamiltonian on a symplectic manifold.

 For the sake of clarity, we distinguish the two cases in two different propositions.

	\begin{proposition}\label{prop:bHam=Ham}
		Let $(\Sigma,Z, \omega)$ a compact $b^m$-symplectic surface with $Z$ being defined by a globally defining function. Let $H_t$ be an unimodular admissible Hamiltonian (see Definition \ref{def:constantlinearweight}).
		Moreover, assume that $\Sigma$ is orientable.
		Then there exist a smooth symplectic structure $\widetilde{\omega}_\varepsilon$ and a smooth Hamiltonian $\widetilde{H}_t$ on $\Sigma$ such that $X^{\widetilde{\omega}_\varepsilon}_{\widetilde{H}}$ coincides with $X^{\omega}_H$.
	\end{proposition}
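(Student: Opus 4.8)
The plan is to reduce the odd case to the even one by exploiting orientability together with the simple observation that, for odd $m$, the $b^m$-form $\omega$ induces \emph{opposite} orientations on the two sides of $Z$. Indeed, in the local model of Remark \ref{remark:localFormHamiltonian} one has $\omega = z^{-m}\,dz\wedge d\theta$ near a component of $Z$, and the coefficient $z^{-m}$ is positive for $z>0$ but negative for $z<0$ when $m$ is odd. This sign change is precisely what forces the desingularization of \cite{gmW1} to produce a fold: any smooth primitive interpolating $z^{-m}$ across $z=0$ must acquire a critical point. The remedy is to flip the form on one colour class of the associated graph.

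First I would fix a global orientation on $\Sigma$ and, using the global defining function $z$, colour the connected components of $\Sigma\setminus Z$ by the sign of $z$ (this is the $2$-colouring of Lemma \ref{lem:globaldefiningfunction}); write $\Sigma^{+}$ and $\Sigma^{-}$ for the two colour classes. On $\Sigma^{+}$ I keep $(\omega,H_t)$, while on $\Sigma^{-}$ I replace them by $(-\omega,-H_t)$. The crucial algebraic fact is that $X^{-\omega}_{-H}=X^{\omega}_{H}$, since $\iota_{X}(-\omega)=-d(-H)$ is equivalent to $\iota_{X}\omega=-dH$; hence the Hamiltonian dynamics is unchanged on each colour class. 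After this flip the form near \emph{every} component of $Z$ reads $|z|^{-m}\,dz\wedge d\theta$ on both sides, with a positive coefficient blowing up as $|z|\to 0$, exactly as in the even case.

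Next I would desingularize this flipped model. Choose a smooth, strictly positive, even function $g_\varepsilon$ on $\mathbb{R}$ with $g_\varepsilon(z)=|z|^{-m}$ for $|z|>\varepsilon$, and set $\widetilde{\omega}_\varepsilon := g_\varepsilon(z)\,dz\wedge d\theta$ in $\mathcal{N}_\varepsilon(Z)$, extended by $\omega$ on $\Sigma^{+}$ and by $-\omega$ on $\Sigma^{-}$ outside the neighbourhood. Since $g_\varepsilon>0$ and $\pm\omega$ is a positive area form on $\Sigma^{\pm}$ for the chosen orientation, $\widetilde{\omega}_\varepsilon$ is a globally defined nowhere-vanishing $2$-form, hence symplectic. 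For the Hamiltonian I set $\widetilde{H}_t:=k(t)\,G_\varepsilon(z)$ near $Z$, where $G_\varepsilon'=g_\varepsilon$, glued to $H_t$ and $-H_t$ (each corrected by the appropriate constant) on $\Sigma^{\pm}$; the local computation $\iota_{k(t)\partial_\theta}\widetilde{\omega}_\varepsilon=-k(t)g_\varepsilon(z)\,dz=-d\widetilde{H}_t$ shows that $X^{\widetilde{\omega}_\varepsilon}_{\widetilde{H}}=k(t)R=X^{\omega}_{H}$ in $\mathcal{N}_\varepsilon(Z)$, while away from $Z$ equality holds because the pairs are $(\omega,H_t)$ or $(-\omega,-H_t)$.

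The main obstacle is the global coherence of these piecewise definitions: one must check that the sign flip, the interpolation, and the additive constants assemble into a single smooth form and a single smooth Hamiltonian. Orientability guarantees that the sign of $z$, hence the colouring and the flip, is globally well defined and that the flipped form is a genuine area form rather than merely a nonvanishing one of inconsistent orientation. Unimodularity is what makes the Hamiltonian glue: because a single global $k(t)$ governs every component of $Z$, the primitive $G_\varepsilon$ and the compensating constants can be matched simultaneously at all components, exactly as the constants $\pm 2/\varepsilon^{2m-1}$ were matched in the proof of Proposition \ref{prop:b2mHam=Ham}. Verifying the $C^\infty$-matching at $\{|z|=\varepsilon\}$, namely continuity of the values together with agreement of the $z$-derivatives, is then a routine computation identical in spirit to the even case, and the even-$m$ case of the statement is already subsumed by Proposition \ref{prop:b2mHam=Ham}.
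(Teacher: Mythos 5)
Your proposal is correct and follows essentially the same route as the paper: flip the form (and Hamiltonian) to $-\omega$, $-H_t$ on the components where the global defining function is negative, interpolate across the collar with a critical-point-free primitive, and use unimodularity of $k(t)$ to match the additive constants at every component of $Z$ simultaneously. The only cosmetic difference is that you work with the Darboux model $z^{-m}dz\wedge d\theta$ rather than the semi-global Laurent expression $\sum_i z^{m-i}a_i\,\frac{dz}{z^m}\wedge d\theta$; carrying the nonvanishing coefficient along, as the paper does, changes nothing in the argument.
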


	The case when $m$ is even is already covered in Proposition \ref{prop:b2mHam=Ham}, so the novel case to consider is when  $m$ is odd. The proof of the proposition is based on an adaptation of the desingularization procedure introduced in \cite{gmW1}. However, this adaptation only works for surfaces\footnote{A different adaptation of the desingularization strategy for dynamical systems works in the integrable case in higher dimensions (see \cite[Chapter~7]{mirandaplanas2})}.

	\begin{proof}[Proof of Proposition \ref{prop:bHam=Ham}]

		The sketch of the proof is as in Proposition \ref{prop:b2mHam=Ham}: we will first construct a smooth symplectic form $\widetilde{\omega}_\varepsilon$ out of the $b^m$-symplectic one.
		We will also construct a smooth function $\widetilde{H}_t$ such that its Hamiltonian vector field with respect to $\widetilde{\omega}_\varepsilon$ coincides with the initial $b^m$-Hamiltonian vector field everywhere.
  
		As $Z$ is compact and $\Sigma$ is a surface, a connected component of $Z$ is diffeomorphic to $S^1$ and any tubular neighborhood of a connected component of the critical set is diffeomorphic to $ (-\varepsilon,\varepsilon) \times S^1$ and the $b^m$-symplectic structure in this neighborhood writes down as $\omega= \sum_{i=1}^m \frac{dz}{z^i} \wedge \pi^*(\alpha_i)$ where $\alpha_i\in \Omega^1(Z)$ are closed $1$-forms and $z$ is a defining function for $Z$. As $Z$ is one dimensional, $\alpha_i=a_i d\theta$, where $a_i$ are smooth functions on $S^1$, and we therefore rewrite 
		\begin{equation}\label{eq:Laurentseriessurface}
			\omega=\sum_{i=1}^m \big(z^{m-i}a_i \big) \frac{dz}{z^m}\wedge d\theta,
		\end{equation} 
		where $a_m(\theta) \neq 0$ for all $\theta\in S^1$ (as otherwise, $\omega$ would not be a $b^m$-symplectic form).

		By the assumptions on admissibility, the Hamiltonian is given by 
						$$H=\begin{cases}
		    k(t)\log|z|  \text{ when } m=1 \\
		    -k(t)\frac{1}{m-1}\frac{1}{z^{m-1}} \text{ when } m>1.
		\end{cases}$$
		We denote by $\Sigma^+$, respectively $\Sigma^-,$ the set of connected components where the globally defined defining function $z$ is greater than $\varepsilon$, respectively smaller than $-\varepsilon$ as in Equation (\ref{def:M+}).

		When $m=1$, let $g:(-\varepsilon,\varepsilon)\to \mathbb{R}$ be a smooth function such that $g_{\varepsilon}(x) = \log|x|$ in $\left(\frac{\varepsilon}2, \varepsilon\right)$ and $g_{\varepsilon}(x) =  -\log|x| +2\log(\frac{\varepsilon}{2})-\varepsilon$ in $\left(- \varepsilon, -\frac{\varepsilon}2\right)$. As $g_\varepsilon(\frac{\varepsilon}{2})-g_\varepsilon(-\frac{\varepsilon}{2})>0$, the function $g_\varepsilon$ can be smoothtly extended to $(-\frac{\varepsilon}{2},\frac{\varepsilon}{2})$ without having any critical points in this neighbourhood.
		
		When $m>1$, instead of the above function we consider $g_\varepsilon:(-\varepsilon,\varepsilon)\to \mathbb{R}$ a smooth function such that $g_{\varepsilon}(x) = -\frac{1}{m-1} \frac{1}{x^{m-1}}$ in $\left(\frac{\varepsilon}2, \varepsilon\right)$ and $g_{\varepsilon}(x) =  \frac{1}{m-1}  \frac{1}{|x|^{m-1}} - \frac{1}{m-1}\frac{2^{m+1}}{\varepsilon^{m-1}}$ in $\left(- \varepsilon, -\frac{\varepsilon}2\right)$. As $g_\varepsilon(\frac{\varepsilon}{2})-g_\varepsilon(-\frac{\varepsilon}{2})>0$, it can be extended to a smooth function in $(-\varepsilon,\varepsilon)$ without any critical points in the neighbourhood $(-\varepsilon,\varepsilon)$\footnote{Note that the function $g_\varepsilon$ resembles the desingularization function $f_\varepsilon$ (up to the addition of constants) in Equation \ref{eq:b2kDesing} in the case where $m$ is even.}.

		\begin{figure}
		\hspace*{-5cm}   
			\begin{minipage}{.25\textwidth}
				\begin{tikzpicture}
					\draw[->] (-3,0) -- (3,0);
					\draw[->] (0,-3) -- (0,3);
					\draw (3,0.1) node [label={$x$}] {};
					\draw [thick, cyan, samples=100, domain=0.15:3] plot ({\x},{(ln(abs(\x)))});
                    \draw[cyan] (3.5,1.5) node [label={$k\log|x|$}] {};
					\draw [thick, cyan, samples=100, domain=-3:-0.15] plot ({\x},{(ln(abs(\x)))});
				\end{tikzpicture}
			\end{minipage}
			\hspace{3cm}
			\begin{minipage}{.25\textwidth}
				\begin{tikzpicture}
					\draw[->] (-3,0) -- (3,0);
					\draw[->] (0,-3) -- (0,3);
					\draw (3,0.1) node [label={$x$}] {};
					\draw [thick, cyan, samples=100, domain=1.1:3] plot ({\x},{(ln(abs(\x)))});
					\draw [thick, dashed, cyan, samples=100, domain=0.15:1.1] plot ({\x},{(ln(abs(\x)))});
					\draw[cyan] (3.5,1.5) node [label={$k\log|x|$}] {};
					\draw [thick, green, samples=100, domain=-3:-0.15] plot ({\x},{-(ln(abs(\x)))});
					\draw[green] (-3.5,-1) node [label={$-k\log|x|$}] {};
					\draw [thick, brown, dashed, samples=100, domain=-1.1:-0.15] plot ({\x},{-(ln(abs(\x)))-2});
					\draw [thick, brown, samples=100, domain=-3:-1.1] plot ({\x},{-(ln(abs(\x)))-2});
					\draw[brown] (-3.5,-2.7) node [label={$-k\log|x|-C$}] {};
					\draw[red,thick] (1.1,0.1) -- (-1.1,-2.1);
				\end{tikzpicture}
			\end{minipage}
		\caption{An illustration of the regularization function $g_\varepsilon$ used to build a symplectic form and a smooth Hamiltonian out of the initial $b$-symplectic form and the $b$-Hamiltonian function.}
		\label{fig:f}
		\end{figure}
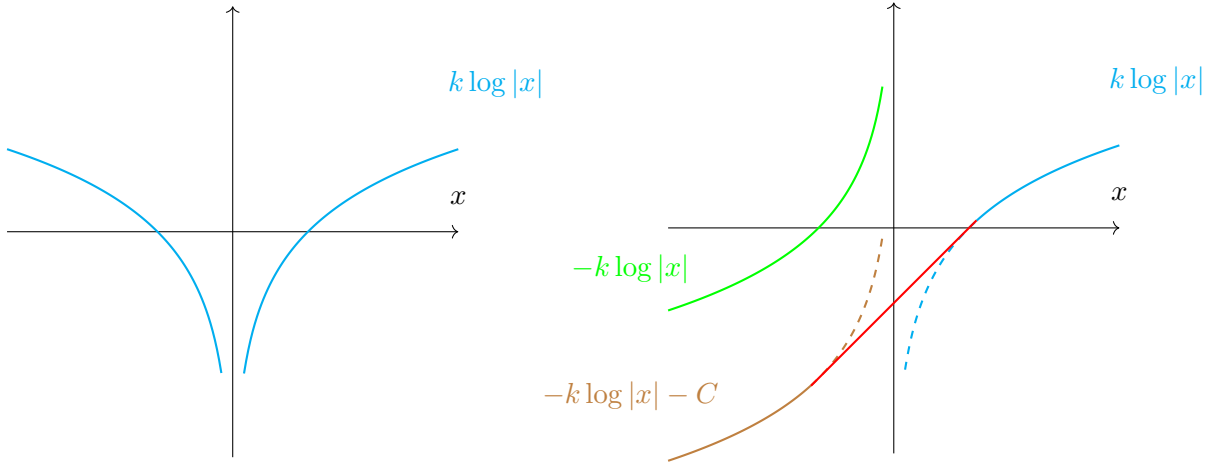

		We define a smooth  $2$-form on $\Sigma$ by
		$$ \widetilde{\omega}_{\varepsilon}= \begin{cases} \omega \text{ on } \Sigma^+ \\ (-1)^m\omega \text{ on } \Sigma^- \\ \sum_{i=1}^m \big(z^{m-i}a_i \big) dg_\varepsilon \wedge d\theta \text{ in } (-\varepsilon,\varepsilon) \times S^1. \end{cases}$$
		Note that the $2$-form $\widetilde{\omega}_\varepsilon$ is well-defined as the differential of $g_\varepsilon$ is well-defined and the form extends smoothly to $\Sigma^-$ and $\Sigma^+$.

        Similarly, we will define the Hamiltonian function by

        $$\widetilde{H}_t=\begin{cases}
            H_t \text{ in } \Sigma^+\\ k(t)g_\varepsilon(z) \text{ in } \mathcal{N}_\varepsilon(Z) \\ (-1)^m H_t-k(t)C(\varepsilon,m) \text{ in } \Sigma^-,
        \end{cases}$$

        where $C(\varepsilon,1)=-\log(\varepsilon)$ and $C(\varepsilon,m)=\frac{2^{m+1}}{\varepsilon^{m-1}}$ for $m>1$.

			We claim that $\widetilde{\omega}_{\varepsilon}$ is a symplectic $2$-form and that the Hamiltonian vector field of $\widetilde{H}_{\varepsilon}$ associated to $(\Sigma, \widetilde{\omega}_{\varepsilon})$ equals to $X_H$, i.e. $X_H^\omega = X_{\widetilde{H}}^{\widetilde{\omega}_{\varepsilon}}$.

           Indeed, as the changes do not affect $\Sigma^+$, the above statement holds in $\Sigma^+$. In $\Sigma^-$, as $d\widetilde{H}_t=(-1)^m dH_t$, and $\widetilde{\omega}=(-1)^m\omega$, it follows that both Hamiltonian vector fields agree. In the tubular neighbourhood, by definition of the Hamiltonian function, $X_H=-\frac{k(t)}{a_m}\frac{\partial}{\partial \theta}$. Similarly, by definition of $\widetilde{H}_t$, its Hamiltonian vector field with respect to $\widetilde{\omega}$ is given by $X^{\widetilde{\omega}}_{\widetilde{H}}=-\frac{k(t)}{a_m}\frac{\partial}{\partial \theta}$, and thus $X^{\widetilde{\omega}}_{\widetilde{H}}=X_H^\omega$.

           This finishes the proof.
	\end{proof}
	
 In the same way, we can do the singularization for $b^m$-symplectic surfaces whose associated graph is acylic without the condition that the Hamiltonian function is unimodular, which generalizes Proposition \ref{prop:cyclicb2m}.

 	\begin{proposition}\label{prop:acyclicbHam=Ham}
		Let $(\Sigma,Z, \omega)$ a $b^m$-symplectic surface whose associated graph is acyclic.
		Moreover, assume that $\Sigma$ is orientable.
		Then there exist a symplectic structure $\widetilde{\omega}_\varepsilon$ and a smooth Hamiltonian $\widetilde{H}_t$ on $\Sigma$ such that $X^{\widetilde{\omega}_\varepsilon}_{\widetilde{H}}$ coincides with $X^{\omega}_H$.
	\end{proposition}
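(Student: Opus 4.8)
The plan is to combine the surface desingularization of Proposition \ref{prop:bHam=Ham} with the tree-propagation argument of Proposition \ref{prop:cyclicb2m}. Since the associated graph is acyclic it is in particular $2$-colorable, so by Lemma \ref{lem:globaldefiningfunction} there is a global defining function $z$, and the sets $\Sigma^{\pm}$ of Equation (\ref{def:M+}) are globally defined. Near each connected component of $Z$ the form has the normal expression (\ref{eq:Laurentseriessurface}), namely $\omega = \sum_{i=1}^m (z^{m-i}a_i)\frac{dz}{z^m}\wedge d\theta$ with $a_m$ nowhere zero, and I would build the smooth $2$-form $\widetilde{\omega}_\varepsilon$ exactly as in Proposition \ref{prop:bHam=Ham}: equal to $\omega$ on $\Sigma^+$, to $(-1)^m\omega$ on $\Sigma^-$, and to $\sum_{i=1}^m (z^{m-i}a_i)\, dg_\varepsilon\wedge d\theta$ in each tubular neighbourhood, where $g_\varepsilon$ is the regularizing function of that proposition. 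This construction is purely local around each component of $Z$, and the $(-1)^m$ patching is dictated by the global $2$-coloring; for odd $m$ this sign is exactly what compensates the change of sign of $\frac{dz}{z^m}$ across $z=0$, so that orientability of $\Sigma$ guarantees that $\widetilde{\omega}_\varepsilon$ is a globally defined, nowhere-vanishing area form, hence symplectic.

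Next I would construct $\widetilde{H}_t$ by propagating through the graph as in the proof of Proposition \ref{prop:cyclicb2m}, the only change being that the even-$m$ desingularizing function $f_\varepsilon$ is replaced by the surface function $g_\varepsilon$. Concretely, choose an initial vertex (a connected component of $\Sigma\setminus\mathcal{N}_\varepsilon(Z)$) and set $\widetilde{H}_t=H_t$ there. I then extend across each incident edge $Z_i$, on which the admissible Hamiltonian reads $-k_i(t)\frac1{m-1}\frac1{z^{m-1}}$ (or $k_i(t)\log|z|$ for $m=1$), by declaring $\widetilde{H}_t=k_i(t)g_\varepsilon(z)$ up to the additive constant that makes the two definitions agree on the overlap $\{z=\varepsilon\}$; matching across $\{z=-\varepsilon\}$ forces a further constant, depending on $k_i(t)$, $\varepsilon$ and $m$, onto the adjacent vertex, where I set $\widetilde{H}_t$ to be $(-1)^m H_t$ plus that accumulated constant. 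Iterating this assignment over the edges and vertices of the graph defines $\widetilde{H}_t$ on all of $\Sigma$.

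Finally I would verify $X^{\widetilde\omega_\varepsilon}_{\widetilde H}=X^\omega_H$ pointwise. On the $\Sigma^+$ vertices nothing has changed; on $\Sigma^-$ both $\widetilde H_t$ and $\widetilde\omega_\varepsilon$ acquire the same factor $(-1)^m$ relative to $H_t$ and $\omega$ (and the added constants are killed by $d$), so the Hamiltonian vector fields coincide; in each tubular neighbourhood a computation identical to the one in Proposition \ref{prop:bHam=Ham} gives $X=-\frac{k_i(t)}{a_m}\frac{\partial}{\partial\theta}$ for both forms. The crux of the argument, and the reason acyclicity replaces the unimodularity hypothesis, is the well-definedness of the propagation step: each edge crossing forces a definite constant onto the next vertex, and only the absence of cycles guarantees that this value never has to be reconciled with a previously fixed one. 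On a tree there is no monodromy, so no global compatibility condition on the local weights $k_i(t)$ is needed; a cycle is precisely what would obstruct the construction, and this is where the generality over the unimodular case is gained.
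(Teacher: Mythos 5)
Your proposal is correct and follows essentially the same route as the paper, which proves this proposition in one sentence by taking the symplectic form from Proposition \ref{prop:bHam=Ham} and desingularizing the Hamiltonian via the tree-propagation of Proposition \ref{prop:cyclicb2m} with $g_\varepsilon$ in place of $f_\varepsilon$. Your write-up simply makes explicit the details (the role of the global defining function, the constant bookkeeping across edges, and why acyclicity removes the unimodularity hypothesis) that the paper leaves implicit.
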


 The proof is done by taking the symplectic form obtained as in Proposition \ref{prop:bHam=Ham}, and desingularizing the Hamiltonian as was done in Proposition \ref{prop:cyclicb2m}  for $b^{2m}$-Hamiltonians (but using the desingularization function of Proposition \ref{prop:bHam=Ham}).
	
	Next, we remark that the proof of Proposition \ref{prop:bHam=Ham} does not generalize to higher dimensions.
	
		\begin{remark} \label{remark:nothigherdim}
		The construction from Proposition \ref{prop:bHam=Ham} does not necessarily work in dimensions higher than $2$ for $m$ odd. We reason here for $m=1$, the case $m>1$ odd being similar.
		Nearby a connected component of $Z$ the $b$-symplectic form has the form $\omega = \frac{dz}{z} \wedge \alpha + \beta$.
		In contrast to dimension $2$, in higher dimensions there is a similar interpolation to glue $\omega$ on $M^+$ to $-\frac{dz}{z}\wedge \alpha+\beta$ on $\mathcal{N}(Z) \cap M^-$ as in the proof, however, a priori the latter does not extend to $M^-$.
	\end{remark}

While the argument does not generalize directly to higher dimensions, it applies to products of $b^m$-symplectic surfaces with symplectic surfaces (with the product $b^m$-symplectic structure).

	\begin{remark}\label{rem:product}
			The extension (as raised in Remark \ref{remark:nothigherdim}) holds for products of $b^m$-symplectic surfaces with symplectic manifolds provided that the $b^m$-symplectic form has a product structure.
			Let the $b^m$-symplectic structure on the product manifold be given by $\omega:=\omega_1+\omega_2$, where $\omega_1$ is the $b^m$-symplectic structure on the surface and $\omega_2$ is the symplectic structure on a symplectic manifold (with no restriction on the dimension).
			In that case, it is sufficient to realize that $\omega$ can be interpolated to $-\omega_1+\omega_2$ on $M^-$ using the methods just exposed.
			By the admissibility conditions, the Hamiltonian dynamics is just the product dynamics.

			The same reasoning can be applied to $c$-symplectic manifolds that arise from products of $b^m$-symplectic surfaces.
	\end{remark}

 We finish this subsection by proving a construction that is "converse" to Proposition \ref{prop:bHam=Ham}.

The following proposition shows that symplectic surfaces with a set of curves can be equipped with a $b^m$-symplectic structure. Furthermore, given a smooth Hamiltonian function with the right behaviour around the critical curves, we can define a $b$-Hamiltonian function such that both Hamiltonian vector fields define the same dynamics. 

\begin{proposition}[Singularization of a symplectic surface]\label{prop:singularization}
    Let $(\Sigma, \widetilde{\omega})$ be a symplectic surface. Let $(\gamma_i)_{i\in I}\subset \Sigma$ be a set of smooth curves given by the zero level set of a smooth global defining function $z:\Sigma\to \mathbb{R}$. Then there exists a $b^m$-symplectic structure $\omega$ on $\Sigma$ that has $\gamma_i$ as critical curves and agrees with $\widetilde{\omega}$ on $z>0$, respectively with $(-1)^m\widetilde{\omega}$ on $z<0$ outside an $\varepsilon$-neighbourhood of $Z$.
    Furthermore, if $\widetilde{H}\in C^\infty(\Sigma)$ is a smooth function such that in an $\varepsilon$-neighbourhood around $\gamma_i$, it is given by $\widetilde{H}(z,\theta)=z$, then there exists a $b$-function $H$ such that $X_{H}^\omega=X_{\widetilde{H}}^{\widetilde{\omega}}$. 
\end{proposition}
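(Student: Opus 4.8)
The plan is to run the desingularization of Proposition \ref{prop:bHam=Ham} in reverse: instead of smoothing the singular coefficient $dz/z^m$ into an honest exact $1$-form $dg_\varepsilon$, I would degenerate a smooth area form into a $b^m$-form, using the parity-dependent sign $(-1)^m$ to guarantee that the resulting coefficient never vanishes. Since everything is local around the critical curves, I first fix a connected component $\gamma \cong S^1$ of $Z$, a tubular neighbourhood $(-\varepsilon,\varepsilon)\times S^1$ with coordinates $(z,\theta)$ in which $z$ is the given global defining function, and write $\widetilde{\omega} = \rho(z,\theta)\, dz\wedge d\theta$. As $\widetilde{\omega}$ is an area form, $\rho$ is nowhere zero, hence of constant sign on the connected neighbourhood; orienting $\Sigma$ so that $\rho>0$ costs nothing.

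First I would build the $b^m$-symplectic form. Choose a smooth $v:(-\varepsilon,\varepsilon)\to\mathbb{R}$ with $v(z)=z^m$ on $(\varepsilon/2,\varepsilon)$, with $v(z)=(-1)^m z^m$ on $(-\varepsilon,-\varepsilon/2)$, and with $v$ nowhere zero. This is the crux of the argument: near $z=+\varepsilon$ one has $z^m>0$, while near $z=-\varepsilon$ one has $(-1)^m z^m>0$ for \emph{both} parities of $m$, so the two prescribed boundary values share a common sign and $v$ can be interpolated across $z=0$ without zeros (in particular $v(0)\neq 0$). Setting
\[
\omega := \frac{v(z)\,\rho(z,\theta)}{z^m}\, dz\wedge d\theta = v(z)\rho(z,\theta)\,\frac{dz}{z^m}\wedge d\theta
\]
in the neighbourhood, $\omega=\widetilde{\omega}$ on $\Sigma^+=\{z\geq\varepsilon\}$ and $\omega=(-1)^m\widetilde{\omega}$ on $\Sigma^-=\{z\leq-\varepsilon\}$, gives a well-defined $b^m$-form: it is closed for degree reasons (there are no $b^m$-$3$-forms on a surface), and it is $b^m$-symplectic because its coefficient $v\rho$ relative to the nonvanishing model $\tfrac{dz}{z^m}\wedge d\theta$ is nowhere zero. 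The matching conditions on $v$ make $\omega$ agree with $\widetilde{\omega}$ on $z>0$ and with $(-1)^m\widetilde{\omega}$ on $z<0$ outside $\mathcal{N}_\varepsilon(Z)$, as required.

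Next I would produce the Hamiltonian. Define $H(z):=\int v(z)\,z^{-m}\,dz$ near $\gamma$. Writing $v(z)=\sum_{j=0}^{m-1} v_j z^j + O(z^m)$ and integrating term by term shows that $H$ is a sum of $z^{-i}$ terms for $1\leq i\leq m-1$, a $\log|z|$ term, and a smooth remainder, i.e. $H\in {}^{b^m}\mathcal{C}^\infty$ (a genuine $b$-function when $m=1$). Setting $X:=\tfrac{1}{\rho}\tfrac{\partial}{\partial\theta}$, a direct contraction gives, for $H=H(z)$,
\[
\iota_X\omega = \frac{v(z)\rho(z,\theta)}{z^m}\,\iota_X(dz\wedge d\theta) = -\frac{v(z)}{z^m}\,dz = -dH ,
\]
so $X_H^\omega = X$; on the other hand $\widetilde{H}=z$ forces $X_{\widetilde{H}}^{\widetilde{\omega}}=X$ as well, so the two vector fields coincide in the neighbourhood. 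Since $v(z)=z^m$ near $+\varepsilon$ yields $H=z=\widetilde{H}$, and $v(z)=(-1)^m z^m$ near $-\varepsilon$ yields $H=(-1)^m z=(-1)^m\widetilde{H}$, I can extend $H$ by $\widetilde{H}$ on $\Sigma^+$ and by $(-1)^m\widetilde{H}$ on $\Sigma^-$; on each of these regions $\omega=\pm\widetilde{\omega}$ makes $X_H^\omega=X_{\widetilde{H}}^{\widetilde{\omega}}$ automatic. The sign assignment is globally consistent precisely because a global defining function exists (the associated graph is $2$-colourable, Lemma \ref{lem:globaldefiningfunction}), so $H$ is a well-defined $b^m$-function on all of $\Sigma$ with $X_H^\omega=X_{\widetilde{H}}^{\widetilde{\omega}}$.

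The only genuine subtlety — and the step I would treat most carefully — is the sign/parity bookkeeping ensuring that $v$ (equivalently the coefficient $v\rho$ of $\omega$) stays nowhere zero across $z=0$: this is exactly what the factor $(-1)^m$ in the statement is engineered to provide, and it is also what makes the global extensions of both $\omega$ and $H$ consistent. The remaining verifications — closedness, nondegeneracy, the membership $H\in{}^{b^m}\mathcal{C}^\infty$, and the contraction computation — are routine.
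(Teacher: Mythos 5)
Your proposal is correct and is essentially the paper's own argument in a different parametrization: the paper builds a $b^m$-function $s_\varepsilon$ interpolating between $z$ and $(-1)^m z$ and sets $\omega = s_\varepsilon'\,\widetilde{\omega}$, $H = s_\varepsilon$, while you build the nowhere-vanishing multiplier $v = z^m s_\varepsilon'$ directly and recover $H$ by integrating $v(z)z^{-m}$. (Only cosmetic caveat: the extension to $\Sigma^-$ should be $(-1)^m\widetilde{H}$ plus the additive constant forced by the antiderivative, which is harmless since it does not affect $X_H$.)
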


In \cite[Section 5]{cavalcanti} the singularization of symplectic structures is investigated in further generality. Nonetheless, the scrutiny of its dynamical aspects was not pursued in \cite{cavalcanti}.  This is precisely what we accomplish in the ensuing proposition:

\begin{proof}
    We denote by $\Sigma^\pm=\{z\in \Sigma| \pm z >0\}$.
    In an $\varepsilon$-neighbourhood around the curve $\gamma_i$, consider the $b^m$-function $s_\varepsilon:(-\varepsilon,0)\cup (0,\varepsilon)\times \gamma_i \to \mathbb{R}$ given by

    $$s_\varepsilon(z,\theta)= \begin{cases} \log|z| \text{ for } z\in (-\frac{\varepsilon}{2},0)\cup (0,\frac{\varepsilon}{2}) \text{ if } m=1 \\ -\frac{1}{(m-1)z^{m-1}} \text{ for } z\in (-\frac{\varepsilon}{2},0)\cup(0,\frac{\varepsilon}{2}) \text{ if } m>1 \\  z \text{ for } z>\frac{2\varepsilon}{3}\\ (-1)^m z \text{ for } z<-\frac{2\varepsilon}{3},\end{cases}$$
    For all $z$ in $(-\varepsilon,\varepsilon)\setminus \{0\}$, it is defined such that $\frac{\partial s_\varepsilon}{\partial z}(z, \theta)\neq 0$ and it glues smoothly with the  function $(-1)^m z$ in $\left(-\varepsilon,-\frac{2\varepsilon}{3}\right)$ and the identity function in $\left(\frac{2\varepsilon}{3},\varepsilon \right)$, and therefore defines a smooth globally defined function away from $\{z=0\}$. In the $\varepsilon$-neighbourhood, we define the following a $b^m$-form by $\omega:=s_\varepsilon' \widetilde{\omega}$. By the definition of the function $s_\varepsilon$, $\omega$ is a $b^m$-form that agrees for $z>\frac{2\varepsilon}{3}$ with $\widetilde{\omega}$, and for $z<-\frac{2\varepsilon}{3}$ with $(-1)^m\widetilde{\omega}$, and can thus be globally defined on $\Sigma$. By the condition that the derivative is non-vanishing, $\widetilde{\omega}$ is a $b^m$-symplectic form.
    Let $\widetilde{H}$ be a Hamiltonian as above. We define the Hamiltonian $H$ to be
    $$H=\begin{cases}\widetilde{H} \text{ when }z>\varepsilon\\ (-1)^m \widetilde{H} \text{ when } z<-\varepsilon \\  s_\varepsilon \text{ in the $\varepsilon$-neighbourhood.}\end{cases}$$
    A direct computation yields that both Hamiltonian vector fields agree in the $\varepsilon$-neighbourhood.
\end{proof}

For higher dimensions, the above reasoning breaks down in general. Note that a similar trick can be used to \emph{singularize} contact structures along convex surfaces, see \cite{bcontact}. However, as for higher dimensional symplectic manifolds, an equivalent result for Reeb dynamics is not true in general.

\subsection{Desingularizing $b^m$-symplectic surfaces II}

As shown in Proposition \ref{prop:bHam=Ham}, an acyclic $b^m$-symplectic surface can be desingularized into symplectic surfaces. This result is used in the following theorem to associate to each connected component of a $b^m$-symplectic surface $(\Sigma,\omega)$ a compact symplectic surface: indeed, given a connected component of $\Sigma\setminus Z$, we will glue $2$-disks to the critical sets bounding this connected component. The so-obtained $b^m$-symplectic manifold is by construction acyclic, and we, therefore, will be able to apply Proposition \ref{prop:bHam=Ham}.

 First, let us introduce some further notation. Consider a vertex $v$ of the associated graph of a $b$-manifold, and recall that $g_v$ denotes the genus of the corresponding connected component of $\Sigma \setminus Z$. Furthermore, we denote by $\mathrm{deg}(v)$ the degree of the vertex $v$, that is, in terms of graph theory, the number of edges incident to it. In the $b$-manifold this number corresponds to the number of connected components of $Z$ that are the boundary of the connected component of $\Sigma \setminus Z$, represented by $v$. With this notation in mind, we obtain the following theorem.

\begin{proposition}\label{thm:regularizingsurfaces}
	Let $(\Sigma,Z,\omega)$ be a compact $b^m$-symplectic orientable surface and let $H_t$ be an admissible Hamiltonian. Let $\Sigma_v$ be a connected component of $\Sigma\setminus Z$, $g_v$ the genus of $\Sigma_v$, and denote by $(\gamma_i)_{i\in I}\subset Z$ the connected components of $Z$ that bound $\Sigma_v$.
    Let $\mathrm{Int}(\Sigma_v):= \Sigma_v\setminus \mathcal{N}_\varepsilon(Z)$.
	Then there exists a closed symplectic surface $(\overline{\Sigma}_v,\omega_v)$ of genus $g_v$ such that $(\overline{\Sigma}_v \setminus \bigsqcup_{i\in I} D^2_i, \omega_v)$ is symplectomorphic to $(\mathrm{Int}(\Sigma_v),\omega)$.

	Furthermore, there exists a smooth function defined on $\overline{\Sigma}_v$ such that the flow of its Hamiltonian vector field agrees with $X_H$ on $\overline{\Sigma}_v\setminus \bigsqcup_{i\in I} D^2_i$.
\end{proposition}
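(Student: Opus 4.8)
The plan is to cap off the component $\Sigma_v$ by gluing a $b^m$-symplectic disk along each bounding critical curve, thereby producing a \emph{closed} $b^m$-symplectic surface whose associated graph is a tree, and then to invoke Proposition \ref{prop:acyclicbHam=Ham}. Since $\Sigma$ is a surface and $Z$ is compact, each bounding curve $\gamma_i$ is a circle, and $\Sigma_v$ contains a one-sided collar $(0,\varepsilon)\times S^1$ of $\gamma_i$ on which, by Equation (\ref{eq:Laurentseriessurface}), $\omega = \sum_{j=1}^m (z^{m-j}a_j)\frac{dz}{z^m}\wedge d\theta$ with $a_m$ nowhere vanishing. Extending the coefficients $a_j$ (viewed as functions on $S^1$) across $z=0$ yields a two-sided $b^m$-symplectic collar $(-\varepsilon,\varepsilon)\times S^1$ with $\gamma_i=\{z=0\}$ as critical set. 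On $\{z<-\varepsilon/2\}$ the form is a genuine area form, so I can cap this region off with a symplectic disk $D_i^2$ of arbitrary positive area matching the germ of the area form near the boundary; for surfaces this is always possible. Doing so for every $i\in I$ produces a closed orientable $b^m$-symplectic surface $(\widehat{\Sigma}_v,\widehat{\omega})$ with critical set $\bigsqcup_i\gamma_i$. Filling each of the $\mathrm{deg}(v)$ ends of $\Sigma_v$ with a disk leaves the genus equal to $g_v$ (this is precisely the convention defining $g_v$), and the associated graph has one central vertex $v$, one leaf vertex per capping disk, and edges $\gamma_i$; it is a star, hence a tree, so $\widehat{\Sigma}_v$ is acyclic.

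Next I would extend the admissible Hamiltonian $H_t$ to $\widehat{\Sigma}_v$ while preserving admissibility. On $\Sigma_v$ I keep $H_t$ unchanged. Near $\gamma_i$, by Remark \ref{remark:localFormHamiltonian} the Hamiltonian is $H_t=k(t)\log|z|$ for $m=1$ and $H_t=-\frac{k(t)}{(m-1)z^{m-1}}$ for $m>1$, and I extend it across $z=0$ by the same formula; over the interior of each capping disk I extend it to an arbitrary smooth function. The three conditions of Definition \ref{def:admissibleHamiltonian} only constrain the germ near the critical curves: linearity along $X^\sigma$ and Reeb-invariance persist because the local expression is unchanged, and the absence of $1$-periodic orbits in a tubular neighbourhood of $\gamma_i$ holds on the disk side exactly as on the $\Sigma_v$ side, since the modular weight of $\gamma_i$ and its cosymplectic structure are intrinsic to $\gamma_i$ and untouched by the capping. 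Hence the extended $H_t$ is admissible on $\widehat{\Sigma}_v$.

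Now Proposition \ref{prop:acyclicbHam=Ham} applies to $(\widehat{\Sigma}_v,\widehat{\omega})$, yielding a smooth symplectic form $\widetilde{\omega}_\varepsilon$ and a smooth Hamiltonian $\widetilde{H}_t$ with $X^{\widetilde{\omega}_\varepsilon}_{\widetilde{H}}=X^{\widehat{\omega}}_H$. I set $\overline{\Sigma}_v:=\widehat{\Sigma}_v$ and $\omega_v:=\widetilde{\omega}_\varepsilon$, a closed symplectic surface of genus $g_v$. By the construction of the desingularization (Propositions \ref{prop:bHam=Ham}--\ref{prop:acyclicbHam=Ham}), $\widetilde{\omega}_\varepsilon$ differs from $\widehat{\omega}$ only inside $\mathcal{N}_\varepsilon(Z)$, and choosing the $2$-colouring of the tree that assigns $\Sigma_v$ the $+$ sign (Lemma \ref{lem:globaldefiningfunction}) gives $\widetilde{\omega}_\varepsilon=\omega$ on $\mathrm{Int}(\Sigma_v)=\Sigma_v\setminus\mathcal{N}_\varepsilon(Z)$. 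Defining $D_i^2$ to be the capping disk together with the desingularized collar of $\gamma_i$, the identity map realizes a symplectomorphism $(\overline{\Sigma}_v\setminus\bigsqcup_i D_i^2,\omega_v)\cong(\mathrm{Int}(\Sigma_v),\omega)$, and the equality $X^{\widetilde{\omega}_\varepsilon}_{\widetilde{H}}=X^\omega_H$ restricts to it.

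The delicate point, and the step I would take most care over, is the capping: realizing the cap as a genuine $b^m$-symplectic disk compatible with the germ of $\omega$ along $\gamma_i$ and extending $H_t$ so that admissibility — in particular the no-$1$-periodic-orbit condition — survives on the new side of the critical curve. The remaining ingredients, namely the genus and acyclicity bookkeeping and the appeal to Proposition \ref{prop:acyclicbHam=Ham}, are then essentially routine.
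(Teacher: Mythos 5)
Your proof is correct and follows essentially the same route as the paper: cap each bounding critical curve of $\Sigma_v$ with a disk carrying an extension of the $b^m$-form and of the admissible Hamiltonian, observe that the resulting closed $b^m$-surface has a star-shaped (hence acyclic) graph, and invoke the acyclic desingularization result (the paper uses Lemma \ref{lemma:diskgluing} together with Proposition \ref{prop:bHam=Ham} for exactly this). The only substantive difference is that the paper chooses the extension of $H_t$ over each capping disk so that $X_H=-\tfrac{k(t)}{a_m}\tfrac{\partial}{\partial\theta}$ there, with exactly one fixed point per disk, whereas you allow an arbitrary smooth extension; this is immaterial for the present statement but the controlled count of orbits in the disks is what gets used later in the proof of Theorem \ref{prop:lowerbound_surfaces}.
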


As explained previously, the idea of the proof of Proposition \ref{thm:regularizingsurfaces} is to glue $2$-disks to the critical sets bounding $\Sigma_v$. We therefore start by analyzing the dynamics of the admissible Hamiltonian functions on the $2$-disk where the critical curve is given by the boundary.

\begin{remark} \label{remark:admissibleDisk}
	Given the $2$-disk $D^2$ with critical curve $Z=\partial D^2$, the $b^m$-symplectic structure writes down in a neighbourhood around $Z$ as Laurent series given by
	\[\omega = \sum_{i=1}^m a_i \frac{dr}{(1-r)^i} \wedge d\theta, \]
	where $a_i \in \mathcal{C}^\infty(S^1\times D^2)$ are smooth functions and $(r,\theta)$ are polar coordinates. Given an admissible Hamiltonian, it is given in the tubular neighborhood by
	\[H(t,r,\theta) = \left\{ \begin{array}{lc} k(t) \log|1 - r| & \text{if } m = 1 \\ -\frac{k(t)}{(m-1)(1-r)^{m-1}} & \text{if } m > 1 \end{array} \right. \]
		for some $k : S^1 \to \mathbb{R}$ such that $k(t) > 0$ and $\int_0^1 k(t) dt < \frac{2\pi}{a_m}$. Here $\frac{2\pi}{a_m}$ is the modular weight.
\end{remark}

	\begin{lemma} \label{lemma:diskgluing}

			Let $\mathcal{N}_{\varepsilon}(Z) = \{(r,\theta) \in D^2 \ | \ r > 1 - \varepsilon\}$ be an annulus around the boundary of the disk, and let $\omega$ a $b^m$-symplectic form on $\mathcal{N}_{\varepsilon}(Z)$ and $H_t$ an admissible Hamiltonian on $\mathcal{N}_{\varepsilon}(Z)$.
			Then there exist extensions of $\omega$ and $H_t$ to the whole disk such that the flow of $X_H$ has exactly one fixed point.
	\end{lemma}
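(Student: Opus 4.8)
The plan is to exploit the fact that, for an admissible Hamiltonian on a surface, the Hamiltonian vector field near the critical curve points purely in the Reeb ($\partial_\theta$) direction, so that the only way to create a zero of the extended field is to ``cap off'' at the center of the disk. First I would record the local computation. Writing $\omega = P(r,\theta)\,\frac{dr}{(1-r)^m}\wedge d\theta$ on $\mathcal{N}_\varepsilon(Z)$ with $P=\sum_{i=1}^m a_i(1-r)^{m-i}$ and $P|_{r=1}=a_m\neq 0$, and $H$ as in Remark \ref{remark:admissibleDisk}, one finds $dH = -\frac{k(t)}{(1-r)^m}\,dr$, whence
\[
X_H = -\frac{k(t)}{P(r,\theta)}\,\partial_\theta ,
\]
which is nonvanishing on the annulus (as $k>0$) and tangent to the circles $\{r=\mathrm{const}\}$.

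Next I would extend the data radially. I extend $\omega$ to a $b^m$-symplectic form $\widetilde{\omega}=G(r,\theta)\,dr\wedge d\theta$ on the whole disk that agrees with $\omega$ on $\{1-\varepsilon<r<1\}$ (so the $b^m$-singularity at $\partial D^2$ is untouched) and is a smooth positive area form in the interior; such an extension exists with no obstruction, since a positive area form on the annulus extends to the disk, and near the origin $G=\sigma\, r$ for a smooth positive $\sigma$, so that $\widetilde{\omega}=\sigma\,dx\wedge dy$ is genuinely symplectic there. For the Hamiltonian I take a radial extension $\widetilde{H}_t=k(t)\phi(r)$, where $\phi$ is smooth, equals the given radial profile ($\log(1-r)$ for $m=1$, $-\frac{1}{(m-1)(1-r)^{m-1}}$ for $m>1$) near $r=1$, satisfies $\phi'(r)<0$ for all $0<r<1$, and is an even function of $r$ near the origin with $\phi''(0)\neq 0$ (so in particular $\phi'(0)=0$).

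With these choices $d\widetilde{H}_t=k(t)\phi'(r)\,dr$ is proportional to $dr$ regardless of the $\theta$-dependence of $G$, so $X_{\widetilde{H}}=\frac{k(t)\phi'(r)}{G(r,\theta)}\,\partial_\theta$, which agrees with $X_H$ near the boundary. For $0<r<1$ this vanishes if and only if $\phi'(r)=0$, which never happens by construction, so there are no fixed points in the punctured interior. At the center $\partial_\theta=x\partial_y-y\partial_x$ vanishes, so the origin is a fixed point; writing $\phi'(r)=r\,g(r^2)$ (valid since $\phi$ is even) and $G=\sigma r$, one gets $X_{\widetilde{H}}=k(t)\frac{g(r^2)}{\sigma}\,(x\partial_y-y\partial_x)$, a smooth rotational field whose unique zero is the origin. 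This yields exactly one fixed point of the flow.

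The step needing the most care is precisely this capping at the center: one must reconcile the monotonicity $\phi'\neq 0$ on $(0,1)$ (which kills all interior fixed points) with smoothness at the origin (which forces $\phi'(0)=0$ and the evenness of $\phi$), and verify that the polar expression descends to a smooth vector field on the disk via the factorizations $\phi'(r)=r\,g(r^2)$ and $G=\sigma r$. If in addition one wants the fixed point to be non-degenerate, so that it contributes to the count in Proposition \ref{thm:regularizingsurfaces}, the freedom in $\phi''(0)$ and $\sigma(0)$ allows one to arrange the rotation number $\frac{g(0)}{\sigma(0)}\int_0^1 k(t)\,dt$ to lie outside $2\pi\mathbb{Z}$, which is compatible with the admissibility bound $\int_0^1 k\,dt<\frac{2\pi}{a_m}$.
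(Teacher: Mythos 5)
Your construction is essentially the paper's: extend $\omega$ and $H_t$ radially so that $X_H$ becomes a rotational field on the capped disk, vanishing only at the centre. The computation $X_H=-\frac{k(t)}{P}\,\partial_\theta$ on the annulus, the radial extension, and the smoothness analysis at the origin via $\phi'(r)=r\,g(r^2)$ and $G=\sigma r$ are all correct, and are in fact more careful than the paper's own proof, which simply asserts that the extension can be chosen so that $X_H=-\frac{k(t)}{a_m}\partial_\theta$ on the whole disk and that this field is smooth.

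There is, however, one genuine gap. What the lemma must deliver --- and what the paper's final sentence explicitly addresses, and what the proof of Proposition \ref{thm:regularizingsurfaces} uses (``there is exactly one $1$-periodic orbit in the glued-in disk'') --- is that the \emph{time-one} flow has exactly one fixed point, i.e.\ that the capped disk contains no non-constant $1$-periodic orbits. Your argument only rules out zeros of the vector field in the punctured interior and discusses the rotation number at the origin. But your field is $\frac{k(t)\phi'(r)}{G(r,\theta)}\,\partial_\theta$ with $\phi'<0$ otherwise unconstrained on $(0,1-\varepsilon)$, so the total rotation of a circle $\{r=r_0\}$ over one unit of time can sweep through multiples of $2\pi$ as $r_0$ varies, producing whole circles of fixed points of the time-one map (hence spurious $1$-periodic orbits, which would break the counting in Theorem \ref{prop:lowerbound_surfaces}). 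To close this you must control the angular-speed profile on the inner disk: for instance take $G$ radial there and arrange that $r\mapsto|\phi'(r)|/G(r)$ interpolates monotonically between its value $1/P(1-\varepsilon,\cdot)\approx 1/a_m$ at the gluing radius (where admissibility already forbids $1$-periodic orbits) and its value at the origin, so that the rotation in unit time stays strictly between $0$ and one full turn at every radius. The paper achieves this trivially by making the angular speed constant, equal to $-k(t)/a_m$, and invoking $0<\int_0^1 k(t)\,dt<\frac{2\pi}{a_m}$.
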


	\begin{proof}
		As observed in the previous remark, $\omega$ and $H_t$ must be given in a tubular neighbourhood around $\gamma_i$ denoted by $\mathcal{N}_{\varepsilon}(\gamma_i)$ by the expressions described in Remark \ref{remark:admissibleDisk}. We trivially extend this to the whole disk.
		This means that we can choose such extensions in such a way that $X_H =-\frac{k(t)}{a_m}\frac{\partial}{\partial \theta}$ in the whole disk, which is smooth. The vector field $\frac{\partial}{\partial \theta}$ has exactly one fixed point in $(0,0) \in D^2$ and the time 1 flow of $X_H$ cannot have more fixed points because $0 < \int_0^1 k(t) dt < \frac{2\pi}{a_m}$.
	\end{proof}

    The above lemma will be used, as already mentioned, to glue $2$-disks to the connected component of $\Sigma\setminus Z$. We will now proceed to the proof of Theorem \ref{thm:regularizingsurfaces}.
	
	\begin{proof}[Proof of Proposition \ref{thm:regularizingsurfaces}]
    
		Let $v \in V$ be the vertex associated with a connected component of $\Sigma \setminus Z$ of degree $1$. Seen in isolation, the associated connected component of $\Sigma \setminus Z$, denoted by $\Sigma_v$ is diffeomorphic to a surface of genus $g_v$ punctured at one point and the attached component of $Z$ is diffeomorphic to $S^1$. We will now complete it to a $b^m$-symplectic surface, diffeomorphic to a surface of genus $g_v$ without punctures by attaching one $2$-disk to $\Sigma_v$.

        The initial $b^m$-symplectic form in the annulus around the connected component of the critical set has the expression $\sum_{i=1}^m a_i \frac{dr}{(1-r)^i} \wedge d\theta$. Using Lemma \ref{remark:admissibleDisk}, we attach to each of the critical curves $\gamma_i$ a $2$-disk to obtain a closed surface of genus $g_v$, denoted by $\overline{\Sigma}_v$, and the $b^m$-symplectic form can be extended to a globally defined $b^m$-symplectic form by extending the given one to the $2$-disks as in Lemma \ref{remark:admissibleDisk}. We also choose an extension $\overline{H}_v$ of the given Hamiltonian $H$ to this disk as in Lemma \ref{remark:admissibleDisk} in such a way that
		\begin{enumerate}
			\item near the boundary $\partial D^2$ the Hamiltonian vector field has the expression $X_{\overline{H}} =-\frac{k(t)}{a_m} \frac{\partial}{\partial \theta}$, where $k$ is the function associated to $H_t$ near the boundary $Z_i$.
			\item $X_{\overline{H}}$ vanishes exactly at one point in the interior of the disk.
		\end{enumerate}

		With these choices, we can define $(\overline{\Sigma}_v, \overline{\omega}_v, \overline{H}_v)$, where $\overline{\Sigma}_v$, as described before, is diffeomorphic to a closed surface of genus $g_v$. The $b^m$-symplectic form $\overline{\omega}_v$ and the Hamiltonian $\overline{H}_v$ are constructed by extending $(\omega, H)$ as explained earlier.
		This is a $b^m$-symplectic surface whose associated graph is given by the initial vertex $v$ of degree $1$ with one vertex of degree $1$.
		In particular, this graph is acyclic, and therefore the critical set  $\overline{\Sigma}_v$ is defined by a \emph{global} defining function.

		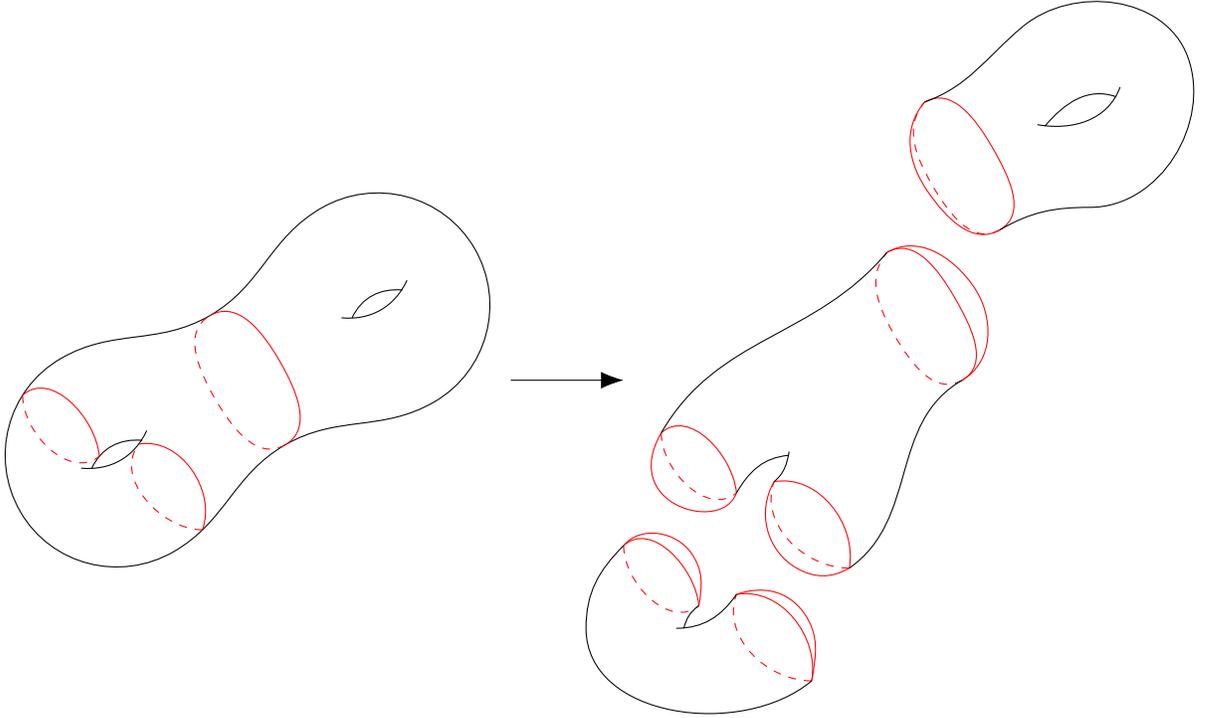
\begin{figure}[h]
			\centering
			\begin{tikzpicture}
				\draw [] (-4.98,0.29) to [out=210,in=120] (-5.52,-1.75) to [out=300,in=210] (-3.48,-2.28) to [out=30,in=210] (-2,-0.86) to [out=30,in=210] (-0.02,-0.29) to [out=30,in=300] (0.52,1.75) to [out=120,in=30] (-1.52,2.29) to [out=210,in=30] (-3,0.86) to [out=210,in=30] (-4.98,0.29);
				\draw [] (-4.57,-1.17) to [out=65,in=175] (-3.9,-0.8);
				\draw [] (-4.71,-1.17) to [out=355,in=245] (-3.84,-0.67);
				\draw [] (-1.11,0.83) to [out=65,in=175] (-0.44,1.2);
				\draw [] (-1.25,0.83) to [out=355,in=245] (-0.38,1.33);

				\draw [red] (-2,-0.86) to [out=30,in=300] (-2.07,0.25) to [out=120,in=30] (-3,0.86);
				\draw [red,dashed] (-2,-0.86) to [out=210,in=300] (-2.93,-0.25) to [out=120,in=210] (-3,0.86);

				\draw [red] (-3.1,-1.99) to [out=70,in=10] (-3.94,-0.85);
				\draw [red,dashed] (-3.1,-1.99) to [out=180,in=230] (-3.94,-0.85);

				\draw [red] (-5.49,-0.2) to [out=42,in=95] (-4.47,-1.01);
				\draw [red,dashed] (-5.49,-0.2) to [out=270,in=220] (-4.47,-1.01);

				\draw [-{Latex[length=3mm]}] (1,0) to (2.5,0);

				\draw [red] (2.5,-2.2) to [out=42,in=95] (3.5,-3);
				\draw [red,dashed] (2.5,-2.2) to [out=270,in=220] (3.5,-3);
				\draw [red] (2.5,-2.2) to [out=50,in=125] (3.4,-2.3) to [out=305,in=80] (3.5,-3);

				\draw [red] (5,-4) to [out=80,in=10] (4,-2.85);
				\draw [red,dashed] (5,-4) to [out=180,in=250] (4,-2.85);
				\draw [red] (5,-4) to [out=80,in=305] (4.9,-3.1) to [out=125,in=20] (4,-2.85);

				\draw [] (2.5,-2.2) to [out=225,in=90] (2,-3.3) to [out=270,in=220] (5,-4);
				\draw [] (4,-2.85) to [out=235,in=0] (3.2,-3.3);
				\draw [] (3.5,-3) to [out=215,in=75] (3.3,-3.3);

				\draw [red] (3,-0.7) to [out=42,in=95] (4,-1.5);
				\draw [red,dashed] (3,-0.7) to [out=270,in=220] (4,-1.5);
				\draw [red] (3,-0.7) to [out=240,in=130] (3,-1.5) to [out=310,in=250] (4,-1.5);

				\draw [red] (5.5,-2.5) to [out=80,in=10] (4.5,-1.35);
				\draw [red,dashed] (5.5,-2.5) to [out=180,in=250] (4.5,-1.35);
				\draw [red] (4.5,-1.35) to [out=240,in=150] (4.8,-2.5) to [out=330,in=210] (5.5,-2.5);

				\draw [] (4,-1.5) to [out=60,in=185] (4.69,-1);
				\draw [] (4.5,-1.35) to [out=40,in=260] (4.7,-0.95);

				\draw [red] (7,0) to [out=30,in=300] (6.9,1.1) to [out=120,in=30] (6,1.7);
				\draw [red,dashed] (7,0) to [out=210,in=300] (6.1,0.6) to [out=120,in=230] (6,1.7);
				\draw [red] (7,0) to [out=30,in=305] (7.15,1.25) to [out=125,in=30] (6,1.7);

				\draw [] (3,-0.7) to [out=60,in=230] (6,1.7);
				\draw [] (5.5,-2.5) to [out=35,in=210] (7,0);

				\draw [red] (7.5,2) to [out=30,in=300] (7.4,3.1) to [out=120,in=30] (6.5,3.7);
				\draw [red,dashed] (7.5,2) to [out=210,in=300] (6.6,2.6) to [out=120,in=230] (6.5,3.7);
				\draw [red] (7.5,2) to [out=210,in=305] (6.55,2.5) to [out=125,in=230] (6.5,3.7);

				\draw (6.5,3.7) to [out=20,in=220] (7.8,4.7) to [out=40,in=110] (10,4.3) to [out=290,in=0] (8.7,2.3) to [out=180,in=30] (7.5,2);
				\draw (8,3.4) to [out=350,in=250] (9.1,3.9);
				\draw (8.1,3.38) to [out=50,in=160] (9.05,3.77);
			\end{tikzpicture}

			\caption{Completion of a $b^m$-symplectic surface with disks at the punctures}
			\label{figure:fillings}
		\end{figure}
  
		{The case when $v \in V$ has a degree greater than $1$ is analogous.
		In this case, the associated connected component of $\Sigma \setminus Z$ is diffeomorphic to a surface of genus $g_v$ punctured at $\mathrm{deg}(v)$ points and the components of $Z$ adjacent to $\Sigma_v$ are diffeomorphic to $S^1 \sqcup \dots \sqcup S^1$.
		We can use Lemma \ref{lemma:diskgluing} to complete it to a closed surface of genus $g_v$ by attaching a $2$-disk to each connected component of the critical set.
		The $b^m$-symplectic form and the $b^m$-Hamiltonian are extended to the completed surface as before.}
        
        As a result, we obtain a $b^m$-symplectic surface whose associated graph is given by the initial vertex $v$ with one vertex of degree $1$ for each disk attached. In particular, this graph is acyclic and the critical set is therefore defined by a global defining function.

        We can therefore apply Proposition \ref{prop:bHam=Ham} to this new surface, so the Hamiltonian vector field and the $b^m$-symplectic structure can be regularized to a smooth Hamiltonian vector field on a smooth symplectic surface. The resulting flow has $\mathrm{deg}(v)$ trivial periodic orbits on the attached disks.
  \end{proof}

\subsection{Desingularizing $b^m$-symplectic manifolds with trivial mapping tori}

In this subsection, we further generalize the previous results. We will show that the techniques used for the proof of $b^m$-symplectic surfaces can be used in higher dimensions whenever the induced structure on the critical set is that of a trivial mapping torus.
Here, trivial mapping torus means that in Equation (\ref{eq:mappingtorus}) the map $\phi$ is the identity.

Given a $b$-manifold, we denote by $M_v$ a connected component of $M\setminus Z$. Similarly, we denote by $Z_v$ the union of connected components of $Z$ that bound $M_v$.

In the following proposition, we show that under the assumption that the mapping tori associated with the  $b^m$-symplectic structure are trivial, we can cut along the hypersurfaces $Z_i$ and glue solid mapping tori to the hypersurfaces. We thus obtain for each connected component of $M\setminus Z$ a closed manifold. We will first prove that these closed manifolds can be equipped with a $b^m$-symplectic structure (Proposition \ref{prop:completionbsymplecticmanifold}) and then that this $b^m$-symplectic structure, together with an admissible Hamiltonian function, can be desingularized into a smooth symplectic structure (Corollary \ref{cor:regularisingbsymphd}).

\begin{proposition}\label{prop:completionbsymplecticmanifold}
    Let $(M,Z,\omega)$ be a $b^m$-symplectic manifold with trivial mapping tori. A connected component of $M\setminus Z$, denoted by $M_v$, can be compactified to a closed $b^m$-symplectic manifold $(\overline{M}_v,Z_v,\overline{\omega})$.
    The manifold $\overline{M}_v$ is diffeomorphic to the closed manifold obtained by gluing to $v$ a solid mapping torus for each connected component $e_v$ of $\partial M_v$.
    Furthermore, there exists a $b^m$-symplectic form $\overline{\omega}$ on $\overline{M}_v$ that  coincides with $\omega$ on $M_v$. Given an admissible Hamiltonian function $H_t \in {^{b^m}}\mathcal{C}^\infty(M)$, it gives rise to an admissible Hamiltonian function $\overline{H}_t \in  {^{b^m}}\mathcal{C}^\infty(\overline{M}_v)$ on the $b^m$-symplectic manifold $(\overline{M}_v,Z_v,\overline{\omega})$, such that Hamiltonian vector field $X_{H_t}$ coincides with $X_{\overline{H}_t}$ on $M_v$.
\end{proposition}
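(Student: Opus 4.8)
The plan is to follow the surface argument of Proposition \ref{thm:regularizingsurfaces} almost verbatim, replacing the capping $2$-disks by \emph{solid mapping tori}. Fix a connected component $Z_i$ of $\partial M_v$. Because the mapping torus is trivial (Definition \ref{rem:trivialmappingdef}), Equation (\ref{eq:mappingtorus}) gives a diffeomorphism $Z_i\cong \mathcal{L}\times S^1$, and, choosing the defining function and tubular neighbourhood as in Proposition \ref{prop:decomposition} and the $b^m$-analogue of Lemma \ref{lem:compatibleacs}, I would identify $\mathcal{N}(Z_i)$ with $(-\varepsilon,\varepsilon)_z\times\mathcal{L}\times S^1_\theta$, on which
\[\omega=\sum_{j=1}^m\frac{dz}{z^j}\wedge\pi^*\alpha_j+\pi^*\beta,\]
the $\alpha_j$ being closed $1$-forms, $\alpha_m$ nowhere zero with $\ker\alpha_m$ tangent to the leaves $\mathcal{L}\times\{\theta\}$ so that $\alpha_m=a_m\,d\theta$ with $a_m$ nowhere vanishing, the Reeb field $R$ collinear with $\partial_\theta$, and $\beta$ restricting to the leafwise symplectic form. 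By Remark \ref{remark:localFormHamiltonian} the admissible Hamiltonian takes the form $H_t=k(t)\log|z|+h_t$ for $m=1$ and $H_t=-\frac{k(t)}{(m-1)z^{m-1}}+h_t$ for $m>1$, with $h_t$ constant along $X^\sigma$ and $R$.

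The solid mapping torus to be glued along $Z_i$ is the trivial $\mathcal{L}$-bundle over the $2$-disk, $\mathcal{L}\times D^2$, whose boundary $\mathcal{L}\times\partial D^2$ is exactly the trivial mapping torus $Z_i$; it is precisely the vanishing of the monodromy that lets the $\mathcal{L}$-bundle extend over the contractible base $D^2$, which is why the hypothesis of trivial mapping tori is needed here. Writing $(r,\theta)$ for polar coordinates on $D^2$ and identifying $\{r=1\}$ with $\{z=0\}$ through a defining function with $z\asymp(1-r)$ near the boundary, I would set, near $\mathcal{L}\times\partial D^2$,
\[\overline{\omega}=\sum_{j=1}^m\frac{d(1-r)}{(1-r)^j}\wedge\pi^*\alpha_j+\pi^*\beta,\]
and then interpolate the radial coefficient inward to a smooth symplectic form on $\mathcal{L}\times(D^2\setminus\partial D^2)$, keeping $\pi^*\beta$ and the (now disk-pulled-back, closed) forms $\alpha_j$; on the disk factor this is the same extension of a positive area form used in the surface case. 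Closedness of $\overline{\omega}$ is inherited from $d\alpha_j=0=d\beta$ and their extension over $\mathcal{L}\times D^2$, and a direct computation of $\overline{\omega}^n$ shows that its only top-degree contribution comes from wedging the top Laurent term with $\beta^{n-1}$, so that $\overline{\omega}^n$ equals, up to a nowhere-vanishing factor, $dr\wedge d\theta\wedge\pi^*\beta^{n-1}$, a $b^m$-volume form. This yields the closed $b^m$-symplectic manifold $(\overline{M}_v,Z_v,\overline{\omega})$ with $\overline{\omega}=\omega$ on $M_v$.

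For the Hamiltonian, I would extend $H_t$ over each glued solid mapping torus exactly as in Lemma \ref{lemma:diskgluing} and Remark \ref{remark:admissibleDisk}: the admissible local form is preserved near $\mathcal{L}\times\partial D^2$, and the resulting Hamiltonian vector field is $X_{\overline{H}}=-\tfrac{k(t)}{a_m}\partial_\theta+X^\beta_{h_t}$, whose $\partial_\theta$-component vanishes along the core leaf $\{r=0\}\times\mathcal{L}$, so $X_{\overline{H}}$ extends smoothly there. The extension $\overline{H}_t$ is admissible on $(\overline{M}_v,Z_v,\overline{\omega})$: conditions (1) and (2) of Definition \ref{def:admissibleHamiltonian} hold near $Z_v$ because the local normal form is unchanged, and condition (3) follows from Proposition \ref{prop:K}, since admissibility of the original $H_t$ forces $\int_0^1 k(t)\,dt$ to lie strictly between $0$ and the modular weight, so no $1$-periodic orbits appear in the collar. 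Finally, since $\overline{\omega}=\omega$ and $\overline{H}_t=H_t$ on $M_v$, the two Hamiltonian vector fields coincide there.

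The main obstacle is the extension of the $b^m$-form and of the Hamiltonian vector field \emph{across the core} $\{r=0\}\times\mathcal{L}$ of each solid mapping torus: I must produce a genuinely smooth, closed, non-degenerate $b^m$-form out of the polar expression, which is singular at $r=0$ in the coordinate sense, while respecting the leaf-dependence of the Laurent coefficients $\alpha_j$ and of $\beta$. Triviality of the mapping torus is exactly the input that removes the monodromy obstruction to extending the $\mathcal{L}$-bundle, and hence the forms, over the contractible disk; the remaining work is to check that the interpolation can be chosen to keep $\overline{\omega}^n$ nonvanishing and the flow smooth near the core, which reduces to the two-dimensional statements of Lemma \ref{lemma:diskgluing} and Remark \ref{remark:admissibleDisk} fibered over $\mathcal{L}$.
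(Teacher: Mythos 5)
Your proposal is correct and follows essentially the same route as the paper: identify each boundary component of $M_v$ with the trivial mapping torus $\mathcal{L}\times S^1$, cap it with the solid mapping torus $D^2\times\mathcal{L}$, take the $b^m$-form to be the product of the disk $b^m$-form from Lemma \ref{lemma:diskgluing} with the leafwise symplectic form $\beta$, and extend the admissible Hamiltonian as the sum of the disk Hamiltonian and $h_t$ on $\mathcal{L}$. The only cosmetic difference is that you carry the full Laurent series and verify non-degeneracy and the behaviour at the core explicitly, whereas the paper works directly with the product normal form $\frac{dz}{z^m}\wedge d\theta+\beta$ and delegates the disk factor entirely to Lemma \ref{lemma:diskgluing}.
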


Note that the associated graph to the $b^m$-symplectic manifold $(\overline{M}_v,Z_v,\overline{\omega})$ is acyclic by construction.

\begin{proof}
    As the critical sets are diffeomorphic to trivial mapping tori by assumption, a connected component of $Z_v$ is diffeomorphic to $S^1 \times \mathcal{L}$, where $\mathcal{L}$ is a leaf of the symplectic foliation on the connected component of $Z_v$. We consider the solid mapping torus given by $D^2\times \mathcal{L}$, which can be glued to the connected component of $e_v$ along the boundary of the disk. Furthermore, the $b^m$-symplectic form in a neighbourhood of $Z_v$ is given by $\omega=\frac{dz}{z^m}\wedge d\theta+\beta$, where $\beta$ is a symplectic form on $\mathcal{L}$ and $d\theta$ is a volume form on $S^1$. On the solid mapping torus, we consider the $b^m$-symplectic form given by ${^{b^m}}\omega+\beta$, where ${^{b^m}}\omega$ is the $b^m$-symplectic form on $D^2$ as given in Lemma \ref{lemma:diskgluing}.
    
    By the admissibility of the Hamiltonian function $H_t$, in a neighbourhood around the critical set, it is given by $H_t = k(t)\log|z|+h_t$ when $m=1$ (or $H_t = k(t)\frac{1}{z^{m-1}}+h_t$ when $m>1$), where $h_t\in \mathcal{C}^\infty(\mathcal{L})$ is a smooth function on the leaf of the symplectic foliation. It can be extended to the solid mapping torus as a sum of the Hamiltonian as in Lemma \ref{lemma:diskgluing} and $h_t$ on the symplectic leaf $\mathcal{L}$.
\end{proof}

Note that around the mapping torus, the $b^m$-symplectic structure obtained is the product of a $b^m$-symplectic surface and a symplectic manifold $(\mathcal{L},\beta)$. Therefore, the regularization of the $b^m$-symplectic structure into a symplectic structure works, as observed in Remark \ref{rem:product}.

\begin{corollary}\label{cor:regularisingbsymphd}
    The $b^m$-symplectic manifold $(\overline{M}_v,Z_v,\overline{\omega})$ can be regularized to a symplectic manifold $(\overline{M}_v,\widetilde{\omega})$. Furthermore, given an admissible $b^m$-Hamiltonian function $H_t\in {^{b^m}}\mathcal{C}^\infty(M)$ on the initial $b^m$-symplectic manifold $(M,Z,\omega)$, the Hamiltonian flow of the extension $\overline{H}_t$ on $\overline{M}_v$ is Hamiltonian with respect to a smooth Hamiltonian function $\widetilde{H}_t$ and the symplectic structure $\widetilde{\omega}$.
\end{corollary}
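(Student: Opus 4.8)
The plan is to reduce the statement to the surface desingularization of Proposition~\ref{prop:acyclicbHam=Ham}, combined with the product observation of Remark~\ref{rem:product}, taking advantage of the fact that the compactified manifold produced by Proposition~\ref{prop:completionbsymplecticmanifold} is acyclic. First I would record the input already available: by Proposition~\ref{prop:completionbsymplecticmanifold} the triple $(\overline{M}_v,Z_v,\overline{\omega})$ is a compact $b^m$-symplectic manifold whose associated graph is acyclic, and the admissible Hamiltonian $H_t$ on $M$ extends to an admissible $\overline{H}_t$ on $\overline{M}_v$ with $X_{\overline{H}}^{\overline{\omega}} = X_H^{\omega}$ on $M_v$. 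Moreover, in a tubular neighbourhood of each connected component of $Z_v$ the form splits as a product $\overline{\omega} = \omega_\Sigma + \beta$, where $\omega_\Sigma = \frac{dz}{z^m}\wedge d\theta$ is the $b^m$-symplectic form on the $D^2$-factor of the solid mapping torus $D^2\times\mathcal{L}$ and $\beta$ is the leafwise symplectic form on $(\mathcal{L},\beta)$. By admissibility $\overline{H}_t = -\frac{k_i(t)}{m-1}z^{-(m-1)}+h_t$ (respectively $k_i(t)\log|z|+h_t$ when $m=1$), with $h_t$ depending only on the leaf coordinates, so the Hamiltonian vector field splits as $X_{\overline{H}}^{\overline{\omega}} = k_i(t)R + X_{h_t}^{\beta}$.

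Next I would run the desingularization in product form. Since the singular directions are confined to the surface factor $\omega_\Sigma$, Remark~\ref{rem:product} allows me to apply the interpolation of Proposition~\ref{prop:bHam=Ham} to that factor alone: I replace $\omega_\Sigma$ by the smooth two-form $\widetilde{\omega}_\Sigma$ built from the regularizing function $g_\varepsilon$, keep $\beta$ fixed, and set $\widetilde{\omega} = \widetilde{\omega}_\Sigma + \beta$. This is symplectic because $\omega_\Sigma^2 = 0$ forces $\widetilde{\omega}^n = n\, \widetilde{\omega}_\Sigma\wedge\beta^{n-1}$, which is nowhere vanishing, and $\widetilde{\omega}$ agrees with $\overline{\omega}$ outside the $\varepsilon$-neighbourhood, up to the orientation reversal $(-1)^m$ on the negative side that $g_\varepsilon$ absorbs. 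Because the graph of $\overline{M}_v$ is acyclic I do not need the unimodularity hypothesis: exactly as in Proposition~\ref{prop:cyclicb2m} and Proposition~\ref{prop:acyclicbHam=Ham}, I desingularize $\overline{H}_t$ edge-by-edge starting from a chosen vertex, taking on each edge the smooth interpolant $k_i(t)g_\varepsilon(z)+h_t$ and correcting by locally constant additive terms so that the pieces match smoothly along $\{z=\pm\varepsilon\}$; acyclicity guarantees that the graph does not close up, so these corrections are globally consistent and yield a well-defined smooth $\widetilde{H}_t$ on $\overline{M}_v$.

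Finally I would verify the dynamics. On the surface factor the computation is identical to the one at the end of the proof of Proposition~\ref{prop:bHam=Ham}: both $X_{\overline{H}}^{\overline{\omega}}$ and $X_{\widetilde{H}}^{\widetilde{\omega}}$ restrict to the reparametrized term $-\frac{k_i(t)}{a_m}\frac{\partial}{\partial\theta}$, while on the leaf factor the form $\beta$ and the function $h_t$ are untouched, so the leafwise component is $X_{h_t}^{\beta}$ in both cases. Since the product structure is preserved by the construction, the two vector fields coincide throughout the neighbourhood; outside it they coincide because $\widetilde{\omega}=\overline{\omega}$ and $\widetilde{H}_t = \overline{H}_t$ up to a locally constant shift, which does not affect the Hamiltonian vector field. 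Chaining this with $X_{\overline{H}}^{\overline{\omega}} = X_H^{\omega}$ on $M_v$ completes the argument. The hard part is the global bookkeeping of the additive constants across the orientation-reversing $(-1)^m$ gluing—precisely the obstruction that prevented the surface argument from generalizing in Remark~\ref{remark:nothigherdim}. Here it is resolved only because the singular part is isolated in the two-dimensional factor, so Remark~\ref{rem:product} applies leaf-by-leaf, and because acyclicity lets the constants be propagated along a spanning tree without contradiction.
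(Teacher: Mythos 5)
Your proposal is correct and follows essentially the same route as the paper: the paper's own (much terser) proof likewise regularizes $(\overline{M}_v,Z_v,\overline{\omega})$ via Proposition \ref{prop:bHam=Ham} using the product structure $b^m$-surface $\times\,(\mathcal{L},\beta)$ near the trivial mapping tori (Remark \ref{rem:product}), and desingularizes $\overline{H}_t$ globally by invoking the acyclicity of the associated graph of $\overline{M}_v$. Your write-up simply makes explicit the splitting $X_{\overline{H}}^{\overline{\omega}}=k_i(t)R+X_{h_t}^{\beta}$, the non-degeneracy check $\widetilde{\omega}^n=n\,\widetilde{\omega}_\Sigma\wedge\beta^{n-1}$, and the spanning-tree propagation of the additive constants, all of which the paper leaves implicit.
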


\begin{proof}
The $b^m$-symplectic manifold $(\overline{M}_v,Z_v,\overline{\omega})$ can be regularized as in Proposition \ref{prop:bHam=Ham} because the solid mapping tori that are attached to $M_v$ are trivial, and the $b^m$-symplectic structure is a product of a $b^m$-symplectic surface and the symplectic manifold $(\mathcal{L},\beta)$.
As for the Hamiltonian flow, the Hamiltonian function $\overline{H}_t$ can be desingularized as in the proof of Proposition \ref{prop:bHam=Ham} once again because the mapping torus is assumed to be trivial. The function is globally defined because the $b^m$-symplectic manifold $(\overline{M}_v,Z_v,\overline{\omega})$ is acyclic. It then follows from the construction that the $b^m$-Hamiltonian vector field $X_{\overline{H}}$ is given by the Hamiltonian vector field of $\widetilde{H}_t$ associated with the symplectic structure $\widetilde{\omega}$.
\end{proof}

We conclude this section by noting that the singularization as in Proposition \ref{prop:singularization} works under the condition that the given hypersurfaces as trivial mapping tori:

\begin{proposition}[Singularization of a symplectic manifolds admitting trivial mapping tori]\label{prop:singularizationhigherdimensions}
    Let $(M, \widetilde{\omega})$ be a symplectic manifold. Let $\bigsqcup Z_i$ a set of smooth hypersurfaces given by the zero-level set of a smooth defining function $z:M\to \mathbb{R}$ and each of the $Z_i$ is a trivial mapping torus. Then there exists a $b^m$-symplectic structure $\omega$ on $M$ that has $Z_i$ as critical curves and agrees with $\widetilde{\omega}$ on $z>0$, respectively with $(-1)^m\widetilde{\omega}$ on $z<0$ outside an $\varepsilon$-neighbourhood of $Z$.
    Furthermore, if $\widetilde{H}\in C^\infty(M)$ is a smooth function such that in an $\varepsilon$-neighbourhood around $\gamma_i$, it is given by $\widetilde{H}(z,\theta)=z$, then there exists a $b$-function $H$ such that $X_{H}^\omega=X_{\widetilde{H}}^{\widetilde{\omega}}$. 
\end{proposition}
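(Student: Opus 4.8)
The plan is to mimic the surface singularization of Proposition~\ref{prop:singularization} leaf-by-leaf, exploiting the triviality of the mapping tori to reduce the construction to a product of the two-dimensional picture with the symplectic leaf. First I would fix one of the hypersurfaces $Z_i$. By hypothesis it is a trivial mapping torus, so a tubular neighbourhood is diffeomorphic to $(-\varepsilon,\varepsilon)\times S^1 \times \mathcal{L}$, where $\mathcal{L}$ is the leaf of the foliation; moreover, since $\widetilde{H}(z,\theta)=z$ near $Z_i$ and $z$ is a global defining function, I can use the flow of the symplectic gradient of $z$ (equivalently, the vector field dual to $\widetilde{\omega}$ with the coordinate $z$) to produce a splitting in which $\widetilde{\omega}=dz\wedge d\theta + \beta$, with $\beta$ symplectic on $\mathcal{L}$ and independent of $(z,\theta)$. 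This is exactly the local product form used in Proposition~\ref{prop:completionbsymplecticmanifold}, and it is where triviality of the mapping torus is essential: without it the monodromy $\phi$ would obstruct the global product decomposition and the one-dimensional desingularization function could not be applied fibrewise.

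Next I would run the two-dimensional singularization of Proposition~\ref{prop:singularization} in the $(z,\theta)$-directions while carrying the leaf factor $(\mathcal{L},\beta)$ along as a spectator. Concretely, using the same $b^m$-function $s_\varepsilon(z,\theta)$ as in the proof of Proposition~\ref{prop:singularization}, I set
\[
\omega := \frac{\partial s_\varepsilon}{\partial z}\, dz\wedge d\theta + \beta
\]
in the $\varepsilon$-neighbourhood, which glues to $\widetilde{\omega}$ for $z>\tfrac{2\varepsilon}{3}$ and to $(-1)^m\widetilde{\omega}$ for $z<-\tfrac{2\varepsilon}{3}$ because $s_\varepsilon$ equals the identity, respectively $(-1)^m z$, there. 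Since $\partial s_\varepsilon/\partial z$ is nowhere vanishing away from $\{z=0\}$ and behaves like $z^{-m}$ near $Z_i$, the form $\omega$ is a genuine $b^m$-symplectic form with critical set exactly $Z_i$: non-degeneracy reduces to non-degeneracy of $\beta$ on each leaf together with the nonvanishing of $\partial s_\varepsilon/\partial z$. Doing this simultaneously over all $Z_i$ (the neighbourhoods are disjoint for $\varepsilon$ small) yields a globally defined $b^m$-symplectic form agreeing with $\pm\widetilde{\omega}$ outside the tube.

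For the Hamiltonian statement I would define $H$ exactly as in Proposition~\ref{prop:singularization}: put $H=\widetilde{H}$ for $z>\varepsilon$, $H=(-1)^m\widetilde{H}$ for $z<-\varepsilon$, and $H=s_\varepsilon$ in the $\varepsilon$-neighbourhood. The key computation is that the Hamiltonian vector field is insensitive to the conformal factor in the $(z,\theta)$-plane. Indeed, with $\omega=\frac{\partial s_\varepsilon}{\partial z}dz\wedge d\theta+\beta$ and $H=s_\varepsilon(z,\theta)$, solving $\iota_{X_H}\omega=-dH$ gives the $(z,\theta)$-component of $X_H$ proportional to $\partial_\theta$ with the factor $\partial s_\varepsilon/\partial z$ cancelling against the corresponding factor in $dH=\frac{\partial s_\varepsilon}{\partial z}dz+\frac{\partial s_\varepsilon}{\partial\theta}d\theta$, so that $X_H^\omega$ matches $X_{\widetilde{H}}^{\widetilde{\omega}}$ there; since $\widetilde{H}=z$ has no leaf dependence near $Z_i$, there is no leaf-direction contribution to reconcile. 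Outside the tube the forms and Hamiltonians agree up to the global sign $(-1)^m$, under which $d\widetilde{H}$ and $\widetilde{\omega}$ transform compatibly, so the vector fields coincide there as well.

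The main obstacle, and the only place the hypothesis is genuinely used, is establishing the global product form $\widetilde{\omega}=dz\wedge d\theta+\beta$ in the tubular neighbourhood: one must check that the return map of the relevant transverse vector field is isotopic to the identity so that the $S^1\times\mathcal{L}$ factorization is honest rather than twisted, and then that $\beta$ can be taken $z$-independent by a Moser-type argument along the $z$-direction. Once this normal form is in hand, the rest is a fibrewise copy of the surface case, and the dynamical identity $X_H^\omega=X_{\widetilde H}^{\widetilde\omega}$ follows from the same cancellation of the desingularizing factor observed in Proposition~\ref{prop:singularization}. I would remark, as the paragraph following the statement already signals, that the analogous statement fails for contact structures precisely because no such leafwise product normal form is available in general.
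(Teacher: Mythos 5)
Your proposal is correct and follows essentially the same route as the paper: use the triviality of the mapping torus to put $\widetilde{\omega}$ in the product normal form $dz\wedge\alpha+\beta$ near each $Z_i$, replace $dz$ by $ds_\varepsilon$ (equivalently multiply the transverse factor by $\partial s_\varepsilon/\partial z$) while leaving the leafwise form $\beta$ untouched, and then repeat the Hamiltonian matching of Proposition~\ref{prop:singularization}. The only difference is that you spell out the cancellation of the desingularizing factor in $\iota_{X_H}\omega=-dH$ and the Moser-type justification of the $z$-independent normal form, which the paper leaves implicit.
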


The above proposition was previously proved in \cite{cavalcanti} without taking into account the dynamical aspects.

\begin{proof}
    Under the condition that $Z_i$ is a trivial mapping tori, there exists a tubular neighbourhood around $Z_i$ such that $\omega=\alpha\wedge \frac{dz}{z^m} +\beta$, where $\alpha,\beta\in \Omega^\bullet(Z)$ are closed forms of degree $1$, respectively $2$. The $b^m$-symplectic form in the tubular neighbourhood around $Z$ is then defined by $\omega:=ds_\varepsilon\wedge \alpha + \beta$, where the function $s_\varepsilon$ is the $b^m$-function as defined in Proposition \ref{prop:singularization}. The proof then follows as in Proposition \ref{prop:singularization}.
\end{proof}

	\section{The Arnold conjecture through desingularization}\label{section:arnold}

	This section contains a proof of the Arnold conjecture for some $b^m$-symplectic manifolds.
	More precisely, we give a lower bound for the number of periodic orbits of an admissible Hamiltonian in terms of the topology of $M$ and the relative position of $Z$ in $M$. The strategy will be to use the desingularization techniques developed in the previous section.
	First, we provide a proof for any $b^{2m}$-symplectic manifold, with no restriction on the dimension.
	We then prove an improved lower bound on the number of periodic orbits for $b^{m}$-symplectic surfaces $(\Sigma, Z, \omega)$ for general $m$.
	We generalize this lower bound for $b^m$-symplectic manifolds of higher dimensions under the condition that $Z$ is a union of trivial mapping tori.

 \subsection{The Arnold conjecture  for $b^{2m}$-symplectic manifolds}

We present two associated corollaries, obtained as a consequence of Proposition \ref{prop:b2mHam=Ham}.

For $b^{2m}$-symplectic manifolds (that are acyclic or under the condition that the Hamiltonian function is unimodular), we recover the same lower bound as in the Arnold conjecture for smooth symplectic manifolds:

	\begin{corollary}[Arnold conjecture $b^{2m}$-symplectic manifolds] \label{coro:b2marnold_acyclic}\label{thm:b2msymplecticArnold}
		Let $(M,Z,\omega)$ be a compact $b^{2m}$-symplectic manifold whose critical set is given by a global defining function $f$. Let $H_t\in {^{b^{2m}}}C^\infty(M)$ be a time-dependent admissible Hamiltonian function.
		Assume that 
  \begin{itemize}
        \item either the associated graph of the $b$-manifold is acyclic, or that
        \item $H_t$ is unimodular.
    \end{itemize}
		Suppose that the solutions of period $1$ of the associated Hamiltonian system are non-degenerate.
		Then
		\[\# \mathcal{P}(H) \geq \sum_i \dim HM_i(M;\mathbb{Z}_2).\]
	\end{corollary}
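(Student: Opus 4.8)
The plan is to reduce the statement directly to the classical Arnold conjecture for smooth compact symplectic manifolds by means of the desingularization results established above. First I would split according to the two hypotheses. If $H_t$ is unimodular, I invoke Proposition \ref{prop:b2mHam=Ham}; if instead the associated graph is acyclic, I invoke Proposition \ref{prop:cyclicb2m}. In either case I obtain the desingularized form $\omega_\varepsilon$, which is genuinely symplectic on all of $M$ by Theorem \ref{th:deblog}, together with a \emph{smooth} Hamiltonian $\widetilde{H}_t \in C^\infty(M)$ whose Hamiltonian vector field with respect to $\omega_\varepsilon$ coincides with the original $b^{2m}$-Hamiltonian vector field, $X^{\omega_\varepsilon}_{\widetilde{H}} = X^{\omega}_H$, as smooth vector fields on the whole of $M$.

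The next step is to identify the two relevant sets of $1$-periodic orbits. Since $X^{\omega_\varepsilon}_{\widetilde{H}}$ and $X^{\omega}_H$ are equal as smooth vector fields, their time-$1$ flows and the linearizations of these flows coincide; in particular a loop is a non-degenerate $1$-periodic orbit of one precisely when it is a non-degenerate $1$-periodic orbit of the other. By the admissibility of $H_t$ (condition (3) of Definition \ref{def:admissibleHamiltonian}), the vector field $X^\omega_H$ has no $1$-periodic orbits in a tubular neighbourhood of $Z$, and hence neither does $X^{\omega_\varepsilon}_{\widetilde{H}}$. Consequently every $1$-periodic orbit of the smooth vector field $X^{\omega_\varepsilon}_{\widetilde{H}}$ lies in $M \setminus Z$, so the full set of $1$-periodic orbits of $(M,\omega_\varepsilon,\widetilde{H}_t)$ coincides with $\mathcal{P}(H)$. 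Thus $\mathcal{P}(\widetilde{H}) = \mathcal{P}(H)$, and by hypothesis all these orbits are non-degenerate.

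Finally I would apply the classical homological Arnold conjecture (with $\mathbb{Z}_2$ coefficients) to the compact symplectic manifold $(M,\omega_\varepsilon)$ and the smooth non-degenerate Hamiltonian $\widetilde{H}_t$. This yields
\[\#\mathcal{P}(\widetilde{H}) \geq \sum_i \dim HM_i(M;\mathbb{Z}_2),\]
where the right-hand side is unchanged because $\omega_\varepsilon$ lives on the same underlying manifold $M$, so the Morse (ordinary) homology is identical. Combining this with the equality $\#\mathcal{P}(H)=\#\mathcal{P}(\widetilde{H})$ from the previous step gives the claimed bound.

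The routine part is the choice of proposition and the symplecticity of $\omega_\varepsilon$, both already available. The main point that requires care is the exact identification $\mathcal{P}(\widetilde{H})=\mathcal{P}(H)$: one must ensure that desingularizing the Hamiltonian introduces no spurious $1$-periodic orbits in the region near $Z$ where the construction was performed, and this is exactly what the admissibility condition guarantees. A secondary subtlety is that $\omega_\varepsilon$ need not satisfy the asphericity assumptions used in the Floer-theoretic sketch of Section \ref{sec:prel}; one therefore appeals to the Arnold conjecture in its general homological form, valid for arbitrary compact symplectic manifolds.
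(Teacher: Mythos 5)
Your proposal is correct and follows essentially the same route as the paper: desingularize via Proposition \ref{prop:b2mHam=Ham} (unimodular case) or Proposition \ref{prop:cyclicb2m} (acyclic case) to obtain a smooth Hamiltonian on $(M,\omega_\varepsilon)$ with the same Hamiltonian vector field, and then apply the classical Arnold conjecture. Your additional care in identifying $\mathcal{P}(\widetilde{H})=\mathcal{P}(H)$ via the admissibility condition, and in noting which form of the Arnold conjecture is being invoked, only makes explicit steps the paper leaves implicit.
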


\begin{proof}
    By Proposition \ref{prop:b2mHam=Ham}, respectively Proposition \ref{prop:cyclicb2m}, we can desingularize the $b^{2m}$-symplectic manifold into a symplectic manifold. Furthermore, the Hamiltonian flow of the admissible $b^{2m}$-Hamiltonian is given by the Hamiltonian flow of a smooth Hamiltonian function. We therefore conclude by the Arnold conjecture on symplectic manifolds that the number of $1$-periodic orbits is bounded from  below by
    $$\mathcal{P}(H) \geq \sum_i \dim HM_i(M;\mathbb{Z}_2).$$
\end{proof}

	As we can semi-locally desingularize the Hamiltonian function as:

	\begin{corollary}[Arnold conjecture for $b^{2m}$-symplectic manifolds (being defined by a local defining function)] \label{coro:b2marnold_cyclic}
		Let $(M,Z,\omega)$ be a closed $b^{2m}$-symplectic manifold of dimension $2n$, with $Z$ being defined by a \emph{local} defining function. Further assume that $M$ aspherical, i.e. $\pi_2(M)=0$.
    Let $H$ be an admissible $b^{2m}$-Hamiltonian and denote by $\phi$ the flow of $X_H$.
		Suppose that $\phi$ is a $b^{2m}$-symplectomorphism on $(M,Z,\omega)$ which is isotopic to the identity through $b^{2m}$-symplectomorphisms.

		If all the fixed points of $\phi$ are non-degenerate, then the number of fixed points of $\phi$ is at least the sum of the Betti numbers of the Novikov homology over $\mathbb{Z}_2$ associated with the Calabi invariant of $\phi$.
	\end{corollary}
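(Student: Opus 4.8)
The plan is to desingularise and then appeal to the Floer--Novikov form of the Arnold conjecture for symplectomorphisms isotopic to the identity. First I would invoke Theorem~\ref{th:deblog} to obtain a smooth symplectic form $\omega_\varepsilon$ on $M$ agreeing with $\omega$ outside the $\varepsilon$-neighbourhood $\mathcal{N}_\varepsilon(Z)$. By Proposition~\ref{prop:b2mdesinsympl}, for each $t$ the vector field $X_{H_t}^\omega$ is symplectic for $\omega_\varepsilon$, i.e. $\mathcal{L}_{X_{H_t}^\omega}\omega_\varepsilon = 0$; consequently the family $\{\phi_t\}$ satisfies $\frac{d}{dt}\phi_t^\ast\omega_\varepsilon = \phi_t^\ast \mathcal{L}_{X_{H_t}^\omega}\omega_\varepsilon = 0$, so $\{\phi_t\}$ is an isotopy through symplectomorphisms of $(M,\omega_\varepsilon)$ and $\phi=\phi_1$ is a symplectomorphism isotopic to the identity. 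The obstruction to $\{\phi_t\}$ being a \emph{Hamiltonian} isotopy --- equivalently, the obstruction to applying Corollary~\ref{coro:b2marnold_acyclic} --- is the non-vanishing of the flux $\big[\int_0^1 \iota_{X_{H_t}^\omega}\omega_\varepsilon\, dt\big]\in H^1(M;\mathbb{R})$, which is exactly the Calabi invariant of $\phi$; when $Z$ admits only a local defining function this class is generally non-trivial, precisely because the globally desingularised Hamiltonian of Proposition~\ref{prop:b2mHam=Ham} fails to exist.

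Next I would match the fixed-point data. On $\mathcal{N}_\varepsilon(Z)$ the proof of Proposition~\ref{prop:b2mdesinsympl} exhibits $X_{H_t}^\omega$ as the $\omega_\varepsilon$-Hamiltonian vector field of the smooth function $\widehat H_t = -k(t)f_\varepsilon + h_t$, and away from $\mathcal{N}_\varepsilon(Z)$ the two forms coincide; hence $\phi$ is genuinely a diffeomorphism of the closed symplectic manifold $(M,\omega_\varepsilon)$ whose fixed points are precisely the $1$-periodic orbits of $X_H$. The admissibility of $H$ (Definition~\ref{def:admissibleHamiltonian}) rules out $1$-periodic orbits in a tubular neighbourhood of $Z$, so after shrinking $\varepsilon$ all fixed points of $\phi$ lie in $M\setminus Z$ and constitute $\mathcal{P}(H)$; moreover the determinant condition of Definition~\ref{def:regularHam} is literally the non-degeneracy of these points as fixed points of $\phi$, since the two vector fields, and thus their time-$1$ linearisations, agree.

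The essential input is then the Floer--Novikov theory of L\^{e} and Ono. Since $\pi_2(M)=0$, the pair $(M,\omega_\varepsilon)$ is symplectically aspherical and $c_1(\omega_\varepsilon)$ vanishes on $\pi_2(M)$, so the Floer--Novikov complex of the symplectic isotopy $\{\phi_t\}$ is well defined, and its homology depends only on the flux class $[\sigma]$, the Calabi invariant of $\phi$. By their theorem, a symplectomorphism isotopic to the identity with only non-degenerate fixed points has at least $\sum_i \dim HN_i(M;[\sigma])$ of them, where $HN_\ast$ denotes the Novikov homology over $\mathbb{Z}_2$ attached to $[\sigma]$. Combining with the identifications of the previous paragraph yields $\#\mathcal{P}(H)\geq \sum_i \dim HN_i(M;[\sigma])$, the asserted bound.

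The main obstacle is conceptual rather than computational: one must read Proposition~\ref{prop:b2mdesinsympl} for the \emph{entire} time-dependent family rather than only for $\phi_1$, so that $\{\phi_t\}$ is a symplectic isotopy for the single fixed form $\omega_\varepsilon$ and its flux is a bona fide invariant feeding the Novikov homology; this is what legitimises speaking of ``the Calabi invariant of $\phi$''. A secondary technical point is to choose $\varepsilon$ uniformly small enough that admissibility keeps all fixed points out of the annular region $\{|z|<\varepsilon\}$ on which $\omega_\varepsilon\neq\omega$, guaranteeing that the regularisation alters neither the set $\mathcal{P}(H)$ nor its non-degeneracy, and then to check that the flux class, and hence the Novikov homology, is independent of this choice of $\varepsilon$.
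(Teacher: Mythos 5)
Your proposal is correct and follows essentially the same route as the paper: desingularize via Theorem~\ref{th:deblog}, use Proposition~\ref{prop:b2mdesinsympl} to see that the flow of $X_H$ is a symplectic isotopy of the smooth symplectic manifold $(M,\omega_\varepsilon)$, and then invoke the L\^e--Ono theorem of \cite{vanono} to bound the non-degenerate fixed points by the Novikov homology attached to the flux (Calabi) class. The paper states this in three sentences; you have simply filled in the details (identification of fixed points with $\mathcal{P}(H)$, matching of non-degeneracy, and the role of the flux class), all of which are consistent with the intended argument.
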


\begin{proof}
    The proof is once more a corollary of Proposition \ref{prop:b2mdesinsympl}. The resulting desingularization yields a symplectic manifold with a symplectic flow. The lower bound then follows from the main theorem of \cite{vanono}.
\end{proof}	

	Note that this lower bound is zero in the case of $\mathbb{T}^2$, and therefore does not contradict Example \ref{ex:arnoldtorus}.

 \begin{remark}
     The main theorem in \cite{vanono} is stated for negative monotone symplectic manifolds. One could try and generalize this definition for $b^m$-symplectic manifolds and prove that this definition is stable under desingularization.
		 This would yield a stronger version of Corollary \ref{coro:b2marnold_cyclic}.
 \end{remark}
 
	\subsection{The Arnold conjecture for $b^m$-symplectic surfaces}

In this section, we improve the previous results of the Arnold conjecture for $b^m$-symplectic manifolds. Indeed, the above results only hold for $m$ even, as the desingularization theorem (Theorem \ref{th:deblog}) does not yield symplectic structures for $m$ odd.

	However, in the case of surfaces, Proposition \ref{prop:bHam=Ham} clears the way to prove a better lower bound on the number of $1$-periodic orbits of the Hamiltonian flow. Indeed, the flow of an admissible $b^m$-Hamiltonian on a $b^m$-symplectic surface is on each connected component of $\Sigma \setminus Z$  Hamiltonian (for a smooth symplectic form). We therefore can use the known result about the dynamics on symplectic manifolds. In particular, we can state the same corollaries as for general $b^{2m}$-symplectic manifolds (Corollary \ref{coro:b2marnold_acyclic} and \ref{coro:b2marnold_cyclic}), however, we will improve the lower bound in the following proposition.

	\begin{theorem} \label{prop:lowerbound_surfaces}
		Let $(\Sigma,Z,\omega)$ be a compact $b^m$-symplectic orientable surface. For a vertex $v$ corresponding to a connected component of $\Sigma\setminus Z$, let $g_v$ be its genus and $\mathrm{deg}(v)$ its degree respectively.
		Let $H_t$ be an admissible Hamiltonian in $(\Sigma, Z, \omega)$ whose periodic orbits are all non-degenerate.
Then the number of $1$-periodic orbits of $X_H$ is bounded from below by 
	\[\# \mathcal{P}(H) \geq \sum_{v\in V} \big(2g_v + |\mathrm{deg}(v) - 2| \big).\]
	\end{theorem}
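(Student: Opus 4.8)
The plan is to reduce the count to each connected component $\Sigma_v$ of $\Sigma\setminus Z$ and then sum. First I note that, since $H_t$ is admissible, condition (3) of Definition \ref{def:admissibleHamiltonian} forbids $1$-periodic orbits in the tubular neighbourhood $\mathcal{N}_\varepsilon(Z)$; as elements of $\mathcal{P}(H)$ lie in $\Sigma\setminus Z$, every orbit of $X_H$ sits in some $\mathrm{Int}(\Sigma_v)=\Sigma_v\setminus\mathcal{N}_\varepsilon(Z)$. Writing $N_v:=\#\big(\mathcal{P}(H)\cap\mathrm{Int}(\Sigma_v)\big)$, it then suffices to prove $N_v\geq 2g_v+|\mathrm{deg}(v)-2|$ for each $v$ and add up over $v\in V$. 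Using Proposition \ref{thm:regularizingsurfaces} I complete $\Sigma_v$ to a closed symplectic surface $(\overline{\Sigma}_v,\omega_v)$ of genus $g_v$ by gluing $\mathrm{deg}(v)$ disks, producing a smooth Hamiltonian whose time-$1$ flow $\phi_v$ coincides with that of $X_H$ on $\overline{\Sigma}_v\setminus\bigsqcup_i D_i^2\cong\mathrm{Int}(\Sigma_v)$ and has exactly one non-degenerate fixed point on each glued disk. Hence $\phi_v$ has precisely $N_v+\mathrm{deg}(v)$ fixed points, all non-degenerate.

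For the first bound I would apply the Arnold conjecture for closed orientable surfaces to $(\overline{\Sigma}_v,\omega_v,\phi_v)$, obtaining $N_v+\mathrm{deg}(v)\geq\sum_i\dim H_i(\overline{\Sigma}_v;\mathbb{Z}_2)=2g_v+2$, i.e.
\[N_v\geq 2g_v+2-\mathrm{deg}(v).\]
This already handles $\mathrm{deg}(v)\leq 2$, where $|\mathrm{deg}(v)-2|=2-\mathrm{deg}(v)$.

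The decisive second bound, needed when the degree is large, comes from the Lefschetz--Hopf theorem. Since $\phi_v$ is Hamiltonian it is isotopic to the identity, so the sum of the indices $\mathrm{sign}\det(\mathrm{Id}-d\phi_v)$ over its fixed points equals $\chi(\overline{\Sigma}_v)=2-2g_v$. On each glued disk, Lemma \ref{lemma:diskgluing} shows $\phi_v$ is, near the centre, the rotation by $\vartheta:=-\int_0^1 k(t)/a_m\,dt\in(-2\pi,0)$, which is non-zero modulo $2\pi$ because $0<\int_0^1 k(t)\,dt<2\pi/a_m$; its linearisation $R_\vartheta$ satisfies $\det(\mathrm{Id}-R_\vartheta)=2(1-\cos\vartheta)>0$, so every disk fixed point has index $+1$. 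Subtracting, the $N_v$ interior fixed points carry total index $2-2g_v-\mathrm{deg}(v)$, and since each non-degenerate index is $\pm1$ I get $N_v\geq|2-2g_v-\mathrm{deg}(v)|$. When $\mathrm{deg}(v)\geq 2$ this quantity is non-positive, so the bound reads $N_v\geq 2g_v+\mathrm{deg}(v)-2=2g_v+|\mathrm{deg}(v)-2|$.

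Combining the two estimates yields $N_v\geq 2g_v+|\mathrm{deg}(v)-2|$ for every vertex, and summing over $v\in V$ gives the theorem. I expect the main obstacle to be exactly the regime $\mathrm{deg}(v)>2$: there the naive subtraction (Arnold bound on $\overline{\Sigma}_v$ minus the $\mathrm{deg}(v)$ auxiliary orbits) is too weak and can even be negative, so the index-theoretic bound is indispensable, together with the computation that each orbit created on a glued disk is elliptic of index $+1$. A minor point to watch is that some $\overline{\Sigma}_v$ may be spheres, for which I would invoke the classical validity of the Arnold conjecture on $S^2$ rather than the aspherical Floer-theoretic version recalled in Section \ref{sec:prel}.
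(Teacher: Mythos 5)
Your proposal is correct and follows the same overall architecture as the paper's proof: decompose along $Z$, invoke Proposition \ref{thm:regularizingsurfaces} to complete each component $\Sigma_v$ to a closed symplectic surface with $\mathrm{deg}(v)$ auxiliary elliptic fixed points on the glued disks, apply the Arnold conjecture for closed surfaces to get the bound $N_v\geq 2g_v+2-\mathrm{deg}(v)$, and then supplement this with an Euler-characteristic constraint to handle $\mathrm{deg}(v)\geq 2$. The only substantive difference is in how that second constraint is implemented. The paper works at the level of the Floer complex: it observes that the disk orbits have Conley--Zehnder index $\pm 1$ and uses the identity $c_{-1}-c_0+c_1=\chi(\overline{\Sigma}_v)=2-2g_v$ together with $c_{-1}+c_1\geq\mathrm{deg}(v)$ to bound $c_{-1}+c_0+c_1$ from below. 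You instead apply the Lefschetz--Hopf theorem to the time-$1$ map and compute directly that each disk fixed point is a rotation with local index $+1$, so the interior fixed points must absorb a total index of $2-2g_v-\mathrm{deg}(v)$, each contributing $\pm1$. These are two phrasings of the same Euler-characteristic obstruction (the alternating sum over the Floer complex computes the Lefschetz number), but your version is slightly more self-contained: it does not require identifying the Conley--Zehnder indices of the auxiliary orbits, only the sign of $\det(\mathrm{Id}-R_\vartheta)$, and it makes explicit why the naive subtraction fails precisely when $\mathrm{deg}(v)>2$. Your remark about the sphere case ($g_v=0$), where the aspherical Floer-theoretic statement of Section \ref{sec:prel} does not apply and one must invoke the classical Arnold conjecture for surfaces, is a point the paper leaves implicit and is worth making.
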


	\begin{proof}
  		We will prove that for each connected component of $\Sigma \setminus Z$ of degree $1$, there are at least $2g_v+1$ periodic orbits, whereas, for the connected components of degree $2$ or more, there are at least $2g_v + \mathrm{deg}(v)-2$ periodic orbits.

		Let $v \in V$ be the vertex associated with a connected component of $\Sigma \setminus Z$ of degree $1$. We apply Theorem \ref{thm:regularizingsurfaces} to $v$ and obtain a closed symplectic surface $(\overline{\Sigma}_v, \overline{\omega}_v)$ and a smooth Hamiltonian function $ \overline{H}_v$ such that its Hamiltonian vector field agrees with the initial one on the connected component of $\Sigma \setminus Z$.
        Thus the smooth Hamiltonian vector field must have at least $2g_v+2$ periodic orbits by the Arnold conjecture for smooth symplectic manifolds. As there is exactly one $1$-periodic orbit in the glued-in disk, we conclude that for the vertex of degree $1$, there are at least $2g_v+1$ periodic orbits in the connected component of $M\setminus Z$ corresponding to $v$.

        We now treat the case when $v \in V$ is the vertex associated to a connected component of $\Sigma \setminus Z$ of degree strictly higher than $1$. In this case, Proposition \ref{thm:regularizingsurfaces} yields as before a closed symplectic surface $(\overline{\Sigma}_v, \overline{\omega}_v)$ and a smooth Hamiltonian function $ \overline{H}_v$ such that its Hamiltonian vector field agrees with the initial one on the connected component of $\Sigma \setminus Z$. On the $\deg(v)$ glued-in disks in $\overline{\Sigma}_v$,  the resulting flow has $\mathrm{deg}(v)$ trivial periodic orbits on the attached disks. These trivial periodic orbits have by construction Conley-Zehnder index $\pm 1$ as they are local maxima and minima of the smooth Hamiltonian.

  We will now use the Morse inequalities for the Floer complex\footnote{The authors would like to thank Marco Mazzucchelli for this observation.} to show that the number of critical points for a Hamiltonian flow with at least $\mathrm{deg}(v)$ periodic orbits of Conley-Zehnder index $\pm 1$ is bounded from below by $2\mathrm{deg}(v)+2g_v-2$.

	Denote by $c_i = \{x \in \mathcal{P}(H) \ | \ \mu_{CZ}(x) = i\}$, the dimension of the $i$-th Floer complex group, and by $b_i$ the dimension of the $i$-th Floer homology group.
	The Euler characteristic is given by the alternated sum of the $b_i$, that is
	\begin{equation} \label{eq:morse_ineq}
		2-2g_v=b_0-b_1+b_2=c_{-1}-c_0+c_1.
	\end{equation}

    As there at least $\mathrm{deg}(v)$ $1$-periodic orbits of Conley-Zehnder index $\pm 1$, that is $c_{-1}+c_1\geq \mathrm{deg}(v)$, we thus obtain that $c_1\geq 2g_v-2+\mathrm{deg}(v)$.
    Using again that $c_{-1} + c_1 \geq \mathrm{deg}(v)$, we thus obtain that the number of periodic orbits is bounded from below by
	\[c_{-1} + c_0 + c_1 \geq 2\mathrm{deg}(v) + 2g_v - 2 .\]

	Knowing that there are $\mathrm{deg}(v)$ $1$-periodic orbits contained in the disks that were glued to the connected component of $M\setminus Z$, we deduce that the number of $1$-periodic orbits in $v$ is bounded from below by $\mathrm{deg}(v) + 2g_v - 2$.
	\end{proof}

	Let us note that the lower bound obtained in Theorem \ref{prop:lowerbound_surfaces} is an improvement of the lower bound obtained in Corollary \ref{coro:b2marnold_acyclic} in the acyclic case.

As is proved in the Appendix, the lower bound in Theorem \ref{prop:lowerbound_surfaces} is sharp: given any $b^m$-symplectic surface, there exists an admissible $b^m$-function such that it has exactly the number of $1$-periodic orbits as given by the lower bound in Theorem \ref{prop:lowerbound_surfaces}.

{As discussed in Remark \ref{rem:product}, the extension method of Proposition \ref{prop:bHam=Ham} holds for products of $b^m$-symplectic surfaces and symplectic manifolds. The resulting dynamics is the product dynamics. Under the same hypothesis, we, therefore, obtain the following corollary (where we adopt the same notation as in Theorem \ref{prop:lowerbound_surfaces})}.

\begin{corollary}[Arnold conjecture for $b^m$-symplectic product manifolds]
	Let $(M=\Sigma\times W,Z=Z_{\Sigma}\times W,\omega=\omega_1 + \omega_2)$ be the product of a compact orientable $b^m$-symplectic surface $(\Sigma,Z_{\Sigma},\omega_1)$ and a compact symplectic manifold $(W,\omega_2)$. Let $H_t$ be an admissible Hamiltonian in $(M,Z,\omega)$ whose $1$-periodic orbits are all non-degenerate. Then the number of $1$-periodic orbits of $X_H$ is bounded from below by
	$$\# \mathcal{P}(H) \geq \left(\sum_i \dim HM^i (W;\mathbb{Z}_2) \right) \cdot \left( \sum_{v \in V} \big(2g_v + |\mathrm{deg}(v) - 2| \big)\right).$$
\end{corollary}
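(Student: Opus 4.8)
The plan is to exploit the product structure in order to factor the count of periodic orbits into a surface count and a $W$-count, and then to invoke Theorem~\ref{prop:lowerbound_surfaces} on the $b^m$-surface together with the classical Arnold conjecture on the closed symplectic manifold $(W,\omega_2)$. The entry point is Remark~\ref{rem:product}: since $\omega=\omega_1+\omega_2$ is a product and $H_t$ is admissible, the desingularization of Proposition~\ref{prop:bHam=Ham} extends factorwise and the Hamiltonian dynamics is the product dynamics. Concretely, I would first record that admissibility forces the induced dynamics to split, so that the Hamiltonian vector field decomposes as $X_{H}=X_{H^\Sigma}\oplus X_{H^W}$, where $X_{H^\Sigma}$ is the vector field of an admissible $b^m$-Hamiltonian $H^\Sigma_t$ on $(\Sigma,Z_\Sigma,\omega_1)$ and $X_{H^W}$ is that of a smooth $H^W_t$ on $(W,\omega_2)$. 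Here one uses that, near $Z=Z_\Sigma\times W$, the symplectic leaves are the slices $\{p\}\times W$, so that the smooth part of an admissible Hamiltonian (Remark~\ref{remark:localFormHamiltonian}) depends only on the $W$-variable.

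Because the flow is then a product $\varphi^t_{X_H}=\varphi^t_{X_{H^\Sigma}}\times\varphi^t_{X_{H^W}}$, a point $(x_0,y_0)\in(\Sigma\setminus Z_\Sigma)\times W$ lies on a $1$-periodic orbit contained in $M\setminus Z$ if and only if $x_0$ and $y_0$ are each $1$-periodic for their respective flows. Hence
\[\mathcal{P}(H)=\mathcal{P}(H^\Sigma)\times\mathcal{P}(H^W),\qquad \#\mathcal{P}(H)=\#\mathcal{P}(H^\Sigma)\cdot\#\mathcal{P}(H^W).\]
The non-degeneracy hypothesis transfers cleanly, since the linearized return map is block-diagonal:
\[\det\!\big(\mathrm{Id}-d\varphi^1_{X_H}(x_0,y_0)\big)=\det\!\big(\mathrm{Id}-d\varphi^1_{X_{H^\Sigma}}(x_0)\big)\cdot\det\!\big(\mathrm{Id}-d\varphi^1_{X_{H^W}}(y_0)\big).\]
As $\mathcal{P}(H^W)\neq\varnothing$ by compactness of $W$, non-degeneracy of every product orbit forces non-degeneracy of each orbit of $H^\Sigma$ and of each orbit of $H^W$ separately, so the hypotheses of Theorem~\ref{prop:lowerbound_surfaces} and of the Arnold conjecture on $W$ are met.

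It then only remains to bound the two factors. Theorem~\ref{prop:lowerbound_surfaces} applied to $H^\Sigma_t$ on the orientable $b^m$-surface $(\Sigma,Z_\Sigma,\omega_1)$ gives $\#\mathcal{P}(H^\Sigma)\geq\sum_{v\in V}\big(2g_v+|\mathrm{deg}(v)-2|\big)$, and the Arnold conjecture on the closed symplectic manifold $(W,\omega_2)$ gives $\#\mathcal{P}(H^W)\geq\sum_i\dim HM_i(W;\mathbb{Z}_2)$; multiplying the two inequalities yields the claimed bound. The step deserving the most care — and the one I expect to be the main obstacle — is the very first: making precise, from Definition~\ref{def:admissibleHamiltonian} and the product form of $\omega$, that the induced dynamics genuinely splits as a \emph{global} product and that the surface factor of every orbit remains in $\Sigma\setminus Z_\Sigma$ (so that it is indeed counted by $\mathcal{P}(H^\Sigma)$). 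Once this reduction is justified through Remark~\ref{rem:product}, the remaining arguments are purely formal.
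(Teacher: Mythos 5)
Your proposal is correct and follows essentially the same route as the paper: the paper derives this corollary directly from Remark \ref{rem:product} (product desingularization, product dynamics) combined with Theorem \ref{prop:lowerbound_surfaces} on the surface factor and the classical Arnold conjecture on $(W,\omega_2)$, multiplying the two counts. Your additional verification that non-degeneracy transfers via the block-diagonal linearized return map is a detail the paper leaves implicit, but it does not change the argument.
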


\subsection{Arnold conjecture for $b^m$-symplectic manifolds with trivial mapping tori}

Using the construction of cutting along trivial mapping tori and gluing in solid mapping tori, we obtain the following theorem.

\begin{theorem} \label{prop:lowerboundtrivialmapping}
		Let $(M,Z=\bigsqcup Z_i,\omega)$ be a compact $b^m$-symplectic manifold of dimension $2n$ with trivial mapping tori $Z_i$.
		Let $H_t$ be an admissible Hamiltonian in $(M, Z, \omega)$ whose periodic orbits are all non-degenerate.
Then the number of $1$-periodic orbits of $X_H$ is bounded from below by 
		\[\sum_{v \in V} \max\biggl\{\sum_{i=0}^{2n} \dim HM^i(\overline{M}_v;\mathbb{Z}_2)- \sum_{e_j}\mathcal{P}(X_{h}|_{Z_j}), 0\biggr\} .\]
\end{theorem}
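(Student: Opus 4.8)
The plan is to reduce the count on each connected component to the classical Arnold bound on a closed symplectic manifold, and then to subtract off the orbits introduced by the compactification. First I would fix a connected component $M_v$ of $M \setminus Z$ and, using the triviality of the mapping tori, apply Proposition \ref{prop:completionbsymplecticmanifold} to glue a solid mapping torus $D^2 \times \mathcal{L}$ to each boundary component $Z_j$ of $M_v$. This produces a closed $b^m$-symplectic manifold $(\overline{M}_v, Z_v, \overline{\omega})$ together with an admissible extension $\overline{H}_t$ whose Hamiltonian vector field agrees with $X_H$ on $M_v$. By construction the associated graph of $(\overline{M}_v, Z_v, \overline{\omega})$ is acyclic, so Corollary \ref{cor:regularisingbsymphd} applies and regularizes $(\overline{M}_v, Z_v, \overline{\omega})$ into a smooth symplectic manifold $(\overline{M}_v, \widetilde{\omega})$ carrying a smooth Hamiltonian $\widetilde{H}_t$ whose flow coincides with $X_{\overline{H}}$, and in particular with $X_H$ on $M_v$.

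Next I would apply the Arnold conjecture (Theorem \ref{thm:floerisomorse} together with the Morse inequalities) to the closed symplectic manifold $(\overline{M}_v, \widetilde{\omega})$ and the regular Hamiltonian $\widetilde{H}_t$, obtaining that the total number of $1$-periodic orbits of $\widetilde{H}_t$ on $\overline{M}_v$ is at least $\sum_{i=0}^{2n} \dim HM^i(\overline{M}_v; \mathbb{Z}_2)$. The key local computation is then to identify the orbits living in the glued-in region. On each solid mapping torus $D^2 \times \mathcal{L}$ attached along $Z_j$, the extended Hamiltonian splits as a sum of the disk Hamiltonian of Lemma \ref{lemma:diskgluing} and the leafwise Hamiltonian $h_t$, so the flow is a product flow. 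Because the admissibility condition forces $\int_0^1 k(t)\,dt$ to lie strictly between $0$ and the modular weight, the disk factor has a unique fixed point at its center and no other periodic orbit; hence the $1$-periodic orbits inside the glued torus are precisely $\{0\} \times \mathcal{P}(X_{h}|_{Z_j})$, that is, exactly $\mathcal{P}(X_{h}|_{Z_j})$ of them.

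Subtracting, the number of $1$-periodic orbits of $\widetilde{H}_t$ contained in $M_v \cong \overline{M}_v \setminus \bigsqcup_{e_j} (D^2 \times \mathcal{L})$ is at least $\sum_{i=0}^{2n} \dim HM^i(\overline{M}_v; \mathbb{Z}_2) - \sum_{e_j} \mathcal{P}(X_{h}|_{Z_j})$; since this count is a cardinality it is also nonnegative, which produces the term $\max\{\,\cdots,\,0\,\}$. Finally I would sum over all vertices $v \in V$. By Definition \ref{def:regularHam} every orbit in $\mathcal{P}(H)$ lies in $M \setminus Z = \bigsqcup_v M_v$, and by admissibility none of them approaches $Z$, so orbits in distinct components are disjoint and the per-component bounds add up. This yields the stated inequality.

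The main obstacle I anticipate is the non-degeneracy bookkeeping needed to legitimately invoke the Arnold bound on each $\overline{M}_v$: the orbits in $M_v$ are non-degenerate by hypothesis, but one must ensure that the extension $\overline{H}_t$ can be chosen so that the glued-in orbits $\{0\} \times \mathcal{P}(X_{h}|_{Z_j})$ are non-degenerate as well, which in turn requires the leafwise orbits $\mathcal{P}(X_{h}|_{Z_j})$ on $\mathcal{L}$ to be non-degenerate. One must argue that this follows from the non-degeneracy of the nearby orbits of $X_H$, or is achievable after a perturbation supported inside the glued tori that leaves the count in $M_v$ unchanged, and simultaneously verify that the interpolation region near $\partial M_v$ creates no spurious periodic orbits. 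Both points rest on the precise product structure of the dynamics near a trivial mapping torus established in Proposition \ref{prop:completionbsymplecticmanifold}.
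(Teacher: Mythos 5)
Your proposal is correct and follows essentially the same route as the paper: complete each component $M_v$ by gluing solid mapping tori (Proposition \ref{prop:completionbsymplecticmanifold}), desingularize via Corollary \ref{cor:regularisingbsymphd}, invoke the Arnold conjecture on the resulting closed symplectic manifold, and subtract the orbits supported in the glued-in tori, which by the product structure of the flow are exactly the $1$-periodic orbits of $X_h$ on each leaf. The non-degeneracy bookkeeping for the glued-in orbits that you flag as a potential obstacle is a genuine point that the paper's proof passes over in silence, so your added caution is warranted rather than a deviation.
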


Here $h$ denotes the local function given by the expression of an admissible Hamiltonian as in Remark \ref{remark:localFormHamiltonian}.

As before, in this proof we denote by $M_v$ the connected components of $M\setminus Z$ that corresponds to the vertex $v$ and by $\overline{v}$ the completion to a closed manifold as in Corollary \ref{cor:regularisingbsymphd}. By $Z_v$ we denote the connected components of $Z$ that in the associated graph correspond to edges, incident to $v$.

We remark that in contrast to Theorem \ref{prop:lowerbound_surfaces}, this is not a topological invariant because of the second term that counts the number of periodic orbits of $X_{h}$ in $Z_i$.

\begin{proof}
    As the mapping torus is trivial, we apply Corollary \ref{cor:regularisingbsymphd} to each connected component of $M\setminus Z$. We thus obtain a symplectic manifold $(\overline{M}_v,\widetilde{\omega})$ and a smooth Hamiltonian function whose Hamiltonian flow coincides on the given connected component of $M\setminus Z$. By the Arnold conjecture applied to this symplectic manifold, we obtain that there are at least
    $$\sum_{i=0}^{2n} \dim HM^i (\overline{M}_v,\mathbb{Z}_2)$$
    periodic orbits on $(\overline{M}_v,\widetilde{\omega})$.
    However, by the condition of admissibility, there are no periodic orbits around the hypersurfaces $Z_v$. Furthermore, by the construction of the solid mapping tori, the Hamiltonian vector field flow is given by 
$$ X_{H}=k(t)\frac{\partial}{\partial \theta}+X_{h},$$
where $X_{h}$ denotes the Hamiltonian flow of $h_t\in \mathcal{C}^\infty(L)$ with respect to the symplectic structure $\beta$ on $L$.
    This is the product of a rotation on $D^2$ with the flow of the Hamiltonian function $h_t$ on $L$, and by the assumption on admissibility, periodic orbits are given thus by periodic orbits of $k(t)\frac{\partial}{\partial \theta}$ in $D^2$ and $X_{H}$ in $L$. As the flow of $k(t)\frac{\partial}{\partial \theta}$ is periodic in the center of the disk, the only periodic orbits of $X_{H}$ are given by periodic orbits of $X_{h}$ in $L$.
\end{proof}

	\section{A Floer complex on $b^m$-symplectic manifolds}\label{sec:floer}

In the preceding section a proof of the Arnold conjecture could be attained in the surface case for $b^m$-symplectic manifold.
Floer developed a whole toolbox of Floer theory in order to address the Arnold conjecture (see \cite{floer1} for the surface case).
This motivates us to introduce the Floer complex for higher dimensional $b^m$-symplectic manifolds.

	\subsection{The Floer equation}

To control the behavior of the $J$-holomorphic curves approaching the critical set $Z$, we need the following proposition.

	\begin{proposition} \label{prop:max_principle}
		Let $(M,Z,\omega)$ a $b^m$-symplectic manifold, and let $\mathcal{N} = \mathcal{N}(Z) \cong ((-\varepsilon,0)\cup(0, \varepsilon)) \times Z$ a tubular neighbourhood of the singular hypersurface (not including $Z$) with a normal symplectic vector field $X^{\sigma}$.
		Let $\Omega \subset \mathbb{C}$ with coordinates $\eta = s + it$, and take $H$ an admissible Hamiltonian in $\mathcal{C}^{\infty}(\Omega \times \mathcal{N})$, so that locally it has the form
		\[H(s,t,z,x) = \left\{ \begin{array}{lc} k(s,t) \log |z| + h(s,t,x) & \text{ if } m = 1 \\ - k(s,t) \frac{1}{(m-1)z^{m-1}} + h(s,t,x) & \text{ if } m > 1 . \end{array} \right.\]
		Take also $J \in \Gamma(\Omega \times \mathcal{N}, T^{\ast}M\otimes TM)$ a compatible almost complex structure adapted to $\omega$.

		Let $u : \Omega \rightarrow \mathcal{N} \subset M$ be a solution to the Floer equation,
		\begin{equation} \label{eq:floer_equation}
			\frac{\partial u}{\partial s} + J(\eta,u(\eta)) \left( \frac{\partial u}{\partial t} - X_{H}(u(\eta)) \right) = 0,
		\end{equation}

		and take $f : \mathcal{N} \rightarrow \mathbb{R}$ given by $f(z,x) = \log |z|$ if $m = 1$ and $f(z,x) = - \frac{1}{(m-1)z^{m-1}}$ if $m > 1$.

		Then
		\[\Delta (f\circ u) = - \frac{\partial k}{\partial s} .\]
	\end{proposition}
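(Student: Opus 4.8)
The plan is to reduce everything to two pointwise identities relating $df$ and the Reeb $1$-form $\alpha := \pi^{\ast}\alpha_m$ through the compatible almost complex structure, and then to exploit the closedness of $\alpha$. First I would record the local geometry in the tubular neighbourhood: with the adapted compatible $J$ furnished by Lemma \ref{lem:compatibleacs} (in its $b^m$-version) one has the splitting ${}^{b^m}TM = \langle X^{\sigma}\rangle \oplus \langle R\rangle \oplus \mathcal{F}$, with $JX^{\sigma} = R$ and $J\mathcal{F} = \mathcal{F}$. Since $f$ depends only on $z$ we have $df = z^{-m}dz$, whence $df(X^{\sigma}) = 1$, $df(R) = 0$ and $df|_{\mathcal{F}} = 0$; dually $\alpha(X^{\sigma}) = 0$, $\alpha(R) = 1$ and $\alpha|_{\mathcal{F}} = 0$ (recall $\mathcal{F} = \ker\alpha$). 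Evaluating on the three summands then gives the two identities $df \circ J = -\alpha$ and $\alpha \circ J = df$.

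Next I would pin down $X_H$. From the admissibility of $H = k(s,t)f + h$ and the local computation already carried out in the proof of Proposition \ref{prop:b2mdesinsympl} (solving $\iota_{X_H}\omega = -dH$ against the normal form $\omega = df\wedge\alpha + \beta$), the Hamiltonian vector field splits as $X_H = kR + X_h^{\beta}$ with $X_h^{\beta}\in\mathcal{F}$. The only facts I actually need from this are the two scalars $\alpha(X_H) = k$ and $df(X_H) = 0$.

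With these in hand the computation is short. Writing $F := f\circ u$, the Floer equation $\partial_s u = -J(\partial_t u - X_H)$ gives $\partial_s F = df(\partial_s u) = -(df\circ J)(\partial_t u - X_H) = \alpha(\partial_t u) - k$, while applying $J$ to the Floer equation (so that $J\partial_s u = \partial_t u - X_H$) yields $\partial_t F = df(\partial_t u) = (df\circ J)(\partial_s u) + df(X_H) = -\alpha(\partial_s u)$. Finally I would use that $\alpha = \pi^{\ast}\alpha_m$ is closed (Proposition \ref{prop:decomposition}), so the pulled-back $1$-form $u^{\ast}\alpha = \alpha(\partial_s u)\,ds + \alpha(\partial_t u)\,dt$ is closed on $\Omega$; its closedness reads $\partial_s\big(\alpha(\partial_t u)\big) = \partial_t\big(\alpha(\partial_s u)\big)$. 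Substituting $\alpha(\partial_t u) = \partial_s F + k$ and $\alpha(\partial_s u) = -\partial_t F$ turns this into $\partial_s^2 F + \partial_s k = -\partial_t^2 F$, that is $\Delta(f\circ u) = -\partial k/\partial s$, as claimed.

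I expect the only genuinely delicate point to be the first two paragraphs: justifying that, for the adapted compatible structure, one really has $df\circ J = -\alpha$ and $X_H = kR + X_h^{\beta}$, i.e. that the lower-order Laurent terms $z^{-i}dz\wedge\alpha_i$ with $i<m$ contribute neither to $\alpha(X_H)$ nor to $df(X_H)$. Everything after that is a two-line manipulation resting on the closedness of $\alpha$ together with the product rule; in particular no analytic estimate or compactness input is needed, the identity being a purely pointwise consequence of the Floer equation and the cosymplectic normal form.
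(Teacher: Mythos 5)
Your argument is correct and runs on the same engine as the paper's proof: compute $\partial_s(f\circ u)$ and $\partial_t(f\circ u)$ from the Floer equation using the adapted $J$, extract the $-k$ term from $\mathcal{L}_{X^{\sigma}}H=k$, kill the Reeb contribution using $\mathcal{L}_R H=0$, and finish by differentiating the pullback of a closed one-form. The only substantive difference is the choice of that one-form, and it is precisely what dissolves the point you flag as delicate. The paper never writes down $X_H=kR+X_h^{\beta}$ nor the identity $df\circ J=-\pi^{\ast}\alpha_m$; instead it uses $df(Jv)=g(\nabla f,Jv)=-\omega(\nabla f,v)$ together with $\nabla f=X^{\sigma}$ (valid for the asymptotically exact metric of Lemma \ref{lem:compatibleacs}), so that the two scalars it needs are $\omega(X^{\sigma},X_H)=dH(X^{\sigma})=k$ and $\omega(X_H,JX^{\sigma})=\omega(X_H,R)=-dH(R)=0$ --- exact consequences of Hamilton's equation and admissibility, with no reference to the Laurent expansion of $\omega$. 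Its closed one-form is then $\iota_{X^{\sigma}}\omega=\sum_j z^{m-j}\pi^{\ast}\alpha_j$ rather than your $\alpha=\pi^{\ast}\alpha_m$, and its closedness comes from $d\,\iota_{X^{\sigma}}\omega=\mathcal{L}_{X^{\sigma}}\omega=0$, i.e.\ from $X^{\sigma}$ being symplectic, rather than from $d\alpha_m=0$. If you replace your $\alpha$ by $\iota_{X^{\sigma}}\omega$ throughout, the identities $df\circ J=-\alpha$, $\alpha(X_H)=k$ and $df(X_H)=0$ become exact and your closing worry about whether the lower-order terms $z^{-i}dz\wedge\alpha_i$, $i<m$, contribute simply does not arise; as written, your version of $\alpha(X_H)=k$ picks up corrections $\sum_{j<m}z^{m-j}\alpha_j(X_H)$ unless one imposes extra conditions on the $\alpha_j$, so this substitution is the cleaner way to close the argument.
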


	\

	\begin{proof}
		Let $d^c (v) := dv \circ i = \frac{\partial v}{\partial t} ds - \frac{\partial v}{\partial s} dt$.
		Then
		\[- dd^c(v) = (\Delta v) ds\wedge dt .\]

		Computing,
		\begin{align}
			- d^c(f\circ u) =&\frac{\partial  }{\partial s} (f\circ u) dt - \frac{\partial  }{\partial t} (f\circ u) ds = \left(df(u) \frac{\partial u}{\partial s} \right) dt - \left(df(u) \frac{\partial u}{\partial t} \right) ds  \nonumber \\
			=&\left(df(u) \left( \frac{\partial u}{\partial s} + J(\eta,u) \frac{\partial u}{\partial t} \right)\right) dt - \left(df(u) \left(J(\eta,u) \frac{\partial u}{\partial t} \right)\right) dt +  \nonumber \\
			&+ \left(df(u) \left(J(\eta,u) \frac{\partial u}{\partial s} - \frac{\partial u}{\partial t} \right)\right) ds - \left(df(u) \left(J(\eta,u) \frac{\partial u}{\partial s} \right)\right) ds  \nonumber \\
			=&- \omega(\nabla f(u), X_H(u)) dt + \omega\left(\nabla f(u), \frac{\partial u}{\partial t} \right) dt - \nonumber \\
			&- \omega\left(\nabla f(u) , J(\eta,u) X_H(u) \right) ds + \omega\left(\nabla f(u), \frac{\partial u}{\partial s} \right) ds \label{eq:finalfloer}
		\end{align}

    Recall that in these coordinates the normal symplectic $b^m$-vector field is given by $X^\sigma=z^m\frac{\partial}{\partial z}$. On the other hand, the metric $\omega(\cdot,J\cdot)$ satisfies Equation (\ref{eq:exactbmetric}), from which we can deduce that $\nabla f=X^\sigma$.

		Now, we apply the fact that with our choice of $J$ we have $\nabla f = X^{\sigma}$, so, for the first term
		\[\omega( X^{\sigma}, X_H(u) ) dt = \left( \mathcal{L}_{X^{\sigma}} H \right) dt = k(s,t) dt.\]
		For the second and fourth terms
		\[\omega \left(X^{\sigma} , \frac{\partial u}{\partial t} \right) dt + \omega\left( X^{\sigma} , \frac{\partial u}{\partial s} \right) ds =  u^{\ast} i_{X^{\sigma}} \omega .\]
		Finally, for the third term
		\begin{gather*}
			\omega (X^{\sigma} , J(\eta,u) X_H(u) ) ds = \omega(X_H(u) , J(\eta,u) X^{\sigma}(u)) ds = \\ \omega(X_H(u), R) ds = \left(\mathcal{L}_R H \right) ds = 0 .
		\end{gather*}

		Collecting everything, we deduce that Equation (\ref{eq:finalfloer}) yields that
		\[ - d^c(f\circ u) = - k(s,t) dt - u^{\ast} i_{X^{\sigma}} \omega .\]

		If we apply the differential, it is clear that $d (k(s,t) dt) = \frac{\partial k}{\partial s} ds \wedge dt $, and
		\[d \left(u^{\ast} i_{X^{\sigma}} \omega\right) = u^{\ast} (d i_{X^{\sigma}} \omega) = u^{\ast} (\mathcal{L}_{X^{\sigma}} \omega) = 0 .\]

		Therefore,
		\[(\Delta (f\circ u)) ds\wedge dt = - dd^c (f\circ u) = - \frac{\partial k}{\partial s} ds \wedge dt. \]
	\end{proof}
	
	As a corollary of Proposition \ref{prop:max_principle}, we obtain the following result.

	\begin{theorem}[Minimum Principle] \label{theorem:min_principle}
		Let $u \in \mathcal{C}^{\infty}(\Omega, \mathcal{N})$ satisfying one of the following conditions:
		\begin{enumerate}
			\item $u$ is a solution to the Floer equation \ref{eq:floer_equation} for an admissible Hamiltonian $H \in \mathcal{C}^{\infty}(S^1 \times \mathcal{N})$. 	If $f \circ u$ attains its maximum or minimum on $\Omega$, then $f \circ u$ is constant.
			\item $u$ is a solution to the Floer equation \ref{eq:floer_equation} for a parameter-dependent admissible Hamiltonian $H \in \mathcal{C}^{\infty}(\Omega \times \mathcal{N})$ such that $ \frac{\partial k}{\partial s}(s,t) \geq 0 \ \forall (s,t) \in \Omega$. If $f \circ u$ attains its minimum on $\Omega$, then $f\circ u$ is constant.
		\end{enumerate}

	\end{theorem}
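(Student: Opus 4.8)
The plan is to deduce the statement directly from Proposition \ref{prop:max_principle}, which identifies the Laplacian of $f \circ u$ with (minus) the $s$-derivative of the linear weight $k$, combined with the classical strong maximum principle for the flat Laplacian $\Delta = \partial_s^2 + \partial_t^2$ on a planar domain. I would first note that, since $u$ takes values in $\mathcal{N} = \mathcal{N}(Z)$ and therefore avoids $Z$, the composition $f \circ u$ is a genuine smooth real-valued function on $\Omega$; in particular the notions of harmonicity and of attaining an interior extremum make sense, and $\Delta(f\circ u)$ is the ordinary Euclidean Laplacian as computed in the proposition.

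For the first case I would argue as follows. The hypothesis $H \in \mathcal{C}^{\infty}(S^1 \times \mathcal{N})$ means that the admissible Hamiltonian carries no dependence on the parameter $s$; consequently its linear weight along $X^{\sigma}$ is a function $k = k(t)$ of the loop variable alone, so $\partial k / \partial s \equiv 0$. Proposition \ref{prop:max_principle} then gives $\Delta(f \circ u) = 0$, i.e. $f \circ u$ is harmonic on $\Omega$. By the strong maximum principle for harmonic functions, a harmonic function on a connected planar domain that attains either its supremum or its infimum at an interior point is constant; applying this on each connected component of $\Omega$ yields that $f \circ u$ is constant, which is the assertion.

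For the second case the weight $k = k(s,t)$ now depends on $s$ and satisfies $\partial k/\partial s \geq 0$ throughout $\Omega$. Proposition \ref{prop:max_principle} gives $\Delta(f \circ u) = -\partial k/\partial s \leq 0$, so that $f \circ u$ is \emph{superharmonic}. The relevant one-sided statement is the strong minimum principle for superharmonic functions, equivalently Hopf's strong maximum principle applied to $-(f\circ u)$, which is subharmonic: a superharmonic function attaining an interior minimum must be constant on the connected component containing that point. This gives the conclusion at once, and it also explains why only the minimum principle is claimed here, since in the superharmonic case an interior maximum cannot be excluded.

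The argument is essentially a matter of tracking signs on top of the computation already carried out in Proposition \ref{prop:max_principle}, so I do not anticipate a serious analytic obstacle. The one point deserving care is precisely the sign convention: one must verify that $-\partial k/\partial s \leq 0$ indeed places $f\circ u$ in the superharmonic class whose minima are controlled, and that the vanishing of $\partial k/\partial s$ in the $s$-independent case restores full harmonicity so that both extrema are controlled simultaneously. The only other subtlety is to invoke the \emph{strong} rather than the weak maximum principle, which is what upgrades the attainment of a non-strict interior extremum to the conclusion that $f\circ u$ is locally constant.
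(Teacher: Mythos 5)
Your proposal is correct and follows essentially the same route as the paper's own proof: both deduce from Proposition \ref{prop:max_principle} that $\Delta(f\circ u)=-\partial k/\partial s$, observe that this vanishes in the $s$-independent case (harmonicity, hence both extrema controlled) and is nonpositive in the parameter-dependent case (superharmonicity, hence the minimum controlled), and conclude by the strong maximum/minimum principle. Your write-up merely spells out the sign-tracking and the strong-versus-weak distinction more explicitly than the paper does.
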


	\begin{proof}
		In the first case, this is straightforward since, according to Proposition \ref{prop:max_principle}, it follows that $f \circ u$ is harmonic and therefore satisfies the Maximum (respectively Minimum) Principle.

		In the second case, it is a consequence of the minimum principle, because $\Delta (f \circ u) = - \frac{\partial k}{\partial s} \leq 0 $ by the choice of the admissible Hamiltonian.
	\end{proof}

	\subsection{The Floer complex}
	
 For the remainder of this section, we assume that $(M,\omega)$ is aspherical, this means, that $[\omega]$ vanishes on $\pi_2(M)$.
	Recall from Definition \ref{def:regularHam} that we denote by $\mathcal{P}(H)$ the set of 1-periodic orbits of $X_H$ for an admissible and regular Hamiltonian $H : M \times S^1 \rightarrow \mathbb{R}$.
	Since $M$ is compact and the image of all elements of $\mathcal{P}(H)$ lies outside of an open neighborhood of $Z$, $\mathcal{P}(H)$ is finite.

	\begin{definition}
		Let $H_t \in {}^{b^m}\mathrm{Reg}(M,\omega)$.
		We define ${}^{b^m}CF(M,\omega,H)$ as the $\mathbb{Z}_2$-vector space generated over $\mathcal{P}(H)$, this means, the set of formal sums of the type
		\[v = \sum_{x \in \mathcal{P}(H)} v_x x , \ v_x \in \mathbb{Z}_2 .\]
	\end{definition}

	Under the assumption that the first Chern class $c_1 = c_1(\omega) \in H^2(M,\mathbb{Z})$ of the bundle $(TM, J)$ vanishes on $\pi_2(M)$, then the Conley-Zehnder index $\mu_{CZ}$ of $x \in \mathcal{P}(H)$ is well-defined (see for instance \cite{salamon}).
	This index can be normalized in such a way that for any critical points of a $\mathcal{C}^2$-small enough $H$ it is satisfied that
	\[\mu_{CZ}(x) = 2n - \mathrm{ind}(x),\]
 where $\mathrm{ind}(x)$ is the Morse index of $H$ at $x$.

	We can use the Conley-Zehnder index to turn ${}^{b^m}CF(M,H,\omega)$ into a graded vector space.

	For $x,y \in \mathcal{P}(H)$ we denote by $\mathcal{M}(x,y)$ the moduli space of Floer trajectories from $x$ to $y$, this means,
	\[\left\{ \begin{array}{l} \frac{\partial u}{\partial s} + J(u) \frac{\partial u}{\partial t} + \mathrm{grad}_u H = 0 \\ \displaystyle\lim_{s \rightarrow - \infty} u(s,t) = x(t) , \ \lim_{s \rightarrow +\infty} u(s,t) = y(t) . \end{array} \right.\]

		As $(M,\omega)$ is aspherical, we cannot have bubbles of pseudo-holomorphic spheres (see, for instance, Section 6.6 on \cite{audin}).
		Moreover, as a consequence of Theorem \ref{theorem:min_principle} we cannot have solutions of the Floer equation approaching $Z$ in any way.
		Thus, we conclude that $\mathcal{M}(x,y)$ is compact for any pair $x,y \in \mathcal{P}(M)$ and that it is a manifold of dimension $\mu_{CZ}(x) - \mu_{CZ}(y)$.
		As in standard Floer theory, we conclude from here that whenever $\mu_{CZ}(x) - \mu_{CZ}(y) = 1$, the quotient $\raisebox{.1em}{$\mathcal{M}(x,y)$}\left/\raisebox{-.1em}{$\mathbb{R}$}\right.$ by the action along the variable $s$ is a finite set see \cite[Chapter~9]{audin}).

		Let
		\[n(x,y) := \# \left\{\raisebox{.1em}{$\mathcal{M}(x,y)$}\left/\raisebox{-.1em}{$\mathbb{R}$}\right. \right\} \mod 2.\]

		Then, for each index $k$ we can define the boundary operator of the Floer complex,
		\[\begin{array}{rccc} \partial_k : & {}^{b^m}CF_k(M,\omega, H, J) & \longrightarrow & {}^{b^m}CF_{k-1}(M,\omega, H, J) \end{array}\]

			as defined in the generators of ${}^{b^m}CF_k(M,\omega, H, J)$ by
			\[\partial_k x := \sum_{\substack{y \in \mathcal{P}(H) \\ \mu_{CZ}(y) = k -1}} n(x,y) y .\]

			\begin{definition} \label{definition:floer_homology}
				The Floer homology is the one given by
				\[{^{b^m}}HF_{k}(M,\omega,H) := \frac{\mathrm{ker} \partial_k}{\mathrm{im} \partial_{k+1}} .\]
			\end{definition}

			\begin{remark}
				The homology constructed so far, as the notation implies, depends on the choice of $H$, $J$, and $\omega$, besides depending on $M$, $Z$, and the relative topology between them.
				To be precise, the family of admissible Hamiltonian functions depends on $X^{\sigma}$ and $R$, but we do not include this dependence in the notation as they are accounted for in the choice of a Hamiltonian $H$.

				Showing invariance with respect to these choices is beyond the scope of this paper, and is the subject of a paper in preparation by the authors.
			\end{remark}

			\begin{remark}
				The construction of this complex (and homology) is related to our methods in Section \ref{sec:desing} due to the conditions on admissible Hamiltonian functions, in particular to the fact that the dynamics of $X_H$ is, in a sense, split on the connected components of $M \backslash Z$.
				Moreover, due to Theorem \ref{theorem:min_principle} we can deduce that solutions to the Floer equation with finite energy do not cross the singular hypersurface $Z$, this means, our Floer complex splits on the connected components of $M\backslash Z$,
				\[{}^{b^m}CF_{\bullet} (M,\omega,H,J) = \bigoplus_{M_i \in M\backslash Z} {}^{b^m}CF_{\bullet}(M_i,\omega,H,J) .\]
			\end{remark}

			\section{Conclusions and Open Questions}\label{sec:final}

			In this article, we were able to give a complete proof of the Arnold conjecture for $b$-symplectic surfaces.
			
			Given the $b^3$-symplectic structure of the restricted three-body problem investigated in \cite{kms}, the first open question would be the following:
			\begin{openquestion}
				Is it possible to find a compact surface on the restricted three-body problem where the Arnold conjecture yields  new applications in Celestial mechanics?
			\end{openquestion}

		As shown in the previous section, by the definition of admissible Hamiltonian functions, there  are only finitely many $1$-periodic orbits around the critical set, and by the minimum principle (Theorem \ref{theorem:min_principle}), there is a Floer complex associated to such a Hamiltonian. We furthermore proved in Theorem \ref{prop:lowerbound_surfaces} that for  orientable surfaces, the number of $1$-periodic orbits of the Hamiltonian vector field is bounded from below by the topology of $M$ and the relative position of $Z$ and that in the case of $b^{2m}$-symplectic manifolds, the dynamics is described by the usual Hamiltonian dynamics and therefore the same lower bound as in the standard Arnold conjecture holds.

The second open question that we raise is whether there is a \emph{finer way} to compute the Floer homology associated with an admissible Hamiltonian (both for $b$-symplectic and for $b^{2m}$-symplectic manifolds). Because of the Mazzeo-Melrose formula for $b$-cohomology (see Theorem 27 in \cite{guimipi}) we might speculate with other decomposition formulas for the Floer complex that take $Z$ into account.

\begin{openquestion}
	Is it possible to compute ${}^{b^m}HF_k(M,\omega,H)$ in terms of the topology of $M$ and $Z$?
\end{openquestion}

Indeed, we suspect that in the $b^{2m}$-symplectic case, the lower bound obtained in Theorem \ref{thm:b2msymplecticArnold} is unsatisfactory. As for the proof of the surface case (Theorem \ref{prop:lowerbound_surfaces}), the fact that the dynamics is tangent to $Z$ should give a higher lower bound. In that sense, an optimal lower bound should take into account not only the topology of $M$, but also the one of $Z$ and its relative position in $M$. Similarly, we expect that a similar result to the one of Theorem \ref{prop:lowerbound_surfaces} holds in higher dimensions.  Our method of proof in the case of surfaces is a hands-on construction in dimension 2.

\begin{openquestion}
Does Theorem \ref{prop:lowerbound_surfaces} hold in higher dimensions?
\end{openquestion}

In Section \ref{sec:dyn}, we justify the notion of admissible Hamiltonian by the fact that there are only finitely many periodic orbits around the critical set. It would be interesting to construct a Floer-type homology for more general Hamiltonian functions. At this moment, this looks terra incognita to us.

\begin{openquestion}\label{open:floer_homology}
Can we associate a Floer homology with more general $b^m$-Hamiltonian functions?
\end{openquestion}

More precisely, this question raises whether one could possibly enlarge the set of admissible Hamiltonian functions considered in this paper.

Another open question that remains is studying smooth Hamiltonian functions on $b^m$-symplectic manifolds.
Indeed, for smooth Hamiltonian functions, the associated Hamiltonian vector field to a $b^m$-symplectic structure is tangent to the leaves of the foliation. Even compact $b$-symplectic manifolds can have non-compact leaves, as shown in the next example.

\begin{example}
	Consider the $4$-torus $M=\mathbb T^4$ with coordinates $\theta_1,\theta_2,\theta_3, \theta_4$. Inside this torus, consider $Z$ the union of two $3$-tori given by the equation $\sin \theta_4=0$.
	The critical set $Z$ can be endowed with a cosymplectic structure which is also a regular Poisson structure:
	Denote by $\mathcal{F}$ the codimension
	one foliation with leaves given by
	$$\theta_3=a\theta_1+b \theta_2+k,\quad k\in\mathbb R,$$
	where $a,b, 1\in\mathbb R$ are fixed and independent
	over $\mathbb Q$. Each leaf is diffeomorphic to $\mathbb R^2$
	\cite{mamaev}. The one-form
	$\alpha=\frac{a}{a^2+b^2+1}\,d\theta_1+\frac{b}{a^2+b^2+1}\,d\theta_2-\frac{1}{a^2+b^2+1}\,d\theta_3$
	defines a symplectic foliation with symplectic form induced by the form
	$$\beta=d\theta_1\wedge d\theta_2+b\, d\theta_1\wedge d\theta_3-a\,d\theta_2\wedge d\theta_3$$
	The  two-form
	$\omega= \frac{d\theta_4}{\sin\theta_4} \wedge \alpha+ \beta$ is a $b$-symplectic form on $M$.

	Now consider the vector field
	$$X=\frac{a}{a^2+b^2+1}\frac{\partial}{\partial\theta_1}+\frac{b}{a^2+b^2+1}\frac{\partial}{\partial\theta_2}-\frac{1}{a^2+b^2+1}\frac{\partial}{\partial\theta_3}.$$
	Due to the rational independence of $a$ and $b$ this vector field does not have any periodic orbits. To check this, it is enough to consider the projection on the $2$-torus with coordinates $\theta_1$ and $\theta_2$ where the vector field projects to a vector field with dense orbits.
	This vector field is the Reeb vector field associated to the cosymplectic structure on $Z$.
\end{example}

It would be interesting to see whether, even if the leaves are non-compact, there is a lower bound (depending on the topology of the leaf) on the number of $1$-periodic Hamiltonian orbits associated to a \emph{smooth} Hamiltonians on each leaf of $Z$.

\begin{openquestion} \label{open:leaf_bound}
Given a $b^m$-symplectic manifold with a smooth Hamiltonian function $H$, is the number of non-degenerate periodic time-$1$ Hamiltonian orbits bounded on each leaf bounded from below by the topology of $Z$?
\end{openquestion}

Given the recent advances in \cite{pinowitte}, by Theorem 1.3 therein, the $b^m$-symplectic structure can be regularized (in the sense of \cite{pinowitte}) to a symplectic foliation. The last open question can be rephrased as to whether the foliated version of the Arnold conjecture holds. To put it simply, the analogous statement for odd dimensions is true, as proven in \cite{foliatedWeinstein}. The foliated Arnold conjecture was first discussed at the \emph{"Workshop on topological aspects of symplectic foliations"} in 2017 at the Université de Lyon.


\appendix

\section{The lower bound in Theorem \ref{prop:lowerbound_surfaces} is optimal}

In this section, we will prove that the lower bound in Theorem \ref{prop:lowerbound_surfaces} is optimal. More precisely, we will prove the following proposition.

\begin{proposition}\label{prop:optimalbound}
	Let $(\Sigma,\omega)$ be a compact orientable $b^m$-symplectic surface with critical set $Z = \bigsqcup \gamma_i$, where each $\gamma_i$ is diffeomorphic to a circle.
		Then there exists an admissible $b$-function $F: \left( \Sigma, \bigsqcup \gamma_i \right) \to \mathbb{R} \cup \{\pm \infty \}$ such that $X_F$ has exactly
		\[\sum_{v\in V} \big(2g_v+|\mathrm{deg}(v)-2|\big)\]
		$1$-periodic orbits, where $V$ is the set of vertices associated to the graph of the $b$-manifold.
\end{proposition}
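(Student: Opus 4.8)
The plan is to run the proof of Theorem~\ref{prop:lowerbound_surfaces} in reverse, producing an admissible $F$ that saturates every per-vertex estimate. Write $G=(V,E)$ for the graph of $(\Sigma,Z)$. For each vertex $v$, Proposition~\ref{thm:regularizingsurfaces} completes the component $\Sigma_v$ to a closed orientable surface $\overline{\Sigma}_v$ of genus $g_v$ by gluing a disk along each of its $\deg(v)$ boundary circles, and under the desingularization an admissible Hamiltonian on $\Sigma$ becomes a smooth Hamiltonian on each $\overline{\Sigma}_v$ whose only orbits inside the glued disks are the $\deg(v)$ centers of Lemma~\ref{lemma:diskgluing}. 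Constructing $F$ with the claimed orbit count therefore reduces to building, on each $\overline{\Sigma}_v$, a smooth Hamiltonian with the minimal number of non-degenerate critical points compatible with the admissible boundary behaviour, the total count being $\sum_v \#\{\text{critical points in }\Sigma_v\}$.

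The crux is the type of each disk center. Near such a center the desingularized field is the rotation $-\tfrac{k_i}{a_m}\partial_\theta$ of Lemma~\ref{lemma:diskgluing}; a one-line computation shows it is a non-degenerate extremum that is a maximum or a minimum according to the sign of $c\,k_i/a_m$, where $c$ encodes the orientation of the desingularized form. Because the two sides of a critical circle $Z_i$ carry opposite orientations — and tracking the coefficient $a_m$ together with the factor $(-1)^m$ of the desingularization shows this holds for every parity of $m$ — the two centers that $Z_i$ contributes to the completions of its two adjacent vertices always have opposite type, and changing the sign of $k_i$ (allowed by the proof of Proposition~\ref{prop:K}, which only excludes $\int_0^1 k_i\in T_i\mathbb{Z}$) interchanges them. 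Thus each edge of $G$ may be freely oriented towards the endpoint where it yields a maximum. Since by (\ref{eq:morse_ineq}) a vertex of degree at least $2$ forces two extra critical points unless its incident centers contain both a maximum and a minimum, saturating the bound everywhere is exactly the condition that the orientation give each vertex of degree $\geq 2$ both an incoming and an outgoing edge.

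I would isolate this as a purely combinatorial statement: every finite graph admits an orientation in which each vertex of degree at least $2$ has positive in-degree and positive out-degree. This follows from standard graph theory, by induction peeling off pendant vertices and applying Robbins' strong-orientation theorem to the $2$-edge-connected blocks; the directed triangle already illustrates why the opposite-type dichotomy is precisely what keeps the statement always solvable. Granting such an orientation, on each $\overline{\Sigma}_v$ I would realise a Morse function whose extrema are exactly the $\deg(v)$ disk centers, of the prescribed types (which now include at least one maximum and one minimum when $\deg(v)\geq 2$), together with exactly $2g_v+\deg(v)-2$ saddles in $\mathrm{Int}(\Sigma_v)$; for $\deg(v)=1$ one inserts a single interior extremum of the complementary type and $2g_v$ saddles. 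The realizability of these prescribed critical data on a genus-$g_v$ surface with the required boundary gradient is standard, and every critical point can be taken non-degenerate.

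Finally, the data along each $Z_i$ (the function $k_i$ and the local normal form) are shared by its two completions and were fixed consistently, so the per-vertex Hamiltonians patch, via the singularization correspondence of Proposition~\ref{prop:bHam=Ham} (cf. Proposition~\ref{prop:singularization}), into a single admissible $F$ on $(\Sigma,Z,\omega)$. Each vertex then contributes $2g_v+1$ orbits when $\deg(v)=1$ and $2g_v+\deg(v)-2$ when $\deg(v)\geq 2$, i.e. $2g_v+|\deg(v)-2|$ in all cases, all non-degenerate, so $X_F$ has exactly $\sum_{v\in V}\big(2g_v+|\deg(v)-2|\big)$ one-periodic orbits. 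The main obstacle is the junction of the last two steps: pinning down the orientation-dependence of the center types, including the delicate sign bookkeeping across $Z_i$ and the role of the parity of $m$, and proving the graph-orientation lemma in the generality required; by comparison the Morse realization and the gluing are routine.
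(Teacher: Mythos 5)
There is a genuine gap in the sign bookkeeping at the critical circles, and it is exactly the point you flag as ``delicate.'' Your claim that the two disk centers contributed by a circle $Z_i$ to the completions of its two adjacent vertices \emph{always} have opposite extremal type is false when $m$ is odd (in particular for the basic $b$-symplectic case $m=1$). Near $Z_i$ an admissible $b^m$-function is $k_i\log|z|+h$ (resp.\ $-\tfrac{k_i}{(m-1)z^{m-1}}+h$), with a \emph{single} $k_i$ shared by both sides since $F$ must be one $b^m$-function there; for $m$ odd the power $z^{m-1}$ is even, so $F$ tends to the \emph{same} infinity on both sides of $Z_i$, and under the singularization correspondence both capping disks therefore receive extrema of the \emph{same} type (flipping the sign of $k_i$ flips both simultaneously). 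No amount of tracking $a_m$ or the $(-1)^m$ in the desingularized form changes this, because the max/min type is read off from the asymptotics of the function itself. Consequently, for odd $m$ the combinatorial problem is not an orientation problem but an edge $2$-coloring problem: assign $\{+,-\}$ to each edge so that every vertex of degree $\geq 2$ sees both colors. This is solvable only when the graph contains no odd cycle — your own directed-triangle remark shows you sensed the danger — and the paper closes this case by invoking that graphs of $b^{2m+1}$-symplectic surfaces are always vertex $2$-colorable (hence bipartite), a nontrivial input from the classification of such surfaces that your argument omits entirely. Your construction, run as written on an odd-$m$ surface, would try to glue $c\log|z|$ from one side to $-c\log|z|$ from the other and would not produce an admissible $b$-function.

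Apart from this, your architecture coincides with the paper's: cap each component with disks, build a per-vertex Morse function whose extrema are exactly the disk centers of prescribed type plus $2g_v+|\deg(v)-2|$ interior critical points, singularize, and glue. Your treatment of the even-$m$ case is correct, and your graph-orientation lemma is precisely the paper's Lemma \ref{lem:goodorientation} (the paper proves it by an elementary edge-peeling induction rather than via Robbins' theorem, but either works). To repair the proof you need to split into parities, replace the orientation lemma by the edge $2$-coloring lemma when $m$ is odd, and justify bipartiteness of the graph in that case.
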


The proposition follows from Proposition \ref{prop:singularization} combined with fairly basic Morse theory on surfaces.

\begin{proof}

   First, we sketch the idea of the proof. 
    
    \begin{enumerate}
        \item Given a $b^m$-symplectic surface, we will consider each connected component of $\Sigma\setminus Z$, denoted by $\Sigma_v$, and construct a Morse function on it that has exactly $2g_v+|\mathrm{deg}(v)-2|$ critical points (here $v$ denotes the associated vertex in the graph). 
        \item To do so, we first construct a smooth Morse function on $\overline{\Sigma}_v$ (as in Proposition \ref{thm:regularizingsurfaces}) having $2 + 2g_v + 2(\mathrm{deg}(v) - 2)$ critical points. Of those critical points, $\mathrm{deg}(v)$ critical points (of index $2$ or $0$, i.e. maxima or minima) will be chosen to lie in the disks $D_v$, given by the connected components of $\overline{\Sigma}_v\setminus \Sigma_v$.
        \item In the next step, we singularize the smooth Morse functions to obtain $b^m$-functions and the aim is to glue all of those $b^m$-functions (defined on $\Sigma_v$ for each $v$) together to a global $b^m$-function defined on $\Sigma$.
    \end{enumerate}
    For the $b^m$-function to be globally defined, the signs of the singularization need to match. In Step ($2$), it is therefore important to choose whether to place a minimum or a maximum in a connected component of $\overline{\Sigma}_v\setminus \Sigma_v$, denoted by $D_v$. Hence, before providing a proof following the sketch above we start by fixing supplementary data of the graph.
		This data will determine whether the Morse function constructed in Step (2) will have a minimum or a maximum in a given disk that is glued to ${\Sigma}_v$. The parity of $m$ is important: a $b^{2m}$-function changes sign around a connected component of the neighbourhood, whereas a $b^{2m+1}$-function has the same sign in a tubular neighbourhood. We therefore distinguish two cases. Let $\Sigma_1$ and $\Sigma_2$ be two connected components of $\Sigma\setminus Z$, that correspond to adjacent vertices in the graph, and let $Z_{12}$ be the connected component of $Z$ that corresponds to the edge that joins these vertices. 
    \begin{itemize}
        \item In the case of a $b^{2m+1}$-symplectic surface, we will have to construct the Morse functions in Step (2) such that it has either a maximum, or a minimum, on both disks that compactify $\Sigma_1$ and $\Sigma_2$ along $Z_{12}$. We denote by $D_i$ the disk compactifying $\Sigma_i$ along $Z_{12}$, for $i=1,2$.
        \item In the case of a $b^{2m}$-symplectic surface, we will have to construct the Morse functions such that it has a maximum on one disk being glued to $\Sigma_1$ along $Z_{12}$, and a minimum on the one that is being glued to $\Sigma_2$ along $Z_{12}$ as in the proof of Proposition \ref{thm:regularizingsurfaces} ({or} vice-versa, a minimum \emph{and} maximum).
    \end{itemize}
    To make these choices, we fix additional data of the graph:
    \begin{itemize}
        \item in the case of a $b^{2m+1}$-symplectic surface, we fix a $2$-coloring of the edges, that we denote for simplicity by $\{+,-\}$. The color $+$  (respectively $-$) means that the Morse function will have a maximum  (respectively minimum) in the connected component of $D_1$ and $D_2$ that is glued to $Z_{12}$.
        
        \item in the case of $b^{2m}$-symplectic surface, we choose an orientation of the edges of the graph. The orientation indicates on which of the compactifying disks we put a minimum, respectively maximum. We choose the convention to assign a $+$-sign to a connected component of $\mathcal{N}_\varepsilon(Z_i)\setminus Z_i$ that corresponds to the terminal vertex associated to the edge $Z_i$, respectively a $-$-sign to the initial vertez associated to the edge $Z_i$ (see \cite[p.28]{diestel} for the definition of terminal and initial vertex).
    \end{itemize}
    To satisfy that the function that we construct has the least possible number of critical points, the coloring (in the $b^{2m+1}$-symplectic case) or the orientation (in the $b^{2m}$-symplectic case) has to be chosen as follows: the graph needs to impose that in Step (2), for each $\Sigma_v$ having degree greater than 1 in the associated graph, there is at least one connected component of $\overline{\Sigma}_v\setminus \Sigma_v$ in which the Morse function has a maximum, and one in which it has a minimum. In fact, if all the compactifying disks of a $\Sigma_v$ had only maxima (or similarily, all of them had only minima), then in Step (2) of the construction, the Morse function would not satisfy the minimal bound.

 In the case of $b^{2m+1}$-symplectic surfaces, this translates to the fact that the $2$-coloring of the edges needs to be chosen in such a way that every vertex of degree 2 or more has incident edges of both colors.
	By an elementary result in graph theory, a graph admits such an edge coloring if and only if it contains no cycle with an odd number of vertices.
	This is always true for the graph associated with a $b^{2m+1}$-symplectic surface because, in particular, these graphs always admit a vertex two-coloring (see for instance \cite{mirandaplanas}).
	Thus, as the graph admits a vertex two-coloring, it does not contain a cycle with an odd number of vertices.
	For simplicity, let us denote the edge 2-coloring by the labels $\{-, +\}$.

    In the case of $b^{2m}$-symplectic surfaces, we equip the edges with an orientation.
		More precisely, we want to orient the edges such that for each vertex $v$ of degree greater than 1 there exists at least one edge incident to $v$, whose initial vertex is $v$, and at least one edge incident to $v$ whose terminal vertex is $v$.
		We call such an orientation a \emph{good} orientation, and a graph always admits a good orientation, as is proved in Lemma \ref{lem:goodorientation}. We thus fix a good orientation of the graph.

    In this way, both for $b^{2m}$- and $b^{2m+1}$-symplectic surfaces, we obtain an associated sign to $\mathcal{N}_\varepsilon(Z_i)\setminus Z_i$ for each connected component $Z_i$ of $Z$.

	For each $\Sigma_v$ we construct a closed symplectic manifold $\left( \overline{\Sigma}_v, {\overline{\omega}}_v \right)$ as in Proposition \ref{thm:regularizingsurfaces}.
	Then $\overline{\Sigma}_v$ contains $\mathrm{deg}(v)$ connected components of $Z$, still labeled with either the sign $+$ or~$-$. By construction, each of these connected components bounds a $2$-disk.

	As $\overline{\Sigma}_v$ is orientable, it admits a perfect Morse function, this means, a Morse function that has the minimal number possible of critical points, in our case $2 + 2g_v$.
	Such a Morse function can be manipulated in Morse coordinates by adding non-degenerate critical points in such a way that we end up with a Morse function $F_v^1 : \overline{\Sigma}_v \to \mathbb{R}$ such that each disk delimited by a critical curve $\gamma_i$ contains exactly one critical point of $F_v^1$ in its interior: a maximum if $\gamma_i$ has the $+$ label, or a minimum if $\gamma_i$ has the $-$ label.
	Moreover, we can assume that in the tubular neighbourhood of any critical curve it is possible to choose cylindrical coordinates $(z,\theta)$ such that $F_v^1$ has the local expression $F_v^1(z, \theta) = z$ in the neighbourhood.

	The constructed Morse function has then $2 + 2g_v$ critical points if $\mathrm{deg}(v) = 1$ or $2 + 2g_v + 2(\mathrm{deg}(v) - 2)$ critical points otherwise.
	This is because the perfect Morse function has the maximum and the minimum in two different bounding disks, and $2g_v$ saddle points.
	For each of the remaining $\mathrm{deg}(v) - 2$ disks, we add one minimum or maximum and a saddle point.
	In the first case, exactly $1+2g_v$ critical points are contained away from the disks delimited by the critical curve, and in the second case $2g_v+\mathrm{deg}(v)-2$.

    We will now use the function $F^1_v$ to construct suitable Hamiltonian dynamics on $\Sigma$. If needed, we divide $F^1_v$ by a constant large enough so that $X^{\overline{\omega}_v}_{F^1_v}$ has no $1$-periodic orbits besides its critical points.

	We now apply Proposition \ref{prop:singularization} and obtain a $b^m$-function $F_v^2$ on the $b^m$-symplectic surface $(\Sigma_v, \omega_v)$ such that $X_{F_v^2}^{\omega_v}$ has exactly the same 1-periodic orbits as $X_{F_v^1}^{\overline{\omega}_v}$.
	In particular, there exists a tubular neighbourhood around each $\gamma_i$ in such a way that $F_v^2$ has the local expression 

	\begin{equation}
		F_v^2(z, \theta) = \begin{cases} \pm \log |z| & \text{if } m = 1 \\ \pm \frac1{z^{m-1}} & \text{if } m > 1 , \end{cases}
			\label{eq:localexpressionA}
	\end{equation}
	with the sign coinciding with the label associated to $\mathcal{N}_\varepsilon(Z_i)\setminus Z_i$ as explained above.

	Let $D_i$ denote the disk delimited by the curve $\gamma_i$, and let us restrict to $\Sigma_v$.
	The number of $1$-periodic orbits of the vector field $X_{F_v^2}^{\omega}$ is $1 + 2g_v$ if $\mathrm{deg}(v) = 1$ and $2g_v + \mathrm{deg}(v) - 2$ if $\mathrm{deg}(v) > 1$.

	To conclude, we construct the $b^m$-function $F^3 \in {}^{b^m}\mathcal{C}^{\infty}(\Sigma)$ by gluing together the components $F_v^2$.
	To do this we need to take into account the parity of $m$. The choice of the coloring of the graph (in the $b^{2m+1}$-symplectic case) or the orientation of the graph (in the $b^{2m}$-symplectic case) assure that we can define $F^3$ by its restriction on each of the connected components of $\Sigma \setminus Z$ so that it coincides with $F^2_v$, and use the expression from Equation (\ref{eq:localexpressionA}) in the tubular neighbourhood of each component of $Z$.

\end{proof}

		\begin{figure}[h]
			\centering
			\begin{tikzpicture}
				\draw (-10, 0) to [out=90,in=180] (-9, 2) to [out=0,in=90] (-8, 0) to [out=270,in=0] (-9, -2) to [out=180,in=270] (-10, 0);
				\draw (-9.05, 1.3) to [out=310, in=50] (-9.05, 0.25);
				\draw (-9, 1.25) to [out=240, in=130] (-9, 0.30);
				\draw (-9.05, -0.25) to [out=310, in=50] (-9.05, -1.3);
				\draw (-9, -0.30) to [out=240, in=130] (-9, -1.25);

				\draw[color=purple] (-9.92, 0.95) to [out=310, in=230] (-9.13, 0.95);
				\draw[color=purple,dashed] (-9.92, 0.95) to [out=50, in=130] (-9.13, 0.95);
				\draw[color=purple] (-8.88, 0.95) to [out=310, in=230] (-8.08, 0.95);
				\draw[color=purple,dashed] (-8.88, 0.95) to [out=50, in=130] (-8.08, 0.95);

				\draw[color=purple] (-9.78, -1.45) to [out=335, in=205] (-8.22, -1.45);
				\draw[color=purple,dashed] (-9.77, -1.45) to [out=15, in=165] (-8.22, -1.45);

				\draw [-{Latex[length=2mm]}] (-7.5,0) to (-6.5,0);

				\draw (-5.78, -1.45) to [out=105, in=270] (-6, 0) to [out=90, in=260] (-5.92, 0.95);
				\draw (-4.22, -1.45) to [out=75, in=270] (-4, 0) to [out=90, in=280] (-4.08, 0.95);
				\draw (-5.13, 0.95) to [out=275, in=130] (-5, 0.30);
				\draw (-4.88, 0.95) to [out=265, in=50] (-5.05, 0.25);
				\draw (-5.05, -0.25) to [out=310, in=50] (-5.05, -1.3);
				\draw (-5, -0.30) to [out=240, in=130] (-5, -1.25);

				\draw [color=blue] (-5.92, 0.95) to [out=310, in=230] (-5.13, 0.95);
				\draw [color=blue,dashed] (-5.92, 0.95) to [out=35, in=145] (-5.13, 0.95);
				\draw [color=blue] (-5.92, 0.95) to [out=85, in=95] (-5.13, 0.95);

				\draw [color=red] (-4.88, 0.95) to [out=310, in=230] (-4.08, 0.95);
				\draw [color=red,dashed] (-4.88, 0.95) to [out=35, in=145] (-4.08, 0.95);
				\draw [color=red] (-4.88, 0.95) to [out=85, in=95] (-4.08, 0.95);
				\draw[color=red] (-5.78, -1.45) to [out=335, in=205] (-4.22, -1.45);
				\draw[color=purple,dashed] (-5.77, -1.45) to [out=15, in=165] (-4.22, -1.45);
				\draw[color=red] (-5.78, -1.45) to [out=305, in=235] (-4.22, -1.45);

				\draw [color=blue] (-6.1, 0.8) node [label={$-$}] {};
				\draw [color=red] (-3.9, 0.8) node [label={$+$}] {};
				\draw [color=red] (-3.9, -2) node [label={$+$}] {};

				\draw (-5.92, 2) to [out=80, in=180] (-5, 3) to [out=0, in=100] (-4.08, 2);
				\draw (-5.18, 2) to [out=85, in=230] (-5, 2.4);
				\draw (-5.05, 2.45) to [out=310, in=95] (-4.82, 2);

				\draw [color=blue] (-5.92, 2) to [out=340, in=200] (-5.18,2);
				\draw [color=blue, dashed] (-5.92, 2) to [out=20, in=160] (-5.18,2);
				\draw [color=blue] (-5.92, 2) to [out=290, in=250] (-5.18,2);

				\draw [color=red] (-4.82, 2) to [out=340, in=200] (-4.08,2);
				\draw [color=red, dashed] (-4.82, 2) to [out=20, in=160] (-4.08,2);
				\draw [color=red] (-4.82, 2) to [out=290, in=250] (-4.08,2);

				\draw [color=blue] (-6.1, 1.7) node [label={$-$}] {};
				\draw [color=red] (-3.9, 1.7) node [label={$+$}] {};

				\draw (-5.8, -2.5) to [out=300, in=240] (-4.2, -2.5);
				\draw [color=red] (-5.8, -2.5) to [out=340, in=200] (-4.2, -2.5);
				\draw [color=red] (-5.8, -2.5) to [out=50, in=130] (-4.2, -2.5);
				\draw [color=red, dashed] (-5.8, -2.5) to [out=10, in=170] (-4.2, -2.5);

				\draw [color=red] (-3.9, -2.8) node [label={$+$}] {};

				\draw [-{Latex[length=2mm]}] (-3.5,0) to (-2.5,0);

				\draw (-2,-1) to (-2, 1);
				\draw (-1.2,-1) to (-1.2, 1);
				\draw [color=red] (-2, 1) to [out=330, in=210] (-1.2, 1);
				\draw [color=red] (-2, 1) to [out=85, in=95] (-1.2, 1);
				\draw [color=red, dashed] (-2, 1) to [out=25, in=155] (-1.2, 1);
				\draw [color=blue, dashed] (-2, -1) to [out=30, in=150] (-1.2, -1);
				\draw [color=blue] (-2, -1) to [out=265, in=275] (-1.2, -1);
				\draw [color=blue] (-2, -1) to [out=335, in=205] (-1.2, -1);

				\draw [color=red] (-1, 0.9) node [label={$+$}] {};
				\draw [color=blue] (-1, -1.7) node [label={$-$}] {};

				\draw (-0.5, 1.5) to [out=270, in=120] (0, 0) to [out=300, in=100] (0.5, -1.7);
				\draw (0.5, 1.5) to [out=280, in=180] (1.3, 0.75) to [out=0, in=280] (2.1, 1.5);
				\draw (3.1, 1.5) to [out=270, in=60] (2.6, 0) to [out=240, in=80] (2.1, -1.7);
				\draw [color=red] (-0.5, 1.5) to [out=330, in=210] (0.5, 1.5);
				\draw [color=red] (-0.5, 1.5) to [out=85, in=95] (0.5, 1.5);
				\draw [color=red, dashed] (-0.5, 1.5) to [out=25, in=155] (0.5, 1.5);
				\draw [color=red] (2.1, 1.5) to [out=330, in=210] (3.1, 1.5);
				\draw [color=red] (2.1, 1.5) to [out=85, in=95] (3.1, 1.5);
				\draw [color=red, dashed] (2.1, 1.5) to [out=25, in=155] (3.1, 1.5);
				\draw [color=blue, dashed] (0.5, -1.7) to [out=30, in=150] (2.1, -1.7);
				\draw [color=blue] (0.5, -1.7) to [out=265, in=275] (2.1, -1.7);
				\draw [color=blue] (0.5, -1.7) to [out=335, in=205] (2.1, -1.7);

				\draw [color=red] (0.6, 1.55) node [label={$+$}] {};
				\draw [color=red] (3.2, 1.55) node [label={$+$}] {};
				\draw [color=blue] (2.3, -2.5) node [label={$-$}] {};
    			\draw (1.35, 0.25) to [out=310, in=50] (1.35, -0.8);
				\draw (1.35, 0.2) to [out=240, in=130] (1.37, -0.75);

				\draw [color=red] (3.5, 0) to [out=90, in=180] (4.5, 1) to [out=0, in=90] (5.5,0);
				\draw [color=red] (3.5, 0) to [out=315, in=225] (5.5,0);
				\draw [color=red, dashed] (3.5, 0) to [out=30, in=150] (5.5,0);
				\draw (3.5, 0) to [out=270, in=180] (4.5, -1) to [out=0, in=270] (5.5,0);

				\draw [color=red] (5.5, 0.5) node [label={$+$}] {};
			\end{tikzpicture}

			\caption{Cutting and filling a $b^{2m}$-symplectic surface with signs at the disks. Red disks contain maxima and blue disks contain minima}
			\label{figure:perfectmorse}
		\end{figure}
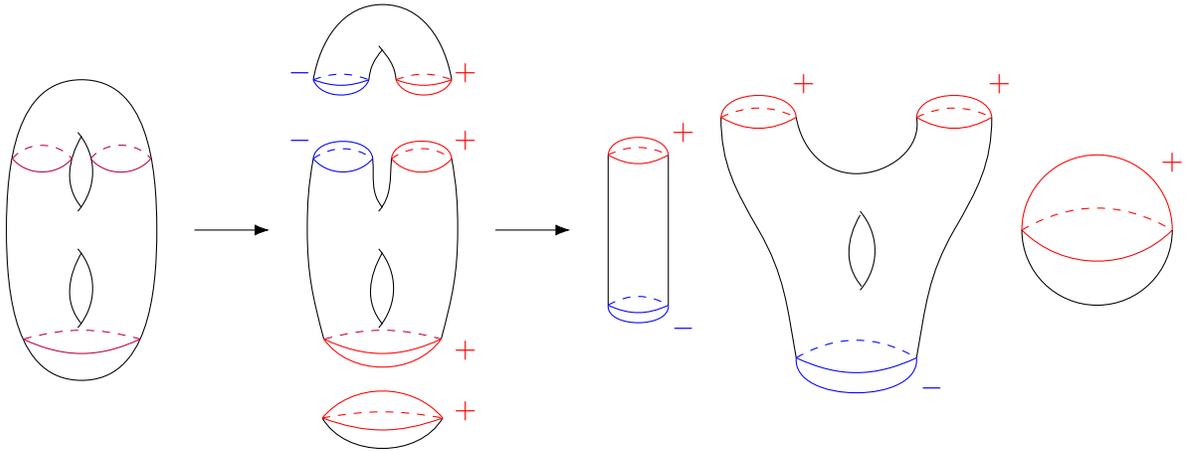

		\begin{figure}[h]
			\centering
			\begin{tikzpicture}
				\draw (-10, 0) to [out=90,in=180] (-9, 2) to [out=0,in=90] (-8, 0) to [out=270,in=0] (-9, -2) to [out=180,in=270] (-10, 0);
				\draw (-9.05, 1.3) to [out=310, in=50] (-9.05, 0.25);
				\draw (-9, 1.25) to [out=240, in=130] (-9, 0.30);
				\draw (-9.05, -0.25) to [out=310, in=50] (-9.05, -1.3);
				\draw (-9, -0.30) to [out=240, in=130] (-9, -1.25);

				\draw[color=purple] (-9.92, 0.95) to [out=310, in=230] (-9.13, 0.95);
				\draw[color=purple,dashed] (-9.92, 0.95) to [out=50, in=130] (-9.13, 0.95);
				\draw[color=purple] (-8.88, 0.95) to [out=310, in=230] (-8.08, 0.95);
				\draw[color=purple,dashed] (-8.88, 0.95) to [out=50, in=130] (-8.08, 0.95);

				\draw[color=purple] (-9.78, -1.45) to [out=335, in=205] (-8.22, -1.45);
				\draw[color=purple,dashed] (-9.77, -1.45) to [out=15, in=165] (-8.22, -1.45);

				\draw [-{Latex[length=2mm]}] (-7.5,0) to (-6.5,0);

				\draw (-5.78, -1.45) to [out=105, in=270] (-6, 0) to [out=90, in=260] (-5.92, 0.95);
				\draw (-4.22, -1.45) to [out=75, in=270] (-4, 0) to [out=90, in=280] (-4.08, 0.95);
				\draw (-5.13, 0.95) to [out=275, in=130] (-5, 0.30);
				\draw (-4.88, 0.95) to [out=265, in=50] (-5.05, 0.25);
				\draw (-5.05, -0.25) to [out=310, in=50] (-5.05, -1.3);
				\draw (-5, -0.30) to [out=240, in=130] (-5, -1.25);

				\draw [color=red] (-5.92, 0.95) to [out=310, in=230] (-5.13, 0.95);
				\draw [color=red,dashed] (-5.92, 0.95) to [out=35, in=145] (-5.13, 0.95);
				\draw [color=red] (-5.92, 0.95) to [out=85, in=95] (-5.13, 0.95);

				\draw [color=blue] (-4.88, 0.95) to [out=310, in=230] (-4.08, 0.95);
				\draw [color=blue,dashed] (-4.88, 0.95) to [out=35, in=145] (-4.08, 0.95);
				\draw [color=blue] (-4.88, 0.95) to [out=85, in=95] (-4.08, 0.95);
				\draw[color=blue] (-5.78, -1.45) to [out=335, in=205] (-4.22, -1.45);
				\draw[color=blue,dashed] (-5.77, -1.45) to [out=15, in=165] (-4.22, -1.45);
				\draw[color=blue] (-5.78, -1.45) to [out=305, in=235] (-4.22, -1.45);

				\draw [color=red] (-6.1, 0.8) node [label={$+$}] {};
				\draw [color=blue] (-3.9, 0.8) node [label={$-$}] {};
				\draw [color=blue] (-3.9, -2) node [label={$-$}] {};

				\draw (-5.92, 2) to [out=80, in=180] (-5, 3) to [out=0, in=100] (-4.08, 2);
				\draw (-5.18, 2) to [out=85, in=230] (-5, 2.4);
				\draw (-5.05, 2.45) to [out=310, in=95] (-4.82, 2);

				\draw [color=blue] (-5.92, 2) to [out=340, in=200] (-5.18,2);
				\draw [color=blue, dashed] (-5.92, 2) to [out=20, in=160] (-5.18,2);
				\draw [color=blue] (-5.92, 2) to [out=290, in=250] (-5.18,2);

				\draw [color=red] (-4.82, 2) to [out=340, in=200] (-4.08,2);
				\draw [color=red, dashed] (-4.82, 2) to [out=20, in=160] (-4.08,2);
				\draw [color=red] (-4.82, 2) to [out=290, in=250] (-4.08,2);

				\draw [color=blue] (-6.1, 1.7) node [label={$-$}] {};
				\draw [color=red] (-3.9, 1.7) node [label={$+$}] {};

				\draw (-5.8, -2.5) to [out=300, in=240] (-4.2, -2.5);
				\draw [color=red] (-5.8, -2.5) to [out=340, in=200] (-4.2, -2.5);
				\draw [color=red] (-5.8, -2.5) to [out=50, in=130] (-4.2, -2.5);
				\draw [color=red, dashed] (-5.8, -2.5) to [out=10, in=170] (-4.2, -2.5);

				\draw [color=red] (-3.9, -2.8) node [label={$+$}] {};

				\draw [-{Latex[length=2mm]}] (-3.5,0) to (-2.5,0);

				\draw (-2,-1) to (-2, 1);
				\draw (-1.2,-1) to (-1.2, 1);
				\draw [color=red] (-2, 1) to [out=330, in=210] (-1.2, 1);
				\draw [color=red] (-2, 1) to [out=85, in=95] (-1.2, 1);
				\draw [color=red, dashed] (-2, 1) to [out=25, in=155] (-1.2, 1);
				\draw [color=blue, dashed] (-2, -1) to [out=30, in=150] (-1.2, -1);
				\draw [color=blue] (-2, -1) to [out=265, in=275] (-1.2, -1);
				\draw [color=blue] (-2, -1) to [out=335, in=205] (-1.2, -1);

				\draw [color=red] (-1, 0.9) node [label={$+$}] {};
				\draw [color=blue] (-1, -1.7) node [label={$-$}] {};

				\draw (-0.5, -2.5) to [out=90, in=240] (0, 0) to [out=60, in=260] (0.5, 1.5);
				\draw (0.5, -2.5) to [out=80, in=180] (1.3, -1.65) to [out=0, in=100] (2.1, -2.5);
				\draw (3.1, -2.5) to [out=90, in=300] (2.6, 0) to [out=120, in=280] (2.1, 1.5);
				\draw [color=blue] (-0.5, -2.5) to [out=330, in=210] (0.5, -2.5);
				\draw [color=blue] (-0.5, -2.5) to [out=265, in=275] (0.5, -2.5);
				\draw [color=blue, dashed] (-0.5, -2.5) to [out=25, in=155] (0.5, -2.5);
				\draw [color=blue] (2.1, -2.5) to [out=330, in=210] (3.1, -2.5);
				\draw [color=blue] (2.1, -2.5) to [out=265, in=275] (3.1, -2.5);
				\draw [color=blue, dashed] (2.1, -2.5) to [out=25, in=155] (3.1, -2.5);
				\draw [color=red,dashed] (0.5, 1.5) to [out=30, in=150] (2.1, 1.5);
				\draw [color=red] (0.5, 1.5) to [out=85, in=95] (2.1, 1.5);
				\draw [color=red] (0.5, 1.5) to [out=335, in=205] (2.1, 1.5);

				\draw [color=red] (0.45, 1.55) node [label={$+$}] {};
				\draw [color=blue] (-0.7, -3.2) node [label={$-$}] {};
				\draw [color=blue] (3.2, -3.2) node [label={$-$}] {};
    			\draw (1.35, 0.25) to [out=310, in=50] (1.35, -0.8);
				\draw (1.35, 0.2) to [out=240, in=130] (1.37, -0.75);

				\draw [color=red] (3.5, 0) to [out=90, in=180] (4.5, 1) to [out=0, in=90] (5.5,0);
				\draw [color=red] (3.5, 0) to [out=315, in=225] (5.5,0);
				\draw [color=red, dashed] (3.5, 0) to [out=30, in=150] (5.5,0);
				\draw (3.5, 0) to [out=270, in=180] (4.5, -1) to [out=0, in=270] (5.5,0);

				\draw [color=red] (5.5, 0.5) node [label={$+$}] {};
			\end{tikzpicture}

			\caption{Cutting and filling a $b^{2m+1}$-symplectic surface with signs at the disks. Red disks contain maxima and blue disks contain minima}
			\label{figure:perfectmorseodd}
		\end{figure}

\section{A lemma in graph theory}

This section of the appendix includes a lemma in graph theory\footnote{The authors could not find a reference in the classic graph theory literature. We are immensely grateful to Juanjo Rué for helpful discussions and comments concerning this part of the appendix.}, that we use in the proof of Lemma \ref{prop:optimalbound}. We follow the notation of \cite{diestel}.

\begin{definition}
     Let $G=(V,E)$ be an undirected graph. An orientation of the graph $G$  is good if for every vertex $v$ with $\mathrm{deg}(v) > 1$, there exists at least one edge incident to $v$, whose initial vertex is $v$, and at least one edge incident to $v$, whose terminal vertex is $v$.
\end{definition}

    \begin{minipage}{.5\textwidth}
        \centering
        \begin{tikzpicture}[node distance={15mm}, thick, main/.style = {draw, circle}]
            \node[main] (1) {$x_1$};
            \node[main] (2) [right of=1] {$x_2$};
            \node[main] (3) [right of=2] {$x_3$};
            \draw[->] (1) -- (2);
            \draw[->] (2) -- (3);
        \end{tikzpicture}
        
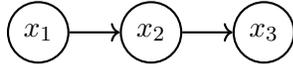
\captionof{figure}{A good orientation of the graph.}
    \end{minipage}
    \begin{minipage}{.4\textwidth}
        \centering
        \begin{tikzpicture}[node distance={15mm}, thick, main/.style = {draw, circle}]
            \node[main] (1) {$x_1$};
            \node[main] (2) [right of=1] {$x_2$};
            \node[main] (3) [right of=2] {$x_3$};
            \draw[->] (1) -- (2);
            \draw[<-] (2) -- (3);
        \end{tikzpicture}
        
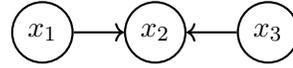
\captionof{figure}{Not a good orientation.}
    \end{minipage}

\begin{lemma}\label{lem:goodorientation}
    Let $G=(V,E)$ be a graph. Then there exists a good orientation of the graph.
\end{lemma}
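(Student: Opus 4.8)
The plan is to reduce the statement to the existence of an Eulerian circuit on a suitably augmented graph. First I would observe that being a \emph{good} orientation is a purely local condition at each vertex: whether $v$ has an incident edge with initial vertex $v$ and an incident edge with terminal vertex $v$ depends only on the edges incident to $v$, all of which lie in the connected component of $v$. Hence it suffices to produce a good orientation of each connected component separately, and we may assume $G$ is connected. If $G$ has no vertex of degree larger than $1$ (for instance a single vertex or a single edge) the condition is vacuous and any orientation works, so we may further assume $G$ has at least one vertex of degree $\geq 2$.

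The key idea is that an Eulerian circuit produces an orientation in which every vertex met by the circuit has equal in- and out-degree. Concretely, if every vertex of $G$ has even degree, then since $G$ is connected it admits an Eulerian circuit (see \cite{diestel}); orienting each edge in the direction in which the circuit traverses it yields $\mathrm{in}(v)=\mathrm{out}(v)=\deg(v)/2$ at every vertex, which is $\geq 1$ whenever $\deg(v)\geq 2$. This is already a good orientation.

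To handle odd-degree vertices I would pass to an auxiliary graph. Let $O\subset V$ be the set of odd-degree vertices; by the handshake lemma $|O|$ is even. Form $G'$ by adding one new vertex $w$ and joining it by a single edge to each vertex of $O$. Then every vertex of $O$ becomes even, the untouched vertices remain even, and $\deg_{G'}(w)=|O|$ is even, so all degrees of $G'$ are even; moreover $G'$ is connected, because $G$ is and (when $O\neq\emptyset$) $w$ is attached to it. I would take an Eulerian circuit of $G'$, orient its edges accordingly, and then delete $w$ together with its incident edges, keeping the induced orientation on $G$.

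The main point to verify — and essentially the only place a computation is needed — is that this deletion never turns a vertex of degree $\geq 2$ into a pure source or a pure sink. In the Eulerian orientation of $G'$ one has $\mathrm{in}(v)=\mathrm{out}(v)$ at every $v$. A vertex of even $G$-degree is not joined to $w$, so its in- and out-degrees are unchanged and equal to $\deg_G(v)/2\geq 1$ once $\deg_G(v)\geq 2$. A vertex $v$ of odd degree $d=\deg_G(v)$ loses exactly one edge (the one to $w$), so the deletion lowers either its in-degree or its out-degree by one, leaving the pair $\{(d-1)/2,\,(d+1)/2\}$; both entries are $\geq 1$ as soon as $d\geq 3$, while a vertex with $d=1$ is exempt from the condition. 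Hence every vertex of degree $\geq 2$ retains both an incoming and an outgoing edge, so the induced orientation of $G$ is good. I expect this final degree bookkeeping for the odd vertices to be the only genuine subtlety; the existence of the Eulerian circuit and the connectivity of $G'$ are standard and cause no trouble.
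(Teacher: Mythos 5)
Your proof is correct, but it follows a genuinely different route from the paper's. You reduce the statement to Euler's theorem: after attaching an auxiliary vertex $w$ to all odd-degree vertices, an Eulerian circuit of the augmented graph gives a balanced orientation, and your final bookkeeping correctly shows that deleting the edges to $w$ leaves every vertex of degree $d\ge 2$ with in- and out-degrees $\{d/2,d/2\}$ or $\{(d-1)/2,(d+1)/2\}$, both positive. In effect you prove the stronger, classical fact that every graph admits an orientation with $|\mathrm{in}(v)-\mathrm{out}(v)|\le 1$ at every vertex, of which goodness is an immediate consequence. The paper instead argues by a greedy deletion--reinsertion induction: it removes edges incident to vertices of degree greater than $2$ until only paths, cycles and isolated vertices remain, orients those trivially, and then reinserts the deleted edges one at a time with a short case analysis on the degree of the second endpoint. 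Your argument is shorter and yields a quantitatively stronger conclusion at the cost of invoking Euler's theorem; the paper's is more elementary and self-contained but requires more careful case checking. Both arguments apply to multigraphs with parallel edges and loops, which matters here since the graph of a $b$-manifold need not be simple.
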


\begin{proof}
    An orientation of a graph is good if it is good for each connected component. Hence, we may assume that $G$ is a connected graph.

    Given the connected graph $G=(V,E)$, consider an ordering of the edges, given by $(e_i)_{i=1,\dots,n}$. We inductively define a subgraph as follows: we initially define $G_0=G$. For $i>0$, $G_i$ is then defined to be the graph given by $G_{i}=(V,E_{i})$, where 
    
    $E_i:=E_{i-1}\setminus\{e_i\}$ \quad if $e_i$ is an edge incident to a vertex of degree strictly greater than $2$ in the graph $G_{i-1}$, and $E_i:=E_{i-1}$ otherwise.
    
    We thus end up with a graph that does not have any vertex of degree strictly greater than $2$.

    Assume we can give $G_{i+1}$ a good orientation. We claim that then $G_i$ can be given a good orientation.
    Consider the edge $e_i$. If $e_i$ is contained in $E_{i+1}$, $G_i=G_{i+1}$ and thus $G_i$ admits a good orientation. If $e_i$ is contained in $E_i\setminus E_{i+1}$, by definition, $e_i$ is incident with at least one vertex that has degree greater than $2$. We adopt the orientation of $E_{i+1}$ for all the edges of $E_i\setminus \{e_i\}$ and will choose a direction for $e_i$.
    There are $3$ possibilities for the other incident vertex to $e_i$:
    \begin{itemize}
        \item the other incident vertex is of degree greater than $2$ in $G_{i+1}$. In this case, both incident vertices of $e_i$ have thus degree greater than $2$. In that case, any direction of the edge $e_i$ is good, because both vertices have already a good direction in $E_{i+1}$.
        \item the other incident vertex $v$ is of degree $1$ in $G_{i+1}$, then we choose the ``opposite" direction, that is: if the other edge that $v$ has, has initial vertex given by $v$ (respectively has terminal vertex given by $v$), we choose the direction of $e_i$ such that its initial vertex (respectively terminal vertex) is $v$.
        \item the other incident vertex is of degree $0$ in $G_{i+1}$. In this case, any direction can be chosen for $e_i$.
    \end{itemize}

    It is therefore sufficient to prove the lemma for the subgraph $G_n$. By definition of the subgraph $G_n$, there are no edges of degree strictly greater than $2$, therefore there are only a limited number of possibilities. A connected component of the graph $G_n$ is given by
    \begin{itemize}
        \item a path,
        \item a cycle,
        \item or an isolated vertex.
    \end{itemize}
    All of these can be given a trivial good orientation. This thus proves that $G_n$ can be given a good orientation. Thus, proceeding by induction, $G=G_0$ can be given a good orientation.
\end{proof}

\newpage



\begin{thebibliography}{ABCD}

		\bibitem[AD]{audin} Michèle Audin, Mihai Damian, and Reinie Erné. "Morse theory and Floer homology." London: Springer, 2014.

		\bibitem[BM]{mazzocco}
		Irina  Bobrova and Marta Mazzocco, 
		"The sigma form of the second Painlevé hierarchy". J. Geom. Phys. 166 (2021), Paper No. 104271, 8 pp. 

		\bibitem[BDMOP]{invitation} Roisin Braddell,  Amadeu Delshams,  Eva Miranda, Cédric Oms and Arnau  Planas. "An invitation to singular symplectic geometry." International Journal of Geometric Methods in Modern Physics 16.supp01 (2019): 1940008.


  \bibitem[BLS]{byy} Maxim Braverman, Yiannis Loizides; Yanli Song. "Geometric quantization of b-symplectic manifolds.`` J. Symplectic Geom. 19 (2021), no. 1, 1–36.

		\bibitem[Ca]{cavalcanti} Gil R Cavalcanti. "Examples and counter‐examples of log‐symplectic manifolds." Journal of Topology 10.1 (2017): 1-21.

        \bibitem[D]{diestel} Reinhard Diestel, "Graph Theory". Graduate Texts in Mathematics (GTM, volume 173), Springer (2017).

		\bibitem[DKM]{dkm} Amadeu Delshams, Anna Kiesenhofer and Eva Miranda. "Examples of integrable and non-integrable systems on singular symplectic manifolds." Journal of Geometry and Physics 115 (2017): 89-97.

		\bibitem[E]{eliashberg} Yakov Eliashberg. "Estimates to the number of fixed points of area preserving transformations." Syktyvkar University (1979).

        \bibitem[FMMP]{2Nescapeorbits} Josep Fontana-McNally, Eva Miranda, Cédric Oms and Daniel Peralta-Salas. "From $2N$ to Infinitely Many Escape Orbits." Regular and Chaotic Dynamics 28, no. 4 (2023): 498-511.

		\bibitem[F1]{floer1} Andreas Floer. "Proof of the Arnold conjecture for surfaces and generalizations to certain Kähler manifolds." Duke Mathematical Journal 53.1 (1986): 1-32.

		\bibitem[F2]{floer2} Andreas Floer. "Symplectic fixed points and holomorphic spheres." Communications in Mathematical Physics 120.4 (1989): 575-611.

		\bibitem[FS]{convexsymplectic} Urs Frauenfelder and Felix Schlenk. "Hamiltonian dynamics on convex symplectic manifolds." Israel Journal of Mathematics 159.1 (2007): 1-56.

		\bibitem[FMM]{frejlichmartinezmiranda} Pedro Frejlich, David Martínez Torres and Eva Miranda. "A note on the symplectic topology of $ b $-manifolds." Journal of Symplectic Geometry 15.3 (2017): 719-739.

		\bibitem[GL]{Gualtierili} Marco Gualtieri and Songhao Li. "Symplectic groupoids of log symplectic manifolds." International Mathematics Research Notices 2014.11 (2014): 3022-3074.
		
		\bibitem[GMP]{guimipi} Victor Guillemin, Eva Miranda and Ana Rita Pires. "Codimension one symplectic foliations and regular Poisson structures." Bulletin of the Brazilian Mathematical Society, New Series 42.4 (2011): 607-623.

		\bibitem[GMP2]{guimipi2} Victor Guillemin, Eva Miranda and Ana Rita Pires. "Symplectic and Poisson geometry on b-manifolds." Advances in mathematics 264 (2014): 864-896.

		\bibitem[GMPS]{gmps} Victor Guillemin, Eva Miranda, Ana Rita Pires and Geoffrey Scott. "Toric actions on b-symplectic manifolds. " International Mathematics Research Notices 2015.14 (2015): 5818-5848.

		\bibitem[GMPS2]{gmps2} Victor Guillemin, Eva Miranda, Ana Rita Pires and Geoffrey Scott. "Convexity for Hamiltonian torus actions on $ b $-symplectic manifolds." Math. Res. Lett. 24 (2017), no. 2, 363–377. 
		
		\bibitem[GMW1]{gmW1} Victor Guillemin, Eva Miranda and Jonathan Weitsman. "Desingularizing $b^m$-symplectic structures". Int. Math. Res. Not. IMRN 2019, no. 10, 2981–2998. 
		
		\bibitem[GMW2]{gmw2} Victor  Guillemin, Eva Miranda and Jonathan Weitsman. "Convexity of the moment map image for torus actions on $b^m$-symplectic manifolds." Philos. Trans. Roy. Soc. A 376 (2018), no. 2131, 20170420, 6 pp. 
		
		\bibitem[GMW3]{gmw} Victor  Guillemin, Eva Miranda and Jonathan Weitsman. "On geometric quantization of b-symplectic manifolds." Advances in Mathematics 331 (2018): 941-951.

        \bibitem[GMW4]{gmwq2} Victor  Guillemin, Eva Miranda and Jonathan Weitsman. "On geometric quantization of $b^m$-symplectic manifolds."  Math. Z. 298 (2021), no. 1-2, 281–288..


		\bibitem[KT]{khesintabachnikov} Boris Khesin and Serge Tabachnikov. "Pseudo-Riemannian geodesics and billiards." Advances in Mathematics 221.4 (2009): 1364-1396.

		\bibitem[KM]{km} Anna Kiesenhofer and Eva Miranda. "Cotangent models for integrable systems." Communications in Mathematical Physics 350.3 (2017): 1123-1145.

		\bibitem[KMS]{kms} Anna Kiesenhofer, Eva Miranda and Geoffrey Scott. "Action-angle variables and a KAM theorem for $b$-Poisson manifolds." Journal de Mathématiques Pures et Appliquées 105.1 (2016): 66-85.

		\bibitem[K]{kirchhofflukat} Charlotte Kirchhoff-Lukat. "Log Floer cohomology for oriented log symplectic surfaces". arXiv:2207.06894.

		\bibitem[Ma]{mamaev} Vladimir Konstantinovich Mamaev. "Codimension one foliations of flat-manifolds." Sbornik: Mathematics 187.6 (1996): 823.

		\bibitem[MOT]{marcutosorno1} Ioan Marcut and Boris Osorno Torres. "On cohomological obstructions for the existence of log-symplectic structures." J. Symplectic Geom 12.4 (2014): 863-866.
			

		\bibitem[MOT2]{marcutosorno2} Ioan Marcut and Boris Osorno Torres. "Deformations of log-symplectic structures". Journal London Mathematical Society (2) 90 (2014), no. 1, 197–212.
		

		\bibitem[M]{anastasia} Anastasia Matveeva. "Poisson structures on moduli spaces and group actions". PhD thesis, Universitat Politecnica de Catalunya, 2022.

		\bibitem[MM]{anastasiaeva} Anastasia  Matveeva and Eva Miranda. "Reduction theory for singular symplectic manifolds and singular forms on moduli spaces", Adv. Math. 428 (2023), Paper No. 109161, 42 pp.
			
		\bibitem[MMN]{mirmirandanicolas} Pau Mir, Eva Miranda and Pablo Nicolás. "Hamiltonian facets of classical gauge theories on E-manifolds." J. Phys. A 56 (2023), no. 23, Paper No. 235201, 43 pp.

		\bibitem[MO]{singularweinstein} Eva Miranda and Cédric Oms. "The singular Weinstein conjecture." Advances in Mathematics 389 (2021): 107925.

		\bibitem[MO2]{bcontact} Eva Miranda and Cédric Oms. "The geometry and topology of contact structures with singularities." Journal of Geometry and Physics 192 (2023): 104957.

		\bibitem[MOP]{escapeorbits}  Eva Miranda, Cédric Oms and Daniel Peralta-Salas. "On the singular Weinstein conjecture and the existence of escape orbits for b-Beltrami fields." Communications in Contemporary Mathematics (2022): 2150076.
		
		\bibitem[MS]{mirandascott} Eva Miranda and Geoffrey Scott. "The geometry of $E$-manifolds." Revista Matemática Iberoamericana 37.3 (2020): 1207-1224.

        \bibitem[MP2]{mirandaplanas2} Eva Miranda and Arnau Planas. "Action-angle coordinates and KAM theory for singular symplectic manifolds ." arXiv:2301.00266. 
  
		\bibitem[MP]{mirandaplanas} Eva Miranda and Arnau Planas. "Equivariant classification of $b^m$-symplectic surfaces."  Regul. Chaotic Dyn. 23 (2018), no. 4, 355–371. 

		\bibitem[NT]{nestandtsygan} Richard Nest and Boris Tsygan. "Formal deformations of symplectic manifolds with boundary". J. Reine Angew. Math. 481, 1996, pp. 27–54.
		
		\bibitem[NT2]{nesttsygan2} Richard Nest and Boris Tsygan. "Deformations of symplectic Lie algebroids, deformations of holomorphic symplectic structures, and index theorems". J Asian J. Math. 5 (2001), no. 4, 599–635.
			
		\bibitem[PW]{tentacular1} Federica Pasquotto and Jagna Wiśniewska. "Bounds for tentacular Hamiltonians." Journal of Topology and Analysis vol 12 (2020), no. 01, pp. 209--265.
		
		\bibitem[PVW]{tentacular2} Federica Pasquotto, Robert Vandervorst and Jagna Wiśniewska. "Rabinowitz Floer homology for tentacular Hamiltonians." International Mathematics Research Notices 2022 (2022), no. 3, pp. 2027--2085.
			
		\bibitem[PiPr]{foliatedWeinstein} Álvaro del Pino and Francisco Presas. "The foliated Weinstein conjecture." International Mathematics Research Notices 2018.16 (2018): 5148-5177.

		\bibitem[PiWi]{pinowitte} Alvaro del Pino and Aldo Witte. "Regularisation of Lie algebroids and applications.",  arXiv:2211.14891 (2022).


		\bibitem[Sal]{salamon} Dietmar Salamon. "Lectures on Floer homology." Symplectic geometry and topology (Park City, UT, 1997) 7 (1999): 143-229.

		\bibitem[Sch]{schwarz} Matthias Schwarz. "Morse homology." Progress in Mathematics. 1993.

		\bibitem[Sco]{scott} Geoffrey Scott. "The geometry of $b^k$ Manifolds". J. Symplectic Geom. 14 (2016), no. 1, 71–95.

		\bibitem[Sw]{swan1962vector} Richard G. Swan, "Vector bundles and projective modules". Transactions of the American Mathematical Society, 105(2) (1962): pp.264--277.

		\bibitem[VO]{vanono} Lê Hông Vân and Kaoru Ono. "Symplectic fixed points, the Calabi invariant and Novikov homology". Topology 34 (1995): 155–176.

		\bibitem[Wen]{wendl} Chris Wendl, "Lectures on Holomorphic Curves in Symplectic and Contact Geometry". arXiv:1011.1690.
		
	\end{thebibliography}
\end{document}